\documentclass[12pt]{article}

\usepackage[ansinew]{inputenc}
\usepackage[UKenglish]{babel}
\usepackage{textcomp}
\usepackage{mathrsfs}
\usepackage{amsfonts}
\usepackage{mathtools}
\usepackage{amssymb}
\usepackage{amsmath}
\usepackage{amsthm}
\usepackage{amscd}
\usepackage{tensor}
\usepackage{stmaryrd}
\usepackage{rotating}
\usepackage[text={6.8in,8.5in},centering]{geometry}
\usepackage{tikz}
\usetikzlibrary{arrows, calc, decorations.markings, positioning}
\usepackage{tikz-cd}
\usepackage{relsize} 
\usepackage[none]{hyphenat}
\usepackage{hyperref}

\usepackage{graphicx}
\graphicspath{ {./Images/} }

\usepackage{hyperref,xcolor}
\usepackage{makeidx}
\definecolor{deepred}{rgb}{0.5,0,0}
\definecolor{deepblue}{rgb}{0,0,0.5}
\definecolor{deepgreen}{rgb}{0,0.5,0}

\hypersetup{
    colorlinks=true,
    linkcolor=deepred,
    filecolor=deepred,      
    urlcolor=deepred,
    citecolor=deepblue,
}

\newcommand{\Real}{\mathbb{R}}
\newcommand{\dimm}{\text{dim}}
\newcommand{\Cin}[1]{\text{C}^\infty(#1)}
\newcommand{\Tan}{\text{T}}
\newcommand{\Cot }{\text{T}^*}
\newcommand{\Sec}[1]{\Gamma(#1)}
\newcommand{\Der}{\text{D}}
\newcommand{\Jet}{\text{J}}
\newcommand{\Proj}{\text{pr}}
\newcommand{\Id}{\text{id}}
\newcommand{\rkk}[1]{\text{rk}(#1)}
\newcommand{\Hom}[1]{\text{Hom}(#1)}

\newcommand{\Dr}[1]{\text{Der}(#1)}

\newcommand{\Ker}[1]{\text{ker}(#1)}

\newcommand{\LGrph}[1]{\text{Lgrph}(#1)}

\newcommand{\Diff}{\text{Diff}}
\newcommand{\Obs}[1]{\text{Obs}(#1)}
\newcommand{\Dyn}[1]{\text{Dyn}(#1)}
\newcommand{\Lie}{\textsf{Lie}}

\newcommand{\LCrnt}{\textsf{LCrnt}}

\newcommand{\Man}{\textsf{Man}}

\newcommand{\Ring}{\textsf{Ring}}
\newcommand{\Vect}{\textsf{Vect}}
\newcommand{\LVect}{\textsf{LVect}}
\newcommand{\Line}{\textsf{Line}}
\newcommand{\Symp}{\textsf{Symp}}

\newcommand{\Cont}{\textsf{Cont}}

\newcommand{\Acts}{\mathbin{\rotatebox[origin=c]{-90}{$\circlearrowright$}}}
\newcommand{\dtimes}{\mathbin{\rotatebox[origin=c]{90}{$\ltimes$}}}
\newcommand{\utimes}{\mathbin{\rotatebox[origin=c]{-90}{$\ltimes$}}}

\newcommand{\LDer}{\mathcal{L}}

\newtheorem{prop}{Proposition}
\numberwithin{prop}{subsection}

\usepackage[
backend=biber,
style=alphabetic,
sorting=ynt
]{biblatex}
\addbibresource{references.bib}

\begin{document}

\title{Jacobi Geometry and Hamiltonian Mechanics:\\ the Unit-Free Approach}

\author{Carlos Zapata-Carratal\'a \thanks{Email: \texttt{c.zapata.carratala@gmail.com} Address: Room 2254, School of Mathematics, The University of Edinburgh, James Clerk Maxwell Building, Peter Guthrie Tait Road, Edinburgh EH9 3FD}}
\date{}

\maketitle

\sloppy

\begin{abstract}
    We present a systematic treatment of line bundle geometry and Jacobi manifolds with an application to geometric mechanics that has not been noted in the literature. We precisely identify categories that generalise the ordinary categories of smooth manifolds and vector bundles to account for a lack of choice of a preferred unit, which in standard differential geometry is always given by the global constant function $1$. This is what we call the `unit-free' approach. After giving a characterisation of local Lie brackets via their symbol maps we apply our novel categorical language to review Jacobi manifolds and related notions such as Lichnerowicz brackets and Jacobi algebroids. The main advantage of our approach is that Jacobi geometry is recovered as the direct unit-free generalisation of Poisson geometry, with all the familiar notions translating in a straightforward manner. We then apply this formalism to the question of whether there is a unit-free generalisation of Hamiltonian mechanics. We identify the basic categorical structure of ordinary Hamiltonian mechanics to argue that it is indeed possible to find a unit-free analogue. This work serves as a prelude to the investigation of dimensioned structures, an attempt at a general mathematical framework for the formal treatment of physical quantities and dimensional analysis.
\end{abstract}

\newpage

\tableofcontents

\newpage

\section{Introduction} \label{Intro}

Geometric mechanics, a long-standing discipline with roots in Enlightenment science, has been established as the modern mathematical framework where classical mechanics, with its links to relativity and quantum physics, is best understood. The fundamental notion upon which geometric mechanics is built is the concept of `phase space', a set (generally a smooth manifold) representing the collection of all possible states of a given physical system, where we can identify physical observables with the real-valued functions. Hamiltonian mechanics, the theory found at the core of both classical and quantum dynamics, considers further geometric structure on the phase space that endows the space of observables with the structure of a Poisson algebra. For a recent account of the unifying and diverging patterns between classical and quantum kinematics see \cite{zalamea2016chasing}.\newline

It is common for theoreticians to develop mathematical models where physical quantities are abstracted simply to be real numbers (or approximations thereof), leaving any considerations about units of measurement to the ulterior application of their theories. This is particularly evident in the case of the aforementioned phase space formalism, where all the observables of a physical system are collectively considered as part of the same set of real-valued functions. In practice, when considering a concrete observable, one operates as if an arbitrary unit has been fixed experimentally ``outside'' the mathematical model, so that a single real value characterises the physical quantity being measured. Such an assumption makes a real-valued function on phase space into a valid abstract representation for an observable of the physical system under consideration. This sort of convention permeates all of geometric mechanics and, indeed, a great deal of modern theoretical physics. Used carefully and systematically, it has proven a powerful tool to connect abstract mathematics and practical science, however, from a formalist point of view, we claim that: 
\begin{center}
    \centering
    \emph{It would be desirable to have mathematical models where the freedom of choice of units of measurement is explicitly accounted for, appearing as a moving part of the formalism}.
\end{center}

Following this principle, we aim to develop a mathematical framework where physical observables are generally defined as \emph{unit-free} and a choice of unit is a precise construction that will allow to recover the usual notion of observable as a real-valued function. Furthermore, we will attempt to generalise phase space theory so as to naturally incorporate a \emph{unit-free} version of Hamiltonian mechanics and the canonical formalism.\newline

As it will be argued, this endeavour will inevitably take us to the realm of Jacobi geometry. The preponderance of symplectic and Poisson structures in geometric mechanics comes as no surprise given the historical origins of the discipline. However, Jacobi and contact structures, in some sense the odd-dimensional sisters of Poisson and symplectic ones, have received much less attention in the last couple of centuries of developments in geometric mechanics and theoretical physics. Fortunately, Jacobi geometry has seen a rise in popularity lately, both as a mathematical subject and due to its applications to physics and other sciences. Just a few examples of recent developments are: the integrability of Jacobi manifolds by contact groupoids \cite{crainic2015jacobi}, the generalization of Dirac geometry for Jacobi manifolds \cite{vitagliano2018dirac} \cite{schnitzer2019normal}, a dissipative version of Liouville's theorem in contact manifolds \cite{bravetti2015liouville}, the identification of a contact structure in thermodynamics \cite{mrugala1991contact} \cite{grmela2014contact} or even applications to neuroscience \cite{petitot2017elements}. More importantly for geometric mechanics, the work of de Le\'on and collaborators \cite{deLeon2017cosymplectic} \cite{valcazar2018contact} \cite{bravetti2020invariant} has shown how contact geometry can provide a natural framework for the dynamical formulation of mechanical systems subject to time-dependent forces and dissipative effects. An interesting branch of further research would be to investigate how the formalism of \emph{unit-free} Hamiltonian mechanics presented in this work fits with conventional Hamiltonian mechanics of forced and dissipative systems, since they are both described in terms of Jacobi and contact structures.\newline

This paper should be regarded as yet another avenue of application of Jacobi geometry to physics that had not been previously explored in the literature.\newline

As a closing remark, it will be pointed out that the \emph{unit-free} formalism, although largely successful in accounting for some basic notions of geometric mechanics, crucially lacks some key features of a complete theory of classical mechanics, such as a commutative product of observables or a tensor product of phase spaces. This hints at a larger category of new structures where such a complete description may be possible: the \emph{dimensioned structures} first introduced in \cite{zapata2019landscape}.\newline

The contents are organised as follows:\newline

In Section \ref{Preliminaries} we present a fairly detailed account of the theory of line bundles building on the approach of Vitagliano \cite{vitagliano2018dirac} and collaborators \cite{tortorella2017deformations} \cite{schnitzer2019normal}, with some care to provide detailed definitions and constructions that are often used in the literature but rarely elaborated upon. In preparation for the \emph{unit-free} approach to line bundles and Jacobi geometry, in Section \ref{LVectorBundles} we define the category of line-vector (lvector) bundles. Section \ref{DerJetBundles} shows how the categories of line bundles and lvector bundles directly generalise the ordinary categories of manifolds and vector bundles while retaining most of the structures that are found in those. Section \ref{TrivialLineBundles} contains a brief summary of results for trivial line bundles. In Sections \ref{JacobiAlgebroids} and \ref{LDiracGeometry} Jacobi algebroids and Dirac-Jacobi structures (what we call LDirac structures, for short) are naturally identified  within the category of lvector bundles in direct analogy with Lie algebroids and Dirac structures within the category of ordinary vector bundles.\newline

In Section \ref{LocalLieAlgebras} we summarise a few basic facts about general local Lie algebras due to Kirillov \cite{kirillov1976local} and introduce a characterisation of derivative Lie algebras, a class of local Lie algebras including Jacobi structures and Lie algebroids. Section \ref{JacobiManifolds} presents the familiar theory of Jacobi manifolds in terms of the categories introduced in Section \ref{Preliminaries}, very much in the spirit of Tortorella \cite{tortorella2017deformations}. In Section \ref{UnitPoisson} we introduce the notion of unit on a Jacobi manifold in order to recover the known notions of Jacobi brackets of functions in the sense of Lichnerowicz \cite{lichnerowicz1978jacobi}, in particular conformal Poisson and \emph{unit-free} Poisson manifolds are clearly identified in this context. Section \ref{JacobiManifoldAlgebroids} contains the usual constructions of the Jacobi algebroid associated with a Jacobi manifold and the correspondence between linear Jacobi manifolds and Jacobi algebroids.\newline

In Section \ref{UnitFreeHamiltonian} we present all the arguments supporting the thesis that Jacobi geometry provides the natural context for a \emph{unit-free} theory of Hamiltonian mechanics. To this end we firstly summarise the standard canonical formalism of Hamiltonian mechanics on cotangent bundles of configuration spaces in Section \ref{OrdinaryHamiltonian} by identifying what we call the Hamiltonian functor. Then, in Section \ref{CanonicalContact} we present a novel approach to the constructions involving the canonical contact structure on the first jet of a line bundle. By introducing the notion of \emph{unit-free} observable as a section of a line bundle over phase space in Section \ref{UnitFreeGeneralisation}, we finish by showing in Section \ref{UnitFreeCanonicalHamiltonian} that the Hamiltonian functor naturally generalises to the unit-free setting.\newline

In Section \ref{Conclusion} we comment on the success of our approach and discuss some of the features of conventional phase space theories that are missing in our \emph{unit-free} formulation. In particular, we note the lack of an explicit algebraic structure on the space of \emph{unit-free} observables, which prevents us from formally considering products of \emph{unit-free} observables, and the difficulties encountered when trying to make sense of the tensor product of the Lie algebras of two Jacobi manifolds. These issues motivate the definition of \emph{dimensioned structures}, which generalise ordinary algebraic structures by considering a partially-defined addition.

\section{Line Bundles, LVector Bundles and Jacobi Algebroids} \label{Preliminaries}

\subsection{The Categories of Lines and LVector Spaces} \label{Lines}

We identify \textbf{the category of lines}, $\Line$, as a subcategory of vector spaces $\Vect$. Objects are vector spaces over the field of real numbers $\mathbb{R}$ of dimension 1, a useful way to think of these in the context of the present work is as sets of numbers without the choice of a unit. An object $L\in\Line$ will be called a \textbf{line}. A morphism in this category $B:L\to L'$, is an invertible linear map. Composition in the category $\Line$ is simply the composition of maps. If we think of $L$ and $L'$ as numbers without a choice of a unit, a morphism $B$ between them can be thought of as a unit-free conversion factor, for this reason we will often refer to a morphism of lines as a \textbf{factor}. We consider the field of real numbers, trivially a line when regarded as real a vector space, as a singled out object in the category of lines $\Real\in \Line$.\newline 

Note that all the morphisms in this category are, by definition, isomorphisms, thus making $\Line$ into a groupoid; however, one should not think of all the objects in the category as being equivalent. As we shall see below, there are times when one finds factors between lines (invertible morphisms by definition) in a canonical way, that is, without making any further choices beyond the information that specifies the lines, these will be called \textbf{canonical factors}. \newline

It is a simple linear algebra fact that any two lines $L,L'\in\Line$ satisfy
\begin{equation*}
    \dimm (L \oplus L')= \dimm L + \dimm L' = 2 > 1, \qquad \dimm L^* = \dimm L = 1, \qquad \dimm (L \otimes L') = 1.
\end{equation*}
Then, we note that the direct sum $\oplus$ is no longer defined in $\Line$, however, it is straightforward to check that $(\Line, \otimes, \mathbb{R})$ forms a symmetric monoidal category and that $*:\Line \to \Line$ is a duality contravariant autofunctor. The usual isomorphisms associated with the monoidal structure and the duality functor induce canonical factors between combinations of tensor products and duals of lines. In particular, $\Vect(L,L)\cong L^*\otimes L\in\Line$ has a distinguished non-zero element, the identity id$_L$, thus we find that it is canonically isomorphic to $\Real$ as lines. Therefore, for any line $L\in\Line$ we find a canonical factor
\begin{equation*}
    p_L:L^*\otimes L\to \Real.
\end{equation*}
This last result, under the intuition of lines as numbers without a choice of unit, allows us to reinterpret the singled out line $\Real$ informally as the set of procedures common to all lines by which a number gives any other number in a linear way (preserving ratios). This interpretation somewhat justifies the following adjustment in terminology: we will refer to the tensor unit $\Real\in\Line$ as \textbf{the patron line}. In anticipation of our discussion about units of measurement and physical quantities of Section \ref{UnitFreeGeneralisation}, the term \textbf{unit} is reserved for non-vanishing elements of a line $u\in L^\bullet$, where we have denoted $L^\bullet:= L\backslash \{0\}$. \newline

We can see the role of the patron line $\Real$ more explicitly by considering the following map for any line $L\in\Line$:
\begin{align*}
\lambda: L\times L^\bullet & \to L^*\otimes L\\
(a,b) & \mapsto \lambda_{ab} \text{ such that } a=\lambda_{ab}(b).
\end{align*}
And, reciprocally, also define $\rho:L^\bullet\times L\to L^*\otimes L$. The following proposition gives mathematical foundation to the intuition of $L$ being a unit-free field of numbers and $\Real$ being the patron of ratios between unit-free numbers.
\begin{prop}[Ratio Maps]\label{RatMaps}
The maps $\lambda$ and $\rho$ are well-defined and linear in their vector arguments; furthermore, for any $a,b,c\in L^\bullet$, we have the following identities for the maps $l:=p_L\circ \lambda$ and $r:=p_L\circ \rho$
\begin{equation}\label{ratio1}
    l_{ab}\cdot l_{bc}\cdot l_{ca}=1, \qquad l_{ab}\cdot r_{ab}=1, \qquad r_{ab}\cdot r_{bc}\cdot r_{ca}=1.
\end{equation}
The maps $l$ and $r$ are called the \textbf{ratio maps} and the first and third equations will be called the \textbf{2-out-of-3 identity} for the maps $l$ and $r$ respectively.
\end{prop}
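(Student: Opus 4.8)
The plan is to exploit the one-dimensionality of $L$: since any nonzero vector spans the line, comparing two of its elements reduces to a single scalar ratio, and all three identities are ultimately the statement that these ratios multiply correctly. I would organise the argument as well-definedness, then explicit tensor formulas, then the scalar interpretation of $l$ and $r$, and finally a substitution argument for \eqref{ratio1}.

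First, well-definedness. Given $(a,b)\in L\times L^\bullet$, since $b\neq 0$ spans $L$ there is a unique $\beta\in L^*$ with $\beta(b)=1$, and I would set $\lambda_{ab}:=\beta\otimes a\in L^*\otimes L$. Viewing $L^*\otimes L\cong\Vect(L,L)$, this endomorphism sends $b\mapsto \beta(b)\,a=a$, which is exactly the defining property $a=\lambda_{ab}(b)$; uniqueness holds because an endomorphism of a one-dimensional space is determined by its value on the nonzero vector $b$. The reciprocal map is handled symmetrically: for $(a,b)\in L^\bullet\times L$ I set $\rho_{ab}:=\alpha\otimes b$, where $\alpha\in L^*$ is dual to the nonzero vector $a$, so that $\rho_{ab}(a)=\alpha(a)\,b=b$. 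From these explicit formulas, linearity in the vector argument is immediate: $\lambda_{ab}=\beta\otimes a$ is linear in $a$ (the slot ranging over the full vector space $L$) by linearity of $\otimes$ in its second factor, and likewise $\rho_{ab}=\alpha\otimes b$ is linear in $b$.

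Next I would identify the ratio maps with genuine scalar ratios. Since $p_L$ is the canonical factor normalised by $p_L(\mathrm{id}_L)=1$, under the identification $L^*\otimes L\cong\Vect(L,L)$ it is precisely the evaluation pairing $p_L(\beta\otimes v)=\beta(v)$ (because $\mathrm{id}_L$ corresponds to $e^*\otimes e$ and $e^*(e)=1$). Hence $l_{ab}=p_L(\lambda_{ab})=\beta(a)$, and writing $a=c\,b$ for the unique scalar $c$ gives $\beta(a)=c\,\beta(b)=c$; that is, $l_{ab}$ is exactly the scalar with $a=l_{ab}\,b$. Symmetrically, $r_{ab}=\alpha(b)$ is the scalar with $b=r_{ab}\,a$.

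With this interpretation the identities become a one-line substitution argument. For $a,b,c\in L^\bullet$ I chain $a=l_{ab}\,b$, $b=l_{bc}\,c$, $c=l_{ca}\,a$ to obtain $a=l_{ab}l_{bc}l_{ca}\,a$; since $a\neq 0$ this forces $l_{ab}l_{bc}l_{ca}=1$, the $2$-out-of-$3$ identity. Substituting $a=l_{ab}\,b$ and $b=r_{ab}\,a$ yields $l_{ab}r_{ab}=1$, and the third identity follows either by the identical chaining for $r$ or simply by taking reciprocals in the first. The only place demanding genuine care is the bookkeeping of which argument lies in $L^\bullet$ versus $L$, since this fixes which slot carries the dual covector and hence the variable in which each map is linear; once the formulas $\lambda_{ab}=\beta\otimes a$ and $\rho_{ab}=\alpha\otimes b$ are pinned down, I expect no substantive obstacle.
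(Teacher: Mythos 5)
Your proof is correct and follows essentially the same route as the paper's: both exploit the one-dimensionality of $L$ to interpret $l_{ab}$ (via $p_L$) as the unique proportionality scalar with $a = l_{ab}\cdot b$, and then obtain the identities in \eqref{ratio1} by chaining; the paper composes the maps $\lambda_{ab}$ directly and uses $\lambda_{ab}=\lambda_{ba}^{-1}$, while you multiply the scalars, which is the same argument in different notation. If anything, your explicit formulas $\lambda_{ab}=\beta\otimes a$ and $\rho_{ab}=\alpha\otimes b$ make the well-definedness and the linearity claim more transparent than the paper's terser treatment.
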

\begin{proof}
This statement is essentially a reformulation of the fact that all three $a,b,c\in L^\bullet$ are choices of basis in the 1-dimensional vector space $L$. Since both $L$ and $\text{Hom}_\Vect(L,L)$ are 1-dimensional real vector spaces, it follows that there is a unique linear isomorphism mapping any two non-zero elements in $L$ and the zero element is only mapped by the zero linear map. This can be seen more explicitly with the use of the canonical factor $p_L:\Hom{L,L}\to \Real$, which allows us to define the map $l:=p_L\circ \lambda$ that when applied to two non-zero elements $a,b\in L$ gives the unique non-zero real number $l_{ab}$ acting as proportionality factor: $a=l_{ab}\cdot b$. To prove the 2-out-of-3 identity consider $a=\lambda_{ab}(b)$, $b=\lambda_{bc}(c)$ and $c=\lambda_{ca}(a)$. Combining the three equations we find $\lambda_{bc}\circ\lambda_{ca}(a)=\lambda_{ba}(a)$. Noting that $\lambda_{ab}=\lambda^{-1}_{ba}$ gives the desired result. Similarly for $\rho$ and the reciprocal identity follows by construction.
\end{proof}
Consider now two lines $L_1,L_2\in\Line$ with their corresponding maps $l^1$, $r^1$ and $l^2$, $r^2$. It follows from the definitions above that for any factor $B:L_1\to L_2$ we have $l^1_{ab}=l^2_{B(a)B(b)}$ and $r^1_{ab}=r^2_{B(a)B(b)}$. More generally, we can define functions on the space of factors between the lines $L_1$ and $L_2$ from pairs of line elements:
\begin{align}\label{ratio2}
    a_1\in L_1, b_2\in L_2^{\bullet} &\quad \mapsto \quad l^{12}_{a_1b_2}(B):=l^2_{B(a_1)b_2}\\
    b_1\in L_1^{\bullet}, a_2\in L_2 &\quad \mapsto \quad r^{12}_{b_1a_2}(B):=r^1_{b_1B^{-1}(a_2)}
\end{align}
for all $B:L_1 \to L_2$ a factor. It follows then by construction, that for any pair of non-zero line elements $b_1\in L_1^{\bullet}$ and $b_2\in L_2^{\bullet}$ the following identity holds
\begin{equation*}
    l^{12}_{b_1b_2}\cdot r^{12}_{b_1b_2} =1
\end{equation*}
as functions over the space of factors $\Line(L_1,L_2)$. These are called the \textbf{ratio functions}. Whenever there is no room for confusion, we will employ the following abuse of fraction notation for non-zero elements:
\begin{equation*}
    l_{ab}=\frac{a}{b}=\frac{1}{r_{ab}} \qquad \qquad l^{12}_{a_1b_2}=\frac{a_1}{b_2}=\frac{1}{r^{12}_{b_1a_2}}.
\end{equation*}

In the same manner to how we have generalised the field of real numbers $\Real$ to the category of lines, the notion of a vector space over the field of scalars $\Real$ ought to be generalised in a larger category. We do so by identifying the category of \textbf{line vector spaces} or \textbf{lvector spaces} defined as the product category
\begin{equation*}
    \LVect:= \Vect \times \Line.
\end{equation*}
Our notation for objects in this category will be $V^L:=(V,L)$ with $V\in\Vect$, $L\in\Line$, and similarly for morphisms $\psi^B:V^L\to W^{L'}$. Objects $V^L$ will be called \textbf{lvector spaces} and morphisms $\psi^B$ will be called \textbf{linear factors}.\newline

Aiming to recover the objects that naturally appear when discussing line bundles and Jacobi manifolds (see Sections \ref{LineBundles} and \ref{JacobiManifolds}) we extend the usual constructions for vector spaces to lvector spaces by taking a ``L-rooted'' approach. This means that the abelian, monoidal and duality categorical structures present in $\Vect$ will be generalised to $\LVect$ by preferring to fix the line component of lvector spaces and actively avoiding tensor products and duals of lines. The first clear example of the L-rooted approach is our definition of \textbf{direct sums} of lvector spaces sharing line component:
\begin{equation*}
    V^L\oplus_L W^L:=(V\oplus W)^L.
\end{equation*}
The notion of \textbf{subspace} and \textbf{quotient} are similarly defined:
\begin{equation*}
    U^L\subset V^L \text{ when } U \subset V \text{ is subspace, } \qquad V^L/U^L:= (V/U)^L.
\end{equation*}
Since the line component of a lvector space $V^L$ plays the role of the unit-free field of scalars, the natural notion of dual space should be given by the space of linear maps from the vector space to the line, $\Vect(V,L)\cong V^*\otimes L$, we thus define \textbf{lduality} as follows: the ldual of a lvector space $V^L$ is
\begin{equation*}
    V^{*L}:=(V^*\otimes L)^L
\end{equation*}
and the ldual of a linear factor $\psi^B:V^L\to W^{L'}$ is
\begin{equation*}
    \psi^{*B}:W^{*L'}\to V^{*L}
\end{equation*}
with $\psi^{*B}(\beta^{l'})=\alpha^l$ such that
\begin{equation*}
    \alpha=B^{-1}\circ \beta \circ \psi \qquad l=B^{-1}(l').
\end{equation*}
There is also a natural notion of \textbf{lannihilator} of a subspace in a lvector space:
\begin{equation*}
    U^{0L}:=\{\alpha\in V^{*L} | \quad \alpha(u)=0\in L, \forall u\in U\}\cong U^0\otimes L.
\end{equation*}

\begin{prop}[$L$-Rooted Subcategories of $\LVect$]\label{LSumsDual}
By fixing a line $\normalfont L\in\Line$, the subcategory of lvector spaces sharing $L$ as line component form an abelian category with duality $\normalfont (\Vect^L,\oplus_L, ^{*L})$ that directly generalises the analogous categorical structures in ordinary vector spaces $\normalfont \Vect$. In particular, we have canonical isomorphic linear factors:
\begin{equation*}
    (V^{*L})^{*L}\cong V^L, \qquad (V \oplus W)^{*L}\cong V^{*L}\oplus_L W^{*L}, \qquad V^{*L}/U^{0L}\cong U^{*L}.
\end{equation*}
\end{prop}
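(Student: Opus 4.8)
The plan is to reduce every assertion to the corresponding fact about ordinary vector spaces, using that once $L$ is fixed the $L$-rooted operations act only on the $V$-component, and invoking the canonical factor $p_L:L^*\otimes L\to\Real$ of Proposition \ref{RatMaps} to absorb the line twist wherever it appears.

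First I would pin down the morphisms of $\Vect^L$: its objects are the lvector spaces with line component $L$, and its morphisms are the linear factors $\psi^{\Id_L}$ carrying the identity factor on the line slot (allowing a general factor $B:L\to L$ would enlarge the automorphisms of each object to $\Real^\times$ and destroy additivity, so the $L$-rooted choice is forced). With this, $V^L\mapsto V$, $\psi^{\Id_L}\mapsto\psi$ is an isomorphism of categories $\Vect^L\cong\Vect$ under which $\oplus_L$ is the ordinary $\oplus$, the subspaces and quotients $V^L/U^L$ are the ordinary ones, and $0^L$ is a zero object. All abelian axioms — zero object, biproducts, kernels and cokernels, and normality of monos and epis — therefore transfer verbatim from $\Vect$, and specialising to $L=\Real$ returns $(\Vect,\oplus)$ exactly, establishing the ``directly generalises'' clause for the additive structure.

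The substantive part is the duality, since $^{*L}$ is not the transport of $^*$ but a line-twisted version: under the identification above it sends $V$ to $V^*\otimes L$, and on morphisms the formula $\alpha=B^{-1}\circ\beta\circ\psi$, $l=B^{-1}(l')$ specialises at $B=\Id_L$ to the ordinary transpose tensored with $L$, which is manifestly contravariant and functorial. The main obstacle is to show $^{*L}$ is a genuine duality, i.e.\ involutive up to a \emph{canonical} natural isomorphism, which is precisely the first displayed iso $(V^{*L})^{*L}\cong V^L$. Expanding gives $(V^{*L})^{*L}=((V^*\otimes L)^*\otimes L)^L$; here I would use the natural iso $(V^*\otimes L)^*\cong V^{**}\otimes L^*$, then reflexivity $V^{**}\cong V$ (which quietly fixes the finite-rank setting appropriate to the fibres of the later sections), and finally contract $L^*\otimes L$ to $\Real$ by the canonical factor $p_L$. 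The delicate point is naturality of this composite, which rests on $p_L$ being canonical; granting it, $^{*L}$ is a duality and $L=\Real$ recovers ordinary reflexivity.

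The last two isomorphisms are then routine once written $L$-rootedly, the only tool needed being that $-\otimes L$ is exact (tensoring with a line over a field preserves direct sums and quotients). For the second, $(V\oplus W)^{*L}=((V\oplus W)^*\otimes L)^L$, and $(V\oplus W)^*\cong V^*\oplus W^*$ tensored with $L$ and distributed over $\oplus$ yields $V^{*L}\oplus_L W^{*L}$. For the third, $U^{0L}=(U^0\otimes L)^L$ and exactness identifies $V^{*L}/U^{0L}$ with $((V^*/U^0)\otimes L)^L$, whereupon the classical annihilator iso $V^*/U^0\cong U^*$ gives $U^{*L}$. In each case the isomorphism is canonical because it is assembled from the canonical isomorphisms of $\Vect$ tensored with $L$, so the displayed factors are honest natural isomorphisms of linear factors, completing the proof.
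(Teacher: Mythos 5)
Your proof is correct and takes essentially the same route as the paper's (very terse) proof: reduce every claim to ordinary finite-dimensional linear algebra and absorb the line twist via the canonical contraction $p_L:L^*\otimes L\to\Real$ coming from $\text{End}(L)\cong\Real$. You merely make explicit what the paper leaves implicit — that morphisms of $\Vect^L$ must cover $\Id_L$ for additivity, that reflexivity $V^{**}\cong V$ requires the finite-dimensional setting, and that $-\otimes L$ is exact — so the argument stands as a fleshed-out version of the paper's sketch.
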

\begin{proof}
This follows from simple linear algebra arguments exploiting the peculiarities of the category of lines $\Line$: linear maps between lines are also 1-dimensional vector spaces (hence also lines) and the endomorphisms of a line are canonically isomorphic to the patron line $L^*\otimes L\cong \Real$.
\end{proof}

We can continue to generalise linear algebra following the L-rooted approach to define \textbf{ltensors} as follows:
\begin{align*}
    \mathcal{T}^k(V^L) & := (\mathcal{T}^k(V))^L=(V\otimes \stackrel{k}{\dots} \otimes V)^L,\\
    \mathcal{T}_k(V^L) & := (\Vect(V, \stackrel{k}{\dots} ,V,L))^L=(V^*\otimes \stackrel{k}{\dots} \otimes V^*\otimes L)^L.
\end{align*}
By taking the conventions $\mathcal{T}^0(V^L)=\Real^L$ and $\mathcal{T}_0(V^L)=L$ we can define the graded lvector spaces of tensors in the obvious way:
\begin{align*}
    \mathcal{T}^\bullet (V^L) & := \bigoplus_{k=0}^\infty \mathcal{T}^k(V^L),\\
    \mathcal{T}_\bullet (V^L) & := \bigoplus_{k=0}^\infty \mathcal{T}_k(V^L).
\end{align*}
Given a linear factor $\psi^B:V^L\to W^{L'}$ we can define its \textbf{push-forward} simply from the ordinary push-forward of contravariant tensors
\begin{equation*}
    \psi^B_*:= (\psi_*)^B :\mathcal{T}^k(V^L) \to \mathcal{T}^k(W^{L'}),
\end{equation*}
where we take the convention that at degree $0$ the push-forward $\psi^B_*:\Real \to \Real$ is simply the identity $\Id_\Real$. The \textbf{pull-back} a linear factor is obtained by extending the definition of ldual map above for an arbitrary number of vector arguments
\begin{equation*}
    \psi^{*B}: \mathcal{T}_k(W^{L'}) \to \mathcal{T}_k(V^L).
\end{equation*}
At degree $0$ the pull-back $\psi^{*B}: L' \to L$ is simply the inverse of the factor $B^{-1}$. Note that the definition of the $\LVect$ category has forced us to define contravariant and covariant tensors in an asymmetrical way. This peculiarity of the L-rooted approach is further exacerbated when we attempt to identify a generalisation of the tensor product of lvector spaces. One could simply use the Cartesian product of the monoidal structures in $\Vect$ and $\Line$ to endow $\LVect$ with a natural monoidal structure, however, this will produce tensor products of lines, which we are trying to avoid. This will imply that the tensor algebra of an ordinary vector space doesn't generalise to an object of the same algebraic nature in the context of lvector spaces. In fact, the price we pay for enforcing the L-rooted approach is that we give up algebra structures for modules over algebras. In the case of contravariant ltensors this is evident simply by using the ordinary tensor product of the vector component of lvector spaces: let $a\in \mathcal{T}^p(V)$ and $b^l\in \mathcal{T}^q(V^L)$ then we define the module product as
\begin{equation*}
    a\cdot b^l := (a\otimes b)^l \in \mathcal{T}^{p+q}(V^L).
\end{equation*}
The following proposition motivates to define \textbf{module of contravariant ltensors} as the (infinite-dimensional) lvector space $\mathcal{T}^\bullet (V^L)$ with the module product above extended by linearity.

\begin{prop}[Modules of Contravariant LTensors]\label{ContraLTensor}
The space of contravariant ltensors of a given lvector space $V^L$ with the product defined above $(\mathcal{T}^\bullet (V^L),+,\cdot)$ is a module over the associative algebra of contravariant tensors of the vector component $(\mathcal{T}^\bullet (V),+,\otimes)$. A linear factor $\psi^B:V^L\to W^{L'}$ induces a morphism of modules covering a morphism of associative algebras via its push-forward $\psi^B_*$, i.e. for all $a\in \mathcal{T}^p(V)$ and $b,c\in \mathcal{T}^q(V^L)$
\begin{equation*}
    \psi^B_*(b+c)=\psi^B_*b+\psi^B_*c, \qquad \psi^B_*(a\cdot b)= \psi_*a \cdot \psi^B_*b.
\end{equation*}
\end{prop}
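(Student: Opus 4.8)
The plan is to reduce every claim to classical facts about the ordinary tensor algebra $(\mathcal{T}^\bullet(V),+,\otimes)$ and the ordinary push-forward $\psi_*\colon\mathcal{T}^\bullet(V)\to\mathcal{T}^\bullet(W)$ of contravariant tensors, exploiting the L-rooted principle that the module product and the push-forward act only on the vector component of a contravariant ltensor while leaving the line component inert (for the product) or transforming it by the factor $B$ (for the push-forward). Since $\mathcal{T}^k(V^L)=(\mathcal{T}^k(V))^L$ and the graded space $\mathcal{T}^\bullet(V^L)$ is assembled using $\oplus_L$, it is an lvector space with fixed line $L$ whose underlying abelian group is simply $(\mathcal{T}^\bullet(V),+)$ carrying the line label; this is what makes the bookkeeping possible.

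First I would establish the module axioms. On homogeneous elements the action is $a\cdot b^l=(a\otimes b)^l$, which only modifies the vector slot via $\otimes$, so it is well-defined on $\mathcal{T}^\bullet(V^L)$ and extends uniquely by $\Real$-bilinearity. The module axioms then follow termwise from the corresponding properties of the associative algebra $(\mathcal{T}^\bullet(V),\otimes)$: additivity in the module argument $a\cdot(b+c)=a\cdot b+a\cdot c$ and in the algebra argument $(a+a')\cdot b=a\cdot b+a'\cdot b$ are the bilinearity of $\otimes$ (the common line label $l$ passes through untouched, using the definition of $+$ in $V^L$); the compatibility $(a\otimes a')\cdot b=a\cdot(a'\cdot b)$ is the associativity of $\otimes$; and $1\cdot b=b$ holds because $1\in\mathcal{T}^0(V)=\Real$ is the unit of the tensor algebra and $1\otimes b=b$. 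No genuinely new identity is required beyond those already available in $\mathcal{T}^\bullet(V)$.

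Next I would treat the push-forward. Recall $\psi^B_*=(\psi_*)^B$ sends a homogeneous $b^l$ to $(\psi_* b)^{B(l)}$, i.e. it applies the ordinary push-forward to the vector component and the factor $B$ to the line element. Additivity $\psi^B_*(b+c)=\psi^B_*b+\psi^B_*c$ is then immediate from the $\Real$-linearity of $\psi_*$. For the compatibility $\psi^B_*(a\cdot b)=\psi_*a\cdot\psi^B_*b$ I would compute both sides on homogeneous elements: the left side is $\psi^B_*((a\otimes b)^l)=(\psi_*(a\otimes b))^{B(l)}$, the right side is $\psi_*a\cdot(\psi_* b)^{B(l)}=(\psi_*a\otimes\psi_* b)^{B(l)}$, and the two agree precisely because the ordinary push-forward is multiplicative, $\psi_*(a\otimes b)=\psi_*a\otimes\psi_* b$. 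This last fact, together with $\psi_*(1)=1$, is exactly the statement that $\psi_*\colon\mathcal{T}^\bullet(V)\to\mathcal{T}^\bullet(W)$ is a morphism of associative algebras, which is what ``covering a morphism of associative algebras'' means; extending by bilinearity completes the verification.

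The content here is almost entirely bookkeeping, so I do not expect a serious obstacle. The one point deserving care is the classical multiplicativity of the contravariant push-forward $\psi_*(a\otimes b)=\psi_*a\otimes\psi_* b$, which underwrites both the covering-algebra-morphism claim and the module-morphism identity; this is where one must ensure that $\psi_*$ is defined as the tensor-algebra extension of the linear map $\psi$ rather than some ad hoc degreewise map. Beyond that, the only subtlety specific to the unit-free setting is to keep the line element strictly passive under the module product and strictly covariant, transforming by $B$ rather than $B^{-1}$, under the push-forward, in contrast to the covariant ltensors whose pull-back uses $B^{-1}$.
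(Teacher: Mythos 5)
Your proof is correct and follows exactly the route the paper takes: the paper's own proof is the one-line observation that everything reduces to the ordinary tensor product on $\mathcal{T}^\bullet(V)$, with the line label passive under the module product and transformed by $B$ under $\psi^B_*$, which is precisely the bookkeeping you carry out in detail.
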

\begin{proof}
This follows trivially from the fact that the module structure is essentially just the ordinary tensor product of contravariant tensors on $V$.
\end{proof}

For covariant tensors we can proceed explicitly by using the vector space structure of lines: let $\alpha \in \mathcal{T}_p(V)$ and $\beta\in \mathcal{T}_q(V^L)$, the module product
\begin{equation*}
    \alpha \cdot \beta \in \mathcal{T}_{p+q}(V^L)
\end{equation*}
is defined by its action on vector arguments
\begin{equation*}
    \alpha \cdot \beta (v_1,\dots v_p,w_1,\dots w_q):= \alpha(v_1,\dots v_p) \cdot \beta(w_1,\dots w_q)\in L.
\end{equation*}
This product indeed allows us to define the \textbf{module of covariant ltensors}.

\begin{prop}[Modules of Covariant LTensors]\label{CoLTensor}
The space of covariant ltensors of a given lvector space $V^L$ with the product defined above $(\mathcal{T}_\bullet (V^L),+,\cdot)$ is a module over the associative algebra of covariant tensors of the vector component $(\mathcal{T}_\bullet (V),+,\otimes)$. A linear factor $\psi^B:V^L\to W^{L'}$ induces a morphism of modules covering a morphism of associative algebras via its pull-back $\psi^{*B}$, i.e. for all $\alpha\in \mathcal{T}_p(V)$ and $\beta,\gamma\in \mathcal{T}_q(V^L)$
\begin{equation*}
    \psi^{*B}(\beta+\gamma)=\psi^{*B}\beta+\psi^{*B}\gamma, \qquad \psi^{B*}(\alpha\cdot \beta)= \psi^*\alpha \cdot \psi^{*B}\beta.
\end{equation*}
\end{prop}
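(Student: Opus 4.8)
The plan is to prove the three claims in direct analogy with Proposition~\ref{ContraLTensor}, but now working with the covariant construction, whose module product is defined pointwise on vector arguments rather than via the ordinary tensor product. First I would verify that $\mathcal{T}_\bullet(V^L)$ is a module over the associative algebra $(\mathcal{T}_\bullet(V),+,\otimes)$. The additive structure is immediate since each $\mathcal{T}_k(V^L)=(V^*\otimes\cdots\otimes V^*\otimes L)^L$ is a vector space (with fixed line component $L$), so the graded direct sum is again an lvector space. For the module axioms I would check compatibility of the product $\alpha\cdot\beta$ with the associative algebra structure on $\mathcal{T}_\bullet(V)$: associativity $(\alpha\otimes\alpha')\cdot\beta=\alpha\cdot(\alpha'\cdot\beta)$ and bilinearity both follow by evaluating on vector arguments $v_1,\dots,v_p,v_1',\dots,v_{p'}',w_1,\dots,w_q$ and observing that the $L$-valued output factors as $\alpha(v_1,\dots,v_p)\,\alpha'(v_1',\dots)\cdot\beta(w_1,\dots)$, where the scalar multiplications on the real-valued $\alpha,\alpha'$ factors commute and associate with the $L$-valued $\beta$ factor because $L$ is an ordinary real vector space. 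These are all routine pointwise identities.

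Next I would establish that the pull-back $\psi^{*B}$ is additive, which is clear since pull-back on each graded piece is a linear map between vector spaces. The substantive claim is the last displayed identity, $\psi^{B*}(\alpha\cdot\beta)=\psi^*\alpha\cdot\psi^{*B}\beta$, asserting that $\psi^{*B}$ is a module morphism covering the ordinary algebra morphism $\psi^*$ on $\mathcal{T}_\bullet(V)$. To prove it I would evaluate both sides on arbitrary vectors $v_1,\dots,v_p,w_1,\dots,w_q\in W$. The left side unwinds, using the degree-$k$ definition of the pull-back (the extension of the ldual map, which sends $\beta^{l'}$ to the covector $B^{-1}\circ\beta\circ\psi$ composed appropriately), to $B^{-1}$ applied to $(\alpha\cdot\beta)(\psi v_1,\dots,\psi v_p,\psi w_1,\dots,\psi w_q)$; by the definition of the covariant module product this is $B^{-1}\bigl(\alpha(\psi v_1,\dots,\psi v_p)\cdot\beta(\psi w_1,\dots,\psi w_q)\bigr)$. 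Since $\alpha(\psi v_1,\dots,\psi v_p)\in\Real$ is an ordinary scalar and $\beta(\psi w_1,\dots)\in L'$, linearity of $B^{-1}$ in its single $L'$-argument pulls the scalar out, giving $\alpha(\psi v_1,\dots)\cdot B^{-1}\bigl(\beta(\psi w_1,\dots)\bigr)=(\psi^*\alpha)(v_1,\dots)\cdot(\psi^{*B}\beta)(w_1,\dots)$, which is exactly the right side evaluated on the same arguments.

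The only genuine subtlety, and the step I expect to require the most care, is the bookkeeping of where the factor $B$ (and its inverse) acts: on the covariant side the line component is carried by $\beta$, so $B^{-1}$ must be applied precisely to the $L'$-valued slot while leaving the purely real $\alpha$-contribution untouched. This is where the asymmetry between the contravariant and covariant constructions noted in the text becomes operative, and it is the reason the identity is stated with the bare $\psi^*$ on the $\alpha$-factor but the decorated $\psi^{*B}$ on the $\beta$-factor. Once this slot-tracking is made explicit the verification is a short evaluation; I would emphasise that, as in the contravariant case, everything reduces to the ordinary pull-back of covariant tensors on $V$ together with the single application of the invertible factor $B^{-1}:L'\to L$, and so the module-morphism property follows essentially from the functoriality of ordinary covariant pull-back and the $\Real$-linearity of $B^{-1}$.
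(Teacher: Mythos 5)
Your proof is correct and follows essentially the same route as the paper's: pointwise evaluation using the explicit formula $\psi^{*B}\beta(v_1,\dots,v_q)=B^{-1}(\beta(\psi(v_1),\dots,\psi(v_q)))$, together with the $\Real$-linearity of $B^{-1}$ and the vector space structure of the line to pull the real-valued $\alpha$-factor out of the $L'$-valued slot. The paper states this tersely as a consequence of basic linear algebra, while you spell out the slot-tracking explicitly, but the underlying argument is identical.
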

\begin{proof}
This follows trivially from basic linear algebra arguments noting that the module product is simply constructed from the vector space structure of lines and the fact that pull-backs are explicitly given by:
\begin{equation*}
    \psi^{*B}\beta (v_1,\dots v_q):=B^{-1}(\beta(\psi(v_1),\dots \psi(v_q)), \qquad \psi^{*B}(l):=B^{-1}(l)
\end{equation*}
for all $\beta\in \mathcal{T}_q(V^L)$ and $l\in L'$.
\end{proof}

The module structures defined above admit natural symmetrisation and antisymmetrisation. In particular, we can identify the \textbf{exterior modules} of \textbf{lmultivectors} and \textbf{lforms} in the obvious way:
\begin{align*}
    \wedge^\bullet (V^L) & := (\wedge^\bullet (V))^L,\\
    \wedge_\bullet (V^L) & := (\wedge^\bullet (V^*)\otimes L)^L.
\end{align*}
It is then a simple check to see that these are finite-dimensional lvector spaces with module structures over the ordinary exterior algebras of multivectors $(\wedge^\bullet (V),+,\wedge)$ and forms $(\wedge^\bullet (V^*),+,\wedge)$ respectively. Furthermore, push-forwards and pull-backs restrict to module morphisms of lmultivectors and lforms respectively.\newline

In Section \ref{Conclusion} we briefly discuss the unit-free generalisations of conventional linear algebra that result when we don't take the L-rooted approach and consider tensor powers and duals of lines.

\subsection{The Category of Line Bundles} \label{LineBundles}

As it is customary in differential geometry, now that we have identified some interesting structures at the linear level it is time to smoothly ``smear'' them on manifolds and develop the corresponding bundle generalisation. This will result in the identification of the category of line bundles $\Line_\Man$ and the category of lvector bundles $\LVect_\Man$ whose objects will be fibrations with bases in the category of smooth manifold $\Man$ and fibres in the corresponding linear categories $\Line$ and $\LVect$, respectively. In the present section and in Section \ref{LVectorBundles} we shall give precise definitions for these categories and show some elementary constructions in them.\newline

We define the \textbf{category of line bundles} $\Line_\Man$ as the subcategory of $\Vect_\Man$ whose objects are rank $1$ vector bundles $\lambda: L \to M$ and whose morphisms are regular, i.e. fibre-wise invertible, bundle morphisms covering general smooth maps
\begin{equation*}
\begin{tikzcd}
L_1 \arrow[r, "B"] \arrow[d, "\lambda_1"'] & L_2 \arrow[d, "\lambda_2"] \\
M_1 \arrow[r, "\varphi"'] & M_2
\end{tikzcd}
\end{equation*}
In the interest of brevity, we may refer to line bundles $L\in \Line_\Man$ as \textbf{lines} and regular line bundle morphisms $B\in\Line_\Man(L_1,L_2)$ as \textbf{factors}. A factor covering a diffeomorphism, i.e. a line bundle isomorphism, is called a \textbf{diffeomorphic factor}. Similarly, a factor covering an embedding or submersion is called an \textbf{embedding factor} or \textbf{submersion factor}, respectively.\newline

The usual structural constructions on smooth manifolds -- submanifolds, quotients and products -- find a natural generalisation within the category of line bundles as illustrated by the next few propositions.

\begin{prop}[Submanifolds of Line Bundles]\label{SubManLineBundle}
Let an embedded manifold $i:S\hookrightarrow M$ and a line bundle $\lambda: L \to M$, then restriction to the submanifold gives a canonical embedding factor
\begin{equation*}
\begin{tikzcd}
L_S \arrow[r, "\iota"] \arrow[d] & L \arrow[d] \\
S \arrow[r,hook, "i"'] & M
\end{tikzcd}
\end{equation*}
where $L_S:=i^*L$.
\end{prop}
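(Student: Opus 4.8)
The plan is to construct the restricted line bundle $L_S := i^*L$ as the pullback bundle along the embedding $i:S\hookrightarrow M$, and then exhibit the canonical map $\iota:L_S\to L$ together with a verification that it is a genuine factor in the sense of $\Line_\Man$, namely a regular (fibre-wise invertible) line bundle morphism covering $i$. First I would recall the concrete description of the pullback: $i^*L = \{(s,\ell)\in S\times L \mid i(s)=\lambda(\ell)\}$, with bundle projection $(s,\ell)\mapsto s$, and define $\iota$ to be the restriction of the second projection, $\iota(s,\ell)=\ell$. By construction the square
\begin{equation*}
\begin{tikzcd}
L_S \arrow[r, "\iota"] \arrow[d] & L \arrow[d] \\
S \arrow[r,hook, "i"'] & M
\end{tikzcd}
\end{equation*}
commutes, since $\lambda(\iota(s,\ell))=\lambda(\ell)=i(s)$.

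Next I would check that $L_S$ is indeed an object of $\Line_\Man$, i.e. a rank $1$ vector bundle over $S$. This is the standard fact that the pullback of a vector bundle is a vector bundle of the same rank; over each point $s\in S$ the fibre $(L_S)_s$ is canonically identified with $L_{i(s)}$, which is a line, so $L_S\to S$ has rank $1$. Local triviality transfers from $L$ via the embedding: pulling back a local frame of $L$ over an open neighbourhood of $i(s)$ in $M$ yields a local frame of $L_S$ over the corresponding open set in $S$.

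The key point for membership in the category $\Line_\Man$ (as opposed to merely $\Vect_\Man$) is regularity of $\iota$. On fibres, $\iota$ acts as the canonical identification $(L_S)_s = L_{i(s)}\xrightarrow{\;\cong\;} L_{i(s)}$, which is the identity on $L_{i(s)}$ and hence a linear isomorphism; thus $\iota$ is fibre-wise invertible, making it an embedding factor covering the embedding $i$. I would also note that smoothness of $\iota$ is immediate as it is the restriction of a projection from a product manifold.

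The statement asserts more than existence — it asserts canonicity, so the mildest obstacle is making precise in what sense $\iota$ is \emph{canonical}: it depends only on the data $(i,L)$ and not on any choice of local trivialisation, which follows because the pullback construction and the projection $\iota$ are defined without reference to frames. I do not expect any serious difficulty here; the entire proposition is a transcription of the elementary pullback-bundle construction into the language of factors, and the only thing requiring genuine (if routine) verification is that fibre-wise invertibility holds, which is automatic from the fibre identification $(L_S)_s\cong L_{i(s)}$.
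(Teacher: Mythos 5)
Your proof is correct and follows the same route as the paper, which simply invokes the pull-back bundle construction; you have spelled out explicitly (fibre identification $(L_S)_s \cong L_{i(s)}$, commutativity of the square, fibre-wise invertibility) exactly what the paper leaves implicit as "trivial by construction."
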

\begin{proof}
This follows trivially by construction of pull-back bundle.
\end{proof}

Let $G$ be a Lie group and $\lambda:L\to M$ a line bundle, we say that \textbf{$G$ acts on $L$} and denote $G\Acts L$ when there is a smooth map $\Phi:G\times L\to L$ such that $\Phi_g:L\to L$ is a diffeomorphic factor for all $g\in G$ and the usual axioms of a group action are satisfied
\begin{equation*}
    \Phi_g\circ \Phi_h = \Phi_{gh}\qquad \Phi_{e}=\Id_L \qquad \forall g,h\in G.
\end{equation*}
We call the map $\Phi$ a \textbf{line bundle $G$-action}. It follows by construction that any such action induces a standard group action of $G$ on the base $\phi:G\times M\to M$. The \textbf{orbits} of a line bundle $G$-action $\Phi$ can be simply defined as the images of all group elements acting on a single fibre, and thus they are naturally regarded as the line bundle restricted to the orbits of the base action $\phi$. In analogy with the case of smooth actions, we denote the set of orbits by $L/G$. Any notion defined for usual group actions on smooth manifolds extends to a notion for line bundle actions simply requiring the base action to satisfy the corresponding conditions.

\begin{prop}[Group Quotient of Line Bundles]\label{QuotientLineBUndle}
Let a line bundle $\lambda: L \to M$ and a free and proper line bundle action $G\Acts L$, then there is a canonical submersion factor
\begin{equation*}
\begin{tikzcd}
L \arrow[r, "\zeta"] \arrow[d] & L/G \arrow[d] \\
M \arrow[r,two heads, "q"'] & M/G
\end{tikzcd}
\end{equation*}.
\end{prop}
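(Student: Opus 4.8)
The plan is to reduce everything to the classical quotient manifold theorem applied to the base, together with the fibre-wise linear and invertible nature of the action, which lets me descend the line bundle structure. By the convention stated just before the proposition, calling the line bundle action $\Phi$ free and proper means that the induced base action $\phi:G\times M\to M$ is free and proper; hence by the quotient manifold theorem $M/G$ is a smooth manifold, $q:M\to M/G$ is a surjective submersion, and $M$ acquires the structure of a principal $G$-bundle over $M/G$ admitting local sections. This principal bundle structure will be the engine for the whole argument.

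First I would promote the action to the total space $L$. Since each $\Phi_g$ is a diffeomorphic factor it is fibre-wise linear, so $\lambda\circ\Phi_g=\phi_g\circ\lambda$ and the zero section is preserved. Freeness of $\Phi$ then follows at once from freeness of $\phi$: if $\Phi_g(\ell)=\ell$ then $\phi_g(\lambda(\ell))=\lambda(\ell)$, forcing $g=e$. Properness is the one genuinely analytic point. Writing $\tilde\Theta(g,\ell)=(\Phi_g\ell,\ell)$ and $\Theta(g,m)=(\phi_g m,m)$, for compact $K\subset L\times L$ I would note that any $(g,\ell)\in\tilde\Theta^{-1}(K)$ has $\ell$ in the compact set $\mathrm{pr}_2(K)\subset L$ and $g$ in the compact set $\mathrm{pr}_G\!\big(\Theta^{-1}((\lambda\times\lambda)(K))\big)$, using properness of $\phi$; thus $\tilde\Theta^{-1}(K)$ is a closed subset of a compact set and hence compact. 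Therefore $\Phi$ is itself free and proper, $L/G$ is a smooth manifold, and $\zeta:L\to L/G$ is a surjective submersion.

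Next I would build the line bundle structure on $L/G$ over $M/G$. Equivariance of $\lambda$ makes it descend to a smooth map $\bar\lambda:L/G\to M/G$ with $\bar\lambda\circ\zeta=q\circ\lambda$. For the fibres, freeness together with fibre-wise linearity shows that the restriction $\zeta|_{L_m}:L_m\to\bar\lambda^{-1}([m])$ is a bijection, and I would transport the one-dimensional vector space structure of $L_m$ across it; this is independent of the chosen representative $m$ because $\Phi_g:L_m\to L_{\phi_g m}$ is a linear isomorphism satisfying $\zeta\circ\Phi_g=\zeta$. Local triviality I would obtain from a local section $s:U\to M$ of the principal bundle $q$: the slice $s(U)$ meets each base orbit over $U$ exactly once, so $\zeta$ restricts to a smooth bijection $L|_{s(U)}\to\bar\lambda^{-1}(U)$ that is a local diffeomorphism by dimension count and hence a diffeomorphism; composing with a local trivialization of the genuine line bundle $L|_{s(U)}$ trivializes $\bar\lambda$ over $U$, with the linear structures matching by construction.

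Assembling these facts, $\zeta$ is a smooth bundle morphism that is linear and invertible on each fibre by the fibre-wise transport above, covering the surjective submersion $q$, so it is a submersion factor in the sense of Section \ref{LineBundles}; it is canonical because $L/G$, $\bar\lambda$ and $\zeta$ were produced from the action data alone without auxiliary choices. The main obstacle I anticipate is precisely the local triviality step, establishing that the descended object is a genuine smooth rank-one vector bundle rather than merely a set-theoretic family of lines, together with the properness lift; both rest on exploiting the principal $G$-bundle structure of $q$ and the fibre-wise linearity of $\Phi$.
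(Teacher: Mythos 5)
Your proof is correct and rests on the same core idea as the paper's: quotient the base by the free and proper action $\phi$ and descend the fibres of $L$ using the canonical identifications $\Phi_g:L_x\to L_{\phi_g(x)}$ along each orbit. The difference is in how the smooth structure on $L/G$ is obtained. The paper's proof stops at the fibre identification: it asserts that identifying fibres over a common orbit induces the line bundle structure and that $\zeta$ is the canonical projection, leaving smoothness and local triviality implicit. You instead lift freeness and properness from $\phi$ to $\Phi$ (your properness argument, bounding $\tilde\Theta^{-1}(K)$ inside $\mathrm{pr}_G\bigl(\Theta^{-1}((\lambda\times\lambda)(K))\bigr)\times \mathrm{pr}_2(K)$, is exactly right), apply the quotient manifold theorem to $L$ itself so that $L/G$ is smooth and $\zeta$ is a surjective submersion for free, and then recover the rank-one vector bundle structure from local sections of the principal bundle $q$. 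This buys precisely what the paper omits: an honest smooth structure on the quotient and genuine local trivializations. One gloss worth tightening: a smooth bijection between equidimensional manifolds need not be a diffeomorphism, so ``local diffeomorphism by dimension count'' for $\zeta|_{L|_{s(U)}}$ requires injectivity of its differential. This holds because $T_\ell\bigl(L|_{s(U)}\bigr)\cap T_\ell(G\cdot\ell)=0$: pushing forward by $\Tan\lambda$, a vector in the intersection lands in $T_x s(U)\cap T_x(G\cdot x)=0$ (as $s$ is a section of $q$), hence is vertical, while $\Tan\lambda$ is injective on orbit tangents since the base orbit map is an immersion; so the vector vanishes, and equality of dimensions then gives the local diffeomorphism.
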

\begin{proof}
By assumption, the base action is free and proper, then $q:M\to M/G$ is simply the surjection to the smooth manifold of orbits. The line bundle structure on the set of orbits $L/G$ is induced by the fact that all pairs of fibres over the same orbit are mapped isomorphically by some group element, allowing to identify a single fibre on $M/G$ as an equivalence class of fibre on $M$ and $\zeta:L\to L/G$ being the canonical projection to the quotient.
\end{proof}

Let us now define the analogue of the Cartesian product of manifolds in the category $\Line_\Man$. Consider two line bundles $\lambda_i:L_i\to M_i$, $i=1,2$, we will use the notations $L_i$ and $L_{M_i}$ indistinctly. We begin by defining the set of all linear invertible maps between fibres:
\begin{equation*}
    M_1 \dtimes M_2:=\{B_{x_1x_2}:L_{x_1}\to L_{x_2}, \text{ factor }, (x_1,x_2)\in M_1\times M_2\},
\end{equation*}
we call this set the \textbf{base product} of the line bundles. Let us denote by $p_i:M_1\dtimes M_2 \to M_i$ the obvious projections.

\begin{prop}[Base Product of Line Bundles]\label{BaseProductLineBundles}
The base product $M_1\dtimes M_2$ is a smooth manifold, furthermore, the natural $\Real^\times$-action given by fibre-wise multiplication makes $M_1\dtimes M_2$ into a principal bundle
\begin{equation*}
    \begin{tikzcd}[column sep=0.1em]
        \Real^\times & \Acts & M_1\dtimes M_2  \arrow[d,"p_1\times p_2"]\\
         & & M_1\times M_2.
    \end{tikzcd}
\end{equation*}
\end{prop}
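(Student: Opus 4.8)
The plan is to realise $M_1 \dtimes M_2$ as the complement of the zero section of a single, naturally-defined line bundle over $M_1\times M_2$, after which both assertions reduce to the standard fact that such a complement is the frame bundle of a rank-$1$ bundle and hence carries a principal $\Real^\times$-structure. Writing $\Proj_i:M_1\times M_2\to M_i$ for the Cartesian projections, the pullbacks $\Proj_1^*L_1$ and $\Proj_2^*L_2$ are line bundles over $M_1\times M_2$, and I would set
\[
H:=\Hom{\Proj_1^*L_1,\Proj_2^*L_2}=(\Proj_1^*L_1)^*\otimes \Proj_2^*L_2 .
\]
This is a smooth rank-$1$ vector bundle, i.e. an object of $\Line_\Man$, whose fibre over $(x_1,x_2)$ is $\Hom{L_{x_1},L_{x_2}}=L_{x_1}^*\otimes L_{x_2}$.

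The key observation is then that, because $L_{x_1}$ and $L_{x_2}$ are $1$-dimensional, a linear map $L_{x_1}\to L_{x_2}$ is invertible precisely when it is non-zero. Hence the factors $B_{x_1x_2}$ constituting $M_1\dtimes M_2$ are exactly the non-zero elements of the fibres of $H$, and I obtain a canonical bijection
\[
M_1\dtimes M_2 \;\cong\; H\setminus 0_H ,
\]
the complement of the zero section, under which the projections $p_1,p_2$ factor through the bundle projection $\lambda_H$ and $p_1\times p_2$ corresponds to $\lambda_H$ itself. Since the total space of $H$ is a smooth manifold and $0_H$ is a closed embedded submanifold, $H\setminus 0_H$ is an open submanifold; declaring the bijection a diffeomorphism endows $M_1\dtimes M_2$ with its smooth structure and makes $p_1\times p_2$ a surjective submersion.

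For the principal bundle claim I would note that the fibrewise scalar multiplication is exactly the restriction to $H\setminus 0_H$ of the standard $\Real^\times$-action on $H$; it is free, since $c\cdot v=v$ with $v\neq 0$ forces $c=1$, and transitive on each fibre, each fibre being a punctured line. Any local trivialisation of $H$ over an open $U\subseteq M_1\times M_2$ restricts to an $\Real^\times$-equivariant diffeomorphism $(H\setminus 0_H)|_U\cong U\times\Real^\times$, which is precisely a principal bundle chart; equivalently, $H\setminus 0_H$ is the frame bundle of the line bundle $H$, and frame bundles of rank-$1$ bundles are principal $\Real^\times=\GL{1,\Real}$-bundles.

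The one point requiring genuine care — and the step I would treat as the main obstacle — is verifying that the smooth structure inherited from $H$ agrees with the intended one on the set-theoretically defined $M_1\dtimes M_2$, i.e. that a factor $B_{x_1x_2}$ depends smoothly on $(x_1,x_2)$ exactly when its representing element of $H$ does. I would settle this in local frames: choosing non-vanishing sections $\mu_1$ of $L_1$ near $x_1$ and $\mu_2$ of $L_2$ near $x_2$, a factor is recorded by the non-vanishing real number $f$ defined by $B(\mu_1)=f\,\mu_2$, and this $f$ is precisely the fibre coordinate of the corresponding element of $H$ in the induced frame $\mu_1^*\otimes\mu_2$. Thus the two smooth structures coincide, the $\Real^\times$-action is free and locally trivial, and all maps in the claimed principal bundle diagram are smooth.
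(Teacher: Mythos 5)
Your proof is correct, but it takes a genuinely different route from the paper's. You realise $M_1\dtimes M_2$ globally as the complement of the zero section of the hom line bundle $H:=\Hom{\Proj_1^*L_1,\Proj_2^*L_2}$ over $M_1\times M_2$ -- equivalently, as the frame bundle of a rank-$1$ bundle -- and then both the smooth structure and the principal $\Real^\times$-structure follow from standard vector bundle theory, with your local-frame computation confirming that the fibre coordinate $f$ determined by $B(\mu_1)=f\,\mu_2$ is exactly the fibre coordinate of $H$. The paper instead builds the atlas by hand: trivializations $L|_{U_i}\cong U_i\times\Real$ give charts $U_1\times U_2\times\Real^\times$ on $(p_1\times p_2)^{-1}(U_1\times U_2)$, and the cocycle condition for the transition functions of $L_1$ and $L_2$ supplies both the smoothness of chart changes and the local triviality of the action. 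The two arguments hinge on the same rank-$1$ fact (non-zero equals invertible), and your transition-function bookkeeping is simply absorbed into the statement that $H$ is a well-defined line bundle; what your approach buys is conceptual economy and a reusable global description of the base product as a familiar object, while the paper's approach is more self-contained and, notably, stays within the ``L-rooted'' philosophy of Section \ref{Lines}, which deliberately avoids duals and tensor products of lines -- precisely the operations your construction of $H$ uses. Mathematically this is harmless here, but it is worth being aware that your proof steps outside the formal framework the paper is trying to maintain.
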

\begin{proof}
This is shown by taking trivializations $L|_{U_i}\cong U_i\times \Real$, $i=1,2$ that give charts of the form $U_1\times U_2\times \Real^\times$ for the open neighbourhoods $(p_1\times p_2)^{-1}(U_1 \times U_2)\subset M_1\dtimes M_2$. The cocycle condition of the transition functions of the two line bundles gives the local triviality condition for the $\Real^\times$-action to define a principal bundle.
\end{proof}

The construction of the base product of two line bundles $M_1\dtimes M_2$ allows to identify a factor $B:L_1\to L_2$ covering a smooth map $\varphi:M_1\to M_2$ with a submanifold that we may regard as the line bundle analogue of a graph:
\begin{equation*}
    \LGrph{B}:=\{C_{x_1x_2}:L_{x_1}\to L_{x_2}|\quad x_2=\varphi(x_1), \quad C_{x_1x_2}=B_{x_1}\} \subset M_1\dtimes M_2,
\end{equation*}
we call this submanifold the \textbf{lgraph} of the factor $B$.\newline

We define the \textbf{line product} of the line bundles as $L_1\utimes L_2 :=p_1^*L_1$, which is a line bundle over the base product 
\begin{equation*}
    \lambda_{12}:L_1\utimes L_2\to M_1\dtimes M_2.
\end{equation*}

\begin{prop}[Product of Line Bundles]\label{ProductLineBundle}
The line product construction $\utimes$ is a well-defined categorical product
\begin{equation*}
    \normalfont\utimes:\Line_\Man \times \Line_\Man \to \Line_\Man.
\end{equation*}
\end{prop}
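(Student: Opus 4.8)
The plan is to exhibit the line product $L_1 \utimes L_2 = p_1^* L_1$ together with two projection factors and then verify the universal property of a categorical product directly on fibres, deferring all smoothness bookkeeping to the local trivializations furnished by Proposition \ref{BaseProductLineBundles}.

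First I would identify the two projection factors. The factor $P_1 : L_1 \utimes L_2 \to L_1$ covering $p_1 : M_1 \dtimes M_2 \to M_1$ is simply the canonical projection of the pull-back bundle $p_1^* L_1$, which is fibrewise the identity and hence regular. The factor $P_2 : L_1 \utimes L_2 \to L_2$ covering $p_2$ is tautological: over a point $B_{x_1 x_2} : L_{x_1} \to L_{x_2}$ of the base product the fibre of $L_1 \utimes L_2$ is $(L_1)_{x_1} = L_{x_1}$, and I define $P_2$ on this fibre by applying the factor $B_{x_1 x_2}$ itself, landing in $L_{x_2} = (L_2)_{x_2}$. Since each $B_{x_1 x_2}$ is invertible by the very definition of the base product, $P_2$ is fibrewise invertible, hence a factor.

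Second, I would check the universal property. Given a line bundle $\mu : L \to N$ with factors $F_i : L \to L_i$ covering $f_i : N \to M_i$, the fibre maps $(F_i)_n : L_n \to (L_i)_{f_i(n)}$ are invertible, so the composite $(F_2)_n \circ (F_1)_n^{-1} : (L_1)_{f_1(n)} \to (L_2)_{f_2(n)}$ is a factor, i.e. a point of $M_1 \dtimes M_2$ lying over $(f_1(n), f_2(n))$. This defines the base map $f : N \to M_1 \dtimes M_2$, $f(n) := (F_2)_n \circ (F_1)_n^{-1}$, which satisfies $p_i \circ f = f_i$. The mediating factor is then $F(v) := (f(\mu(v)), F_1(v)) \in p_1^* L_1$; a one-line fibrewise computation yields $P_1 \circ F = F_1$ and $P_2 \circ F = (F_2)_n \circ (F_1)_n^{-1} \circ (F_1)_n = F_2$. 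Uniqueness follows because $P_1 \circ F' = F_1$ forces the fibre map of any competitor $F'$ to equal $(F_1)_n$, whereupon $P_2 \circ F' = F_2$ forces its base map to equal $(F_2)_n \circ (F_1)_n^{-1}$, so that $F' = F$.

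The main obstacle is smoothness, since everything above is linear algebra performed pointwise. I expect to dispatch it in the charts $U_1 \times U_2 \times \Real^\times$ of Proposition \ref{BaseProductLineBundles}: in a simultaneous local trivialization of $L_1$ and $L_2$ each factor is encoded by a nonvanishing real number, so $n \mapsto (F_2)_n \circ (F_1)_n^{-1}$ becomes a quotient of nowhere-vanishing smooth functions and is therefore smooth, while the tautological action defining $P_2$ is multiplication by the $\Real^\times$-coordinate and hence smooth; smoothness of $F$ then follows from the universal property of the pull-back $p_1^* L_1$ applied to the smooth pair $(F_1, f \circ \mu)$. Finally, functoriality of $\utimes$ on morphisms is automatic once this object-level universal property is established.
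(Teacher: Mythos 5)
Your proposal is correct and follows essentially the same route as the paper's proof: the same two projection factors ($P_1$ the pull-back projection, $P_2$ the tautological evaluation by $B_{x_1x_2}$) and the same mediating morphism $l_x \mapsto \bigl(B_2|_x\circ B_1|_x^{-1},\, B_1(l_x)\bigr)$, which the paper calls the line product of factors $B_1\utimes B_2$. The differences are minor: you make explicit the uniqueness and smoothness checks that the paper leaves as ``readily checked,'' while the paper additionally records the canonical swap factor $C_{12}:L_1\utimes L_2\to L_2\utimes L_1$ to dispel the apparent asymmetry of the definition $L_1\utimes L_2:=p_1^*L_1$ --- a fact that, as your argument implicitly shows, also follows from the universal property alone.
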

\begin{proof}
Despite the apparent asymmetry of the definition, note that we can define the following factors
\begin{align*}
    P_1(B_{x_1x_2},l_{x_1})&:= l_1\in L_{x_1}\\
    P_2(B_{x_1x_2},l_{x_1})&:= B_{x_1x_2}(l_{x_1})\in L_{x_2}
\end{align*}
where $(B_{x_1x_2},l_{x_1})\in p_1^*L_1$, thus giving the following commutative diagram
\begin{equation}\label{LineProductCommutativeDiagram}
\begin{tikzcd}
L_1 \arrow[d, "\lambda_1"'] & L_1\utimes L_2 \arrow[l,"P_1"']\arrow[d, "\lambda_{12}"]\arrow[r,"P_2"] & L_2\arrow[d,"\lambda_2"] \\
M_1 & M_1 \dtimes M_2\arrow[l,"p_1"]\arrow[r,"p_2"'] & M_2
\end{tikzcd}
\end{equation}
where $P_1$ and $P_2$ are submersion factors. It is clear from this definition that taking the pull-back bundle $p_2^*L_2$ instead of $p_1^*L_1$ as our definition for line product will give the line bundle that we have denoted by $L_2\utimes L_1$. By construction of the base product $M_1\dtimes M_2$ we can construct the following smooth map:
\begin{align*}
c_{12}: M_1\dtimes M_2 & \to M_2\dtimes M_1\\
B_{x_1x_2} & \mapsto B_{x_1x_2}^{-1}
\end{align*}
which is clearly invertible, $c_{12}^{-1}=c_{21}$, and induces a factor on the line products by setting
\begin{equation*}
    C_{12}(B_{x_1x_2},l_{x_1})=(B_{x_1x_2}^{-1},B_{x_1x_2}(l_{x_1})).
\end{equation*}
We have thus found a canonical factor covering a diffeomorphism
\begin{equation*}
\begin{tikzcd}
L_1\utimes L_2 \arrow[r,"C_{12}"]\arrow[d, "\lambda_{12}"] & L_2\utimes L_1\arrow[d, "\lambda_{21}"] \\
M_1 \dtimes M_2\arrow[r,"c_{12}"'] & M_2 \dtimes M_1
\end{tikzcd}
\end{equation*}
hence proving that the definition of line product is indeed symmetrical, i.e the two products are canonically isomorphic as line bundles $L_1\utimes L_2\cong L_2\utimes L_1$. For $\utimes$ to be a categorical product in $\Line_\Man$ first we need the two canonical projection morphisms, these are given by the factors $P_1$ and $P_2$ defined above
\begin{equation*}
\begin{tikzcd}
L_1 & L_1\utimes L_2 \arrow[l,"P_1"']\arrow[r,"P_2"] & L_2.
\end{tikzcd}
\end{equation*}
Now we must check the universal property that, given two morphisms $B_i:L\to L_i$, $i=1,2$, we get a unique morphism $B:L\to L_1 \utimes L_2$ such that the following diagram commutes
\begin{equation*}
\begin{tikzcd}
 & L\arrow[dl,"B_1"'] \arrow[dr,"B_2"]\arrow[d,"B"] & \\
L_1 & L_1\utimes L_2 \arrow[l,"P_1"]\arrow[r,"P_2"'] & L_2.
\end{tikzcd}
\end{equation*}
We can indeed define the \textbf{line product of factors} as
\begin{align*}
B_1\utimes B_2: L & \to L_1\utimes L_2\\
l_x & \mapsto (B_2|_x\circ B_1|_{x}^{-1},B_1(l_x))
\end{align*}
noting that, by assumption, $B_2|_x\circ B_1|_{x}^{-1}:L_{\varphi_1(x)}\to L_{\varphi_2(x)}\in M_1\dtimes M_2$ since factors are fibre-wise invertible. We readily check that setting $B=B_1\utimes B_2$ makes the above diagram commutative, thus completing the proof.
\end{proof}

If we consider a single line bundle $\lambda:L\to M$ the construction of the base product $M\dtimes M$ coincides with the usual definition of the \textbf{general linear groupoid} $p_1,p_2:\text{GL}(L)\to M$ with the projections $p_1$ and $p_2$ acting as the source and target maps respectively. This is clearly a transitive groupoid, with single orbit $M$, and the isotropies are $\text{GL}(L_x)$ which form a bundle of Lie groups isomorphic to the frame bundle $L^\times:= L^*\backslash \{0\}$. The set of diffeomorphic factors from $L$ to itself is called the \textbf{group of automorphisms} $\text{Aut}(L)$ and it is a simple check to verify that they correspond to the bisections of the general linear groupoid $\text{GL}(L)$. These objects represent the intrinsic symmetries of line bundles; indeed, the unit-free analogue of how the groups of diffeomorphisms capture the intrinsic symmetries of ordinary manifolds.\newline

Having shown that the category of line bundles is a direct generalisation of the category of smooth manifolds in terms of geometric constructions, we now move on to discuss how sections of line bundles represent a unit-free generalisation of the functions on a manifold. The facts discussed in the remainder of this section should convey the notion that modules of sections are the algebraic counterpart of line bundles in the same way that commutative algebras (rings) of functions are the algebraic counterpart of manifolds.\newline

Let a $B:L_1\to L_1$ be a factor covering the smooth map $\varphi:M_1\to M_2$, we can define the \textbf{factor pull-back} on sections as:
\begin{align*}
B^*: \Sec{L_2} & \to \Sec{L_1}\\
s_2 & \mapsto B^*s_2
\end{align*}
where
\begin{equation*}
    B^*s_2(x_1):=B^{-1}_{x_1}s_2(\varphi(x_1))
\end{equation*}
for all $x_1\in M_1$. Note that the fibre-wise invertibility of $B$ has critically been used for this to be a well-defined map of sections. The $\Real$-Linearity of $B$ implies the following interaction of factor pull-backs with the module structures:
\begin{equation*}
    B^*(f\cdot s)=b^*f\cdot B^*s \quad \forall f\in\Cin{M_2}, s\in \Sec{L_{M_2}}.
\end{equation*}
Then, if we consider $\textsf{RMod}$, the category of modules over rings with module morphisms covering ring morphisms, the assignment of sections becomes a contravariant functor
\begin{equation*}
    \Gamma:\Line_\Man \to \textsf{RMod}.
\end{equation*}
This is in contrast with the similar situation for sections of general vector bundles, since pull-backs of sections are not always defined for non-regular vector bundle morphisms.\newline

We may give an algebraic characterization of submanifolds of the base space of a line bundle as follows: for an embedded submanifold $i:S\hookrightarrow M$ we define its \textbf{vanishing submodule} as
\begin{equation*}
    \Gamma_S:=\Ker{\iota^*}=\{s\in\Sec{L_M}|\quad s(x)=0\in L_x \quad \forall x\in S\}.
\end{equation*}
This is the line bundle analogue to the characterization of submanifolds with their vanishing ideals. In fact, the two notions are closely related since (depending on the embedding $i$, perhaps only locally) we have
\begin{equation*}
    \Gamma_S=I_S\cdot \Sec{L_M},
\end{equation*}
where $I_S:=\Ker{i^*}$ is the multiplicative ideal of functions vanishing on the submanifold. This ideal gives a natural isomorphism of rings $\Cin{S}\cong \Cin{M}/I_S$ which, in turn, gives the natural isomorphism of $\Cin{S}$-modules
\begin{equation*}
    \Sec{L_S}\cong \Sec{L}/\Gamma_S.
\end{equation*}

In the case of a free and proper action $G\Acts L$, we can identify the sections of the quotient line bundle with the submodule of \textbf{$G$-invariant sections}:
\begin{equation*}
    \Sec{L/G}\cong \Sec{L}^G:=\{s\in\Sec{L}| \quad \Phi_g^*s=s \quad \forall g\in G\}.
\end{equation*}

The construction of the line product as a pull-back bundle over the base product indicates that the sections $\Sec{L_1\utimes L_2}$ are spanned by the sections of each factor, $\Sec{L_1}$, $\Sec{L_2}$, and the functions on the base product. More precisely, we have the isomorphisms of $\Cin{M_1\dtimes M_2}$-modules:
\begin{equation*}
    \Cin{M_1\dtimes M_2}\cdot P_1^*\Sec{L_1}\cong \Sec{L_1 \utimes L_2} \cong \Cin{M_1\dtimes M_2}\cdot P_2^*\Sec{L_2},
\end{equation*}
where the second isomorphism is induced by the canonical factor $C_{12}$. We can summarize this by writing the image of the line product commutative diagram (\ref{LineProductCommutativeDiagram}) under the section functor
\begin{equation*}
\begin{tikzcd}[row sep=tiny]
\Sec{L_1} \arrow[r,"P_1^*"] & \Sec{L_1\utimes L_2}  & \Sec{L_2}\arrow[l,"P_2^*"'] \\
\bullet & \bullet & \bullet \\
\Cin{M_1}\arrow[r,hook,"p_1^*"'] & \Cin{M_1 \dtimes M_2} & \Cin{M_2}\arrow[l,hook',"p_2^*"]
\end{tikzcd}
\end{equation*}
where $\bullet$ denotes ring-module structure. We clearly see that pull-backs of functions on the Cartesian product of base manifolds $M_1\times M_2$ form a subring of $\Cin{M_1\dtimes M_2}$ but a quick computation introducing trivializations shows that differentials of these do not span the cotangent bundle of the base product everywhere. However, we shall show below that it is possible to identify a subspace of spanning functions on $M_1 \dtimes M_2$ defined from local sections of $L_1$ and $L_2$. Consider two trivializations $L|_{U_i}\cong U_i\times \Real$, $i=1,2$ so that the spaces of non-vanishing sections over the trivializing neighbourhoods, denoted by $\Sec{L^\bullet_i}$, are non-empty. Recall that the ratio maps introduced in Section \ref{Lines} allowed us to define functions over the sets of invertible maps between lines. Working fibre-wise, we can use these together with the choice of two local sections to define the \textbf{ratio functions} on $M_1\dtimes M_2$ as follows: let the local sections $s_1\in\Sec{L_1}$ and $s_2\in\Sec{L_2^\bullet}$, then the function $\tfrac{s_1}{s_2}\in \Cin{M_1\dtimes M_2}$ is defined by
\begin{equation*}
    \frac{s_1}{s_2}(B_{x_1x_2}):=\frac{B_{x_1x_2}(s_1(x_1))}{s_2(x_2)}
\end{equation*}
where we have made use of the map $l^{12}$ in (\ref{ratio2}) in fraction notation. Similarly, using the map $r^{12}$ in (\ref{ratio2}) we can define $\tfrac{s_2}{s_1}\in \Cin{M_1\dtimes M_2}$ for some local sections $s_1\in\Sec{L_1^\bullet}$ and $s_2\in\Sec{L_2}$. If we consider two non-vanishing local sections $c_1\in\Sec{L_1^\bullet}$ and $c_2\in\Sec{L_2^\bullet}$, it is a direct consequence of (\ref{ratio1}) that the relation
\begin{equation*}
    \frac{c_1}{c_2}\cdot \frac{c_2}{c_1}=1.
\end{equation*}
holds on the open set $(p_1\times p_2)^{-1}(U_1 \times U_2)$ where $U_i\subset M_i$ are the neighbourhoods where the local sections are defined.
\begin{prop}[Spanning Functions of $M_1\dtimes M_2$]\label{SpanningFunctionsLineProduct}
Let $B_{x_1x_2}\in M_1\dtimes M_2$ any point in the base product of two line bundles $L_{M_1}$, $L_{M_2}$, consider trivializing neighbourhoods $x_i\in U_i\subset M_1$ so that $\Sec{L_i^\bullet}$ are non-empty; then the cotangent bundle in a neighbourhood of $B_{x_1x_2}$ is spanned by the differentials of all possible ratio functions, i.e.
\begin{equation*}
\normalfont
    \Cot(M_1\dtimes M_2)=\text{span} \left( d\frac{\Sec{L_1}}{\Sec{L_2^\bullet}} \right)
\end{equation*}
at every point of the open set $(p_1\times p_2)^{-1}(U_1 \times U_2)$.
\end{prop}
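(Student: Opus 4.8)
The plan is to reduce the statement to an explicit computation in a principal-bundle chart and then exhibit enough ratio functions whose differentials manifestly span. First I would invoke Proposition \ref{BaseProductLineBundles}: choosing trivialisations $L_i|_{U_i}\cong U_i\times\Real$ produces a chart $U_1\times U_2\times \Real^\times$ on the open set $(p_1\times p_2)^{-1}(U_1\times U_2)$, with coordinates $(x_1,x_2,t)$ where $x_i$ are coordinates on $U_i$ and $t\in\Real^\times$ records the factor $B_{x_1x_2}$ as multiplication by $t$ in the two trivialisations. In these trivialisations a section $s_1\in\Sec{L_1}$ is a smooth function $f_1$ on $U_1$, while a non-vanishing section $s_2\in\Sec{L_2^\bullet}$ is a nowhere-zero smooth function $f_2$ on $U_2$; unwinding the definition of the ratio function through the map $l^{12}$ of (\ref{ratio2}) gives the coordinate expression
\[
\frac{s_1}{s_2}(x_1,x_2,t)=\frac{t\,f_1(x_1)}{f_2(x_2)}.
\]

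Next I would simply differentiate. Writing $g=t f_1/f_2$, the chain rule yields
\[
dg=\frac{t}{f_2}\,df_1-\frac{t f_1}{f_2^{2}}\,df_2+\frac{f_1}{f_2}\,dt,
\]
where $df_1$ and $df_2$ are differentials of functions on $U_1$ and $U_2$ respectively. (This also records the fact that every ratio function is homogeneous of degree one along the principal $\Real^\times$-direction.) It then remains to choose sections so that such differentials recover a full coframe $\{dt,\,dx_1^i,\,dx_2^j\}$ at an arbitrary point $(x_1,x_2,t)$ of the chart.

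Then I would make three families of choices. Taking $f_1\equiv 1$ and $f_2\equiv 1$ gives $g=t$ and hence $dg=dt$. For each coordinate index $i$, taking $f_2\equiv 1$ and $f_1=x_1^i$ gives $dg=t\,dx_1^i+x_1^i\,dt$; since $dt$ is already available and $t\neq 0$, this produces $dx_1^i$. For each index $j$ I would take $f_1\equiv 1$ and choose a non-vanishing $f_2$ on $U_2$ with $\partial f_2/\partial x_2^j\neq 0$ at the point (for instance $f_2=1+(x_2^j-x_2^{j,0})$ near the point), which yields a differential of the form $-\tfrac{t}{f_2^{2}}\,\partial_j f_2\,dx_2^j+\tfrac{1}{f_2}\,dt$ and, again using $dt$ and $t\neq 0$, recovers $dx_2^j$. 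These span the cotangent space at the chosen point, and since the point was arbitrary this proves the claim.

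The main point to watch is the asymmetry forced by the L-rooted definition: the denominator section $s_2$ must be non-vanishing, so the $dx_2^j$ directions can only be reached by differentiating $1/f_2$, never $f_2$ itself. The content of the argument is that this is never an obstruction — one can always choose a nowhere-zero local function with prescribed nonzero partial derivative — and, crucially, that the factor coordinate satisfies $t\neq 0$ because factors are fibre-wise invertible. It is precisely this invertibility, the defining feature of $\Line_\Man$, that lets us solve for the $dx_1^i$ and $dx_2^j$ after first extracting $dt$, and that explains why, as remarked just before the proposition, the differentials of mere pull-backs of functions from $M_1\times M_2$ fail to span whereas those of the ratio functions do.
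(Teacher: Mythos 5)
Your proposal is correct and follows essentially the same route as the paper's own proof: trivialise via Proposition \ref{BaseProductLineBundles}, write the ratio function in coordinates as $t f_1(x_1)/f_2(x_2)$, and differentiate to get exactly the expression the paper obtains. The only difference is that you make the final spanning step explicit with three concrete families of sections (recovering $dt$, then $dx_1^i$, then $dx_2^j$ using $t\neq 0$), where the paper merely asserts that the differentials span because $f_1$ is allowed to vanish; your version is a welcome completion of that last step, not a different argument.
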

\begin{proof}
First note that it is a general fact from basic vector bundle geometry that such open neighbourhoods $U_i$ can be chosen for any points $x_i\in M_i$. The trivializations induce the following diffeomorphism
\begin{equation*}
    (p_1\times p_2)^{-1}(U_1 \times U_2) \cong U_1\times U_2\times \Real^\times
\end{equation*}
and thus we can assign coordinates $(y_1,y_2,b)$, where $y_i\in\Real^{\text{dim}M_i}$ and $b>0$, to any element $B_{x_1x_2}\in (p_1\times p_2)^{-1}(U_1 \times U_2)$. Local sections $s_i\in\Sec{L_i}$ are given by  functions $f_i:U_i\to \Real$ under the trivialization. It follows directly from the definition that ratio functions have the following coordinate expressions
\begin{equation*}
    \frac{s_1}{s_2}(y_1,y_2,b)=f_1(y_1) f_2^{-1}(y_2)b
\end{equation*}
where $f_1$, $f_2$ are the trivialized expressions for the sections $s_1\in\Sec{L_1}$, $s_2\in\Sec{L_2^\bullet}$. A simple computation shows that
\begin{equation*}
    d_{(y_1,y_2,b)}\frac{s_1}{s_2}=f^{-1}_2b\cdot d_{y_1}f_1-f_1f_2^{-2}b\cdot d_{y_2}f_2+f_1f_2^{-1}\cdot d_bb
\end{equation*}
Since $f_1(y_1)$ is allowed to vanish in general, these differentials span all the cotangent vectors at any coordinate point $(y_1,y_2,b)$.
\end{proof}

\subsection{The Category of LVector Bundles} \label{LVectorBundles}

Following the idea of smoothly ``smearing'' lvector spaces as defined in Section \ref{Lines} over a manifold, we define a \textbf{lvector bundle} $\epsilon:E^L\to M$ as a pair $(E,L)$ with $E$ a vector bundle and $L$ a line bundle over the same base $M$. The fibres are indeed identified with objects in the category of lvector spaces $E_x^{L_x}\in\LVect$ for all $x\in M$. A \textbf{morphism of lvector bundles} $F^B:E_1^{L_1}\to E_2^{L_2}$ is defined as a pair $(F,B)$ with $F:E_1\to E_2$ a vector bundle morphism and $B:L_1\to L_2$ a factor of lines. Clearly, the fibre-wise components of a morphism of lvector bundles are linear factors. We have thus identified the \textbf{the category of lvector bundles}, which will be denoted by $\LVect_\Man$.\newline

The L-rooted approach discussed in section \ref{Lines} is naturally extended to the context bundles by considering \textbf{restricted subcategories} $\LVect^L$ whose objects are lvector bundles with a fixed line bundle (hence also a base manifold) and morphisms are taken to be covering the identity. Under these restrictions, we can simply apply the constructions of the linear theory of lvector spaces of Section \ref{Lines} fibre-wise and define \textbf{subbundles}, \textbf{quotients}, \textbf{direct sums}, \textbf{ldual bundles} and \textbf{ltensor bundles} in a natural way. Explicitly, for lvector bundles $\epsilon:E^L\to M$ and $\delta:D^L\to M$ we have
\begin{align*}
    E^L\oplus_L D^L & := (E\oplus_M D)^L,\\
    E^{*L} & := (E^*\otimes_M L)^L,\\
    \mathcal{T}^k(E^L) & := (E\otimes_M \stackrel{k}{\dots} \otimes_M E)^L,\\
    \mathcal{T}_k(E^L) & := (E^* \otimes_M \stackrel{k}{\dots} \otimes_M E^* \otimes_M L)^L.
\end{align*}

Beyond the restricted categories, we can exploit the structure present in manifolds and line bundles to give two further constructions of lvector bundles. Firstly, let $\epsilon:E^L\to M$ be a lvector bundle and $\phi:N\to M$ a smooth map, the \textbf{pull-back lvector bundle} is then defined in the obvious way by taking the simultaneous pull-back of both the vector bundle and the line bundle over the same base $\phi^*(E^L):=(\phi^*E)^{(\phi^*L)}$. Secondly, the presence of the categorical product of line bundles (\ref{ProductLineBundle}) ensures that we can define a compatible categorical product for lvector bundles.

\begin{prop}[Product of LVector Bundles] \label{ProductLVectorBundles}
Let $\epsilon_i: E_i^{L_i}\to M_i$, $i=1,2$, be two lvector bundles, then the \textbf{product of lvector bundles} defined by means of the line product as
\begin{equation*}
    E_1^{L_1}\boxplus E_2^{L_2}:=(p_1^*E_1\oplus p_2^*E_2)^{L_1\utimes L_2},
\end{equation*}
is a well-defined a lvector bundle over the base product $M_1\dtimes M_2$. This construction makes 
\begin{equation*}
    \normalfont\boxplus: \LVect \times \LVect \to \LVect
\end{equation*}
into a categorical product.
\end{prop}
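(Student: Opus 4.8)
The plan is to handle well-definedness quickly and then reduce the universal property of $\boxplus$ to the one already established for the line product in Proposition~\ref{ProductLineBundle}, carrying the vector component along by pull-back. First I would verify that $E_1^{L_1}\boxplus E_2^{L_2}$ is genuinely an object of $\LVect_\Man$. Its line component $L_1\utimes L_2=p_1^*L_1$ is a line bundle over the base product $M_1\dtimes M_2$ by Proposition~\ref{ProductLineBundle}, while its vector component $p_1^*E_1\oplus p_2^*E_2$ is the Whitney sum of the two pull-back bundles $p_i^*E_i$ along the projections $p_i:M_1\dtimes M_2\to M_i$, hence an ordinary vector bundle over the same base. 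Since an lvector bundle is by definition just a pair consisting of a vector bundle and a line bundle over a common base, nothing further is needed here. For the two projection morphisms I would take the submersion factors $P_i:L_1\utimes L_2\to L_i$ from the proof of Proposition~\ref{ProductLineBundle} on the line component, and on the vector component compose the canonical Whitney-sum projection $p_1^*E_1\oplus p_2^*E_2\to p_i^*E_i$ with the tautological bundle map $p_i^*E_i\to E_i$ covering $p_i$; pairing these gives the candidate projections $\Pi_i^{P_i}:E_1^{L_1}\boxplus E_2^{L_2}\to E_i^{L_i}$, each covering $p_i$ on the base.

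The heart of the argument is the universal property. Given an lvector bundle $E^L$ over a base $N$ together with morphisms $F_i^{B_i}:E^L\to E_i^{L_i}$, where $F_i$ covers $\varphi_i:N\to M_i$, I would first apply Proposition~\ref{ProductLineBundle} to the line factors $B_i:L\to L_i$ to obtain the unique factor $B:=B_1\utimes B_2:L\to L_1\utimes L_2$ satisfying $P_i\circ B=B_i$. This $B$ covers a base map $\psi:N\to M_1\dtimes M_2$, explicitly $\psi(x)=B_2|_x\circ B_1|_{x}^{-1}$, and by construction $p_i\circ\psi=\varphi_i$. This compatibility is exactly what lets the vector data fall into place: since $F_i:E\to E_i$ covers $\varphi_i=p_i\circ\psi$, the universal property of the pull-back bundle yields a unique bundle map $\tilde F_i:E\to p_i^*E_i$ covering $\psi$, and the universal property of the Whitney sum assembles the pair into a unique $F:=\tilde F_1\oplus\tilde F_2:E\to p_1^*E_1\oplus p_2^*E_2$ covering $\psi$. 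Setting $F^B:=(F,B)$, I would then check $\Pi_i^{P_i}\circ F^B=F_i^{B_i}$ componentwise (the line part from Proposition~\ref{ProductLineBundle}, the vector part from the very factorizations defining $\tilde F_i$), and read off uniqueness from the uniqueness of $B$, of the induced $\psi$, and of the pull-back and Whitney-sum factorizations. Functoriality of $\boxplus$, together with the symmetry and associativity constraints, I would then obtain formally from the standard fact that a binary categorical product is determined up to canonical isomorphism by its universal property.

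The step I expect to require genuine care, rather than routine bookkeeping, is the interlocking of the two components through a single base map. The subtlety is that the base $M_1\dtimes M_2$ over which the product lives is dictated \emph{entirely} by the line bundles, so the base map $\psi$ of the comparison morphism must come from the line product $B_1\utimes B_2$ and not from the vector data; one must then check that the vector morphisms $F_i$, which a priori only see $\varphi_i$, are compatible with this $\psi$ through $p_i\circ\psi=\varphi_i$, so that they descend to maps into the pull-backs $p_i^*E_i$ over $M_1\dtimes M_2$. Making this compatibility explicit, and confirming that it is forced by the definition of the base product rather than being an extra hypothesis, is the one place where the argument genuinely uses the specific geometry of $\dtimes$ and $\utimes$ instead of generic abstract nonsense.
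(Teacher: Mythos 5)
Your proof is correct and follows essentially the same route as the paper: both reduce everything to Proposition~\ref{ProductLineBundle}, take the base map of the comparison morphism from the line product of factors $B_1\utimes B_2$, and assemble the vector component over that base, your pull-back/Whitney-sum factorizations being exactly the universal-property packaging of the paper's explicit point-wise formula for $F_1^{B_1}\boxplus F_2^{B_2}$. The only difference is completeness: the paper merely exhibits the product morphism and leaves the commutation and uniqueness checks implicit, whereas you spell them out — a gain in rigour, not a different argument.
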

\begin{proof}
To see that $E_1^{L_1}\boxplus E_2^{L_2}$ is well-defined we first note that the commutative diagram (\ref{LineProductCommutativeDiagram}) allows for lvector bundle pull-backs $p_i^*E_i^{L_i}\to M_1\dtimes M_2$ which, by construction of the line product $L_1 \utimes L_2$, share the same line bundle component. It is then possible to take their direct sum in the restricted category $\LVect^{L_1 \utimes L_2}$. In order to show that $\boxplus$ is a categorical product it will suffice to construct the product of two morphisms of lvector bundles $ F_i^{B_i}: E^L \to E_i^{L_i}$, $i=1,2$. The line product of factors $B_1\utimes B_2$ allows for an explicit point-wise definition:
\begin{equation*}
    F_1^{B_1}\boxplus F_2^{B_2}(a^l):=(B_2|_{p_2(x)}\circ B_1|_{p_1(x)}^{-1},F_1|_{p_1(x)}(\Proj_1(a))\oplus F_2|_{p_2(x)}(\Proj_2(a)))^{B_1\utimes B_2(l)},
\end{equation*}
for a generic element of the  element $a^l\in (E_1^{L_1}\boxplus E_2^{L_2})_x $.
\end{proof}

Sections of a lvector bundle $\epsilon: E^L:\to M$ are considered formally as $\Sec{E^L}:=\Sec{E}^{\Sec{L}}$, which we interpret as the $\Cin{M}$-module generalisation of lvector spaces, i.e. pairs of modules over the same ring with one of them serving as base module and the other as the line module together with the axioms of lvector space. We call these \textbf{lmodules}. The following lmodule isomorphisms are direct consequences of the fibre-wise constructions on the restricted subcategories above:
\begin{align*}
    \Sec{E^L\oplus_L D^L} & \cong \Sec{E^L}\oplus_{\Sec{L}}\Sec{D^L},\\
    \Sec{E^{*L}} & \cong \Sec{E}^{*\Sec{L}},\\
    \Sec{\mathcal{T}^k(E^L)} & \cong (\Sec{\mathcal{T}^k(E)})^{\Sec{L}},\\
    \Sec{\mathcal{T}_k(E^L)} & \cong (\Sec{\mathcal{T}_k(E)} \otimes_{\Cin{M}} \Sec{L})^{\Sec{L}}.
\end{align*}
The linear module structures of ltensors of Propositions \ref{ContraLTensor} and \ref{CoLTensor} make $\Sec{\mathcal{T}^\bullet(E^L)}$ into a $(\Sec{\mathcal{T}^\bullet(E)},\otimes)$-module and $\Sec{\mathcal{T}_\bullet(E^L)}$ into a $(\Sec{\mathcal{T}_\bullet(E)},\otimes)$-module respectively.\newline

The same difficulties one encounters when discussing maps induced on sections by ordinary vector bundle morphisms appear in the context of lvector bundles. Consider a lvector bundle morphism $F^B:E_1^{L_1}\to E_2^{L_2}$ covering a smooth map $\varphi:M_1\to M_2$, in general we can define the notion of \textbf{$F^B$-relatedness} as follows: we say that to sections $s_1^{a_1}\in \Sec{E_1^{L_1}}$ and $s_2^{a_2}\in \Sec{E_2^{L_2}}$ are $F^B$-related when
\begin{equation*}
    s_1^{a_1} \sim_F^B s_2^{a_2} \Leftrightarrow s_1 \sim_F s_2 \quad \text{and} \quad a_1 \sim_B a_2
\end{equation*}
where
\begin{equation*}
    s_1 \sim_F s_2 \Leftrightarrow F\circ s_1 =s_2 \circ \varphi, \qquad a_1 \sim_B a_2 \Leftrightarrow a_1=B^*a_2.
\end{equation*}
This relation only gives a well-defined morphism of lmodules when the map $\varphi:M_1\to M_2$ is a diffeomorphism, in this case the \textbf{push-forward} $F^B_*:\Sec{E_1^{L_1}}\to \Sec{E_2^{L_2}}$ is explicitly defined 
\begin{equation*}
    F^B_*(s^a):=(F_*s)^{B_*a}=(F_*s)^{(B^{-1})^*a}.
\end{equation*}
As in the case of ordinary vector bundles, we may induce well-defined maps on sections from general lvector bundle morphisms by taking lduals. Indeed, we define the \textbf{pull-back} of a lvector bundle morphism $F^B:E_1^{L_1}\to E_2^{L_2}$ covering a smooth map $\varphi:M_1 \to M_2$ as follows:
\begin{align*}
    F^{*B} :& \Sec{E_2^{*L_2}}\to \Sec{E_1^{*L_1}}\\
    &\beta^b\to \alpha^a
\end{align*}
where
\begin{equation*}
    a=B^*b, \qquad \alpha(x)=B_x^{-1}\circ F_x^*\beta(\varphi(x))
\end{equation*}
for all $x\in M_1$ and where the second equality is as maps of the form $E_1|_x\to L_1|_x$.\newline

\begin{prop}[Induced LModule Maps] \label{InducedLModuleMaps}
A lvector bundle morphism $F^B:E_1^{L_1}\to E_2^{L_2}$ induces a morphism of lmodules of covariant ltensors via its pull-back
\begin{equation*}
    F^{*B}:\Sec{\mathcal{T}_\bullet(E_2^{L_2})}\to \Sec{\mathcal{T}_\bullet(E_1^{L_1})},
\end{equation*}
i.e.
\begin{equation*}
    F^{*B}(\alpha +\beta)=F^{*B}\alpha + F^{*B}\beta, \qquad F^{*B}(\omega\cdot \alpha)=F^*\omega \cdot F^{*B}\alpha
\end{equation*}
for all $\alpha,\beta\in \Sec{\mathcal{T}_\bullet(E_2^{L_2})}$ and $\omega\in \Sec{\mathcal{T}_\bullet(E_2)}$.
\end{prop}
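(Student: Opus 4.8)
The plan is to define $F^{*B}$ on covariant ltensors of arbitrary degree fibre-wise, by applying at each base point the linear pull-back of Proposition \ref{CoLTensor}, and then to deduce the two claimed identities from their linear counterparts together with the fact that the module product of covariant ltensors is itself defined fibre-wise. Concretely, for $\alpha\in\Sec{\mathcal{T}_k(E_2^{L_2})}$ I set
\begin{equation*}
    (F^{*B}\alpha)(x)(v_1,\dots,v_k):=B_x^{-1}\big(\alpha(\varphi(x))(F_x v_1,\dots,F_x v_k)\big)\in L_1|_x,
\end{equation*}
for $v_1,\dots,v_k\in E_1|_x$, which at degree $1$ reduces to the formula already given for the ldual bundle and at degree $0$ to $F^{*B}(l)=B^{-1}(l)$. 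First I would check that this is a well-defined smooth section: each fibre value is the linear pull-back $F_x^{*B_x}$ of the linear factor $F_x^{B_x}:E_1|_x^{L_1|_x}\to E_2|_{\varphi(x)}^{L_2|_{\varphi(x)}}$, so it is a genuine element of $\mathcal{T}_k(E_1|_x^{L_1|_x})$, and smoothness follows from smoothness of $F$, $\varphi$, $\alpha$ and the invertibility (hence smooth invertibility) of $B$.

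Additivity is then immediate: since the sum of sections is computed pointwise and each linear pull-back $F_x^{*B_x}$ is additive by Proposition \ref{CoLTensor}, we obtain $F^{*B}(\alpha+\beta)=F^{*B}\alpha+F^{*B}\beta$ directly.

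For the module identity, fix $\omega\in\Sec{\mathcal{T}_p(E_2)}$ and $\alpha\in\Sec{\mathcal{T}_q(E_2^{L_2})}$ and evaluate $F^{*B}(\omega\cdot\alpha)$ at a point $x$ on vectors $v_1,\dots,v_{p+q}\in E_1|_x$. Unwinding the fibre-wise definition of the covariant ltensor product and then using that $B_x^{-1}$ is $\Real$-linear, the real scalar produced by $\omega$ factors through $B_x^{-1}$:
\begin{align*}
    (F^{*B}(\omega\cdot\alpha))(x)(v_1,\dots,v_{p+q})
    &=B_x^{-1}\big((\omega\cdot\alpha)(\varphi(x))(F_x v_1,\dots,F_x v_{p+q})\big)\\
    &=\omega(\varphi(x))(F_x v_1,\dots,F_x v_p)\cdot B_x^{-1}\big(\alpha(\varphi(x))(F_x v_{p+1},\dots,F_x v_{p+q})\big).
\end{align*}
Recognising the scalar factor as $(F^*\omega)(x)(v_1,\dots,v_p)$ -- the ordinary covariant-tensor pull-back, which is defined for \emph{any} bundle morphism -- and the line-valued factor as $(F^{*B}\alpha)(x)(v_{p+1},\dots,v_{p+q})$, the right-hand side is exactly $(F^*\omega\cdot F^{*B}\alpha)(x)(v_1,\dots,v_{p+q})$. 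Since this holds at every $x$ and on all vectors, the identity $F^{*B}(\omega\cdot\alpha)=F^*\omega\cdot F^{*B}\alpha$ follows.

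In essence this proposition is the bundle-level globalisation of the linear Proposition \ref{CoLTensor}, so no new algebraic phenomenon appears. The only genuinely non-formal point is the smoothness of $F^{*B}\alpha$, which I expect to be the main (though mild) obstacle and which I would dispatch by working in local frames for $E_1$, $E_2$ and a local non-vanishing section trivialising $L_1$ over a chart; in such frames the component functions of $B_x^{-1}$, $F_x$ and $\alpha$ are all smooth, and the displayed formula is manifestly smooth. The whole argument hinges on the single structural fact that $B_x^{-1}$ is $\Real$-linear, which is what lets the base-algebra scalar commute past the line-valued pull-back and thereby makes $F^{*B}$ a morphism of modules covering the ordinary pull-back $F^*$ on the base algebra.
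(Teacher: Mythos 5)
Your proposal is correct and follows essentially the same route as the paper: define $F^{*B}$ fibre-wise by the same formula used for the ldual bundle, get additivity from pointwise linearity, and verify the module identity by evaluating at a point and using $\Real$-linearity of $B_x^{-1}$ to pull the real scalar $\omega_{\varphi(x)}(F_x v_1,\dots,F_x v_p)$ outside, exactly as in the paper's displayed computation. The only addition is your explicit smoothness check in local frames, which the paper leaves implicit.
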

\begin{proof}
The definition of pull-back at degree $0$ is simply $F^{*B}=B^*:\Sec{L_2}\to\Sec{L_1}$. In arbitrary degree, we can see that the definition pull-back on dual sections $F^{*B}:\Sec{E_2^{*L_2}}\to \Sec{E_1^{*L_1}}$ given above works just as well for covariant ltensors of higehr degree. The fact that $F^{*B}$ is additive follows trivially for the fibre-wise linear structure of the bundles involved. The compatibility with the $(\Sec{\mathcal{T}_\bullet(E)},\otimes)$-module structures can be easily checked explicitly: let $\alpha\in \Sec{\mathcal{T}_p(E_2^{L_2})}$, $\omega\in \Sec{\mathcal{T}_q(E_2)}$ and $x\in M_1$, then
\begin{align*}
    F^{*B}(\omega\cdot \alpha)_x(v_1,\dots,v_p,w_1,\dots,w_q)& =B_x^{-1}\{(\omega\cdot \alpha)_{\varphi(x)} (F_x(v_1),\dots,F_x(v_p),F_x(w_1),\dots,F_x(w_q))\}\\
    &= B_x^{-1}\{\omega_{\varphi(x)}(F_x(v_1),\dots,F_x(v_p))\cdot \alpha_{\varphi(x)} (F_x(w_1),\dots,F_x(w_q))\}\\
    &=(F^*\omega)_x(v_1,\dots,v_p)\cdot (F^{*B}\alpha)_x(w_1,\dots,w_q)\\
    &= (F^*\omega\cdot F^{*B}\alpha)_x(v_1,\dots,v_p,w_1\dots,w_q).
\end{align*}
\end{proof}
This result establishes the lmodules of covariant ltensors as the natural algebraic counterpart of lvector bundles, again in direct analogy to the similar result regarding modules of covariant tensors or ordinary vector bundles.\newline

Following an analogous reasoning to the linear case, it is easy to show that induced maps restrict nicely to symmetrisations and antisymmetrisations of the lmodules of ltensors. In particular, push-forwards (when defined) and pull-backs give morphisms of the exterior modules of \textbf{lmultisections} $\Sec{\wedge^\bullet(E^L)}$ and \textbf{lforms} $\Sec{\wedge_\bullet(E^L)}$ respectively.

\subsection{Der Bundles and Jet Bundles} \label{DerJetBundles}

In Section \ref{LineBundles} we established line bundles as the direct generalisation of smooth manifolds where the notion of unit-free functions is given by sections. We now turn our attention to the dynamical aspects of manifolds. In this section we shall find the appropriate unit-free generalisations of curves on manifolds, vector fields, tangent bundles, cotangent bundles, induced maps, etc.\newline

Let $M$ be a smooth manifold, the presence of a line bundle structure $\lambda:L\to M$ forces us to generalise the notion of curve on $M$ to a $1$-parameter group of fibre-wise factors covering an ordinary curve on the base. In this direction, we define a \textbf{curve} on a line bundle $L$ formally as a curve on the general linear groupoid $\text{GL}(L)$ passing through the identity submanifold and whose target projection gives an ordinary curve on the base manifold. Following the characterisation of line bundle automorphisms as bisections of $\text{GL}(L)$ we easily see that a curve on $L$ can be regarded as the flow of an infinitesimal line bundle automorphism $D:\Sec{L}\to \Sec{L}$, which can be checked to be a $\Real$-linear map that interacts with he module structure via the following Leibniz identity: for $s\in \Sec{L}$ and $f\in\Cin{M}$
\begin{equation}\label{ModuleDerivation}
    D(f\cdot s)=f\cdot D(s) +X[f]\cdot s
\end{equation}
where $X\in \Sec{\Tan M}$ is the vector field of the infinitesimal diffeomorphism on the base. Note that this is the line bundle analogue of the directional derivative and thus we are compelled to give the definition of \textbf{der bundle} as the bundle of such objects:
\begin{align*}
    \Der L:=\{ & a_x:\Sec{L}\to L_x \quad \Real\text{-linear} \quad|\quad \exists\quad  \delta(a_x)\in \Tan_x M, \\ 
    &a_x(f\cdot s)=f(x)a_x(s)+\delta(a_x)(f)s(x)\quad \forall f\in \Cin{M},s\in\Sec{L}\}.
\end{align*}
where the action of $\delta(a_x)$ on $f$ is as a point-wise directional derivative, which can be shown to be unique for any $a_x$ given as an infinitesimal automorphism at $x\in M$. This makes $\delta:\Der A\to \Tan M$ into a vector bundle map covering the identity. $\Der L$ is clearly a vector bundle whose fibres realize the unit-free version of local directional derivatives. In analogy with the usual notation of the action a vector field $X$ on a function $f$ as a directional derivative $X[f]$, given a section of the der bundle $a\in \Sec{\Der L}$, we define its \textbf{action as a derivation} on sections by setting
\begin{equation*}
    a[s](x):=a(x)(s)\in L_x, \quad \forall x\in M.
\end{equation*}
We then write the \textbf{Lie bracket of derivations} as a bracket on sections of $\Der L$ explicitly
\begin{equation*}
    [a,a'][s]:=a[a'[s]]-a'[a[s]]
\end{equation*}
for all $a,a'\in \Sec{\Der L}$ and $s\in\Sec{L}$. We have thus recovered the standard notion of module \textbf{derivations} of sections $\Dr{L}$, which are generally defined as $\Real$-linear endomorphisms of $\Sec{L}$ satisfying the Leibniz identity (\ref{ModuleDerivation}). The map $\delta$ is the vector bundle morphism associated with the usual symbol map of derivations (see \cite[Ch. 11]{nestruev2006smooth}), which in this case is a $\Cin{M}$-linear map $\sigma:\Dr{L}\to \Sec{\Tan M \otimes \text{End}(L)}$, via the isomorphism $\text{End}(L)\cong \Real_M$. If follows that the der bundle naturally carries the structure of a Lie algebroid $(\Der L, \delta, [,])$.\newline

The notions of der bundle and derivations present themselves as the obvious candidates for the unit-free generalisations of tangent bundles and vector fields respectively. The results that follow below on how these notions interact with factors, submanifolds, quotients and products shall indeed confirm that this is the correct interpretation.\newline

Let a factor $B:L_1\to L_2$ covering a smooth map $\varphi:M_1\to M_2$, we define its \textbf{der map} at a point $x_1\in M_1$ as follows:
\begin{align*}
\Der_{x_1} B: \Der_{x_1} L_1 & \to \Der_{\varphi(x_1)} L_2\\
a_{x_1} & \mapsto \Der_{x_1}a_{x_1} 
\end{align*}
where
\begin{equation*}
    \Der_{x_1}a_{x_1}(s_2):=B_{x_1}(a_{x_1}(B^*s_2)) \quad \forall s_2\in\Sec{L_2}.
\end{equation*}
As it will be shown in the next proposition below, the constructions of der bundles and der maps takes us naturally to the category of lvector bundles introduced in Section \ref{LVectorBundles}; in fact, they serve as the main motivation to define and study them in general in the first place. Notions of induced maps find their particular realisations in this context. Given two derivations, $a_1\in\Sec{\Der L_1}$ and $a_2\in\Sec{\Der L_2}$ we say that they are $B$\textbf{-related}, $a_1\sim_B a_2$, if the following diagram commutes
\begin{equation*}
    \begin{tikzcd}
    \Der L_1 \arrow[r, "\Der B"] & \Der L_2 \\
    M_1 \arrow[r, "\varphi"']\arrow[u,"a_1"] & M_2.\arrow[u,"a_2"]
\end{tikzcd}
\end{equation*}
When the base map of the factor, $\varphi$, is a diffeomorphism this relation defines the \textbf{push-forward} of derivations:
\begin{align*}
B_*: \Sec{\Der L_1} & \to \Sec{\Der L_2}\\
a & \mapsto \Der B \circ a \circ \varphi^{-1}.
\end{align*}
With our identification of derivations with sections of the der bundle and the definition of pull-back of sections, we can readily check the following identity:
\begin{equation*}
    B_*a[s]=(B^{-1})^*a[B^*s] \qquad \forall s\in \Sec{L_2},
\end{equation*}
which gives an alternative definition of der push-forward only in terms of pull-backs of diffeomorphic factors.

\begin{prop}[The Der Functor]\label{DerFunctor}
The der map of a factor $B:L_1 \to L_2$ gives a well-defined lvector bundle morphism
\begin{equation*}
\normalfont
\begin{tikzcd}
\Der L_1 \arrow[r, "\Der B"] \arrow[d, "\lambda_1"'] & \Der L_2 \arrow[d, "\lambda_2"] \\
M_1 \arrow[r, "b"'] & M_2
\end{tikzcd}
\end{equation*}
which is a Lie algebroid morphism, i.e.
\begin{equation*}
\normalfont
    \Tan b\circ \delta_1 = \delta_2\circ \Der B, \qquad  \qquad a_1\sim_B a_2 ,\quad a'_1\sim_B a'_2 \quad \Rightarrow \quad [a_1,a'_1]_1\sim_B [a_2,a'_2]_2
\end{equation*}
for $\normalfont a_i,a_i'\in\Sec{\Der L_i}$. Furthermore, for any other factor $F:L_2 \to L_3$ and $\normalfont \Id_L:L\to L$ the identity factor, we have
\normalfont
\begin{align*}
    \Der (F\circ B) & = \Der F \circ \Der B,\\
    \Der (\Id_L) & = \Id_{\Der L}.
\end{align*}
\end{prop}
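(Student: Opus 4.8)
The plan is to extract everything from the two defining formulas already at hand: the der map $\Der_{x_1}a_{x_1}(s_2)=B_{x_1}(a_{x_1}(B^*s_2))$ and the factor pull-back $B^*s_2(x_1)=B^{-1}_{x_1}s_2(\varphi(x_1))$, where $\varphi$ denotes the base map (written $b$ in the statement). First I would establish well-definedness together with the anchor compatibility in a single computation. Since the factor pull-back satisfies $B^*(f\cdot s_2)=\varphi^*f\cdot B^*s_2$ for $f\in\Cin{M_2}$, applying the Leibniz identity (\ref{ModuleDerivation}) for $a_{x_1}$ and then the $\Real$-linearity of $B_{x_1}$ gives
\begin{equation*}
\Der_{x_1}a_{x_1}(f\cdot s_2)=f(\varphi(x_1))\,\Der_{x_1}a_{x_1}(s_2)+\Tan_{x_1}\varphi(\delta_1(a_{x_1}))(f)\cdot s_2(\varphi(x_1)),
\end{equation*}
where the chain rule rewrites $\delta_1(a_{x_1})(\varphi^*f)$ as $\Tan_{x_1}\varphi(\delta_1(a_{x_1}))(f)$. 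This shows at once that $\Der_{x_1}a_{x_1}\in\Der_{\varphi(x_1)}L_2$ and reads off its symbol as $\delta_2(\Der_{x_1}a_{x_1})=\Tan_{x_1}\varphi(\delta_1(a_{x_1}))$, which is exactly the claimed relation $\Tan b\circ\delta_1=\delta_2\circ\Der B$. Smoothness of $\Der B$ as a bundle map then follows by writing this formula in local trivialisations.

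The core observation, which I would isolate, is a sectional reformulation of $B$-relatedness: unwinding the commuting square that defines $a_1\sim_B a_2$ and applying $B_{x_1}^{-1}$ yields
\begin{equation*}
a_1\sim_B a_2\quad\Longleftrightarrow\quad a_1[B^*s_2]=B^*(a_2[s_2])\quad\forall s_2\in\Sec{L_2}.
\end{equation*}
This is the key step, and also the main obstacle: because $\varphi$ need not be a diffeomorphism there is no push-forward of derivations available, so bracket compatibility cannot be phrased as a map of sections and must be argued entirely in terms of related pairs.

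With this equivalence in hand, bracket compatibility is a direct computation. Given $a_1\sim_B a_2$ and $a_1'\sim_B a_2'$, I would expand $[a_1,a_1'][B^*s_2]=a_1[a_1'[B^*s_2]]-a_1'[a_1[B^*s_2]]$ and substitute the equivalence twice in each term: first to replace $a_1'[B^*s_2]=B^*(a_2'[s_2])$ and $a_1[B^*s_2]=B^*(a_2[s_2])$, then to move the outer derivation across $B^*$, using that $a_2'[s_2],a_2[s_2]\in\Sec{L_2}$. The $\Real$-linearity of $B^*$ collects the result into $[a_1,a_1'][B^*s_2]=B^*([a_2,a_2'][s_2])$, which by the equivalence means $[a_1,a_1']_1\sim_B[a_2,a_2']_2$.

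Finally, functoriality comes straight from the definition. For a further factor $F:L_2\to L_3$, contravariance of the section functor $\Gamma$ gives $(F\circ B)^*=B^*\circ F^*$, and fibre-wise $(F\circ B)_{x_1}=F_{\varphi(x_1)}\circ B_{x_1}$; substituting both into the der-map formula and reassociating yields $\Der(F\circ B)=\Der F\circ\Der B$. The identity $\Der(\Id_L)=\Id_{\Der L}$ is immediate, since $\Id_L^*=\Id$ on sections reduces the der-map formula to the identity.
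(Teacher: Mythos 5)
Your proposal is correct and follows essentially the same route as the paper's proof: the Leibniz-plus-chain-rule computation that simultaneously gives well-definedness and the anchor compatibility $\Tan b\circ\delta_1=\delta_2\circ\Der B$, the pull-back characterisation of $B$-relatedness for the bracket condition, and contravariance of pull-backs for the two functorial identities. If anything, your explicit equivalence $a_1\sim_B a_2\Leftrightarrow a_1[B^*s_2]=B^*(a_2[s_2])$ for all $s_2\in\Sec{L_2}$, applied twice, spells out carefully the bracket-compatibility step that the paper compresses (with a small typographical slip) into a single displayed line.
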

\begin{proof}
Note first that  $B^*(f\cdot s)=b^*f\cdot B^*s$ for $f\in\Cin{M_2}$ and $s\in\Sec{L_2}$, then applying the definition of point-wise der map one obtains
\begin{equation*}
    \Der_xBa_x(f\cdot s)=f(b(x))\cdot \Der_x B a_x(s)+\delta_1(a_x)(b^*f)(x)\cdot s(b(x)).
\end{equation*}
Simply using the definition of tangent map $\Tan b$ we can rewrite the second term to find
\begin{equation*}
    \Der_xBa_x(f\cdot s)=f(b(x))\cdot \Der_x B a_x(s)+\Tan_xb\circ \delta_1(a_x)(f)(b(x))\cdot s(b(x))
\end{equation*}
which shows that indeed $\Der_xBa_x\in \Der_{b(x)}L_2$, making $\Der B$ a well-defined vector bundle morphism. We can then use the anchor $\delta_2$ to rewrite the LHS
\begin{equation*}
    \Der_xBa_x(f\cdot s)=f(b(x))\cdot \Der_x B a_x(s)+\delta_2(\Der_xB a_x)(f)(b(x))\cdot s(b(x))
\end{equation*}
and thus we obtain
\begin{equation*}
    \Tan_xb\circ \delta_1(a_x)=\delta_2(\Der_xB a_x),
\end{equation*}
which must hold for any $a_x\in\Der_xL_1$, thus giving the desired compatibility condition with the anchors. The $B$-relatedness condition follows directly from the definition of der map by noting that
\begin{equation*}
    a_2(b(x))(a'_2[s])=(\Der_x B a_1(x))(a'_2[s])=B_x(a_1(x)(B^*a_2[s]))
\end{equation*}
for all $s\in\Sec{L_2}$. The two functorial identities follow directly from the contravariant functoriality of pull-backs:

\begin{align*}
    (F\circ B)^* &=B^*\circ F^*,\\
    \Id_L^* &=\Id_{\Sec{L}}.
\end{align*}
\end{proof}

\begin{prop}[Der Bundle of a Submanifold]\label{DerBundleLineSubmanifold}
Let $\lambda:L\to M$ be a line bundle and $i:S\hookrightarrow M$ an embedded submanifold of the base. The canonical embedding factor $\iota : L_S\hookrightarrow L$ gives an injective lvector bundle morphism
\begin{equation*}
\normalfont
    \Der \iota : \Der L_S \hookrightarrow \Der L.
\end{equation*}
By a slight abuse of notation we will write $\normalfont\Der L_S\subset \Der L$ in the same way that we write $\normalfont\Tan S\subset \Tan M$, then we have
\begin{equation*}
\normalfont
    \delta_L(\Der L_S)=\Tan S,
\end{equation*}
where $\normalfont\delta_L:\Der L\to \Tan M$ is the anchor of the der bundle.
\end{prop}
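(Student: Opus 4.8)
The plan is to derive both assertions from the der functor of Proposition \ref{DerFunctor}, together with two elementary facts: that the fibre components of the embedding factor $\iota$ (supplied by Proposition \ref{SubManLineBundle}) are linear isomorphisms, and that the anchor of a der bundle is fibre-wise surjective. Since Proposition \ref{DerFunctor} already guarantees that $\Der\iota$ is a well-defined lvector bundle morphism (indeed a Lie algebroid morphism) covering $i$, all that remains for the first claim is fibre-wise injectivity.

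For injectivity I would fix $x\in S$ and recall that the der map is given fibre-wise by $\Der_x\iota(a_x)(s)=\iota_x(a_x(\iota^*s))$ for all $s\in\Sec{L}$. As $\iota$ is a factor, $\iota_x\colon (L_S)_x\to L_{i(x)}$ is invertible, so $\Der_x\iota(a_x)=0$ forces $a_x(\iota^*s)=0$ for every $s\in\Sec{L}$. I would then use that any pointwise derivation $a_x\in\Der_x L_S$ is local, i.e.\ it annihilates every section vanishing on a neighbourhood of $x$ --- the standard consequence of the Leibniz identity (\ref{ModuleDerivation}) obtained by factoring such a section through a bump function vanishing at $x$ --- so that $a_x$ depends only on the germ of its argument at $x$. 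Because $i$ is an embedding and $L_S=i^*L$, every $t\in\Sec{L_S}$ agrees near $x$ with $\iota^*\tilde t$ for some $\tilde t\in\Sec{L}$ (extend a local section across the embedding in adapted coordinates and cut off). Hence $a_x(t)=a_x(\iota^*\tilde t)=0$ for all $t$, giving $a_x=0$. Thus $\Der_x\iota$ is injective for every $x$, and since $i$ is injective on the base, $\Der\iota$ is an injective lvector bundle morphism.

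For the anchor image I would invoke the compatibility with anchors in Proposition \ref{DerFunctor}, namely $\Tan i\circ\delta_{L_S}=\delta_L\circ\Der\iota$, where $\delta_{L_S}\colon\Der L_S\to\Tan S$ is the anchor of $\Der L_S$. The supporting fact is that this anchor is fibre-wise surjective: in any local trivialisation $\Der L_S\cong \Tan S\oplus\Real_S$ with $\delta_{L_S}$ the projection onto $\Tan S$ (equivalently, for $v\in\Tan_xS$ one builds a derivation with symbol $v$ from a vector field and a local trivialising section). Hence $\delta_{L_S}(\Der L_S)=\Tan S$, and applying $\Tan i$ gives
\begin{equation*}
\delta_L\bigl(\Der\iota(\Der L_S)\bigr)=\Tan i\bigl(\delta_{L_S}(\Der L_S)\bigr)=\Tan i(\Tan S)=\Tan S,
\end{equation*}
which is precisely $\delta_L(\Der L_S)=\Tan S$ under the identifications $\Der L_S\subset\Der L$ and $\Tan S\subset\Tan M$.

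The main obstacle is the injectivity step, and specifically the claim that restriction $\iota^*$ is surjective at the level of germs at $x$: it relies on the locality of pointwise derivations together with the local extendability of sections of $i^*L$ across the embedding, so some care with adapted coordinates (or a tubular neighbourhood) is needed to make the extension precise. Everything else is a direct bookkeeping consequence of Proposition \ref{DerFunctor}.
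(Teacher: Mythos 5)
Your proposal is correct and takes essentially the same route as the paper's own proof: well-definedness of $\Der\iota$ comes from the construction $L_S=i^*L$ together with Proposition \ref{DerFunctor}, and the identity $\delta_L(\Der L_S)=\Tan S$ follows from anchor compatibility of the Lie algebroid morphism $\Der\iota$ combined with surjectivity of both anchors, exactly as in the paper. The only difference is one of detail: you spell out the fibre-wise injectivity of $\Der\iota$ (using locality of point-wise derivations and local extension of sections of $i^*L$ across the embedding), a step the paper compresses into the single remark that injectivity follows from injectivity of $i$, so your argument supplies the verification the paper leaves implicit rather than a different method.
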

\begin{proof}
That $\Der \iota$ is a lvector bundle morphism follows by construction since $L_S:=i^*L$, so the line bundle morphism is simply the fibre-wise identity map. Then, injectivity of $\Der \iota$ follows simply from injectivity of $i:S\hookrightarrow M$. Note that using the full notation the second identity in the proposition reads
\begin{equation*}
    \delta_L(\Der \iota (\Der L_S))=\Tan i(\Tan S)
\end{equation*}
which is clearly a direct consequence of $\Der \iota$ being a Lie algebroid morphism, in particular compatible with the anchors, and the anchors being surjective so that $\delta_L(\Der L)=\Tan M$ and $\delta_{L_S}(\Der L_S)=\Tan S$.
\end{proof}

\begin{prop}[Derivations of a Submanifold]\label{DerivationsLineSubmanifold}
Let $\lambda:L\to M$ be a line bundle, $i:S\hookrightarrow M$ an embedded submanifold of the base and denote by $\Gamma_S\subset\Sec{L}$ its vanishing submodule. We define the derivations that tangentially restrict to $S$ as
\begin{equation*}
\normalfont
    \text{Der}_S(L):=\{D\in\Dr{L}|\quad D[\Gamma_S]\subset \Gamma_S\}
\end{equation*}
and the derivations that vanish on $S$ as
\begin{equation*}
\normalfont
    \text{Der}_{S_0}(L):=\{D\in\Dr{L}|\quad D[\Sec{L}]\subset \Gamma_S\},
\end{equation*}
then there is a natural isomorphism of $\normalfont\Cin{M}$-modules and Lie algebras
\begin{equation*}
\normalfont
    \Dr{L_S}\cong \text{Der}_S(L)/\text{Der}_{S_0}(L). 
\end{equation*}
\end{prop}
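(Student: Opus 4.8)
The plan is to construct an explicit surjective homomorphism $\Psi\colon\text{Der}_S(L)\to\Dr{L_S}$ whose kernel is exactly $\text{Der}_{S_0}(L)$, and then invoke the first isomorphism theorem. The starting point is the identification $\Sec{L_S}\cong\Sec{L}/\Gamma_S$ of $\Cin{S}\cong\Cin{M}/I_S$-modules recorded in Section \ref{LineBundles}, together with the restriction-of-sections map $\iota^*\colon\Sec{L}\to\Sec{L_S}$ whose kernel is $\Gamma_S$.

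First I would record the characterisation that a derivation $D\in\Dr{L}$ with symbol $X:=\delta_L(D)$ satisfies $D[\Gamma_S]\subset\Gamma_S$ if and only if $X$ is tangent to $S$ along $S$. Since $\Gamma_S$ is locally generated by products $f\cdot s$ with $f\in I_S$ and $s\in\Sec{L}$, it suffices to test the Leibniz identity (\ref{ModuleDerivation}) on such an element and restrict to $S$: the term $f\cdot D(s)$ dies because $f|_S=0$, leaving $X[f]\cdot s$, so the condition is precisely $X[f]|_S=0$ for all $f\in I_S$, i.e.\ $X|_S\in\Tan S$. This is consistent with Proposition \ref{DerBundleLineSubmanifold}. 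Since $D[\Sec{L}]\subset\Gamma_S$ clearly forces $D[\Gamma_S]\subset\Gamma_S$, we have $\text{Der}_{S_0}(L)\subset\text{Der}_S(L)$ and the quotient is meaningful.

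Next I would define $\Psi$ by descent: for $D\in\text{Der}_S(L)$ the inclusion $D[\Gamma_S]\subset\Gamma_S$ means $D$ descends to a well-defined $\Real$-linear endomorphism $\bar D$ of $\Sec{L}/\Gamma_S\cong\Sec{L_S}$ via $\bar D(\iota^* s):=\iota^*(D s)$, and the tangency of $X$ established above guarantees $i^*(X[f])=\bar X[i^* f]$ for $f\in\Cin{M}$, so that $\bar D$ obeys the Leibniz identity over $\Cin{S}$ and is a genuine derivation of $L_S$; set $\Psi(D):=\bar D$. That $\Psi$ is additive, intertwines the $\Cin{M}$-action with the $\Cin{S}$-action along $i^*$, and is a Lie-algebra map is a routine check, all three following from the defining relation $\bar D\circ\iota^*=\iota^*\circ D$; the bracket case because $\overline{[D,D']}(\iota^* s)=\iota^*(DD's-D'Ds)=[\bar D,\bar D'](\iota^* s)$. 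The kernel is immediate: $\Psi(D)=0$ means $\iota^*(Ds)=0$ for all $s$, i.e.\ $D[\Sec{L}]\subset\Gamma_S$, which is exactly $\text{Der}_{S_0}(L)$; in particular this exhibits $\text{Der}_{S_0}(L)$ as a Lie ideal and a submodule, so the quotient inherits both structures.

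The hard part will be surjectivity, which is the only place genuine analysis enters. Given $\tilde D\in\Dr{L_S}=\Sec{\Der L_S}$, the der functor of Proposition \ref{DerFunctor} turns it into a section $s:=\Der\iota\circ\tilde D$ of $\Der L$ along $S$ (a section of $i^*\Der L$), and by Proposition \ref{DerBundleLineSubmanifold} its anchor lands in $\Tan S$. I would then extend $s$ to a global section $D_0\in\Sec{\Der L}$ by the standard extension of sections of a vector bundle from an embedded submanifold: in trivialising slice charts a derivation is a tangent-to-$S$ vector field plus a $\Cin{}$-multiple of the identity, so one extends the coefficient functions constantly in the transverse directions and glues with a partition of unity subordinate to a cover of $S$ (setting $D_0=0$ away from the cover). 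Because $\delta_L(D_0)|_S=\delta_L(s)$ is tangent to $S$ by construction, $D_0\in\text{Der}_S(L)$ automatically; and since the pointwise form of the der map gives $\Der\iota\circ\Psi(D_0)=D_0|_S=s=\Der\iota\circ\tilde D$ with $\Der\iota$ injective, we conclude $\Psi(D_0)=\tilde D$. The first isomorphism theorem then yields the stated isomorphism, simultaneously of $\Cin{M}$-modules (hence $\Cin{S}$-modules along $i^*$) and of Lie algebras.
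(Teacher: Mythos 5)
Your proposal is correct and follows essentially the same route as the paper: derivations preserving $\Gamma_S$ descend along $\iota^*$ to derivations of $L_S$, the kernel of this assignment is exactly $\text{Der}_{S_0}(L)$, and the stated isomorphism is the resulting subquotient identification (the paper verifies the subalgebra/ideal bracket inclusions by hand, while you obtain the ideal property for free as the kernel of a Lie algebra morphism). Where you go beyond the paper is in explicitly proving surjectivity — extending a derivation of $L_S$ via slice charts and a partition of unity — which the paper compresses into ``follows directly from Proposition \ref{DerBundleLineSubmanifold} using the correspondence between sections of the der bundle and derivations''; note only that this extension argument, exactly like the paper's own identification $\Sec{L_S}\cong\Sec{L}/\Gamma_S$, tacitly requires $S$ to be closed in $M$ (or the statement to be read locally), a caveat your proof shares with, rather than adds to, the original.
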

\begin{proof}
The isomorphism as modules follows directly from proposition \ref{DerBundleLineSubmanifold} using the correspondence between sections of the der bundle and derivations. The Lie algebra isomorphism is then a consequence of the following simple facts
\begin{align*}
    [\text{Der}_S(L),\text{Der}_S(L)] & \subset  \text{Der}_S(L)\\
    [\text{Der}_S(L),\text{Der}_{S_0}(L)] & \subset  \text{Der}_{0S}(L)\\
    [\text{Der}_{S_0}(L),\text{Der}_{S_0}(L)] & \subset  \text{Der}_{S_0}(L),
\end{align*}
easily derived from the definitions above, thus showing that $\text{Der}_S(L)\subset \Dr{L}$ is a Lie subalgebra and $\text{Der}_{S_0}(L)\subset\text{Der}_S(L)$ is a Lie ideal making the subquotient $\text{Der}_S(L)/\text{Der}_{S_0}(L)$ into a Lie algebra reduction.
\end{proof}

\begin{prop}[Der Bundle of a Group Action Quotient]\label{DerBundleGroupAction}
Let $\Phi:G\times L\to L$ be a free and proper line bundle $G$-action with infinitesimal counter part $\normalfont \Psi:\mathfrak{g}\to \Dr{L}$ and let us denote by $\zeta: L\to L/G$ the submersion factor given by taking the quotient onto the space of orbit line bundles. Defining the $G$-invariant derivations as
\begin{equation*}
\normalfont
    \Dr{L}^G:=\{D\in\Dr{L}|\quad (\Phi_g)_*D=D \quad \forall g\in G\}=\{D\in\Dr{L}|\quad D[\Sec{L}^G]\subset \Sec{L}^G\}
\end{equation*}
and the derivations that tangentially restrict to the orbits as
\begin{equation*}
\normalfont
     \Dr{L}^{G_0}:=\{D\in\Dr{L}|\quad D[\Sec{L}^G]=0\}
\end{equation*}
we find the following isomorphism of modules and Lie algebras
\begin{equation*}
\normalfont
    \Dr{L/G}\cong \Dr{L}^G/\Dr{L}^{G_0}.
\end{equation*}
When $G$ is connected, this isomorphism becomes
\begin{equation*}
\normalfont
    \Dr{L/G}\cong \Dr{L}^{\mathfrak{g}}/\Psi(\mathfrak{g}).
\end{equation*}
where the $\mathfrak{g}$-invariant derivations are defined as
\begin{equation*}
\normalfont
    \Dr{L}^{\mathfrak{g}}:=\{D\in\Dr{L}|\quad [\Psi(\xi),D]=0 \quad \forall \xi\in \mathfrak{g}\}.
\end{equation*}
When applied point-wise, this last isomorphism gives a fibre-wise isomorphism of der spaces over the base orbit space $M/G$:
\begin{equation*}
\normalfont
    \Der_{[x]}(L/G)\cong \Der_xL/\mathfrak{g}.
\end{equation*}
\end{prop}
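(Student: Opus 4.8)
The plan is to treat this as the line-bundle counterpart of the classical identification of vector fields on a quotient manifold with projectable vector fields modulo vertical ones, the role of vector fields now being played by derivations of $L$. The single structural input is the identification $\Sec{L/G}\cong\Sec{L}^G$ already established for the quotient line bundle. First I would build the descent map $R\colon\Dr{L}^G\to\Dr{L/G}$ by restricting a $G$-invariant derivation to the submodule of invariant sections: if $D[\Sec{L}^G]\subset\Sec{L}^G$ then $D$ induces an $\Real$-linear endomorphism of $\Sec{L}^G\cong\Sec{L/G}$, and the Leibniz identity descends because the $G$-invariant functions on $M$ are exactly $\Cin{M/G}$. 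That $R$ is a morphism of $\Cin{M/G}$-modules is immediate, and that it is a Lie algebra morphism follows since the bracket of derivations is defined through their action on sections, so $R[D_1,D_2]=[RD_1,RD_2]$. By construction $\Ker{R}=\Dr{L}^{G_0}$, and the three bracket inclusions $[\Dr{L}^G,\Dr{L}^G]\subset\Dr{L}^G$, $[\Dr{L}^G,\Dr{L}^{G_0}]\subset\Dr{L}^{G_0}$ and $[\Dr{L}^{G_0},\Dr{L}^{G_0}]\subset\Dr{L}^{G_0}$ (each obtained by evaluating on $\Sec{L}^G$) show that $\Dr{L}^G$ is a subalgebra and $\Dr{L}^{G_0}$ a Lie ideal. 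The first isomorphism theorem for modules and Lie algebras then yields $\Dr{L/G}\cong\Dr{L}^G/\Dr{L}^{G_0}$, once surjectivity of $R$ is in hand.

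Surjectivity is the step I expect to be the main obstacle, exactly as in the classical theory where it demands a genuine construction rather than formal bookkeeping. Given $\bar D\in\Dr{L/G}$, I would work over a cover of $M/G$ by opens $U_\alpha$ trivializing the principal bundle $q\colon M\to M/G$, so that $q^{-1}(U_\alpha)\cong U_\alpha\times G$ compatibly with the $G$-action and with a trivialization of $L$. On each piece $\bar D$ admits the evident $G$-invariant lift $D_\alpha$ to a derivation of $L$ over $q^{-1}(U_\alpha)$ that is $\zeta$-related to $\bar D$. Patching the $D_\alpha$ with a partition of unity $q^*\rho_\alpha$ pulled back from $M/G$ (so the weights are automatically $G$-invariant and each summand extends by zero) produces a global $G$-invariant derivation $D$ with $RD=\sum_\alpha\rho_\alpha\bar D=\bar D$; properness and freeness of the action supply the slice charts needed for the local lifts and the $G$-invariance of the gluing. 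This closes the first isomorphism.

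For the connected case I would first replace strict $G$-invariance by infinitesimal invariance: since the flows of the fundamental derivations $\Psi(\xi)$ generate $G$ when $G$ is connected, $(\Phi_g)_*D=D$ for all $g$ is equivalent to $[\Psi(\xi),D]=0$ for all $\xi$, giving $\Dr{L}^G=\Dr{L}^{\mathfrak{g}}$. The delicate point is matching the ideal of vertical derivations with $\Psi(\mathfrak{g})$: any $D\in\Dr{L}^{G_0}$ annihilates invariant sections, so evaluating the Leibniz rule on $f\cdot s$ with $f\in\Cin{M/G}$ and $s$ a local non-vanishing invariant section forces the symbol $\delta_L(D)$ to be tangent to the orbits, i.e.\ a fundamental vector field; freeness of the action then pins the corresponding element of $\mathfrak{g}$, while the symbol sequence $0\to\mathrm{End}(L)\to\Der L\to\Tan M\to 0$ together with the local non-vanishing invariant sections kills any residual zeroth-order part. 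I would carry this out fibrewise, which is cleanest and simultaneously delivers the last claim.

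Concretely, the pointwise statement follows by applying the der map of the submersion factor $\zeta$ from Proposition \ref{DerFunctor}: $\Der_x\zeta\colon\Der_xL\to\Der_{[x]}(L/G)$ is fibre-wise surjective because $\zeta$ is a submersion factor, and since $\zeta$ is a line isomorphism on fibres it restricts to an isomorphism on the zeroth-order part $\mathrm{End}(L)=\Ker{\delta_L}$ while covering the surjection $\Tan_xq$ on symbols. Hence $\delta_L$ is injective on $\Ker{\Der_x\zeta}$ with image the vertical space $\Ker{\Tan_xq}$, of dimension $\dim G$ by freeness; as $\rkk{\Der L}=\dim M+1$ the kernel has dimension $\dim G$. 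Each $\Psi(\xi)(x)$ lies in this kernel and $\xi\mapsto\Psi(\xi)(x)$ is injective by freeness, so the kernel is exactly the image of $\mathfrak{g}$, giving $\Der_{[x]}(L/G)\cong\Der_xL/\mathfrak{g}$. The main obstacle throughout remains the global lifting in the module statement; the connected and pointwise refinements are then a matter of the symbol sequence and a dimension count, with the only real care needed in reconciling the a priori larger module of invariant vertical derivations with the finite-dimensional $\Psi(\mathfrak{g})$.
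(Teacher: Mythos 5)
Your proposal is correct and follows essentially the same route as the paper's proof: descent of $G$-invariant derivations through the identifications $\Sec{L/G}\cong\Sec{L}^G$ and $\Cin{M/G}\cong\Cin{M}^G$, the three bracket inclusions making $\Dr{L}^{G_0}$ an ideal in $\Dr{L}^G$, the equivalence of group and infinitesimal invariance for connected $G$ with $\Dr{L}^{G_0}$ identified as the image of the action algebroid, and a pointwise dimension count for $\Der_{[x]}(L/G)\cong\Der_xL/\mathfrak{g}$. The only difference is one of detail, not of method: you make explicit what the paper compresses into ``direct consequence'' and ``clear by construction'', namely surjectivity of the descent map (via slice charts and a $G$-invariant partition of unity) and the symbol argument showing that a derivation killing $\Sec{L}^G$ is fundamental, both of which are sound.
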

\begin{proof}
The module isomorphism $\Dr{L/G}\cong \Dr{L}^G/\Dr{L}^{G_0}$ is a direct consequence of the ring isomorphism $\Cin{M/G}\cong \Cin{M}^G$ and the module isomorphism $\Sec{L/G}\cong\Sec{L}^G$. The isomorphism as Lie algebras follows from the fact that, by construction, we have
\begin{align*}
    [\Dr{L}^G,\Dr{L}^G] & \subset  \Dr{L}^G\\
    [\Dr{L}^G, \Dr{L}^{G_0}] & \subset   \Dr{L}^{G_0}\\
    [ \Dr{L}^{G_0}, \Dr{L}^{G_0}] & \subset   \Dr{L}^{G_0},
\end{align*}
thus showing that $\Dr{L}^G\subset \Dr{L}$ is a Lie subalgebra and $\Dr{L}^{0G}\subset\Dr{L}^G$ is a Lie ideal making the subquotient $\Dr{L}^G/\Dr{L}^{G_0}$ into a Lie algebra reduction. When $G$ is connected, its action is uniquely specified by the infinitesimal counterpart, which can be equivalently regarded as a Lie algebroid morphism $\Psi:(\mathfrak{g}_M,\psi,[,])\to (\Der L,\delta,[,])$, where $\mathfrak{g}_M=M\times \mathfrak{g}$ here denotes the action Lie algebroid with anchor given by the infinitesimal action on the base manifold $\psi:\mathfrak{g}\to \Sec{\Tan M}$. It is clear by construction that for the action of a connected $G$ we have $\Dr{L}^{0G}=\Psi(\mathfrak{g})$ and that $G$-invariance under push-forward, $(\Phi_g)_*D=D$, becomes vanishing commutator with the infinitesimal action, $[\Psi(\xi),D]=0$. Then the second isomorphism follows. Since the action is free and proper, the map $\Psi$ will be injective as a vector bundle morphism covering the identity map on $M$, then applying the second isomorphism point-wise and injectivity of the infinitesimal action, so that $\Psi(\mathfrak{g})_x\cong \mathfrak{g}$, we find the last desired result.
\end{proof}

\begin{prop}[Der Bundle of the Line Product]\label{DerLineProduct}
Let two line bundles $\normalfont L_1, L_2\in \Line_\Man$ and take their line product $L_1\utimes L_2$, then there is a canonical isomorphism of lvector bundles covering the identity map on the base product:
\begin{equation*}
\normalfont
    \Der (L_1\utimes L_2)\cong \Der L_1 \boxplus \Der L_2.
\end{equation*}
\end{prop}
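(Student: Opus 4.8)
The plan is to build the isomorphism from the two projection factors of the line product and then verify it is fibre-wise linear isomorphism by a dimension count together with an injectivity check. Recall from Proposition \ref{ProductLineBundle} that the line product $L_1\utimes L_2$ carries two submersion factors $P_1:L_1\utimes L_2\to L_1$ and $P_2:L_1\utimes L_2\to L_2$ covering the base projections $p_1,p_2:M_1\dtimes M_2\to M_i$. Applying the der functor (Proposition \ref{DerFunctor}) produces lvector bundle morphisms $\Der P_1$ and $\Der P_2$ with line components $P_1,P_2$, and the universal property of the categorical product $\boxplus$ (Proposition \ref{ProductLVectorBundles}) assembles these into a single lvector bundle morphism
\begin{equation*}
    \Phi:=\Der P_1\boxplus \Der P_2:\Der(L_1\utimes L_2)\to \Der L_1\boxplus \Der L_2.
\end{equation*}
Using the explicit formulas $P_1(B_{x_1x_2},l_{x_1})=l_{x_1}$ and $P_2(B_{x_1x_2},l_{x_1})=B_{x_1x_2}(l_{x_1})$, so that $P_1|_{B_{x_1x_2}}=\Id$ and $P_2|_{B_{x_1x_2}}=B_{x_1x_2}$, one checks directly that the base map of $\Phi$ sends $B_{x_1x_2}\mapsto P_2|\circ P_1|^{-1}=B_{x_1x_2}$, i.e. $\Phi$ covers the identity on $M_1\dtimes M_2$, and that its line component $P_1\utimes P_2$ equals the identity factor on $L_1\utimes L_2$. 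It therefore only remains to show that $\Phi$ is a fibre-wise linear isomorphism onto $(p_1^*\Der L_1\oplus p_2^*\Der L_2)^{L_1\utimes L_2}$.

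Next I would compare ranks. Writing $n_i=\dimm M_i$, the der bundle $\Der L_i$ has rank $n_i+1$, while the base product $M_1\dtimes M_2$ has dimension $n_1+n_2+1$, being an $\Real^\times$-principal bundle over $M_1\times M_2$ by Proposition \ref{BaseProductLineBundles}. Hence $\Der(L_1\utimes L_2)$ has rank $n_1+n_2+2$, which matches the rank of $p_1^*\Der L_1\oplus p_2^*\Der L_2$. With equal ranks over the same base and a morphism covering the identity, it suffices to prove that $\Phi$ is fibre-wise injective.

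For injectivity, suppose $a\in\Der_{B_{x_1x_2}}(L_1\utimes L_2)$ satisfies $\Der P_1(a)=0$ and $\Der P_2(a)=0$. Since $P_1|_{B_{x_1x_2}}=\Id$ and $P_2|_{B_{x_1x_2}}=B_{x_1x_2}$ is invertible, the definition of the der map gives $a(P_1^*s_1)=0$ and $a(P_2^*s_2)=0$ for all $s_1\in\Sec{L_1}$ and $s_2\in\Sec{L_2}$. The anchor compatibility of $\Der P_i$ (Proposition \ref{DerFunctor}) then yields $\Tan p_i(\delta(a))=0$, so $\delta(a)$ lies in the kernel of $\Tan(p_1\times p_2)$, that is, in the one-dimensional vertical distribution of the principal $\Real^\times$-bundle $M_1\dtimes M_2\to M_1\times M_2$. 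To kill this last direction I would pick non-vanishing local sections $c_i\in\Sec{L_i^\bullet}$; a direct fibre-wise computation shows the identity of sections $P_2^*c_2=\tfrac{c_2}{c_1}\cdot P_1^*c_1$, so applying the Leibniz rule together with the already-established $a(P_1^*c_1)=a(P_2^*c_2)=0$ gives $\delta(a)\!\left(\tfrac{c_2}{c_1}\right)\cdot c_1(x_1)=0$, whence $\delta(a)\!\left(\tfrac{c_2}{c_1}\right)=0$. By the coordinate expression in Proposition \ref{SpanningFunctionsLineProduct} the differential of $\tfrac{c_2}{c_1}$ has non-trivial component along the vertical $\Real^\times$-direction, forcing $\delta(a)=0$. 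Finally, since $\Sec{L_1\utimes L_2}\cong\Cin{M_1\dtimes M_2}\cdot P_1^*\Sec{L_1}$, every section is a $\Cin{M_1\dtimes M_2}$-combination of the $P_1^*s_1$, and the Leibniz rule with $\delta(a)=0$ and $a(P_1^*s_1)=0$ forces $a=0$. This proves injectivity and hence that $\Phi$ is the desired canonical isomorphism.

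The main obstacle is precisely the extra fibre direction of the base product that is invisible from either $M_1$ or $M_2$: the anchor condition controls $\delta(a)$ only modulo the vertical $\Real^\times$-direction, and recovering its vanishing there is where the ratio-function machinery of Section \ref{Lines} and Proposition \ref{SpanningFunctionsLineProduct} is genuinely required. Everything else is bookkeeping with the der-map formula and the Leibniz rule, and could alternatively be carried out by exhibiting $\Phi$ as an explicit invertible matrix in line-bundle trivializations, where $\Der(L_1\utimes L_2)$ acquires the natural frame of base derivatives along $M_1$, along $M_2$, along the fibre coordinate $b$, and the identity endomorphism.
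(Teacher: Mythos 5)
Your proposal is correct and takes essentially the same approach as the paper: the same canonical map built fibre-wise from $\Der P_1$ and $\Der P_2$, followed by the same rank count ($n_1+n_2+2$ on both sides) reducing the problem to a fibre-wise bijectivity check. The only divergence is that where the paper asserts fibre-wise \emph{surjectivity} directly from the definition of the der map, you prove fibre-wise \emph{injectivity} --- equivalent given equal ranks --- and your careful use of the ratio functions and the identity $P_2^*c_2=\tfrac{c_2}{c_1}\cdot P_1^*c_1$ to kill the vertical $\Real^\times$-direction actually supplies the detail that the paper's one-line surjectivity claim leaves implicit.
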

\begin{proof}
Note first that both vector bundles are, by construction, vector bundles over the same base manifold $M_1\dtimes M_2$. The first term $p_1^*\Der L_1$ is clearly an lvector bundle with line $p_1^*L_1$ and although the second term is an lvector bundle with line $p_2^*L_2$ we can use the canonical factor $C_{12}$ to isomorphically regard $p_2^*\Der L_2$ as an lvector bundle with line $p_1^*L_1$. Then the direct sum as lvector bundles is well-defined and giving an lvector bundle isomorphism now reduces to finding a vector bundle isomorphism. A quick check introducing trivializations (see the end of this section for details) shows that both vector bundles have the same rank and so it is enough to find a fibre-wise surjective map $\Phi$ between the vector bundles. We can write this map explicitly as
\begin{align*}
\Phi_{B_{x_1x_2}}:\Der_{B_{x_1x_2}}(L_1\utimes L_2) & \to \Der_{x_1}L_1\oplus \Der_{x_2}L_2\\
a & \mapsto \Der_{B_{x_1x_2}}P_1(a) \oplus \Der_{B_{x_1x_2}}P_2(a)
\end{align*}
where $P_1$ and $P_2$ are the line product projection factors as in (\ref{LineProductCommutativeDiagram}). Surjectivity of this map follows directly from the definition of the der map and the fact that projection factors are surjective. The line bundle component of the isomorphism of lvector bundles is given simply by the fact that the lvector bundle product $\boxplus$ is defined as a lvector bundle with line bundle component given by the line product $\utimes$.
\end{proof}
\begin{prop}[Derivations of the Line Product]\label{DerivationsLineProduct}
Let two line bundles $\normalfont L_1, L_2\in \Line_\Man$ and take their line product $L_1\utimes L_2$, then we find the derivations of each factor as submodules of the derivations of the product
\begin{equation*}
\normalfont
\begin{tikzcd}
\Dr{L_1}\arrow[r,hook, "k_1"] & \Dr{L_1\utimes L_2} & \Dr{L_2}\arrow[l,hook',"k_2"'].
\end{tikzcd}
\end{equation*}
The maps $k_i$ are Lie algebra morphisms making $\normalfont \Dr{L_i}\subset\Dr{L_1\utimes L_2}$ into Lie subalgebras which, furthermore, satisfy
\begin{equation*}
\normalfont
    [\Dr{L_1},\Dr{L_1}]\subset \Dr{L_1} \qquad [\Dr{L_1},\Dr{L_2}]=0 \qquad [\Dr{L_2},\Dr{L_2}]\subset\Dr{L_2}.
\end{equation*}
\end{prop}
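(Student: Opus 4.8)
The plan is to build the inclusions $k_i$ out of the canonical isomorphism $\Der(L_1 \utimes L_2) \cong \Der L_1 \boxplus \Der L_2$ established in Proposition \ref{DerLineProduct}, and then to read off all three bracket relations from the fact, proved in Proposition \ref{DerFunctor}, that the der map of a factor is a Lie algebroid morphism and hence preserves relatedness under Lie brackets. The guiding idea is that the whole statement should reduce to a formal manipulation of $P_1$- and $P_2$-relatedness, with no computation in local trivialisations.

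First I would define $k_1$. A derivation $D_1 \in \Dr{L_1}$ is a section $a_1 \in \Sec{\Der L_1}$; since $p_1 : M_1 \dtimes M_2 \to M_1$ is smooth, its bundle pull-back $p_1^* a_1$ is a smooth section of $p_1^* \Der L_1$, and under the lvector bundle isomorphism of Proposition \ref{DerLineProduct} the section $(p_1^* a_1) \oplus 0$ of $p_1^* \Der L_1 \oplus p_2^* \Der L_2$ defines a derivation $k_1(D_1) \in \Dr{L_1 \utimes L_2}$; the map $k_2$ is defined symmetrically. The crucial observation, which I would isolate as a lemma, is that because the fibre-wise isomorphism $\Phi = (\Der P_1, \Der P_2)$ used in Proposition \ref{DerLineProduct} is exactly the pair of der maps of the projection factors $P_1, P_2$, the derivation $k_1(D_1)$ is the \emph{unique} section of $\Der(L_1 \utimes L_2)$ that is simultaneously $P_1$-related to $D_1$ and $P_2$-related to the zero derivation of $L_2$; symmetrically $k_2(D_2)$ is the unique derivation $P_2$-related to $D_2$ and $P_1$-related to $0$. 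Injectivity of $k_1$ and $k_2$ is then immediate, since $p_1, p_2$ are surjective and $\Phi$ is a fibre-wise isomorphism.

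With this characterisation in hand, the bracket relations follow formally. As $\Der P_i$ is a Lie algebroid morphism, relatedness is stable under the Lie bracket of derivations, so from $k_1(D_1) \sim_{P_1} D_1$, $k_1(D_1') \sim_{P_1} D_1'$ together with $k_1(D_1) \sim_{P_2} 0$, $k_1(D_1') \sim_{P_2} 0$ I obtain that $[k_1(D_1), k_1(D_1')]$ is $P_1$-related to $[D_1, D_1']$ and $P_2$-related to $[0,0] = 0$; by the uniqueness above this forces $[k_1(D_1), k_1(D_1')] = k_1([D_1, D_1'])$, proving at once that $k_1$ is a Lie algebra morphism and that $\Dr{L_1}$ is a Lie subalgebra, and symmetrically for $k_2$. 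For the mixed relation the same reasoning gives that $[k_1(D_1), k_2(D_2)]$ is $P_1$-related to $[D_1, 0] = 0$ and $P_2$-related to $[0, D_2] = 0$; being related to the zero derivation under both projections, uniqueness forces $[k_1(D_1), k_2(D_2)] = 0$, which is exactly the commuting relation $[\Dr{L_1}, \Dr{L_2}] = 0$.

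The step I expect to require the most care is the lemma identifying $k_i(D_i)$ through relatedness: one must check that the summand-inclusion definition coming from Proposition \ref{DerLineProduct} genuinely translates into $\Der P_1 \circ k_1(D_1) = a_1 \circ p_1$ and $\Der P_2 \circ k_1(D_1) = 0$, which amounts to unwinding the definition of $\Phi$ as the pair of der maps and verifying smoothness of the resulting section. Everything downstream of this lemma is a purely formal consequence of the Lie algebroid morphism property of $\Der P_i$ and the fibre-wise bijectivity of $\Phi$, so no explicit work with coordinates should be needed. If a hands-on check were preferred, one could instead verify that $k_1(D_1)$ acts on the spanning sections by $k_1(D_1)[P_1^* s_1] = P_1^*(D_1[s_1])$ and extend by the Leibniz rule over $\Cin{M_1 \dtimes M_2}$, but the relatedness argument has the advantage of avoiding any bookkeeping of the module structure.
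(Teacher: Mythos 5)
Your proof is correct, and it is genuinely not the paper's proof: it is exactly the route that the paper's opening sentence acknowledges (``this can be proved using the isomorphism in Proposition \ref{DerLineProduct} and the identification of sections of the der bundle with derivations'') and then deliberately sets aside in favour of an ``independent, more algebraic proof''. You build $k_i$ from the fibre-wise isomorphism $\Phi=(\Der P_1,\Der P_2)$ of Proposition \ref{DerLineProduct}, characterise $k_1(D_1)$ as the unique section of $\Der(L_1\utimes L_2)$ that is $P_1$-related to $D_1$ and $P_2$-related to $0$ (uniqueness being fibre-wise injectivity of $\Phi$), and then injectivity of $k_i$, the morphism property and all three bracket identities follow formally from the relatedness-preservation clause of Proposition \ref{DerFunctor}. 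The paper instead defines $\overline{D}_i=k_i(D_i)$ by prescribing its action on the spanning sections $P_j^*\Sec{L_j}$ together with its symbol's action on the ratio functions of Proposition \ref{SpanningFunctionsLineProduct}, and reads the Lie-algebra statements off those defining conditions. The two constructions agree: unwinding $\Der P_1\circ k_1(D_1)=D_1\circ p_1$ and $\Der P_2\circ k_1(D_1)=0$ gives precisely $k_1(D_1)(P_1^*s_1)=P_1^*D_1(s_1)$ and $k_1(D_1)(P_2^*s_2)=0$, which are the paper's conditions. What your route buys: existence, uniqueness and smoothness come for free from $\Phi$ being a bundle isomorphism, so you never need to verify that the paper's defining conditions are mutually consistent (the existence half of ``determined uniquely by'' is left implicit in the paper's proof), and no bookkeeping of the module structure or local trivialisations is required. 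What the paper's route buys: logical independence from Proposition \ref{DerLineProduct}, so the two propositions corroborate each other, and explicit module-level formulas for how $k_i(D_i)$ acts on spanning sections, which are invoked verbatim later, e.g.\ in the proof of Proposition \ref{JetLineProduct}; your relatedness characterisation recovers those formulas anyway, so nothing downstream is lost.
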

\begin{proof}
This can be proved using the isomorphism in proposition \ref{DerLineProduct} above and the fact that sections of the der bundle are identified with derivations. However, we give an independent, more algebraic proof that explicitly involves the module structure of the sections of the line product. Recall that sections of the line product $\Sec{L_1\utimes L_2}$ are spanned by the pull-backs $P^*_i\Sec{L_i}$ over the functions on the base product $\Cin{M_1\dtimes M_2}$. This means that a derivation $D$ is characterised by its action on projection pull-backs and the action of its symbol $X$ on spanning functions of the base product, which are the ratio functions defined in proposition \ref{SpanningFunctionsLineProduct}. With this in mind, given derivations $D_i\in\Dr{L_i}$ we give the derivations on the line product $k_i(D_i)=\overline{D}_i\in\Dr{L_1\utimes L_2}$ determined uniquely by the conditions
\begin{align*}
    \overline{D}_1(P^*_1s_1)&=P^*D_1(s_1) &\overline{D}_1(P^*_2s_2)&=0\\
    \overline{D}_2(P^*_1s_1)&=0 &\overline{D}_2(P^*_2s_2)&=P^*_2D_2(s_2)
\end{align*}
and with symbols $\overline{X}_i\in\Sec{T(M_1\dtimes M_2)}$ defined on ratio functions by
\begin{align*}
    \overline{X}_1[\tfrac{s_1}{u_2}]=\tfrac{D_1(s_1)}{u_2}\\
    \overline{X}_2[\tfrac{s_2}{u_1}]=\tfrac{D_2(s_2)}{u_1}
\end{align*}
for all sections $s_i\in\Sec{L_i}$ and local non-vanishing sections $u_i\in\Sec{L_i^\bullet}$. The fact that $k_i$ are Lie algebra morphisms and that $[\overline{D}_1,\overline{D}_2]=0$ follow directly from the defining conditions above.
\end{proof}

In summary, we find the \textbf{der functor} for line bundles
\begin{equation*}
    \Der : \Line_\Man \to \LVect_\Man,
\end{equation*}
which, in light of the last few propositions above, is shown to play a categorical role entirely analogous to that of the tangent functor for smooth manifolds:
\begin{equation*}
    \Tan : \Man \to \Vect_\Man.
\end{equation*}

Similar to the idea of differential of functions giving local linear approximations in ordinary manifolds, a unit-free counterpart of this idea is encapsulated in the standard definition of \textbf{jets} of sections: let a line bundle $\lambda: L\to M$, a point $x\in M$ and a neighbourhood $U\subset M$, the jet of a local section $s\in \Sec{L|_U}$ is the equivalence class of local sections:
\begin{equation*}
    j_x^1s:=\{r\in \Sec{L|_U}\quad | \quad s(x)=r(x), \qquad \Tan_xs=\Tan_xr\}.
\end{equation*}
Following standard constructions, it is clear that the \textbf{jet bundle}
\begin{equation*}
    \Jet^1 L:=\bigsqcup_{x\in M} \Jet^1_x L, \qquad \Jet^1_x L:= \{j_x^1 s, \quad s\in \Sec{L}\}.
\end{equation*}
corresponds to the ldual of the der bundle
\begin{equation*}
    \Jet^1 L:=(\Der L)^*\otimes L = (\Der L)^{*L}.
\end{equation*}
By construction, the assignment of the jet to a section gives a differential operator
\begin{equation*}
    j^1:\Sec{L}\to \Sec{\Jet^1 L}
\end{equation*}
called the \textbf{jet map}. The action of a section of the der bundle $a\in\Sec{\Der L}$ on a section $s\in\Sec{L}$ as a local derivation can now be rewritten as
\begin{equation*}
    a[s]=j^1s(a)
\end{equation*}
thus we see that the jet map $j^1$ gives the unit-free generalization of the exterior derivative in ordinary manifolds.\newline

The symbol and Spencer short exact sequences, defined in general for differential operators between vector bundles (see e.g. \cite[Ch. 11]{nestruev2006smooth}), find a straightforward reformulation in the category of lvector bundles. Note that, for any line bundle $\lambda:L\to M$, the anchor map $\delta:\Der L\to \Tan M$ is surjective by construction and elements of its kernel correspond to bundle endomorphisms induced from fibre-wise scalar multiplication, $\Ker{\delta}\cong \Real_M$. Then, regarding $\Der L$ and $\Tan M$ as lvector bundles and $\delta$ as a lvector bundle morphism in a trivial way, the symbol sequence is simply the short exact sequence of lvector bundles induced by the fact that $\delta$ is surjective
\begin{equation*}
\begin{tikzcd}
0 \arrow[r] & \Real_M \arrow[r] & \Der L \arrow[r, "\delta"] & \Tan M \arrow[r] & 0.
\end{tikzcd}
\end{equation*}
This will be sometimes called the \textbf{der sequence} of the line bundle $L$. The Spencer sequence then corresponds precisely to the ldual of the short exact sequence above
\begin{equation*}
\begin{tikzcd}
0 &  L \arrow[l] & \Jet^1L \arrow[l]& (\Tan M)^{*L} \arrow[l,"i"'] & 0 \arrow[l]
\end{tikzcd}
\end{equation*}
where $i=\delta^{*\Id_L}$ is injective. We will refer to this sequence as the \textbf{jet sequence} of the line bundle $L$. We will use the notation $\Tan^{*L} M:= (\Tan M)^{*L}$ in analogy with the usual notation for cotangent bundles. This sequence allows to write the Leibniz characterisation of the jet map $j^1$ as a differential operator:
\begin{equation*}
    j^1(f\cdot s)=f\cdot j^1 + i(df \otimes s).
\end{equation*}

\subsection{Trivial Line Bundles} \label{TrivialLineBundles}

In this section we briefly summarise the results presented so far for the case of globally trivialised line bundles. Note that the results of this section will apply to general line bundles locally, when restricted to trivializing neighbourhoods. In anticipation to our discussion of unit-free phase spaces in Section \ref{UnitFreeGeneralisation}, a choice of a local non-vanishing section in a line bundle $L$ is called a \textbf{unit} $u\in\Sec{L^\bullet}$ defined (possibly only) over an open subset of the base manifold $U\subset M$. The choice a unit $u$ is, of course, equivalent to a local trivialization $L|_U\cong \Real_U$, which in turn induces an isomorphism of modules of sections $\Cin{U}\cong \Sec{L|_U}$ given by the fact that for any local section $s\in \Sec{L|_U}$ there is a unique function $f_s\in\Cin{M}$ such that $s=f_s\cdot u$. Two units are related by a non-vanishing local function $z\in\Cin{U\cap U'}$ via $u'=z\cdot u$, we call such functions \textbf{conversion factors}.\newline

Sections of a trivial line bundle $\Real_M$ are isomorphic to the smooth functions of the base $\Sec{\Real_M}\cong \Cin{M}$ with the module structure being simply point-wise multiplication. This implies that there is now a natural inclusion $\Dr{\Cin{M}}\subset\Dr{\Real_M}$ making the der short exact sequence split and thus giving an isomorphism of modules
\begin{equation*}
    \Dr{\Real_M} \cong \Sec{\Tan M}\oplus \Cin{M}.
\end{equation*}
The action of a derivation on a section $s\in \Sec{\Real_M}$, regarded as a function $s\in \Cin{M}$, is given by
\begin{equation*}
    (X\oplus f)[s]=X[s]+fs.
\end{equation*}
It then trivially follows that the der bundle is
\begin{equation*}
    \Der \Real_M \cong \Tan M \oplus \Real_M
\end{equation*}
with anchor $\delta=\Proj_1$ and Lie bracket bracket
\begin{equation*}
    [X\oplus f,Y\oplus g]=[X,Y]\oplus X[g]-Y[f].
\end{equation*}
Note how this Lie bracket is entirely determined by the fact that vector fields are the Lie algebra of derivations on functions. Taking $\Real_M$-duals corresponds to taking ordinary duals, therefore the jet bundle is
\begin{equation*}
    \Jet^1 \Real_M\cong \Cot M\oplus \Real_M.
\end{equation*}
The jet prolongation map then becomes
\begin{align*}
j^1: \Cin{M} & \to \Sec{\Cot M}\oplus \Cin{M}\\
s & \mapsto ds\oplus s,
\end{align*}
which indeed only carries the information of the ordinary exterior differential. The base product of two trivial line bundles $\Real_M$ and $\Real_N$ is
\begin{equation*}
    M\dtimes N\cong M\times N\times \Real^\times
\end{equation*}
and the line product is again a trivial line bundle
\begin{equation*}
    \Real_M\utimes \Real_N \cong \Real_{M\dtimes N}.
\end{equation*}
A factor between trivial line bundles $B:\Real_M\to \Real_N$ is given by a pair $B=(\varphi,\beta)$ with $\varphi:M\to N$ a smooth map and $\beta\in\Cin{M}$ a nowhere-vanishing function. We have explicitly
\begin{align*}
(\varphi,\beta): \Real_M & \to \Real_N\\
(x,l) & \mapsto (\varphi(x),\beta(x)l).
\end{align*}
Pull-backs then become
\begin{equation*}
    (\varphi,\beta)^*s=\tfrac{1}{\beta}\cdot \varphi^*s
\end{equation*}
for all $s\in\Cin{N}$. A simple computation shows that the der map of a factor $(\varphi,\beta)$ gives a map of the form
\begin{align*}
\Der (\varphi,\beta): \Tan M\oplus \Real_M & \to \Tan N\oplus \Real_N\\
v_x\oplus a_x & \mapsto \Tan_x \varphi(v_x)\oplus a_x-\tfrac{1}{\beta(x)}d_x\beta(v_x).
\end{align*}
For a diffeomorphic factor $(\varphi,\beta):\Real_M\to \Real_M$, i.e. when $\varphi$ is a diffeomorphism, the der push-forward of a derivation is given by
\begin{equation*}
    (\varphi,\beta)_*(X\oplus f)=\varphi_* X \oplus \varphi_*f +\beta\cdot X[\tfrac{1}{\beta}].
\end{equation*}

\subsection{Jacobi Algebroids and Cartan Calculus} \label{JacobiAlgebroids}

In the context of ordinary smooth manifolds, one can motivate the introduction of Lie algebroids simply from the canonical structure found on tangent bundles. Let $M$ be a smooth manifold, then $\Tan M$ is a vector bundle whose sections carry a natural Lie algebra structure interacting with the module product via the Leibniz rule: the ordinary Lie bracket of vector fields. This can be trivially regarded as a vector bundle morphism given by the identity $\Id_{\Tan M}:\Tan M \to \Tan M$ which induces a Lie algebra morphism between the modules of sections. Lie algebroids appear naturally when one replaces the identity map into $\Tan M$ by an arbitrary bundle map covering the identity on $M$ called the anchor $\rho: A\to \Tan M$, and the sections are assumed to carry a Lie algebra structure $(\Sec{A},[,])$ which interacts with the module structure via the Leibniz formula enabled by the anchor:
\begin{equation*}
    [a,f\cdot b]=f\cdot [a,b] + \rho_*(a)[f]\cdot b.
\end{equation*}
This condition can be shown to imply that the anchor $\rho$ induces a morphism of Lie algebras
\begin{equation*}
    \rho_*:\Sec{A}\to \Sec{\Tan M}.
\end{equation*}

Our discussion in Section \ref{DerJetBundles} showed that the der bundles of line bundles are the unit-free analogues of tangent bundles and, in particular, Proposition \ref{DerFunctor} proved that der bundles carry canonical Lie algebroid structures in much the same way. At this point we could mirror the discussion above but now in the category of lvector bundles and attempt to define a \textbf{Lie lalgebroid} as generalisation of the der bundle: let $\lambda: L\to M$ be a line bundle and $(\Der L, \delta, [,])$ its der bundle, a Lie lalgebroid could then be defined as a lvector bundle $A^L$, together with an anchor lvector bundle map covering the identity $\rho^{\Id_L}:A^L\to \Der L$ and a Lie algebra structure on sections $(\Sec{A},[,])$ that interacts with the module structure via the Leibniz formula enabled by the anchor $\rho$ and the symbol $\delta: \Der L \to \Tan M$:
\begin{equation*}
    [a,f\cdot b]=f\cdot [a,b] + \delta(\rho_*(a))[f]\cdot b.
\end{equation*}
Mirroring the simple computation which proves that the anchor is a Lie algebra morphism in the Lie algebroid case, we now obtain the condition that $\rho$ is a Lie algebra morphism up to the kernel of the symbol $\delta$:
\begin{equation*}
    \delta(\rho_*([a,b]))=\delta([\rho_*(a),\rho_*(b)]).
\end{equation*}
We will comment on this issue again in Section \ref{Conclusion}, since it is a clear example of one of the limitations of unit-free geometry. Motivated both by analogy with the Lie algebroid case and by the many examples of similar structures found in the Jacobi geometry literature, we define \textbf{Jacobi algebroid} following the conventional definition in terms of Lie algebroid representations, which in our case means a Lie lalgebroid $(A^L,\rho,[,])$ with an anchor that, furthermore, induces a Lie algebra morphism on sections
\begin{equation*}
    \rho_*:\Sec{A}\to \Sec{\Der L}.
\end{equation*}

Jacobi algebroids are thus regarded as the natural direct generalisation of Lie algebroids in the category of lvector bundles and, as such, we should expect to find a discussion about morphisms and associated Cartan calculus that is entirely analogous to the ordinary case.\newline

The lmultisections of a Jacobi algebroid $(A^L,\rho,[,])$ carry a natural Gerstenhaber algebra $(\Sec{\wedge^\bullet A},\wedge, \llbracket , \rrbracket)$ induced by the ordinary Lie algebroid structure present on $A$ with anchor $\delta \circ \rho_*$. We recall that this structure is simply the graded Lie bracket induced from:
\begin{align*}
    \llbracket a,b \rrbracket &= [a,b]\\
    \llbracket a,f \rrbracket &=\delta(\rho_*(a))[f] \\
    \llbracket f,g \rrbracket &= 0
\end{align*}
for $a,b\in \Sec{\wedge^1 A^L}=\Sec{A}$, $f,g\in \Sec{\wedge^0 A^L}=\Cin{M}$ and extending via lmodule derivations. In the case when $A^L=\Der L$, the lmultisections together with the Gerstenhaber algebra structure $\text{Der}^\bullet (L):=(\Sec{\wedge^\bullet \Der L}, \wedge, \llbracket , \rrbracket)$ are called the \textbf{multiderivations} of the line bundle $L$.\newline

Dually to the Lie algebra structure of a Jacobi algebroid $(A^L,\rho,[,])$ we find the line bundle version of the Cartan calculus on the lmodule of lforms $\Omega^\bullet(A^L):=(\Sec{\wedge^\bullet A^* \otimes L},\wedge, d_{A^L})$, this complex is appropriately called \textbf{the de Rham complex of the Jacobi algebroid} $A^L$. Let us spell out the details of this construction. The \textbf{differential} $d_{A^L}$ is induced by the anchor map in degree $0$:
\begin{align*}
    d_L^0:\Sec{L} &\to \Sec{A^{*L}}\\
    s &\mapsto d_{A^L}s \quad | \quad d_{A^L}s(a)=\rho_*(a)[s]=j^1s(\rho_*(a)),
\end{align*}
extending via lmodule derivations from the following fundamental relation:
\begin{equation*}
    j^1(f\wedge s):=j^1(f\cdot s)=f\cdot j^1s + i(df\otimes s) =: f\wedge j^1s + df \wedge s,
\end{equation*}
and by capturing in degree $1$ the Lie bracket via the usual formula of the differential of a $1$-form:
\begin{equation*}
    d_{A^L}^1 \alpha (a,b):=a[\alpha(b)]-b[\alpha(a)]-\alpha([a,b])
\end{equation*}
for $\alpha\in \Sec{\wedge^1 A^{*L}}=\Sec{A^{*L}}$ and $a,b\in \Sec{A}$. Let us denote by $\Omega^\bullet(A):=(\Sec{\wedge^\bullet A^*},\wedge, d_A)$ the ordinary de Rham complex of the Lie algebroid structure present in $A$ with anchor $\delta \circ \rho$. The differential $d_{A^L}$ defined above is easily shown to be nilpotent and it behaves as a lmodule derivation of degree $+1$ in the followig sense:
\begin{equation*}
    d_{A^L}(\omega \wedge \alpha)=d_A\omega \wedge \alpha +(-)^k\omega \wedge d_{A^L}\alpha
\end{equation*}
for $\omega\in \Omega^k(A)$ and $\alpha\in\Omega^\bullet(A^L)$. We thus say that the de Rham complex of a Jacobi algebroid $\Omega^\bullet(A^L)$ is a lmodule over the de Rham complex of the Lie algebroid associated with it $\Omega^\bullet(A)$. Further to this lmodule structure, the fundamental relation imposed by the jet map above enables a module structure over the de Rham complex of the base manifold defined by
\begin{equation*}
    \theta \wedge \alpha := \delta^*\theta \wedge \alpha
\end{equation*}
for $\theta\in \Omega^k (M)$ and $\alpha\in \Omega^\bullet(A^L)$. It then follows from the above general expression for $d_{A^L}$ that
\begin{equation*}
    d_{A^L}(\theta\wedge \alpha)=d\theta \wedge \alpha +(-)^k \theta \wedge d_{A^L}\alpha
\end{equation*}
where $d$ denotes the ordinary de Rham differential on $M$. The \textbf{interior product} with a section of the Jacobi algebroid $a\in\Sec{A^L}$ is defined in the obvious way
\begin{equation*}
    i_a\alpha (\cdot , \cdot , \dots, \cdot):=\alpha(a,\cdot , \dots , \cdot)
\end{equation*}
for $a\in \Omega^\bullet(A^L)$ and it is readily checked to behave as a lmodule derivation of degree $-1$. Lastly, the \textbf{Lie derivative} is defined via Cartan's magic formula:
\begin{equation*}
    \LDer_a\alpha := i_ad_{A^L}+d_{A^L}i_a,
\end{equation*}
which is then clearly a lmodule derivation of degree $0$. These operations constitute what we call the \textbf{Cartan calulus of the Jacobi algebroid} $(A^L,\rho, [,])$ since it follows from routine computations that the usual Cartan calculus relations hold
\begin{align*}
    [\LDer_a,\LDer_b] &= \LDer_{[a,b]}\\
    [\LDer_a,i_b] &=i_{[a,b]} \\
    [i_a,i_b] &= 0.
\end{align*}
In the case when $A^L=\Der L$, the de Rham complex of $\Der L$ regarded as a Jacobi algebroid with identity anchor will be called \textbf{the de Rham complex of the line bundle} $L$ and it will be denoted by $\Omega^\bullet(L):=(\Sec{\wedge^\bullet(\Der L)^*\otimes L},\wedge,d_L)$.\newline

The de Rham complex of a Jacobi algebroid $\Omega^\bullet(A^L)$ induces a natural notion of \textbf{Jacobi algebroid cohomology} $\text{H}^\bullet(A^L)$. In the case of a line bundle $\lambda:L\to M$, an important remark should be made since the de Rham complex $\Omega^\bullet(L)$ is always acyclic, thus having trivial cohomology at all degrees. This can be easily shown by the presence of a contracting homotopy induced by the identity map $\Id_L\in\Dr{L}$. What this tells us is that the lmodule of lforms $\Omega^\bullet(L)$ doesn't contain any more cohomological information beyond the Lie algebroid cohomology of $\Der L$ and the ordinary de Rham cohomology of the base manifold $M$.\newline

Morphisms of Jacobi algebroids can now be easily defined as morphisms of differential graded lmodules via the induced maps of lvector bundle morphisms. Let $\lambda: A^L\to M$ and $\mu: B^{L'}\to N$ be two Jacobi algebroids and $F^B:A^L\to B^{L'}$ a lvector bundle morphism, we say that $F^B$ is a \textbf{Jacobi algebroid morphism} when the induced map $F^{*B}$ is compatible with the graded module differentials:
\begin{equation*}
    F^{*B}\circ d_{B^{L'}} = d_{A^L} \circ F^{*B}.
\end{equation*}
The \textbf{category of Jacobi algebroids} is denoted by $\textsf{Jacb}_\Man$. In the case of the der bundle, the der map of a factor between line bundles $B:L\to L'$ recovers the notion of a morphism of Jacobi algebroids since a simple computation shows that:
\begin{equation*}
    B^*\circ d_{L'} = d_L \circ B^*,
\end{equation*}
where the pull-back of lforms has been defined by
\begin{equation*}
    B^*:=(\Der B)^{*B}:\Omega^\bullet(L')\to \Omega^\bullet(L).
\end{equation*}

\subsection{LDirac Geometry} \label{LDiracGeometry}

One of the strengths of the unit-free approach via the identification of the category of lvector bundles is that it enables a clear path to generalising the ordinary notions of derivations, tangent bundles, de Rham complexes and Lie algebroids to the context of line bundles, as shown in the preceding sections. Although we will not be using any aspects of Dirac-Jacobi geometry beyond the occasional mention of Jacobi algebroids in the present work, this section is devoted to succinctly showing how Courant algebroids and Dirac structures also naturally generalise in the category of lvector bundles.\newline

Let us start by discussing the linear case. Recall that the basic example of an ordinary Courant space (see Courant's thesis \cite{courant1990dirac}) is the direct sum of a generic vector space with its dual, in the same spirit, we can consider a lvector space $V^L\in\LVect$ and construct the direct sum
\begin{equation*}
    \mathbb{V}^L:=V^L\oplus_L V^{*L}
\end{equation*}
It is clear that this space carries a linear factor $\Proj_1^{\Id_L}:\mathbb{V}^L\to V^L$ and a non-degenerate symmetric ltensor $\langle,\rangle \in \mathcal{T}_2(\mathbb{V}^L)$ defined by
\begin{equation*}
    \langle v+\alpha ,w+\beta\rangle_{\mathbb{V}}:=\tfrac{1}{2}(\alpha(v)+\beta(w)).
\end{equation*}
The lvector space $\mathbb{V}^L$ is called the \textbf{standard LCourant space}. This motivates the definition of a general \textbf{LCourant space} as a lvector space $C^L$ together with a linear factor to another lvector space $\rho^{B}:C^L\to V^L$ called the \textbf{anchor} and a non-degenerate symmetric ltensor $\langle,\rangle \in \mathcal{T}_2(C^L)$. Note that the tensor $\langle,\rangle$ is a non-degenerate $L$-valued bilinear form, for some fixed line $L$, and, by dimension counting, we can regard this datum coherent family of ordinary bilinear forms indexed by the choices of basis in $L$. Following from this remark, we see that the usual notions of musical isomorphism, orthogonal complements, isotropic and coisotropic subspaces appear naturally in LCourant spaces. A LCourant space $C^L$ is called \textbf{exact} when it sits in the following short exact sequence of lvector spaces
\begin{equation*}
\begin{tikzcd}
0 \arrow[r] & V^{*L} \arrow[r, "i"] & C^L \arrow[r, "\rho^B"] & V^L \arrow[r]  & 0
\end{tikzcd}
\end{equation*}
where $i:=\sharp \circ \rho^{*B}$ and $\sharp:C^{*L}\to C^L$ is the musical isomorphism induced by $\langle,\rangle$.\newline

We say that $D\subset C^L$ is a \textbf{LDirac space} in the LCourant space $(C^L, \rho:C^L\to V^L,\langle,\rangle)$ when it is a maximally isotropic subspace. Once more, results for ordinary Dirac spaces naturally generalize to the LCourant setting. In particular, we note that the 2-forms that naturally appear in ordinary Courant spaces, namely, differences of isotropic splittings and the 2-forms associated to Dirac spaces, are now 2-lforms. A \textbf{LCourant morphism} is a LDirac space covering graphs of linear factors that compose as isotropic relations to form \textbf{the category of LCourant spaces} $\LCrnt$.\newline

As it will become apparent from the discussion below, it turns out that the notion of \textbf{Dorfman algebra}, i.e. a Leibniz algebra $(\mathfrak{a},[,])$ together with an algebra morphism to a Lie algebra $\rho: \mathfrak{a}\to \mathfrak{g}$ (see \cite{mandal2016exact}), is broad enough to capture the algebraic structures that generalise Courant algebroids in the context of line bundles.\newline 

In the same spirit of how Courant algebroids can be motivated by Lie bialgebroids (see \cite{gualtieri2004generalized}), let us introduce the notion of \textbf{Jacobi bialgebroid} as a pair of a lvector bundle and its ldual $(A^L,A^{*L})$ each with a Jacobi algebroid structure, denoted by $(A^L,\rho,[,])$ and $(A^{*L},\rho_*,[,]_*)$, satisfying the compatibility condition
\begin{equation*}
    d_{A^{*L}}[a,b]=[d_{A^{*L}}a,b]+[a,d_{A^{*L}}b]
\end{equation*}
for all $a,b\in\Sec{A^L}$, where $[,]$ denotes the Schouten bracket of multisections $(\Sec{\wedge^\bullet A^L},[,])$ and $d_{A^{*L}}$ is the graded differential of $A^{*L}$ defined on the complex $\Sec{\wedge^\bullet (A^{*L})^{*L}}\cong \Sec{\wedge^\bullet A^L}$. Just like in the case of ordinary Lie bialgebroids, this compatibility condition gives Leibniz representations of $\Sec{A^L}$ and $\Sec{A^{*L}}$ on each other via the Cartan calculi. These, in turn, induce two independent semidirect product Dorfman algebras on $\Sec{A^L}\oplus\Sec{A^{*L}}$ which can be written explicitly as direct sum:
\begin{equation*}
    [a\oplus\alpha ,b\oplus \beta]:= [a,b] + \LDer_\alpha b -i_\beta d_{A^{*L}}a \oplus [\alpha,\beta]_* + \LDer_a\beta -i_bd_{A^L}\alpha.
\end{equation*}
This is then a Dorfman algebra structure on the sections of the lvector bundle $A^L \oplus A^{*L}$ whose fibres are clearly LCourant spaces with the obvious bilinear form induced by the ldual pairing and the anchor map is given by the sums of anchors $\rho + \rho_*:A^L \oplus A^{*L}\to \Der L$.\newline

The construction above motivates the definition of \textbf{LCourant algebroid} as a lvector bundle $\epsilon:E^L\to M$ together with $(E^L,\langle,\rangle,\rho^B: E^L\to \Der L,[,])$ where $\langle,\rangle\in\Sec{\mathcal{T}_2(E^L)}$ is symmetric and non-degenerate ltensor and $\rho: (\Sec{E},[,])\to (\Der L,[,])$ is a Dorfman algebra structure such that
\begin{align*}
    [a,f\cdot b] &=f\cdot [a,b]+\delta\circ\rho_*(a)[f]\cdot b\\
    \rho(a)[\langle b , c \rangle] &=\langle [a,b] , c \rangle + \langle b , [a,c] \rangle\\
    [a,a] &=D\langle a , a \rangle
\end{align*}
for all $a,b\in\Sec{E}$, $f\in\Cin{M}$, where $D:=\sharp\circ \rho^{*B} \circ j^1:\Sec{L}\to \Sec{E^L}$ is defined from the musical isomorphism $\sharp:E^{*L}\to E^L$ induced by $\langle,\rangle$ and the jet map $j^1:\Sec{L}\to \Sec{\Jet^1 L}\cong\Sec{\Der L^{*L}}$. These defining axioms should be regarded as the unit-free analogues of the axioms of ordinary Courant algebroids (see \cite{bursztyn2007reduction}) since they are identical to those after simply interchanging der bundles and tangent bundles and working in the line categories where unit-free functions are represented by sections of line bundles.\newline

A \textbf{LDirac structure} in a LCourant algebroid $(E^L,\langle,\rangle,\rho: E^L\to \Der L,[,])$ is a maximally isotropic subbundle $S\subset E^L$ (possibly supported on a submanifold) with involutive sections $[\Sec{S},\Sec{S}]\subset \Sec{S}$. At this point, we could clearly carry out definitions and constructions for LCourant algebroids and LDirac structures that mirror those given in the standard Dirac geometry literature. We find the obvious definitions of \textbf{LCourant tensor}, \textbf{product LCourant algebroid} with the product $\boxplus$ of lvector bundles, \textbf{morphisms of LCourant algebroids} as LDirac structures supported on lgraphs of factors of line bundles and \textbf{LCourant maps} as graphs of lvector bundle morphisms that are LDirac structures in the product LCourant algebroid. These then allow us to identify \textbf{the category of LCourant algebroids} $\LCrnt_\Man$ in a natural way.\newline

We conclude this section with two results that show that LDirac geometry, as introduced in this section, encompasses both ordinary Dirac geometry as well as Vitagliano's Dirac-Jacobi geometry \cite{vitagliano2018dirac}.

\begin{prop}[Courant Algebroids are LCourant Algebroids]\label{LCourantCourant}
Let $\normalfont (E,\langle,\rangle,\rho: E\to \Tan M,[,])$ be a Courant algebroid, then the Lvector bundle $E^{\Real_M}$ is naturally a LCourant algebroid. Furthermore, if $K\subset E$ is a Dirac structure, $K\subset E^{\Real_M}$ is a LDirac structure.
\end{prop}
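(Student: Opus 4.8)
The plan is to endow $E^{\Real_M}$ with LCourant data transported verbatim from the given Courant algebroid, and then to check that each of the three defining axioms collapses to its ordinary Courant counterpart. Concretely, I would keep the same symmetric non-degenerate pairing $\langle,\rangle$ (an $\Real_M$-valued, hence entirely unaltered, symmetric ltensor) and the same Courant bracket $[,]$ on $\Sec{E}$. For the anchor, I would use the canonical splitting of the der sequence of the trivial line bundle from Section \ref{TrivialLineBundles}, namely $\Der\Real_M\cong\Tan M\oplus\Real_M$ with $\delta=\Proj_1$, and define the lvector bundle anchor $\rho^{\Id}:E^{\Real_M}\to\Der\Real_M$ by $\rho^{\Id}(e):=\rho(e)\oplus 0$. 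This is the natural horizontal lift of the original tangent anchor, carrying vanishing scaling component.

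The first two axioms should then follow almost immediately. For the Leibniz axiom, since $\delta\circ\rho_*(a)=\Proj_1(\rho(a)\oplus 0)=\rho(a)$, the LCourant identity $[a,f\cdot b]=f\cdot[a,b]+\delta\circ\rho_*(a)[f]\cdot b$ becomes exactly the Courant Leibniz rule $[a,f\cdot b]=f\cdot[a,b]+\rho(a)[f]\cdot b$. For the invariance axiom, the action of the derivation $\rho_*(a)=\rho(a)\oplus 0$ on a section $\langle b,c\rangle\in\Sec{\Real_M}\cong\Cin{M}$ is, by the formula $(X\oplus f)[g]=X[g]+fg$ recorded in Section \ref{TrivialLineBundles}, just the ordinary directional derivative $\rho(a)[\langle b,c\rangle]$; hence invariance is inherited unchanged.

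The one computation that requires care is the symmetric axiom $[a,a]=D\langle a,a\rangle$, where $D:=\sharp\circ\rho^{*\Id}\circ j^1$. Here I would invoke the trivial-bundle identifications $\Jet^1\Real_M\cong\Cot M\oplus\Real_M$ and $j^1 g=dg\oplus g$ from Section \ref{TrivialLineBundles}. The key observation is that the ldual anchor $\rho^{*\Id}$ pairs $j^1 g$ against $\rho^{\Id}=\rho\oplus 0$, so the $\Real_M$-summand of $j^1 g$ is annihilated and only the cotangent part survives, giving $\rho^{*\Id}(j^1 g)=\rho^*(dg)$. Since the musical isomorphism $\sharp$ induced by the ltensor coincides with the ordinary musical isomorphism of $\langle,\rangle$, we obtain $D=\sharp\circ\rho^*\circ d$, which is precisely the standard Courant operator; the axiom $[a,a]=D\langle a,a\rangle$ thus reduces to the original Courant axiom, and one need only confirm that the normalisation conventions for $\langle,\rangle$ and $D$ agree on both sides. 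The hard part will be exactly this identification of $D$, rather than either of the first two axioms, since it requires threading the jet map and the ldual through the splitting of the der sequence and checking that the spurious scaling component drops out.

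Finally, for the Dirac statement, a Dirac structure $K\subset E$ regarded inside $E^{\Real_M}$ simply carries the line component $\Real_M$ together with the same pairing and bracket. Maximal isotropy of $K$ with respect to $\langle,\rangle$ is unchanged, and its involutivity $[\Sec{K},\Sec{K}]\subset\Sec{K}$ under the unaltered Courant bracket is exactly the LDirac involutivity condition; hence $K\subset E^{\Real_M}$ is an LDirac structure with no further work.
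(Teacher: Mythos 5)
Your proposal is correct and follows essentially the same route as the paper's proof: promote $\langle,\rangle$ to an $\Real_M$-valued ltensor, take the anchor $\rho\oplus 0:E\to \Tan M\oplus\Real_M\cong\Der\Real_M$, and use the trivial-bundle identifications $\delta=\Proj_1$, $\Jet^1\Real_M\cong\Cot M\oplus\Real_M$, $j^1=d\oplus\Id_{\Real_M}$ to collapse each LCourant axiom (and the LDirac conditions) to its ordinary Courant counterpart. Your explicit verification that $\rho^{*\Id}(j^1g)=\rho^*(dg)$, so that $D=\sharp\circ\rho^*\circ d$, merely spells out a step the paper declares ``clear,'' which is a welcome addition rather than a divergence.
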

\begin{proof}
This follows from the results about trivial line bundles presented in Section \ref{TrivialLineBundles}. In particular, recall that the der bundle of a trivial line bundle is $\Der \Real_M\cong \Tan M\oplus \Real_M$, the jet bundle is $\Jet^1\Real_M\cong \Cot M\oplus \Real_M$ and that the jet prolongation map $j^1:\Sec{\Real_M}\to \Jet^1 \Real_M$ is given by $j^1=d\oplus \Id_{\Real_M}$, where $d$ is the ordinary exterior derivative. The Courant bilinear form $\langle,\rangle$ is trivially promoted to a $\Real_M$-valued bilinear form and the anchor map defines a LCourant anchor map simply by setting $\rho \oplus 0: E\to \Tan M\oplus \Real_M$. Since the symbol of the Lie bracket of derivations of a trivial line bundle becomes $\delta = \Proj_1 : \Tan M\oplus \Real_M \to \Tan M$ and we have $\Sec{\Real_M}\cong\Cin{M}$, it is clear that the three compatibility conditions of the Courant bracket on $E$ are equivalent to the same compatibility conditions of the LCourant bracket on $E^{\Real_M}$. It is also easy to see that these make the defining conditions for a Dirac structure on $E$ equivalent to the defining conditions of a LDirac structure on $E^{\Real_M}$ with respect to the LCourant structure defined above.
\end{proof}

We define \textbf{exact LCourant algebroid} $(E^L,\langle,\rangle,\rho^B, [,])$ by imposing that the anchor induces a short exact sequence of lvector bundle morphisms:
\begin{equation*}
\begin{tikzcd}
0 \arrow[r] & E^{*L} \arrow[r, "i"] & E^L \arrow[r, "\rho^B"] & \Der L \arrow[r]  & 0
\end{tikzcd}
\end{equation*}
where $i:=\sharp \circ \rho^{*B}$. At the level of sections we readily find that $(\Sec{E},[,])$ now becomes an exact Dorfman algebra. Associated to any exact Dorfman algebra there is a characteristic class in Leibniz cohomology (\cite[Th. 4.2]{mandal2016exact}) which in the case of ordinary Courant algebroids corresponds to the well-known Severa class. In the context of LCourant algebroids we have the following result.

\begin{prop}[The Characteristic Class of an Exact LCourant Algebroid] \label{CharClass}
An exact LCourant algebroid structure on the lvector bundle $\lambda:E^L\to M$ uniquely determines a Leibniz cohomology class $\normalfont[H]\in \text{H}^2(\Der L;\Jet^1 L)$.
\end{prop}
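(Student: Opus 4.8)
The plan is to mirror the standard construction of the Severa class of an exact ordinary Courant algebroid, transported verbatim into the line-bundle setting via the dictionary established in Sections \ref{DerJetBundles} and \ref{LDiracGeometry}. The essential point is that an exact LCourant algebroid fits into the short exact sequence of lvector bundles
\begin{equation*}
\begin{tikzcd}
0 \arrow[r] & E^{*L} \arrow[r, "i"] & E^L \arrow[r, "\rho^B"] & \Der L \arrow[r]  & 0,
\end{tikzcd}
\end{equation*}
and the representation underlying the Leibniz cohomology $\text{H}^\bullet(\Der L;\Jet^1 L)$ is precisely the Cartan-calculus action of $\Der L$ on its ldual $\Jet^1 L=(\Der L)^{*L}$. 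So first I would fix a splitting of this sequence, that is an isotropic right inverse $s:\Der L\to E^L$ of the anchor $\rho^B$ (isotropic with respect to the non-degenerate symmetric ltensor $\langle,\rangle$); such a splitting exists by the usual argument that any linear right inverse can be corrected to an isotropic one using $\sharp$ and the musical isomorphism, exactly as in the ordinary case. Identifying $E^{*L}\cong\Jet^1 L$ through $\sharp$ and $\rho^{*B}$, the splitting induces an isomorphism $E^L\cong \Der L\oplus_L \Jet^1 L$ of lvector bundles.

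Second, I would transport the LCourant bracket through this isomorphism and read off its structure. The defining axioms of an exact LCourant algebroid, being the unit-free analogues of the Courant axioms, force the bracket to take the standard split form, and the failure of the image of the splitting $s$ to be involutive is measured by a $\Jet^1 L$-valued expression in three sections of $\Der L$. Concretely, I would define the candidate cocycle by
\begin{equation*}
    H(a,b,c):=\langle [s(a),s(b)], s(c)\rangle
\end{equation*}
for $a,b,c\in\Sec{\Der L}$, and verify using the three LCourant axioms (compatibility with the module product, invariance of $\langle,\rangle$, and the symmetric part identity $[a,a]=D\langle a,a\rangle$) together with involutivity of $\Sec{\Der L}$ as derivations that $H$ is $\Cin{M}$-linear in each argument, hence a genuine lform, and that it is totally antisymmetric. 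Thus $H\in\Sec{\wedge_3(\Der L)}\cong\Sec{\wedge^3(\Der L)^*\otimes L}$, landing it naturally in the correct Leibniz cochain group with values in $\Jet^1 L$.

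Third, I would check that $H$ is a Leibniz cocycle with respect to the differential of the complex computing $\text{H}^\bullet(\Der L;\Jet^1 L)$; here the Cartan calculus of the der bundle as a Jacobi algebroid (Section \ref{JacobiAlgebroids}) supplies the relevant differential, and the cocycle condition $d_{\Der L}H=0$ follows from the graded Jacobi identity for the LCourant bracket, which is encoded in the Dorfman-algebra (Leibniz) structure on $\Sec{E}$. Finally, to see that the class $[H]$ is well defined I would show that any two isotropic splittings differ by a $2$-lform $B\in\Sec{\wedge_2(\Der L)}$, and that changing the splitting shifts $H$ by $d_{\Der L}B$, leaving the cohomology class unchanged; conversely a different choice of identification $E^{*L}\cong\Jet^1 L$ produces an exact correction as well.

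The main obstacle I anticipate is the tensoriality and antisymmetry verification in the second step. In the line-bundle setting the anchor $\rho^B$ is only a Lie-algebra morphism up to the symbol $\delta$, as flagged in Section \ref{JacobiAlgebroids}, so the Leibniz rule carries the extra symbol term $\delta\circ\rho_*(a)[f]\cdot b$; one must confirm that these correction terms cancel when $H$ is evaluated on products $f\cdot a$, so that $H$ descends to a well-defined lform rather than merely a $\Cin{M}$-polynomial differential expression. This is exactly the place where the unit-free bookkeeping of derivations versus their symbols must be handled carefully, and it is where the proof genuinely uses that $\langle,\rangle$ is $L$-valued and that $D=\sharp\circ\rho^{*B}\circ j^1$ is built from the jet map rather than the ordinary exterior derivative.
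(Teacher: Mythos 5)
Your proposal is correct and follows essentially the same route as the paper's proof: existence of an isotropic splitting $\nabla:\Der L\to E$, the cocycle $H_\nabla(a,b,c):=\langle[\nabla(a),\nabla(b)],\nabla(c)\rangle$ regarded as a closed $3$-lform (equivalently a Leibniz $2$-cocycle with values in $\Jet^1 L$ via the Cartan calculus representation), and independence of the class under a change of splitting, which is exactly a shift by $d_L\beta$ for a $2$-lform $\beta$. The only cosmetic difference is that the paper packages the representation step by observing that the splitting makes $(\Der L,\Jet^1 L)$ into a Jacobi bialgebroid with the zero structure on $\Jet^1 L$, but this is the same mechanism you invoke through the Cartan calculus.
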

\begin{proof}
Under the mild assumption that $M$ is a paracompact manifold, splittings of exact sequences of vector bundles over $M$ always exist, then, by a fibre-wise argument, we can see that isotropic splittings $\nabla: \Der L\to E$ always exist for LCourant algebroids. A choice of isotropic splitting allows us to construct a lvector bundle isomorphism explicitly as:
\begin{align*}
\Phi_\nabla^{\Id_L}: \Der L\oplus \Jet^1 L  &\to E^L\\
a + \alpha  &\mapsto \nabla( a ) + j( \alpha)
\end{align*}
Now, $\Der L$ is canonically a Jacobi algebroid, then assuming the zero Jacobi algebroid structure on $\Jet^1 L$ the isotropic splitting makes $(\Der L,\Jet^1 L)$ into a Jacobi bialgebroid. This is seen explicitly via the Leibniz representations than can be built from the datum of the LCourant algebroid:
\begin{equation*}
[\alpha,a]_R=[i(\alpha),\nabla (a)]=i(-i_ad_L\alpha) \quad\quad [a,\alpha]_L=[\nabla (a),i(\alpha)]= i(\LDer_a\alpha).
\end{equation*}
We also identify the Leibniz cocycle induced by the isotropic splitting:
\begin{equation*}
    \eta_\nabla(a,b):=[\nabla(a),\nabla(b)]-\nabla([a,b])
\end{equation*}
which can be easily shown to correspond to a 3-lform via the identity
\begin{equation*}
    \eta_\nabla (a,b) = i(H(a,b,\cdot)), \qquad H_\nabla(a,b,c):=\langle[\nabla(a),\nabla(b)],\nabla(c)\rangle.
\end{equation*}
The lform $H_\nabla\in\Omega^3(L)$ can be shown to be closed $d_LH_\nabla=0$ by construction, hence it defines a cohomology class $[H_\nabla]$. It follows from a simple computation that for any two isotropic splittings, we have $\nabla - \nabla' = i\circ \beta^\flat$ with $b\in \Omega^2(L)$, then we verify:
\begin{align*}
i_ci_bi_aH_{\nabla'} & = i_ci_bi_aH_\nabla + \LDer_ai_bi_cb-i_b\LDer_ai_c\beta-i_ci_b\LDer_a\beta+i_c\LDer_ai_b\beta+i_ci_bi_ad_L\beta b\\
& = i_ci_bi_a(H_\nabla+db) + [\LDer_a,i_b]i_c\beta + i_c[\LDer_a,i_b]\beta \\
& = i_ci_bi_a(H_\nabla+d\beta),
\end{align*}
which implies that $[H_{\nabla'}]=[H_\nabla]$. This shows that the cohomology class is independent of the choice of isotropic splitting.
\end{proof}

The proposition above can be summarised in saying that isotropic splittings induce isomorphisms of exact LCourant algebroids with the \textbf{standard LCourant algebroid}. This is defined as the lvector bundle $\mathbb{D}L:=\Der L \oplus \Jet^1 L$ with anchor $\Proj_1^{\Id_L}:\mathbb{D}L\to \Der L$, ltensor $\langle,\rangle$ given by the ldual pairing and Dorfman bracket explicitly defined in terms of the Cartan calculus:
\begin{equation*}
    [a\oplus \alpha,b\oplus \beta]_H:=[a,b] \oplus \LDer_a\beta -i_bd_L\alpha + i_ai_bH
\end{equation*}
with $H\in \Omega^3(L)$ and $d_LH=0$. Setting $H=0$ recovers the notion of Courant-Jacobi algebroid used by Vitagliano in \cite{vitagliano2018dirac}, which in turn encompassed a wide range of similar, more special, structures previously known in the literature with various names. A LDirac structure in a standard LCourant with $H=0$ algebroid precisely recovers the definition of Dirac-Jacobi structures due to the aforementioned author.

\section{Jacobi Geometry} \label{JacobiGeometry}

The line bundle and lvector bundle machinery developed in the previous section shall make the presentation of Jacobi geometry entirely analogous to the standard presentation of Poisson geometry. The natural approach is to consider Lie bracket structures on the unit-free functions, i.e. sections of line bundles, that are compatible with the module structure. To motivate the precise definition of a Jacobi manifold we first give some general results on local Lie algebras.

\subsection{Local Lie Algebras} \label{LocalLieAlgebras}

The idea of generalising Poisson brackets via Lie algebra structures on sections of vector bundles, as pioneered by Kirillov \cite{kirillov1976local} in the 1970s, requires a generalisation of the Leibniz property of derivations with respect to the point-wise product of functions. The obvious way to generalise this kind of compatibility property is to consider local operators on the commutative algebras of functions and the modules of sections of vector bundles. These are commonly known in the literature as differential operators (see \cite{nestruev2006smooth}). We can thus define a general \textbf{local Lie algebra} structure on a vector bundle $A$ as a Lie bracket on sections $(\Sec{A},[,])$ that acts as a differential operator in each entry. This is a vast class of objects and it is not yet well understood whether there are well-defined categories of local Lie algebras of finite rank beyond the cases discussed below of Lie algebroids and Jacobi structures. For this reason, we limit ourselves to differential operators of order $\leq 1$, which will be simply referred to as differential operators, for the remainder of this section.\newline

Let two vector bundles $A$ and $B$ over the same base $M$, the space of \textbf{differential operators} is defined as:
\begin{equation*}
    \Diff_1(A,B):=\{\Delta : \Sec{A} \to \Sec{B} |\quad \Real\text{-linear }, c_{f}\circ c_{g}(\Delta)=0 \quad \forall f,g\in\Cin{M}\},
\end{equation*}
where we have simplified the notation for commutators of the module multiplications by using the maps $c_f:\text{Hom}_\Real(\Sec{A},\Sec{B})\to \text{End}_\Real(\Sec{A},\Sec{B})$ defined as
\begin{equation*}
    c_f(\Delta)(a):=[\Delta,f](a)=\Delta(f\cdot a )-f\cdot \Delta(a)
\end{equation*}
for $f\in\Cin{M}$, $a\in\Sec{A}$. The standard construction of the jet bundle $\Jet^1 A$ and the jet map $j^1: \Sec{A}\to \Sec{\Jet^1A}$ allow us to regard differential operators as sections of some vector bundle. This is realised via the correspondence
\begin{equation*}
    \text{Hom}_{\Cin{M}}(\Jet^1 A,B)\ni\Phi \mapsto\Phi \circ j^1\in\Diff_k(A,B).
\end{equation*}
which induces an isomorphism of $\Cin{M}$-modules
\begin{equation*}
    \Diff_1(A,B)\cong \Sec{(\Jet^1A)^*\otimes B},
\end{equation*}
see \cite[Prop. 11.51]{nestruev2006smooth} for details.\newline

According to our definition of differential operator above, a $\Cin{M}$-linear map is a particular case of differential operator. In general, we can define the \textbf{symbol} of a differential operator $\Delta\in \Diff_1(A,B)$ as:
\begin{equation*}
    \sigma_\Delta(f,a):=[\Delta,f](a)=\Delta(f\cdot a)-f\cdot \Delta(a).
\end{equation*}
Since $\Delta$ is a differential operator, $\sigma_\Delta(f,-):\Sec{A}\to \Sec{B}$ is a $\Cin{M}$-linear map and a simple computation shows that $\sigma_\Delta(-,a)$ acts as a $\Cin{M}$-module derivation. This makes the assignment of symbols to differential operators into a map of the form
\begin{align*}
\sigma: \Diff_1 (A,B) & \to \Sec{\Tan M\otimes A^*\otimes B}\\
\Delta & \mapsto \sigma_\Delta
\end{align*}
Symbols, in turn, characterise differential operators via the Leibniz identity since a $\Real$-linear map $\Delta:\Sec{A}\to \Sec{B}$ is a differential operator iff there exist a unique $\xi\in \Sec{\Tan M\otimes A^*\otimes B}$ such that
\begin{equation*}
    \Delta(f\cdot a)=f\cdot \Delta(a) + \xi(df,a).
\end{equation*}

In the case of a single vector bundle, we denote the differential operators of $A$ into itself via $\Diff_1(A):=\Diff_1(A,A)$. The classical example of differential operators are linear partial differential equations on smooth functions, which we recover as $\Diff_1(M)=\Diff_1(\Real_M)$ in the general formalism. Conventional differential operators (of order $\leq 1$) on a manifold $\Diff_1(\Real_M)$ are not closed under composition in general (the composition is a differential operator of order $\leq 2$) but a quick computation shows that, due to the commutativity of the product of functions, they do close under commutator. Since differential operators on a general vector bundle $A$ are the sections of the vector bundle $(\Jet^1 A)^* \otimes A$, one may expect this to be a first canonical example of local Lie algebra after endowing $\Diff_1(A)$ with the commutator bracket. The proposition below shows that this is not the case in general.

\begin{prop}[Differential Operators as Local Lie Algebras] \label{DiffLocal}
Let $A$ be a vector bundle over a manifold $M$. Differential operators do not close under commutator in general
\begin{equation*}
\normalfont
    [\Diff_1(A),\Diff_1(A)] \nsubseteq \Diff_1(A).
\end{equation*}
\end{prop}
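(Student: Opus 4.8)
The plan is to measure the precise obstruction to the commutator $[\Delta_1,\Delta_2]$ of two operators $\Delta_1,\Delta_2\in\Diff_1(A)$ being again of order $\leq 1$, namely the iterated commutator $c_g\circ c_f([\Delta_1,\Delta_2])$, and to show that it is generically non-zero as soon as $\rkk{A}\geq 2$. Recall from the symbol characterisation above that $\Delta\in\Diff_1(A)$ is equivalent to the existence of a unique $\sigma_\Delta\in\Sec{\Tan M\otimes A^*\otimes A}=\Sec{\Tan M\otimes\text{End}(A)}$ with $\Delta(f\cdot a)=f\cdot\Delta(a)+\sigma_\Delta(df,a)$, and that being of order $\leq 1$ is exactly the vanishing $c_g\circ c_f(\Delta)=0$. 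It therefore suffices to compute this second-order symbol for a bracket and to exhibit a single case in which it fails to vanish.

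First I would compute the second-order symbol of a composition. Expanding $\Delta_2(f\cdot a)=f\cdot\Delta_2(a)+\sigma_2(df,a)$ and using the Leibniz property of $\Delta_1$ gives
\[
c_f(\Delta_1\Delta_2)(a)=\sigma_1(df,\Delta_2 a)+\Delta_1(\sigma_2(df,a)),
\]
and applying $c_g$ once more, using only the $\Cin{M}$-linearity of the symbols in their vector argument, yields
\[
c_g\circ c_f(\Delta_1\Delta_2)(a)=\sigma_1(df,\sigma_2(dg,a))+\sigma_1(dg,\sigma_2(df,a)),
\]
which is symmetric in $f$ and $g$, as any second-order symbol must be. Antisymmetrising in the two operators to form the bracket, and writing the symbols locally as $\sigma_i=X_i\otimes\phi_i$ with $X_i\in\Sec{\Tan M}$ and $\phi_i\in\Sec{\text{End}(A)}$, I obtain
\[
c_g\circ c_f([\Delta_1,\Delta_2])(a)=\big(df(X_1)\,dg(X_2)+dg(X_1)\,df(X_2)\big)\,[\phi_1,\phi_2](a).
\]
Thus the obstruction factorises as a symmetric product of the symbol vector fields tensored with the commutator $[\phi_1,\phi_2]$ of the endomorphism parts.

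This formula makes the conclusion transparent: the bracket remains of order $\leq 1$ precisely when $[\phi_1,\phi_2]$ is forced to vanish, which happens exactly when $\text{End}(A)$ is commutative, i.e. when $A$ is a line bundle — recovering the closure of $\Diff_1(\Real_M)$ already observed. To finish I would produce an explicit counterexample in higher rank. Taking $M=\Real$, the trivial rank-$2$ bundle $A=\Real^2_M$, and two non-commuting constant endomorphisms $\phi_1,\phi_2$, I set $\Delta_i(a):=\phi_i(\partial_x a)$; each is manifestly first order with symbol $\partial_x\otimes\phi_i$, yet
\[
[\Delta_1,\Delta_2](a)=[\phi_1,\phi_2]\,\partial_x^2 a
\]
is a genuine second-order operator since $[\phi_1,\phi_2]\neq 0$, so $[\Delta_1,\Delta_2]\notin\Diff_1(A)$.

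The only real labour is the bookkeeping in the two-step symbol computation; the conceptual content — that the obstruction lives in the commutator of the $\text{End}(A)$-valued symbols and so vanishes only in the rank-one case — is what simultaneously establishes the non-closure and explains why ordinary differential operators on functions behave differently.
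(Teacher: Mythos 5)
Your proof is correct, and its core mechanism coincides with the paper's: both locate the obstruction to $[\Delta_1,\Delta_2]$ having order $\leq 1$ in the non-commutativity of the $\text{End}(A)$-valued parts of the symbols. The differences are worth recording. On the computational side, you obtain the second-order symbol by expanding the composition, $c_g\circ c_f(\Delta_1\Delta_2)(a)=\sigma_1(df,\sigma_2(dg,a))+\sigma_1(dg,\sigma_2(df,a))$, and then antisymmetrising in the operators, whereas the paper works from the nested-bracket identity $[[[\Delta,\Delta'],f],g]=[[\Delta,f],[\Delta',g]]+[[\Delta,g],[\Delta',f]]$; these are equivalent pieces of bookkeeping. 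The substantive difference is that you finish with an explicit counterexample ($M=\Real$, the trivial rank-$2$ bundle, $\Delta_i=\phi_i\circ\partial_x$ with constant non-commuting $\phi_1,\phi_2$, so that $[\Delta_1,\Delta_2]=[\phi_1,\phi_2]\,\partial_x^2$ and $c_x\circ c_x$ applied to it equals $2[\phi_1,\phi_2]\neq 0$), while the paper stops at the assertion that ``general $\Cin{M}$-linear maps do not commute'' and never exhibits a non-closing pair; your version is therefore the more complete proof of the stated non-inclusion, and your factorised formula also makes transparent why rank $1$ is exceptional, in agreement with Proposition \ref{LocalLieRk1}. One small caveat: your displayed factorisation
\begin{equation*}
    c_g\circ c_f([\Delta_1,\Delta_2])(a)=\bigl(df(X_1)\,dg(X_2)+dg(X_1)\,df(X_2)\bigr)\,[\phi_1,\phi_2](a)
\end{equation*}
is derived for decomposable symbols $\sigma_i=X_i\otimes\phi_i$; a general symbol is a sum of such terms, so the general obstruction is the corresponding sum of symmetric coefficients against endomorphism commutators. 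This does not affect your counterexample, but the clause ``closure holds exactly when $\text{End}(A)$ is commutative'' should be read as a statement about closure for all pairs, established in one direction by the rank-$1$ case and in the other by your rank-$2$ example, rather than as an immediate consequence of the decomposable formula alone.
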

\begin{proof}
Let two differential operators $\Delta, \Delta'\in\Diff_1(A)$ and two functions $f,g \in \Cin{M}$. After some manipulations, it follows from the definition of differential operator that
\begin{equation*}
    [[[\Delta, \Delta'],f],g]=[[\Delta, f],[\Delta',g]]+[[\Delta,g],[\Delta',f]],
\end{equation*}
however note that the terms in brackets in the RHS are precisely the commutators of symbols of the differential operators. Since general $\Cin{M}$-linear maps do not commute, those brackets do not vanish in general. This can be seen more explicitly by regarding the symbols of each operator as maps of the form $\delta,\delta':\Cot M\to A^*\otimes A$. Using the Leibniz characterisation of the differential operators $\Delta$ and $\Delta'$ we can write the following Leibniz identity for the commutator of differential operators
\begin{equation*}
    [\Delta, \Delta'](f\cdot a)=f\cdot [\Delta, \Delta'](a)+ ([\delta(df),\Delta']+[\Delta,\delta'(df)])(a).
\end{equation*}
We can identify the second term as a map $\lambda_{[\Delta,\Delta']}:\Sec{\Cot M}\to \text{Hom}_\Real(\Sec{A},\Sec{A})$ which is $\Cin{M}$-linear from the fact that symbols are $\Cin{M}$-linear. If $\lambda_{[\Delta,\Delta']}(df):\Sec{A}\to \Sec{A}$ were checked to be $\Cin{M}$-linear for all $f\in \Cin{M}$, then the Leibniz characterisation of differential operators would tell us that $\lambda_{[\Delta,\Delta']}$ is the unique symbol of $[\Delta,\Delta']$, hence showing that the commutator is a differential operator. A simple computation shows that
\begin{equation*}
    \lambda_{[\Delta,\Delta']}(df)(g\cdot a)=g\cdot \lambda_{[\Delta,\Delta']}(df)(a)+([\delta(df),\delta'(dg)]+[\delta(dg),\delta'(df)])(a).
\end{equation*}
We see that the failure for $\lambda_{[\Delta,\Delta']}(df)$ to be $\Cin{M}$-linear is the second term of the expression above which consists of commutators of generic $\Cin{M}$-linear maps of sections of $A$, which don't vanish in general.
\end{proof}

The proof of Proposition \ref{DiffLocal} above suggests an obvious strategy in the search of subspaces of differential operators on a vector bundle $A$ that will carry a local Lie algebra structure. Such a subspace of differential operators will have to be characterised by the fact that their symbols commute as $\Cin{M}$-linear maps for all arguments so that the map $\lambda_{[\Delta,\Delta']}$ becomes the symbol of the commutator of two elements $\Delta,\Delta'\in\Diff_1(A)$ of the subspace. A canonical choice in this direction is to consider differential operators whose symbols are multiples of the identity map $\Id_{\Sec{A}}$, i.e. $\Real$-linear maps $D:\Sec{A}\to\Sec{A}$ such that
\begin{equation*}
    D(f\cdot a)=f\cdot a+\sigma_D(df,a)=f\cdot a+X_D[f]\cdot a
\end{equation*}
where $X_D\in \Sec{\Tan M}$ is a vector field acting as a $\Cin{M}$-derivation. Note that this is precisely the identity (\ref{ModuleDerivation}) that characterised module derivations of sections of line bundles. The subspace of differential operators in $\Diff_1(A)$ satisfying this property is called the \textbf{derivations} of $A$ and is denoted by $\Dr{A}$. Since the rank of the vector bundles involved plays no role in the discussion about derivations and der bundles of line bundles in Section \ref{DerJetBundles}, similar results can be found in the general case of vector bundles by restricting to fibre-wise invertible vector bundle morphisms. Symbols of derivations correspond to multiplication by functions, hence their commutators trivially vanish and thus der bundles of vector bundles give the first class of canonical examples of local Lie algebras.\newline

The realisation that vector bundle derivations give natural examples of local Lie algebras motivates the definition of a subclass of local Lie algebras. A vector bundle $A$ is said to carry a \textbf{derivative Lie algebra} structure when there is a Lie bracket on sections $(\Sec{A},[,])$ that acts as a derivation in each of its arguments. More explicitly, a derivative Lie algebra on a vector bundle $A$ over the manifold $M$ is a $\Real$-bilinear Lie bracket $(\Sec{A},[,])$ satisfying
\begin{equation*}
    [a,f\cdot b]=f\cdot [a,b]+\lambda_a[f]\cdot b
\end{equation*}
for all $a,b\in\Sec{A}$ and $f\in\Cin{M}$, and where the map $\lambda:\Sec{A}\to \Sec{\Tan M}$ is a differential operator $\lambda\in\Diff_1(A,\Tan M)$. The map $\lambda$ is called the \textbf{symbol} of the derivative Lie algebra and its symbol as a differential operator $\Lambda^\sharp:=\sigma_\lambda\in \Sec{\Tan M \otimes A^*\otimes \Tan M}$ is called the \textbf{squiggle} of the derivative Lie algebra. These appear explicitly when we consider $\Cin{M}$-linear combinations in both entries of the Lie bracket:
\begin{equation*}
    [f\cdot a,g\cdot b]=fg\cdot[a,b]+f\lambda_a[g]\cdot b-g\lambda_b[f]\cdot a +\Lambda^\sharp(df\otimes a)[g]\cdot b
\end{equation*}
which we hence call the \textbf{symbol-squiggle expansion} of the derivative Lie algebra. Note that $\lambda$ and $\Lambda^\sharp$ are maps from and into Lie algebras that are defined via commutators of differential operators so we should expect to find compatibility conditions between them. These are summarised in the next proposition.

\begin{prop}[Symbol-Squiggle Identities] \label{SymbolSquiggle}
Let $A$ be a vector bundle over a manifold $M$ carrying a derivative Lie algebra structure $(\Sec{A},[,])$ with symbol $\lambda$ and squiggle $\Lambda^\sharp$, then the following identities hold for all $a,b,c\in\Sec{A}$ and $f,g,h\in\Cin{M}$:
\begin{enumerate}
\normalfont
    \item $\lambda_{f\cdot a}=f\cdot \lambda_a +\Lambda^\sharp(df\otimes a)$
    \item $\Lambda^\sharp(df\otimes a)[g]\cdot b=-\Lambda^\sharp(dg\otimes b)[f]\cdot a$
    \item $\lambda_{[a,b]}=[\lambda_a,\lambda_b]$
    \item $[\lambda_a,\Lambda^\sharp(df\otimes b)]=\Lambda^\sharp(d\lambda_a[f]\otimes b + df\otimes [a,b])$
    \item $\Lambda^\sharp(df\otimes a)[\Lambda^\sharp(dg\otimes b)[h]]\cdot c + \Lambda^\sharp(dg\otimes b)[\Lambda^\sharp(dh\otimes c)[f]]\cdot a + \Lambda^\sharp(dh\otimes c)[\Lambda^\sharp(df\otimes a)[g]]\cdot b \\= \lambda_b[f]\Lambda^\sharp(dg\otimes a)[h]\cdot c + \lambda_c[g]\Lambda^\sharp(dh\otimes b)[f]\cdot a + \lambda_a[h]\Lambda^\sharp(df\otimes c)[g]\cdot b$
\end{enumerate}
\end{prop}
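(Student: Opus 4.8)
The plan is to extract all five identities by systematically examining the Jacobi (Leibniz) compatibility of the Lie bracket with the module structure, using the symbol-squiggle expansion already derived before the statement. The structural principle throughout is that $\lambda$ is a differential operator with symbol $\Lambda^\sharp$, so identities 1 and 2 are essentially definitional; identities 3, 4, and 5 are the genuine content, arising from the Jacobi identity applied to arguments of increasing $\Cin{M}$-linear complexity. My strategy is to impose the Jacobi identity $[[a,b],c] + [[b,c],a] + [[c,a],b] = 0$ on sections $f\cdot a$, $g\cdot b$, $h\cdot c$ and collect terms according to how many functions they differentiate.

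First I would dispatch identities 1 and 2. Identity 1 is just the statement that $\lambda$ is a differential operator of order $\leq 1$ with symbol $\Lambda^\sharp$: applying the definition of the symbol $\Lambda^\sharp = \sigma_\lambda$ directly gives $\lambda_{f\cdot a} = f\cdot\lambda_a + \Lambda^\sharp(df\otimes a)$. Identity 2 follows by imposing antisymmetry of the bracket on the symbol-squiggle expansion: since $[f\cdot a, g\cdot b] = -[g\cdot b, f\cdot a]$, writing out both expansions and cancelling the symmetric terms isolates the squiggle terms, forcing $\Lambda^\sharp(df\otimes a)[g]\cdot b = -\Lambda^\sharp(dg\otimes b)[f]\cdot a$. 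Identity 3 comes from computing $\lambda_{[a,b]}$ via its action on a function $f$: using the Leibniz rule twice and the fact that $\lambda$ must itself be compatible with the bracket (the unit-free analogue of the anchor being a Lie morphism), one extracts $\lambda_{[a,b]} = [\lambda_a,\lambda_b]$. This is the derivative-Lie-algebra version of the anchor compatibility already seen for Lie algebroids and der bundles.

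The heart of the proof is identities 4 and 5, which I would obtain by feeding progressively $\Cin{M}$-decorated arguments into the Jacobi identity and matching terms by their differential order. For identity 4, I would compute $\lambda_{[a, f\cdot b]}$ in two ways — directly via identity 3 as $[\lambda_a, \lambda_{f\cdot b}]$, and via the Leibniz expansion $[a, f\cdot b] = f\cdot[a,b] + \lambda_a[f]\cdot b$ followed by identity 1 — and then equate. Expanding $[\lambda_a, \lambda_{f\cdot b}] = [\lambda_a, f\cdot\lambda_b + \Lambda^\sharp(df\otimes b)]$ and subtracting the $\lambda_{[a,b]}$-contribution isolates the commutator $[\lambda_a, \Lambda^\sharp(df\otimes b)]$, and careful bookkeeping of the remaining terms yields $\Lambda^\sharp(d\lambda_a[f]\otimes b + df\otimes[a,b])$. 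Identity 5 is the most laborious: it is the full Jacobi identity evaluated on $f\cdot a$, $g\cdot b$, $h\cdot c$, collecting precisely the terms that are second-order in the functions (those carrying two nested squiggle operators against those carrying a symbol times a single squiggle).

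The main obstacle I anticipate is the combinatorial bookkeeping in identity 5: the triple Jacobi sum produces a large number of terms, and one must carefully track which survive at each differential order. The lower-order terms (zeroth and first order in the $f,g,h$ derivatives) should vanish as a consequence of identities 1 through 4 and the Jacobi identity for the undecorated bracket, so that only the genuinely second-order terms remain to give the stated cyclic identity. The delicate point is ensuring that the cancellation of lower-order terms is complete, which relies on the cyclic symmetry of the Jacobi sum interacting correctly with the antisymmetry encoded in identity 2; verifying this cleanly, rather than by brute expansion, is where I would concentrate the effort.
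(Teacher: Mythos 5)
Your proposal is correct and follows essentially the same route as the paper: identities 1 and 2 are read off from the Leibniz formula for $\lambda$ and from antisymmetry in the symbol-squiggle expansion, while identities 3--5 are extracted from the Jacobi identity applied to nested brackets of the form $[[f\cdot a,g\cdot b],h\cdot c]$, with the lower-order terms cancelling by the previously established identities. Your derivation of identity 4 --- equating the two expansions of $\lambda_{[a,f\cdot b]}$ obtained from identities 1 and 3 and the Leibniz property --- is a clean and valid repackaging of what the paper describes as ``setting $f=1$ and using 3.\ repeatedly'', not a genuinely different method.
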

\begin{proof}
Identity 1. is simply the Leibniz formula for $\lambda$ as a differential operator. 2. follows directly from antisymmetry of the derivative Lie algebra bracket in the symbol-squiggle expansion $[f\cdot a, g\cdot b]$. Identities 3., 4. and 5. are a consequence of the Jacobi identity of the derivative Lie algebra bracket. We can obtain them by direct computation by considering a nested bracket of the form $[[f\cdot a, g\cdot b],h\cdot c]$. By setting $f=g=1$, we can expand using the symbol of the derivative Lie algebra
\begin{equation*}
    [[a,b],h\cdot c]=h\cdot [[a,b],c]+\lambda_{[a,b]}\cdot c.
\end{equation*}
On the other hand, by using the Jacobi identity of the Lie bracket first and then expanding the symbol we obtain, after cancellations:
\begin{equation*}
    [[a,b],h\cdot c]=[[a,h\cdot c],b]+[a,[b,h\cdot c]]=h\cdot ([[a,c],b]+[a,[b,c]])+(\lambda_a[\lambda_b][h] - \lambda_b[\lambda_a][h])\cdot c.
\end{equation*}
Since both expressions must agree for all $a,b,c\in\Sec{A}$ and $h\in\Cin{M}$, using the Jacobi identity of the Lie bracket once more, it follows that $\lambda_{[a,b]}=[\lambda_a,\lambda_b]$, hence proving 3. Using a similar procedure but now setting $f=1$ and using 3. repeatedly gives 4. Finally, a long but routine computation shows that expanding the general nested bracket $[[f\cdot a, g\cdot b],h\cdot c]$ and using 3. and 4. implies 5.
\end{proof}

Similar to how a differential operator may be given by a $\Real$-linear map and its symbol via the Leibniz characterisation, a derivative Lie algebra can be characterised by $\Real$-linear Lie brackets and its symbol and squiggle.

\begin{prop}[Extension by Symbol] \label{ExtensionSymbol}
Let $A$ be a vector bundle over a manifold $M$ and $\Sigma\subset \Sec{A}$ a subspace of spanning sections, $\normalfont \text{span}_{\Cin{M}}(\Sigma)=\Sec{A}$, then the datum of a derivative Lie algebra structure on $A$ can be given by the following data:
\begin{enumerate}
    \item a $\Real$-bilinear Lie bracket $(\Sigma,[,])$,
    \item a $\Real$-linear map $\lambda:\Sec{A}\to \Sec{\Tan M}$ and
    \item an antisymmetric form $\Lambda\in\Sec{\wedge^2(\Tan M\otimes A^*)\otimes A}$
\end{enumerate}
satisfying the compatibility conditions 1.-5. in Proposition \ref{SymbolSquiggle}.
\end{prop}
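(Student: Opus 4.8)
The plan is to prove the non-trivial implication — that the three data together with the compatibility identities of Proposition \ref{SymbolSquiggle} assemble into a genuine derivative Lie algebra — the reverse direction being exactly the content of Proposition \ref{SymbolSquiggle}. Uniqueness comes for free: if such a bracket exists, then for $a,b\in\Sigma$ and $f,g\in\Cin{M}$ its value $[f\cdot a,g\cdot b]$ is forced by the symbol-squiggle expansion to equal $fg\cdot[a,b]+f\lambda_a[g]\cdot b-g\lambda_b[f]\cdot a+\Lambda(df\otimes a)[g]\cdot b$, and since $\Sigma$ spans $\Sec{A}$ over $\Cin{M}$ this pins the bracket down on all of $\Sec{A}$. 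I would therefore take this expression as the \emph{definition}, extend it $\Real$-bilinearly, and devote the rest of the proof to showing that it is well-defined and satisfies the derivative Lie algebra axioms with symbol $\lambda$ and squiggle $\Lambda$.

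The delicate point, and the step I expect to be the main obstacle, is well-definedness, since a section admits many presentations $s=\sum_i f_i a_i$ with $a_i\in\Sigma$. Granting the antisymmetry of the defining formula (which is condition 2 together with antisymmetry of $(\Sigma,[,])$), it suffices to check a single slot: that $\sum_i f_i a_i=0$ forces $\sum_i[f_i a_i,g\cdot b]=0$. Expanding, the purely symbolic contributions collect into $\big(\sum_i f_i\lambda_{a_i}+\sum_i\Lambda(df_i\otimes a_i)\big)[g]\cdot b=\lambda_{\sum_i f_i a_i}[g]\cdot b=0$, using condition 1 and the $\Real$-linearity of $\lambda$ on all of $\Sec{A}$. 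The surviving terms amount precisely to the assertion that $[b,-]$ extends from $\Sigma$ to a well-defined derivation of $\Sec{A}$ with symbol $\lambda_b$. Because a derivative Lie bracket is a bidifferential operator of order $(1,1)$, hence local, I would verify this in a trivialising neighbourhood carrying a frame $\{e_i\}\subset\Sigma$ — such a frame exists since $\Sigma$ spans each fibre — where unique decomposition makes the formula manifestly consistent. What remains is to match this local derivation with the prescribed value $[b,a]$ for every $a\in\Sigma$; the discrepancy $R(a)$ is $\Real$-additive and vanishes on the $e_i$, its symbol $\lambda_{R(a)}$ is killed by conditions 1, 3 and 4, and $R(a)=0$ then follows from the remaining identities (condition 5 and the Jacobi identity of $(\Sigma,[,])$). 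This compatibility is the most demanding part.

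Once well-definedness is secured, the remaining axioms follow by reading the proof of Proposition \ref{SymbolSquiggle} backwards. Antisymmetry is condition 2; the derivation property $[s,h\cdot t]=h\cdot[s,t]+\lambda_s[h]\cdot t$ — which simultaneously certifies that $\lambda$ is the symbol and $\Lambda$ the squiggle — falls out of the defining formula and condition 1 by a direct term-by-term check on single-term presentations, extended by bilinearity.

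The final and longest step is the Jacobi identity. The plan is to expand the Jacobiator on generators $f\cdot a$, $g\cdot b$, $h\cdot c$ with $a,b,c\in\Sigma$ and to organise the result by its functional weight: the top term reduces to the Jacobi identity of $(\Sigma,[,])$, the terms $\Cin{M}$-linear in a single slot are controlled by condition 3, the mixed symbol-squiggle terms by condition 4, and the purely squiggle terms by condition 5 — exactly the identities that were extracted in the proof of Proposition \ref{SymbolSquiggle}. This closes existence, and together with the uniqueness already noted establishes the claimed correspondence.
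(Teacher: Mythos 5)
Your overall route is the same as the paper's: take the symbol-squiggle expansion as the \emph{definition} of $[f\cdot s,g\cdot r]$ on $\Cin{M}$-combinations of spanning sections, read antisymmetry off condition 2, and organise the Jacobiator so that conditions 3--5 kill it. Where you go beyond the paper is in raising well-definedness: the paper's proof never acknowledges that a section has many presentations $\sum_i f_i a_i$ with $a_i\in\Sigma$, and your reduction of the problem is correct and genuinely sharper than anything in the printed argument --- the purely symbolic terms do cancel by condition 1 together with additivity of $\lambda$ (this is exactly where it matters that $\lambda$ is given on all of $\Sec{A}$ rather than only on $\Sigma$), and what survives is precisely the assertion that $[b,-]$ extends from $\Sigma$ to a derivation of $\Sec{A}$ with symbol $\lambda_b$.

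However, your resolution of that surviving obstruction fails, and it cannot be repaired from the stated hypotheses. Take $M=\Real$, $A=\Real_M$, so $\Sec{A}=\Cin{\Real}$, and $\Sigma=\text{span}_\Real\{1,x,x^2\}$, which spans over $\Cin{M}$ since $1\in\Sigma$. Because $\Tan M\otimes A^*$ has rank $1$, the bundle $\wedge^2(\Tan M\otimes A^*)\otimes A$ is zero, so $\Lambda=0$ is forced; choose also $\lambda\equiv 0$. Then conditions 1--5 hold trivially for \emph{any} $\Real$-bilinear Lie bracket on $\Sigma$, for instance $[1,x]=1$, $[1,x^2]=[x,x^2]=0$, whose Jacobi identity on $\Sigma$ is immediate. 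The extension formula collapses to $[f\cdot s,g\cdot r]=fg\cdot[s,r]$, and the two presentations $x^2=x\cdot x=1\cdot x^2$ give $[x\cdot x,1]=-x$ but $[1\cdot x^2,1]=0$: the bracket is not well-defined, i.e.\ your discrepancy $R$ is nonzero even though conditions 1, 3, 4, 5 and the Jacobi identity of $(\Sigma,[,])$ all hold (each is trivially true here). So the step ``$R(a)=0$ then follows from the remaining identities'' is exactly where your proof breaks down, and the counterexample shows the defect lies in the statement itself, not merely in your argument: compatibility of the bracket on $\Sigma$ with the $\Cin{M}$-relations among spanning sections is an independent hypothesis, not a consequence of the symbol-squiggle identities. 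The paper's proof silently assumes this compatibility by declaring the formula to be a definition; in the actual uses of the proposition (Propositions \ref{ProductJacobi} and \ref{LinearJacobi}) it has to be, and implicitly is, checked by hand for the specific spanning sections appearing there.
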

\begin{proof}
Since general sections of $A$ are given by $\Cin{M}$-linear combinations of elements in $\Sigma$, it suffices to specify a bracket of the form $[f\cdot s, g\cdot r]$ where $s,r\in \Sigma$ and $f,g\in \Cin{M}$. The bilinear form $\Lambda$ defines a squiggle map via
\begin{equation*}
    \Lambda^\sharp(df\otimes a)[g]\cdot b:=\Lambda(df\otimes a, dg \otimes b),
\end{equation*}
which satisfies condition 2. from the antisymmetry of $\Lambda$. By condition 1. $\lambda$ is a differential operator with symbol given precisely by $\Lambda^\sharp$. Then, the bracket $[f\cdot s, g\cdot r]$ can be defined via the symbol-squiggle expansion:
\begin{equation*}
    [f\cdot s, g\cdot r]=fg\cdot[s,r]+f\lambda_s[g]\cdot r-g\lambda_r[f]\cdot s +\Lambda^\sharp(df\otimes s)[g]\cdot r.
\end{equation*}
This bracket is clearly antisymmetric and conditions 3.-5. imply that it satisfies the Jacobi identity, thus showing that $\Sec{A}$ is endowed with a derivative Lie algebra structure.
\end{proof}

When the squiggle of a derivative Lie algebra $A$ vanishes $\Lambda^\sharp=0$, i.e. when the symbol $\lambda:\Sec{A}\to \Sec{\Tan M}$ is $\Cin{M}$-linear, the above results simplify greatly and the extension by symbol conditions reduce to the symbol being a Lie algebra morphism. In this case, the symbol can be considered the push-forward of a vector bundle morphism covering the identity $\rho:A\to \Tan M$ which is then called an \textbf{anchor}. This, of course, recovers the well-known notion of Lie algebroid.\newline

It turns out that derivative Lie algebra structures are quite sensitive to the rank of the underlying vector bundle being $1$ or greater than $1$. The next two propositions illustrate this fact.

\begin{prop}[Local Lie Algebras of Rank $1$] \label{LocalLieRk1}
A local Lie algebra structure $(\Sec{A},[,])$ with $\normalfont\rkk{A}=1$ is necessarily a derivative Lie algebra.
\end{prop}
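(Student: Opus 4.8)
The plan is to exploit the fact that for a rank-$1$ bundle the endomorphism bundle is canonically trivial, so that every first-order differential operator is automatically a derivation. Concretely, a local Lie algebra structure $(\Sec{A},[,])$ means that for each fixed $a\in\Sec{A}$ the adjoint map $[a,-]:\Sec{A}\to\Sec{A}$ lies in $\Diff_1(A)$. To conclude that the bracket is a \emph{derivative} Lie algebra I must show that each such operator is a derivation, i.e.\ that its symbol is a multiple of the identity endomorphism; this is precisely the condition isolated in the definition of $\Dr{A}$.

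First I would recall the symbol calculus from earlier in this section. For $\Delta:=[a,-]\in\Diff_1(A)$ the symbol $\sigma_\Delta(f,b)=\Delta(f\cdot b)-f\cdot\Delta(b)$ is, for each fixed $f$, a $\Cin{M}$-linear endomorphism of $\Sec{A}$, so that $\sigma_\Delta\in\Sec{\Tan M\otimes A^*\otimes A}=\Sec{\Tan M\otimes \mathrm{End}(A)}$. In general the $\mathrm{End}(A)$-component of this symbol is an arbitrary bundle endomorphism, and it is exactly this freedom that obstructs a generic first-order operator from being a derivation (cf.\ the proof of Proposition~\ref{DiffLocal}).

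The key step is the rank-$1$ simplification. When $\rkk{A}=1$ the bundle $A$ is a line, and the canonical factor $p_A:A^*\otimes A\to\Real$ of Section~\ref{Lines}, applied fibre-wise, gives a canonical bundle isomorphism $\mathrm{End}(A)=A^*\otimes_M A\cong\Real_M$ sending $\Id_A$ to the constant function $1$. Consequently every section of $\mathrm{End}(A)$ is a unique function multiple of $\Id_A$. Feeding this back into the symbol, there is a unique $\xi_a\in\Sec{\Tan M}$ with $\sigma_\Delta(f,b)=\xi_a[f]\cdot b$, so that $[a,f\cdot b]=f\cdot[a,b]+\xi_a[f]\cdot b$. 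This is exactly the defining Leibniz identity of a derivation, with symbol $\lambda_a:=\xi_a$.

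Finally I would assemble the statement. The computation above shows $[a,-]\in\Dr{A}$ for every $a$, and by antisymmetry of the bracket $[-,b]\in\Dr{A}$ for every $b$ as well; hence the bracket acts as a derivation in each of its arguments, which is the definition of a derivative Lie algebra. I expect no serious obstacle here: the entire content is the canonical identification $\mathrm{End}(A)\cong\Real_M$ in rank $1$, which collapses the $\mathrm{End}(A)$-valued part of the symbol to a scalar and thereby forces every order-$1$ operator to be a derivation. The only point requiring a little care is checking that the vector field $\xi_a$ obtained this way is genuinely the symbol demanded by the definition of $\Dr{A}$, which follows from the uniqueness built into the isomorphism $p_A$.
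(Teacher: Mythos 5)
Your proof is correct and takes essentially the same route as the paper's: both arguments rest on the canonical trivialisation $\mathrm{End}(A)=A^*\otimes A\cong\Real_M$ in rank $1$, which collapses the endomorphism part of any first-order symbol to a scalar, yielding $\Diff_1(A)=\Dr{A}$ and hence forcing a bracket that is a differential operator in each entry to be a derivation in each entry. The additional detail you supply (the explicit symbol computation producing $\xi_a$, and invoking antisymmetry for the second argument) merely spells out what the paper leaves implicit.
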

\begin{proof}
This is a direct consequence of the fact that the endomorphism bundle of a vector bundle $\alpha:A\to M$ with 1-dimensional fibres is trivial
\begin{equation*}
    \text{End}(A)=A^*\otimes A \cong \Real_M,
\end{equation*}
and thus all $\Cin{M}$-linear maps of sections can be regarded as $\Cin{M}$-multiples of the identity map. It is clear then that the symbol of any differential operator $\Delta\in\Diff_1(A)$ will be uniquely determined by a derivation on the ring of functions, i.e. a vector field. In other words,
\begin{equation*}
    \rkk{A}=1 \quad \Rightarrow \quad \Diff_1(A)=\Dr{A}.
\end{equation*}
Then a Lie bracket on $A$ that acts as a differential operator in each entry, necessarily acts as a derivation in each entry.
\end{proof}

\begin{prop}[Derivative Lie Algebras of Rank $> 1$]\label{LocalLieRk2}
The symbol of any derivative Lie algebra $(\Sec{A},[,])$ with $\normalfont\rkk{A}> 1$ is necessarily an anchor.
\end{prop}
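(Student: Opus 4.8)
The plan is to prove that the squiggle $\Lambda^\sharp$ vanishes identically. This suffices, because by the remark following Proposition \ref{ExtensionSymbol} a vanishing squiggle is equivalent to the symbol $\lambda$ being $\Cin{M}$-linear, and identity 3 of Proposition \ref{SymbolSquiggle}, namely $\lambda_{[a,b]}=[\lambda_a,\lambda_b]$, then says precisely that this $\Cin{M}$-linear $\lambda$ is the push-forward of a bundle map $\rho:A\to\Tan M$ covering the identity that is moreover a Lie algebra morphism, i.e. an anchor. The entire argument is pointwise and hinges on the antisymmetry recorded in identity 2 of Proposition \ref{SymbolSquiggle},
$$\Lambda^\sharp(df\otimes a)[g]\cdot b = -\Lambda^\sharp(dg\otimes b)[f]\cdot a.$$

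First I would fix a point $x\in M$ and a nonzero vector $v\in A_x$. Since $\rkk{A}>1$, the fibre $A_x$ has dimension at least $2$, so I can pick $w\in A_x$ linearly independent from $v$ and extend both to local sections $a,b\in\Sec{A}$ with $a(x)=v$ and $b(x)=w$. Evaluating identity 2 at $x$ yields an equation in the fibre $A_x$ whose left-hand side is a scalar multiple of $b(x)=w$ and whose right-hand side is a scalar multiple of $a(x)=v$; linear independence of $v$ and $w$ forces both coefficients to vanish, so in particular $\Lambda^\sharp(df\otimes a)[g](x)=0$ for all $f,g\in\Cin{M}$. To upgrade this to the vanishing of the tensor $\Lambda^\sharp$ at $x$, I would use that $\Lambda^\sharp\in\Sec{\Tan M\otimes A^*\otimes\Tan M}$ is $\Cin{M}$-linear in its slots, so that $\Lambda^\sharp(df\otimes a)(x)$ depends only on $df(x)$ and $v$, and $\Lambda^\sharp(df\otimes a)[g](x)=\langle dg(x),\Lambda^\sharp(df\otimes a)(x)\rangle$. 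As $f$ and $g$ vary, $df(x)$ and $dg(x)$ realise every covector at $x$, so the vanishing above reads $\langle\eta,\Lambda^\sharp(x)(\xi\otimes v)\rangle=0$ for all $\xi,\eta\in\Cot_x M$; pairing against all $\eta$ gives $\Lambda^\sharp(x)(\xi\otimes v)=0$ for every $\xi$. Since $v=0$ is trivial by linearity and $x$ was arbitrary, $\Lambda^\sharp=0$ throughout $M$.

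The hard part is really just the choice of the linearly independent partner $w$, and this is exactly where the rank hypothesis enters: when $\rkk{A}=1$ no such $w$ exists, identity 2 degenerates into a relation of the symbol with itself, and the squiggle need not vanish — consistent with Proposition \ref{LocalLieRk1}. The remaining work is bookkeeping, namely the passage from the section-level identity to a pointwise tensorial statement, which I would justify by the $\Cin{M}$-linearity of $\Lambda^\sharp$ together with the surjectivity of $f\mapsto df(x)$ onto $\Cot_x M$. With $\Lambda^\sharp=0$ in hand, $\lambda$ is $\Cin{M}$-linear and, by identity 3, an anchor, which completes the proof.
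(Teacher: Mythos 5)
Your proposal is correct and is essentially the paper's own argument: the paper expands $[f\cdot a,g\cdot b]$ via Leibniz in two different orders for a pair of $\Cin{U}$-linearly independent local sections — which is precisely the derivation of the antisymmetry identity 2 that you cite — and uses that independence to conclude $\lambda_{f\cdot a}=f\cdot\lambda_a$, i.e.\ vanishing of the squiggle. Your only variation is bookkeeping: you run the same independence argument pointwise in a fibre (where $\rkk{A}>1$ supplies the two independent vectors) to kill the tensor $\Lambda^\sharp$ at each point, and then invoke identities 1 and 3 of Proposition \ref{SymbolSquiggle}, which is equivalent to the paper's section-level conclusion.
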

\begin{proof}
Restricting to a trivializing neighbourhood $U\subset M$ we can choose local sections $a,b\in \Gamma_U(A)$ that are $\Cin{U}$-linearly independent, this is guaranteed generically in sufficiently small open neighbourhoods since the rank of the vector bundle is 2 or greater: we are always able to choose two independent directions at any given fibre and then extend smoothly. Take two such sections $a,b\in \Gamma_U(A)$, two local functions $f,g\in \Cin{U}$ and consider the symbol-squiggle bracket $[f\cdot a, g\cdot b]$. Applying the Leibniz identity on each side in two different orders we get two expressions that must agree:
\begin{equation*}
    \lambda_{f\cdot a}[g]\cdot b - g\cdot\lambda_b[f]\cdot a + fg\cdot [a,b]=[f\cdot a, g\cdot b]=f\cdot \lambda_{a}[g]\cdot b - \lambda_{g\cdot b}[f]\cdot a + fg\cdot [a,b]
\end{equation*}
Since $a$ and $b$ are assumed to be $\Cin{U}$-independent, each factor accompanying them must vanish independently, thus giving
\begin{equation*}
    \lambda_{f\cdot a}=f\cdot\lambda_{a}
\end{equation*}
for all $a\in \Gamma_U(A)$, $f\in\Cin{U}$. This gives $\Cin{U}$-linearity of the symbol map restricted to the trivializing neighbourhood $\lambda|_U$, however this is clearly a trivialization-independent property as introducing other trivializations will give $\Cin{U}$-linear combinations of the local sections. Hence, $\lambda$ is globally $\Cin{M}$-linear, showing that it is an anchor, as desired.
\end{proof}

These results give us two good reasons to study derivative Lie algebra structures on line bundles: firstly, beyond rank 1 derivative Lie algebras are simply Lie algebroids, whose general theory is well understood, and secondly, in rank 1 there aren't any local Lie algebra structures which are not derivative. These motivate the definition of Jacobi manifolds in Section \ref{JacobiManifolds} below. Furthermore, it will be shown that most of the well-known examples of Jacobi structures found in the literature, such as Poisson or contact manifolds, are, in fact, not Lie algebroids.

\subsection{Jacobi Manifolds} \label{JacobiManifolds}

We encourage the reader to compare our presentation of the topic of Jacobi manifolds in this section with a standard presentation of Poisson manifolds (an excellent reference is \cite{fernandes2014lectures}). As it will be apparent below, our treatment using the categories of line bundles and lvector bundles produces a formulation of Jacobi geometry that is entirely analogous to that of Poisson geometry. This will justify the choice of similar terminology where appropriate. Furthermore, in Section \ref{UnitPoisson} below it will be shown that conventional Poisson manifolds are recovered as a very special case of Jacobi manifolds within a spectrum of generalisations including, for instance, unit-free Poisson and conformal Poisson structures.\newline

A \textbf{Jacobi manifold} or \textbf{Jacobi structure} is defined as a line bundle $\lambda:L\to M$ whose sections carry a local Lie algebra structure $(\Sec{L},\{,\})$. In virtue of proposition \ref{LocalLieRk1}, the locality condition is tantamount to the adjoint map of the Lie bracket being a differential operator of the form:
\begin{equation*}
    \text{ad}_{\{,\}}:\Sec{L}\to \Dr{L}.
\end{equation*}
The fact that the Lie bracket $\{,\}$ is a derivation on each argument allows us to write
\begin{equation*}
    \{s,r\}=\Pi(j^1s,j^1r) \quad \text{ with }\quad  \Pi\in \Sec{\wedge^2(\Der L)}.
\end{equation*}
We see that the bilinear form $\Pi$ is the analogue of the Poisson bivector and we appropriately call it the \textbf{Jacobi biderivation}. A simple check using the Schouten bracket of multiderivations introduced in Section \ref{JacobiAlgebroids} shows that
\begin{equation*}
    \Pi\in \Sec{\wedge^2(\Der L)} \text{ is a Jacobi biderivation}\quad \Leftrightarrow \quad \llbracket \Pi , \Pi \rrbracket=0.
\end{equation*}
Noting that $(\Jet^1L)^*\otimes L=((\Der L)^{*L})^{*L}\cong \Der L$, the Jacobi biderivation induces a musical map
\begin{equation*}
    \Pi^\sharp:\Jet^1 L \to \Der L.
\end{equation*}
This musical map connects the otherwise disconnected der and jet sequences:
\begin{equation*}
\begin{tikzcd}
0 \arrow[r] &  \Real_M \arrow[r] & \Der L \arrow[r, "\delta"]  & \Tan M \arrow[r]  & 0 \\
0 &  L \arrow[l] & \Jet^1L  \arrow[u, "\Pi^\sharp"] \arrow[l]& \Tan^{*L} M \arrow[u,"\Lambda^\sharp"] \arrow[l,"i"] & 0 \arrow[l] 
\end{tikzcd}
\end{equation*}
where we have defined the bundle map $\Lambda^\sharp:=\delta \circ \Pi^\sharp\circ i$ so that the diagram commutes. By construction, we see that this bundle map is indeed a musical map for the bilinear form defined as the fibre-wise pull-back of the Jacobi biderivation via the injective map of the jet sequence: $\Lambda=i^*\Pi\in\Sec{\wedge^2(\Tan^{*L} M)\otimes L}$. We call this form the \textbf{Jacobi lbivector}.\newline

Since a Jacobi structure is a derivative Lie algebra, there is a notion of \textbf{Hamiltonian derivation} of a section $s\in\Sec{L}$ given by the adjoint map as usual
\begin{equation*}
    D_s :=\{s, -\}= \Pi^\sharp (j^1s),
\end{equation*}
but also a notion of \textbf{Hamiltonian vector field} given by the symbol of the Jacobi bracket, which in our conventions we obtain as the symbol of the Hamiltonian derivation
\begin{equation*}
    X_s:= \delta(D_s).
\end{equation*}
The symbol-squiggle identities of the Jacobi bracket (see Proposition \ref{SymbolSquiggle}) establish the basic relationship between the Hamiltonian maps and bilinear forms associated to a Jacobi structure. These are summarised in the following three expressions:
\begin{align*}
    D_{f\cdot s} &=f\cdot D_s+\Pi^\sharp(i(df\otimes s))\\
    D_{s}(f\cdot r) &=f\cdot D_s(r)+X_s[f]\cdot r\\
    X_{f\cdot s} &=f\cdot X_s+\Lambda^\sharp(df\otimes s)
\end{align*}
for all $f\in\Cin{M}$ and $s,r\in\Sec{L}$.\newline

Let $i:S\hookrightarrow M$ be an embedded submanifold of a line bundle $\lambda:L\to M$ and let us denote by $L_S:=i^*L$ the induced line bundle over $S$ and by $\Gamma_S\subset \Sec{L}$ the submodule of vanishing sections. Note that at each point $x\in M$ the jet space $\Jet^1_x L$ carries a $L_x$-valued bilinear form given by the Jacobi biderivation $\Pi_x$ and so the notion of isotropic subspace of a jet space (one where the bilinear form restricts to zero) is well-defined point-wise. We can then define coisotropic submanifolds of Jacobi manifolds in an entirely analogous way to the coisotropic submanifolds of Poisson manifolds. We say that $S$ is \textbf{coisotropic} if the annihilator of its der bundle $(\Der L_S)^{0L}\subset \Jet^1L$ is an isotropic subbundle with respect to the lbiderivation $\Pi$. Equivalently, $S$ is coisotropic if
\begin{equation*}
    \Pi^\sharp_x((\Der_xL_S)^{0L})\subset \Der_xL_S \qquad \forall x\in S.
\end{equation*}
The following proposition gives several equivalent characterizations of coisotropic submanifolds.
\begin{prop}[Coisotropic Submanifolds of a Jacobi Manifold, cf. {\cite[Lemma 2.44]{tortorella2017deformations}}]\label{CoisotropicSubmanifoldsJacobi}
Let $i:S\hookrightarrow M$ be an embedded submanifold of a Jacobi structure $(\Sec{L},\{,\})$, then the following conditions are equivalent
\begin{enumerate}
\normalfont
    \item $S$ is a coisotropic submanifold;
    \item $\Lambda^\sharp_x(\Tan_x S^0\otimes L_x)\subset \Tan_x S \quad \forall x\in S$;
    \item the vanishing sections $\Gamma_S$ form a Lie subalgebra, $\{\Gamma_S,\Gamma_S\}\subset \Gamma_S$;
    \item the Hamiltonian vector fields of vanishing sections are tangent to $S$, $X_s\in\Sec{\Tan S}$ for $s\in\Gamma_S$.
\end{enumerate}
\end{prop}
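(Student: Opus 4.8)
The plan is to reduce all four conditions to pointwise statements about the musical maps $\Pi^\sharp$ and $\Lambda^\sharp$ acting on the lannihilator $(\Der_x L_S)^{0L}$, and the cornerstone will be two structural identities valid at each $x\in S$. First I would record that, since the scalar derivations $\ker\delta_x\cong\Real$ lie in $\Der_x L_S$ and $\delta_L(\Der L_S)=\Tan S$ by Proposition \ref{DerBundleLineSubmanifold}, a rank count (both spaces have rank $\dim S+1$) forces
\begin{equation*}
    \Der_x L_S = \delta_x^{-1}(\Tan_x S).
\end{equation*}
Dualising, an element of $\Jet^1_x L$ annihilating $\Der_x L_S$ must in particular annihilate $\ker\delta_x$, hence factors through $\delta_x$ as $\kappa(\psi)$ for a unique $\psi\in\Tan^{*L}_x M$, where $\kappa:=\delta^{*\Id_L}$ denotes the jet-sequence inclusion (the map written $i$ in the jet sequence, renamed here to avoid clashing with the embedding $i:S\hookrightarrow M$); surjectivity of $\delta_x$ then gives
\begin{equation*}
    (\Der_x L_S)^{0L} = \kappa(\Tan_x S^0 \otimes L_x).
\end{equation*}

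Second, I would establish that the jets of vanishing sections exhaust this lannihilator: for $x\in S$,
\begin{equation*}
    \{\, j^1_x s \mid s\in\Gamma_S \,\} = (\Der_x L_S)^{0L}.
\end{equation*}
Writing $s=\sum_k f_k\cdot s_k$ with $f_k\in I_S$ and applying the Leibniz rule $j^1(f\cdot s)=f\cdot j^1 s+\kappa(df\otimes s)$, the terms $f_k(x)\,j^1_x s_k$ vanish and one is left with $j^1_x s=\kappa\big(\sum_k d_x f_k\otimes s_k(x)\big)$; since the differentials $d_x f$ of functions $f\in I_S$ span the conormal space $\Tan_x S^0$ and $s_k(x)$ is arbitrary in $L_x$, these jets cover all of $\kappa(\Tan_x S^0\otimes L_x)$, which by the previous identity is $(\Der_x L_S)^{0L}$.

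With these in hand the equivalences are short. For (1)$\Leftrightarrow$(2), apply $\delta_x$: since $\Der_x L_S=\delta_x^{-1}(\Tan_x S)$, membership $\Pi^\sharp_x(\phi)\in\Der_x L_S$ is detected by $\delta_x(\Pi^\sharp_x(\phi))\in\Tan_x S$, and running $\phi$ over $(\Der_x L_S)^{0L}=\kappa(\Tan_x S^0\otimes L_x)$ turns this, via $\Lambda^\sharp=\delta\circ\Pi^\sharp\circ\kappa$, into exactly condition 2. For (2)$\Leftrightarrow$(4), the spanning identity gives $X_s|_x=\delta_x(\Pi^\sharp_x(j^1_x s))=\Lambda^\sharp_x(\psi)$ for $s\in\Gamma_S$ with $j^1_x s=\kappa(\psi)$, so tangency of $X_s$ to $S$ for all $s\in\Gamma_S$ is precisely $\Lambda^\sharp_x(\Tan_x S^0\otimes L_x)\subset\Tan_x S$. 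Finally (1)$\Leftrightarrow$(3): using $\{s,r\}=j^1 r\big(\Pi^\sharp(j^1 s)\big)$ together with the spanning identity, condition 3 says that $(\Der_x L_S)^{0L}$ is isotropic for $\Pi_x$, i.e. $\Pi^\sharp_x\big((\Der_x L_S)^{0L}\big)$ lies in the lannihilator of $(\Der_x L_S)^{0L}$; by reflexivity of lduality (the double-lannihilator identity of Proposition \ref{LSumsDual}) that lannihilator is $\Der_x L_S$ itself, which is condition 1.

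The delicate points are bookkeeping rather than computational. The main thing to get right is the spanning identity for jets of vanishing sections, and then the consistent handling of lannihilators, musical maps and the $L$-valued pairing throughout: in particular, the passage from isotropy to condition 1 genuinely uses reflexivity of the lduality on the finite-rank lvector space $\Der_x L$, and one must keep the jet-sequence inclusion $\kappa$ and the embedding $i:S\hookrightarrow M$ notationally distinct. Once the two structural identities are in place no further analysis is needed, so I would present them first as standalone pointwise lemmas and then read off the four equivalences.
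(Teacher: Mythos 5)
Your proposal is correct, and it factors the argument differently from the paper. The paper proves the chain (1)$\Leftrightarrow$(2)$\Leftrightarrow$(3)$\Leftrightarrow$(4): the first equivalence uses the same annihilator identity $(\Der_x L_S)^{0L}=\kappa(\Tan_x S^0\otimes L_x)$ that you derive, but the remaining two are module-level bracket computations — it expands $i^*\{f\cdot s,g\cdot s\}=\Lambda^\sharp(df\otimes s)[g]\cdot i^*s$ and $i^*\{s',f\cdot s\}=i^*(X_{s'}[f])\cdot i^*s$ via the symbol-squiggle identities of Proposition \ref{SymbolSquiggle}, using the local presentation $\Gamma_S=I_S\cdot\Sec{L}$. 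You instead front-load all of the Leibniz and ideal bookkeeping into one pointwise lemma that the paper never states explicitly, the spanning identity $\{j^1_x s\mid s\in\Gamma_S\}=(\Der_x L_S)^{0L}$, after which (1)$\Leftrightarrow$(2), (2)$\Leftrightarrow$(4) and (1)$\Leftrightarrow$(3) reduce to linear algebra at each point (the last invoking the double-lannihilator identity of Proposition \ref{LSumsDual}, which in the paper is silently packaged into the two stated forms of the definition of coisotropy). Both routes ultimately rest on the same analytic inputs — the local identity $\Gamma_S=I_S\cdot\Sec{L}$ and the fact that differentials of functions in $I_S$ span the conormal bundle — so they are equivalent in substance; what yours buys is transparency and reusability (your two lemmas, in particular the exhaustion of the annihilator by jets of vanishing sections, are exactly what is needed again in arguments like Proposition \ref{JetCoisotropicReduction}, and no bracket manipulation is needed downstream), while the paper's chained version stays closer to the bracket itself and records slightly finer module-level identities along the way. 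Your explicit observation that $\Der_x L_S=\delta_x^{-1}(\Tan_x S)$, justified by the rank count, also patches a step the paper leaves implicit: detecting membership in $\Der_x L_S$ by applying $\delta_x$ genuinely requires that $\Der_x L_S$ be the full preimage, not merely that $\delta_x(\Der_x L_S)=\Tan_x S$.
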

\begin{proof}
Equivalence between 1. and 2. follows from the definition of the bilinear forms, $\Lambda=i^*\Pi$, and the fact that
\begin{equation*}
    i(\alpha\otimes l)\in (\Der_xL_S)^{0L} \quad \Leftrightarrow \quad i(\alpha\otimes l)(a)=0 \quad \forall a\in \Der_xL_S,
\end{equation*}
which, since $i$ is the $L$-dual to the anchor $\delta$, is tantamount to demanding
\begin{equation*}
    \alpha(\delta(a))=0 \quad \forall a\in \Der_xL_S.
\end{equation*}
Recall that $\delta(\Der L_S)=\Tan S$, then 
\begin{equation*}
    i(\alpha\otimes l)\in (\Der_xL_S)^{0L} \quad \Leftrightarrow \quad \alpha\otimes l \in \Tan_x S^0\otimes L_x.
\end{equation*}
To show equivalence between 2. and 3. we can use a local trivialization to write $\Ker{i^*}=\Gamma_S=I_S\cdot \Sec{L}$, then it follows from the definition of the Jacobi lbivector that
\begin{equation*}
    i^*\{f\cdot s,g\cdot s\}=\Lambda^\sharp(df\otimes s)[g]\cdot i^*s=0
\end{equation*}
for $f,g\in I_S$ and any section $s\in\Sec{L}$. Finally, equivalence between 3. and 4. follows by considering $f\in I_S$, $s\in\Sec{L}$, $s'\in\Gamma_S$ and writing
\begin{equation*}
    i^*\{s',f\cdot s\}=i^*f\cdot i^*\{s',s\} + i^*(X_{s'}[f])\cdot i^*s= i^*(X_{s'}[f])\cdot i^*s
\end{equation*}
thus implying $X_{s'}[f]\in I_S$, completing the equivalence.
\end{proof}

Let $(\Sec{L_1},\{,\}_1)$ and $(\Sec{L_2},\{,\}_2)$ be two Jacobi structures, a \textbf{Jacobi map} is a factor $B:L_1\to L_2$ such that its pull-back on sections
\begin{equation*}
    B^*:(\Sec{L_2},\{,\}_2)\to (\Sec{L_1},\{,\}_1)
\end{equation*}
is a Lie algebra morphism. The next proposition shows that the categorical product in $\Line_\Man$ allows us to define a product of Jacobi manifolds.
\begin{prop}[Product of Jacobi Manifolds, cf. {\cite[Proposition 5.1]{ibanez1997coisotropic}}]\label{ProductJacobi}
Let Jacobi structures $(\Sec{L_1},\{,\}_1)$ and $(\Sec{L_2},\{,\}_2)$, then there exists a unique Jacobi structure in the line product $(\Sec{L_1\utimes L_2},\{,\}_{12})$ such that the canonical projection factors
\begin{equation*}
\begin{tikzcd}
L_1 & L_1\utimes L_2 \arrow[l,"P_1"']\arrow[r,"P_2"] & L_2.
\end{tikzcd}
\end{equation*}
are Jacobi maps. The Jacobi structure $(\Sec{L_1\utimes L_2},\{,\}_{12})$ is called the \textbf{product Jacobi structure}.
\end{prop}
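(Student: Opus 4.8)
The plan is to construct the product Jacobi biderivation explicitly from the decomposition of the der bundle of the line product and to obtain uniqueness from the spanning properties of the line product. Recall from Proposition \ref{DerLineProduct} that $\Der(L_1\utimes L_2)\cong \Der L_1\boxplus \Der L_2$, and from Proposition \ref{DerivationsLineProduct} that the factor derivations embed as commuting Lie subalgebras $k_i:\Dr{L_i}\hookrightarrow \Dr{L_1\utimes L_2}$ with $[\Dr{L_1},\Dr{L_2}]=0$. A quick check on the defining conditions of Proposition \ref{DerivationsLineProduct} shows that each $k_i$ is $\Cin{M_i}$-linear through $p_i^*$, so it extends to a morphism of Gerstenhaber algebras on multiderivations. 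I would therefore define the candidate
\begin{equation*}
    \Pi_{12}:=k_1(\Pi_1)+k_2(\Pi_2)\in\Sec{\wedge^2\Der(L_1\utimes L_2)},
\end{equation*}
where $\Pi_1,\Pi_2$ are the Jacobi biderivations of the two factors.

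For existence I would verify that $\Pi_{12}$ is a genuine Jacobi biderivation, i.e.\ that $\llbracket\Pi_{12},\Pi_{12}\rrbracket=0$. Expanding the Schouten bracket, the two pure terms vanish because each $k_i$ is a Gerstenhaber morphism and $\llbracket\Pi_i,\Pi_i\rrbracket=0$, while the cross term $\llbracket k_1\Pi_1,k_2\Pi_2\rrbracket$ vanishes because the Schouten bracket of multiderivations assembled from the two commuting subalgebras $k_1\Dr{L_1}$ and $k_2\Dr{L_2}$ is identically zero. This is the cleanest point of the argument, since it reduces entirely to the commutation relation of Proposition \ref{DerivationsLineProduct}.

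Next I would show that the projections are Jacobi maps. The key identity is $\Pi_{12}^\sharp(j^1P_i^*s)=k_i(D^i_s)$, where $D^i_s=\{s,-\}_i$ is the Hamiltonian derivation on the factor $L_i$; this follows by tracking $j^1P_i^*s$ through the der--jet decomposition of Proposition \ref{DerLineProduct} and using the block-diagonal form of $\Pi_{12}^\sharp$ induced by the absence of a $\Der L_1\wedge\Der L_2$ term in $\Pi_{12}$. Granting it, the behaviour of the embedded derivations recorded in Proposition \ref{DerivationsLineProduct}, namely $k_1(D)(P_1^*r)=P_1^*D(r)$ and $k_1(D)(P_2^*r)=0$, immediately gives $\{P_i^*s,P_i^*r\}_{12}=P_i^*\{s,r\}_i$, so each $P_i^*$ is a Lie algebra morphism, and as a bonus shows that sections pulled back from the two different factors have vanishing bracket.

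The main obstacle is uniqueness. Suppose $\{,\}'$ is any Jacobi structure on $L_1\utimes L_2$ making both projections Jacobi maps. Since $\Sec{L_1\utimes L_2}$ is generated over $\Cin{M_1\dtimes M_2}$ by $P_1^*\Sec{L_1}$ (equivalently $P_2^*\Sec{L_2}$), and a biderivation is determined by its values on a generating set together with its symbol, it suffices to pin down the Hamiltonian vector fields $X'_{P_i^*s}$. The requirement that $P_i^*$ preserve brackets forces $X'_{P_i^*s}$ to be $p_i$-related to the Hamiltonian vector field $X^i_s$ of the factor; then, using the relation $P_1^*s_1=\tfrac{s_1}{u_2}\cdot P_2^*u_2$ to re-express the pure brackets of one factor in terms of the other and letting the local sections $s_i,u_i$ vary, one determines $X'_{P_i^*s}$ on all ratio functions. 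By Proposition \ref{SpanningFunctionsLineProduct} the differentials of these ratio functions span the cotangent bundle of $M_1\dtimes M_2$, so the symbols are completely fixed, the cross-brackets are forced to vanish, and $\{,\}'$ must coincide with the bracket of $\Pi_{12}$. I expect the delicate part to be disentangling the coupling between the unknown cross-brackets and the action of the symbols on the ratio functions, which is precisely where the spanning result of Proposition \ref{SpanningFunctionsLineProduct} does the essential work.
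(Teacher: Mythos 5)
Your existence argument is correct, and it takes a genuinely different route from the paper's. The paper proceeds by extension by symbol (Proposition \ref{ExtensionSymbol}): it prescribes the brackets of pull-back sections together with the values of the symbol $X^{12}$ and squiggle $\Lambda^{12}$ on ratio functions, and then verifies the symbol--squiggle identities of Proposition \ref{SymbolSquiggle} by the long computation recorded in Appendix \ref{SymbolSquiggleProductJacobi}. You instead assemble $\Pi_{12}=k_1(\Pi_1)+k_2(\Pi_2)$ and obtain integrability from the Schouten calculus, reducing everything to Propositions \ref{DerLineProduct} and \ref{DerivationsLineProduct}; this replaces the appendix computation by conceptual bookkeeping. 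Two facts are used implicitly and should be recorded: the symbol of $k_i(D)$ is $p_i$-related to the symbol of $D$, and it annihilates $p_j^*\Cin{M_j}$ for $j\neq i$ (both follow from the defining conditions of $k_i$ and the Leibniz rule). The first is what makes $k_i$ a Gerstenhaber morphism covering $p_i^*$; the second is genuinely needed for the cross term $\llbracket k_1\Pi_1,k_2\Pi_2\rrbracket$ to vanish, since commutation of the embedded derivations alone does not kill the terms in which a symbol hits the coefficient functions of the other factor. With these in place, your $\Pi_{12}$ satisfies the same defining relations as the paper's bracket, so the two constructions agree, and your derivation of the Jacobi-map property is fine.

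The uniqueness argument, however, has a genuine gap, located exactly where you said you expected trouble. The identity
\begin{equation*}
    P_1^*\{s,r\}_1=\{P_1^*s,P_1^*r\}'=\tfrac{r}{u_2}\,\{P_1^*s,P_2^*u_2\}'+X'_{P_1^*s}[\tfrac{r}{u_2}]\,P_2^*u_2, \qquad s,r\in\Sec{L_1},\ u_2\in\Sec{L_2^\bullet},
\end{equation*}
is one equation in two unknowns, the cross-bracket and the action of the symbol on ratio functions, and varying the sections only reproduces the same coupled family; the Jacobi-map hypothesis supplies nothing that decouples them. In fact no argument can close this gap, because uniqueness as stated is false. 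Take $M_1=M_2=\{\ast\}$ and $L_1=L_2=\Real$ with their (necessarily zero) Jacobi brackets. Then $M_1\dtimes M_2=\Real^\times$, $\Sec{L_1\utimes L_2}\cong\Cin{\Real^\times}$, $P_1^*s$ is the constant function $s$, and $P_2^*r$ is the function $b\mapsto r/b$. The product structure is the zero bracket, but $\{f,g\}':=fg'-gf'$ (the Lichnerowicz bracket with $\pi=0$ and $R=\partial_b$) is another Jacobi structure making both projections Jacobi maps: $\{s,r\}'=0$ for constants, and $\{s/b,r/b\}'=0$ by direct computation. Yet its cross-brackets $\{P_1^*s,P_2^*r\}'=-sr/b^2$ do not vanish. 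So the cross-brackets are not forced to vanish by the stated hypotheses; their vanishing must be imposed as an additional part of the characterisation, exactly as in the classical uniqueness statement for products of Poisson manifolds, and as the paper's construction does implicitly. Once it is imposed, your spanning argument via Proposition \ref{SpanningFunctionsLineProduct} does pin down the symbols and hence the whole bracket. Note finally that the paper's own proof is purely constructive and never addresses uniqueness at all, so the defect lies as much in the statement of the proposition as in your proposal; but your claim that the cross-brackets are forced to vanish is not a valid deduction.
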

\begin{proof}
Recall from the definition of line product that $\Sec{L_1\utimes L_2}=\Cin{M_1\dtimes M_2} \cdot p_1^*\Sec{L_1}$, therefore pull-backs of sections span all sections of the line product. Our strategy is to use an extension by symbol argument to construct a total Jacobi bracket on $L_1\utimes L_2$ by fixing an $\Real$-linear Lie bracket on spanning sections and finding appropriate symbol $X^{12}:\Sec{L_1\utimes L_2}\to \Sec{\Tan M_1\dtimes M_2}$ and squiggle $\Lambda^{12}:\Sec{\Tan ^{*L} (M_1\dtimes M_2)\otimes L_1\utimes L_2}\to \Sec{\Tan (M_1\dtimes M_2)}$ to apply Proposition \ref{ExtensionSymbol}. The bracket on pull-backs is defined by:
\begin{equation*}
    \{P_1^*s_1,P_1^*s_1'\}_{12}:=P_1^*\{s_1,s_1'\}_1 \qquad \{P_2^*s_2,P_2^*s_2'\}_{12}:=P_2^*\{s_2,s_2'\}_2\qquad \{P_1^*s_1,P_2^*s_2\}_{12}:=0
\end{equation*}
for all $s_i,s_i'\in\Sec{L_i}$, $i=1,2$. Proposition \ref{SpanningFunctionsLineProduct} ensures that it suffices to define the action of the symbol and the squiggle locally on spanning functions of the base product, thus we set
\begin{align*}
    X^{12}_{P_1^*s_1}[\tfrac{a}{b}]&=\tfrac{\{s_1,a\}_1}{b}   & X^{12}_{P_2^*s_2}[\tfrac{b}{a}]&=\tfrac{\{s_2,b\}_2}{a}\\
    \Lambda^{12}(d\tfrac{a}{b}\otimes P_1^*s_1)[\tfrac{a'}{b'}]&=\tfrac{\{a,a'\}_1}{b}\tfrac{s_1}{b'} &\Lambda^{12}(d\tfrac{b}{a}\otimes P_2^*s_2)[\tfrac{b'}{a'}]&=\tfrac{\{b,b'\}_2}{a}\tfrac{s_2}{a'}
\end{align*}
for all $s_i\in\Sec{L_i}$, $a,a'\in\Sec{L_1^\bullet}$ and $b,b'\in\Sec{L_2^\bullet}$. Checking that $X^{12}$ and $\Lambda^{12}$ so defined satisfy the symbol-squiggle compatibility conditions of Proposition \ref{SymbolSquiggle} follows by a long direct computation that becomes routine once formulas for the action of $X^{12}$ and $\Lambda^{12}$ on all the possible combinations of spanning functions are derived from the symbol-squiggle identity. Appendix \ref{SymbolSquiggleProductJacobi} contains all these formulas. We also point out that, although the above formulas may appear asymmetric for the local non-vanishing sections $a\in\Sec{L_1^\bullet}$ and $b\in\Sec{L_2^\bullet}$, the definition of local ratio functions is such that $\tfrac{a}{b}\tfrac{b}{a}=1$ and so, by construction, we can relate both sides of the definition by the identities
\begin{equation*}
    d\tfrac{a}{b}=-(\tfrac{a}{b})^2d\tfrac{b}{a} \qquad P_1^*a=\tfrac{a}{b}P_2^*b.
\end{equation*}
\end{proof}

Given a Jacobi structure $(\Sec{L},\{,\})$ we define the \textbf{opposite Jacobi structure} simply by taking the negative Lie bracket on sections $\overline{L}:=(\Sec{L},-\{,\})$. The proposition below shows that we can regard Jacobi maps as a particular case of \textbf{coistropic relations}, which are defined, in general, as coisotropic submanifolds of the product Jacobi manifold $L_1\utimes \overline{L}_2$.
\begin{prop}[Jacobi Maps as Coisotropic Relations, cf. {\cite[Theorem 5.3]{ibanez1997coisotropic}}]\label{JacobiMapCoisotropicRelation}
Let two Jacobi structures $(\Sec{L_{M_1}},\{,\}_1)$ and $(\Sec{L_{M_2}},\{,\}_2)$ and a factor $B:L_{M_1}\to L_{M_2}$, then $B$ is a Jacobi map iff its lgraph
\begin{equation*}
\normalfont
    \LGrph{B}\subset M_1\dtimes M_2
\end{equation*}
is a coisotropic submanifold of the product Jacobi structure $L_{M_1}\utimes \overline{L}_{M_2}$.
\end{prop}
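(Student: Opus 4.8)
The plan is to characterise coisotropicity of $\LGrph{B}$ through condition 3 of Proposition \ref{CoisotropicSubmanifoldsJacobi}: an embedded submanifold is coisotropic precisely when its vanishing submodule is closed under the ambient Jacobi bracket. Thus the whole statement reduces to (i) identifying the vanishing submodule $\Gamma_{\LGrph{B}}\subset\Sec{L_{M_1}\utimes \overline{L}_{M_2}}$ explicitly, and (ii) computing the product bracket of its generators and reading off exactly when it stays inside $\Gamma_{\LGrph{B}}$.

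For step (i), I would use the two projection factors $P_1,P_2$ of the line product together with the factor pull-back $B^*:\Sec{L_{M_2}}\to\Sec{L_{M_1}}$ to build the candidate generators
$$\sigma_s:=P_1^*(B^*s)-P_2^*s,\qquad s\in\Sec{L_{M_2}}.$$
Evaluating at a point $B_{x_1}\in\LGrph{B}$, the first term gives $(B^*s)(x_1)$ (as $P_1$ restricts to the identity on fibres) while the second gives $B_{x_1}^{-1}s(\varphi(x_1))$, and these coincide by the very definition of the factor pull-back; hence $\sigma_s$ vanishes on $\LGrph{B}$ by construction. To see that these span the whole vanishing submodule I would pass to a local trivialisation $L_i|_{U_i}\cong\Real_{U_i}$, in which $M_1\dtimes M_2\cong U_1\times U_2\times\Real^\times$ with coordinates $(y_1,y_2,b)$ and $\LGrph{B}=\{(y_1,\varphi(y_1),\beta(y_1))\}$. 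Taking $s$ to be the local unit recovers a nonvanishing multiple of $b-\beta(y_1)$, and taking $s$ to be coordinate sections and subtracting the appropriate multiple of the previous generator recovers nonvanishing multiples of $y_2-\varphi(y_1)$; since these functions generate $I_{\LGrph{B}}$ and $\Gamma_{\LGrph{B}}=I_{\LGrph{B}}\cdot\Sec{L_{M_1}\utimes L_{M_2}}$ locally, the $\sigma_s$ indeed generate $\Gamma_{\LGrph{B}}$ as a module.

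For step (ii), I would feed these generators into the product Jacobi bracket of Proposition \ref{ProductJacobi}, applied to $L_{M_1}$ and $\overline{L}_{M_2}$, so that $\{P_1^*\,\cdot\,,P_1^*\,\cdot\,\}_{12}=P_1^*\{\,\cdot\,,\,\cdot\,\}_1$, $\{P_2^*\,\cdot\,,P_2^*\,\cdot\,\}_{12}=-P_2^*\{\,\cdot\,,\,\cdot\,\}_2$ and the mixed brackets vanish. Expanding $\{\sigma_s,\sigma_{s'}\}_{12}$, the two cross terms die and one is left with
$$\{\sigma_s,\sigma_{s'}\}_{12}=\sigma_{\{s,s'\}_2}+P_1^*\big(\{B^*s,B^*s'\}_1-B^*\{s,s'\}_2\big).$$
Since $\sigma_{\{s,s'\}_2}\in\Gamma_{\LGrph{B}}$ automatically, and since $P_1^*s_1\in\Gamma_{\LGrph{B}}$ forces $s_1=0$ (because $p_1$ maps $\LGrph{B}$ onto all of $M_1$ via $x_1\mapsto B_{x_1}$), the bracket lands in $\Gamma_{\LGrph{B}}$ if and only if $\{B^*s,B^*s'\}_1=B^*\{s,s'\}_2$ for all $s,s'$, i.e.\ exactly when $B^*$ is a Lie algebra morphism, i.e.\ when $B$ is a Jacobi map.

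The main obstacle I anticipate is the bookkeeping in step (i): justifying that the $\sigma_s$ generate the entire vanishing submodule, and that verifying closure on these generators suffices to conclude $\{\Gamma_{\LGrph{B}},\Gamma_{\LGrph{B}}\}_{12}\subset\Gamma_{\LGrph{B}}$. The latter follows from the derivation property of the Jacobi bracket, since for $f\in\Cin{M_1\dtimes M_2}$ and $\sigma,\tau\in\Gamma_{\LGrph{B}}$ the term $\{f\sigma,\tau\}_{12}-f\{\sigma,\tau\}_{12}$ is a symbol multiple of $\sigma$ and hence again in the vanishing module; but this reduction, together with the local trivialisation computation identifying the generators with $b-\beta(y_1)$ and $y_2-\varphi(y_1)$, is the part that requires the most care.
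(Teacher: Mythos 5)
Your proof is correct and follows essentially the same route as the paper's: characterisation 3 of Proposition \ref{CoisotropicSubmanifoldsJacobi}, the same generators $P_1^*B^*s-P_2^*s$ of the vanishing submodule, and the same product-bracket computation reducing coisotropicity to $B^*$ being a Lie algebra morphism. The two points you flag as delicate --- the local-trivialisation argument that the $\sigma_s$ generate $\Gamma_{\LGrph{B}}$, and the derivation-property reduction from generators to the full module --- are both sound, and in fact fill in steps the paper dismisses with ``by construction''.
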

\begin{proof}
We will use characterization 3 in proposition \ref{CoisotropicSubmanifoldsJacobi} to identify a coisotropic submanifold with a Lie subalgebra of the local Lie algebra of a Jacobi structure. Note that, by construction of $\LGrph{B}$, the vanishing sections $\Gamma_{\LGrph{B}}\subset \Sec{L_1\utimes \overline{L}_2}$ are generated by those of the form
\begin{equation*}
    P_1^*B^*s-P_2^*s\qquad \forall s\in\Sec{L_2}.
\end{equation*}
We can directly compute the brackets using the defining relations of the product Jacobi structure
\begin{equation*}
    \{P_1^*B^*s-P_2^*s,P_1^*B^*s'-P_2^*s'\}_{12}=P_1^*\{B^*s,B^*s'\}_1-P_2^*\{s,s'\}_2
\end{equation*}
which must hold for all $s,s'\in\Sec{L_2}$. Then it follows that
\begin{equation*}
    \{P_1^*B^*s-P_2^*s,P_1^*B^*s'-P_2^*s'\}_{12}\in \Gamma_{\LGrph{B}}\quad \Leftrightarrow \quad \{B^*s,B^*s'\}_1 = B^*\{s,s'\}_2
\end{equation*}
as desired.
\end{proof}

Let a line bundle $\lambda:L\to M$ with a Jacobi structure $(\Sec{L},\{,\})$ and consider a submanifold $i:C\hookrightarrow M$ with its corresponding embedding factor $\iota: L_C\to L$. For another line bundle $\lambda':L'\to M'$, assume there exists a submersion factor $\pi:L_C\to L'$ covering a surjective submersion $p:C\twoheadrightarrow M'$. Then, we say that a Jacobi structure $(\Sec{L},\{,\})$ \textbf{reduces} to the Jacobi structure $(\Sec{L'},\{\,\}')$ via $\pi:L_C\to L'$ when for all pairs of sections $s_1,s_2\in\Sec{L'}$ the identity
\begin{equation*}
    \pi^*\{s_1,s_2\}'=\iota^*\{S_1,S_2\}
\end{equation*}
holds for all choices of extensions $S_1,S_2$, i.e all choices of sections $S_1,S_2\in\Sec{L}$ satisfying 
\begin{equation*}
    \pi^*s_1=\iota^*S_1 \qquad \pi^*s_2=\iota^*S_2.
\end{equation*}
The following proposition shows that, in analogy with the case of reduction of Poisson manifolds, coistropic submanifolds play a distinguished role in the reduction of Jacobi manifolds.

\begin{prop}[Coisotropic Reduction of Jacobi Manifolds, cf. {\cite[Proposition 2.56]{tortorella2017deformations}}] \label{CoisotropicReductionJacobi}
Let $\lambda:L\to M$ and $\lambda':L'\to M'$ be line bundles,  $(\Sec{L},\{,\})$ be a Jacobi structure, $i:C\hookrightarrow M$ a closed coisotropic submanifold and $\pi:L_C\to L'$ a submersion factor covering a surjective submersion $p:C\twoheadrightarrow M'$ so that we have the reduction diagram:
\begin{equation*}
    \begin{tikzcd}[sep=tiny]
    L_C \arrow[rr,"\iota"] \arrow[dd,"\pi"'] \arrow[dr]& & L \arrow[dr] & \\
    & C \arrow[rr,"i"', hook] \arrow[dd, "p",twoheadrightarrow] & & M \\
    L' \arrow[dr] & & & \\
    & M' &  & 
    \end{tikzcd}
\end{equation*}
Assume the following compatibility condition between the coisotropic submanifold and the submersion factor:
\begin{equation*}
\normalfont
    \delta(\Ker{\Der \pi})=\Lambda^\sharp((\Tan C)^{\text{0}L}),
\end{equation*}
where $\delta:\Der L \to \Tan M$ is the anchor of the der bundle, $\normalfont(\Tan C)^{0L}\subset \Tan^{*L}M$ is the annihilator and $\normalfont\Lambda^\sharp: \Tan^{*L}M\to \Tan M$ is the squiggle, then there exists a unique Jacobi structure $(\Sec{L'},\{,\}')$ such that $(\Sec{L},\{,\})$ reduces to it via the submersion factor $\pi:L_C\to L'$.
\end{prop}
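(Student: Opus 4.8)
The plan is to define the reduced bracket by the only formula available and then show it is well posed. Since $\pi$ is a submersion factor, its pull-back $\pi^*:\Sec{L'}\to\Sec{L_C}$ is injective, so there is at most one bracket $\{,\}'$ satisfying $\pi^*\{s_1,s_2\}'=\iota^*\{S_1,S_2\}$; this gives uniqueness immediately. For existence, closedness of the embedded $C$ makes $\iota^*:\Sec{L}\to\Sec{L_C}$ surjective, so every $\pi^*s_i$ admits an extension $S_i\in\Sec{L}$. I would then set $\{s_1,s_2\}'$ to be the unique section with $\pi^*\{s_1,s_2\}'=\iota^*\{S_1,S_2\}$, the content of the proposition being that the right-hand side (i) is independent of the chosen extensions and (ii) is $p$-projectable, i.e. lies in the image of $\pi^*$.

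The key lemma, and the step where both hypotheses enter, is the identification along $C$
\[
\Ker{\Der \pi}=\Pi^\sharp\!\left(i\big((\Tan C)^{0L}\big)\right)\subset \Der L_C .
\]
Coisotropy (Proposition \ref{CoisotropicSubmanifoldsJacobi}, condition 2) gives $\Lambda^\sharp((\Tan C)^{0L})\subset \Tan C$, so for any $T\in\Gamma_C$ the Hamiltonian derivation $D_T=\Pi^\sharp(j^1T)$ restricts at points of $C$ to an element of $\Der L_C$, its symbol $X_T=\delta(D_T)$ being tangent to $C$; the compatibility condition $\delta(\Ker{\Der \pi})=\Lambda^\sharp((\Tan C)^{0L})$ then matches these derivations with the kernel of $\Der \pi$. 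Granting the lemma, independence of the extensions follows: replacing $S_1$ by $S_1+T$ with $T\in\Gamma_C$ changes $\iota^*\{S_1,S_2\}$ by $\iota^*\{T,S_2\}=\iota^*D_T(S_2)$, and since $D_T|_C\in\Ker{\Der \pi}$ annihilates any projectable section $\iota^*S_2=\pi^*s_2$ (by the very definition of the der map, $\Der \pi(D_T|_C)=0$ forces $D_T|_C(\pi^*s_2)=0$), this variation vanishes; the antisymmetric argument handles the second slot.

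For projectability (ii) I would show $\iota^*\{S_1,S_2\}$ is annihilated by every $b\in\Ker{\Der \pi}$. Writing $b=D_T|_C$ for a suitable $T\in\Gamma_C$ via the key lemma, this is $\{T,\{S_1,S_2\}\}|_C$, which by the Jacobi identity equals $\{\{T,S_1\},S_2\}|_C+\{S_1,\{T,S_2\}\}|_C$; both terms vanish because $\{T,S_i\}\in\Gamma_C$ by step (i) and brackets of vanishing sections with projectable ones restrict to zero. Hence $\{s_1,s_2\}'$ is well defined. I would finish by checking $\{,\}'$ is a Jacobi structure: $\Real$-bilinearity and antisymmetry are inherited termwise, the Jacobi identity transfers because $\pi^*$ is injective and intertwines $\{,\}'$ with $\{,\}$ on projectable sections, and locality holds since $\text{ad}_{\{,\}'}$ is the operator induced on $L'$ by $\Der \pi$ applied to the Hamiltonian derivations of extensions, hence valued in the derivations $\Dr{L'}$.

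The main obstacle is precisely the key lemma: showing that the Hamiltonian derivations of vanishing sections \emph{exhaust} $\Ker{\Der \pi}$ and carry no spurious component transverse to it. Coisotropy only controls the symbols (the $\delta$-images) of these derivations, so the role of the compatibility condition is to promote this symbol-level equality to an equality of subbundles of $\Der L_C$; verifying that the fibre-wise (scalar) parts also match — equivalently, that $D_T|_C$ genuinely projects to $0$ under $\Der \pi$ rather than to a nonzero fibre-wise endomorphism — is the delicate point, and is exactly what pins down the reduced Jacobi bracket.
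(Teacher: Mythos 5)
Your overall architecture --- uniqueness from injectivity of $\pi^*$, existence by choosing extensions, well-definedness split into (i) independence of extensions and (ii) projectability, then transfer of antisymmetry, Jacobi identity and locality --- is sound, and it is close in spirit to the paper's own proof, which runs the same argument more algebraically: there $\Sec{L'}$ is identified with the quotient $N(\Gamma_C)/\Gamma_C$ of the Lie idealizer of the vanishing submodule, a statement that packages your steps (i) and (ii) at once. The genuine gap is that your key lemma, $\Ker{\Der \pi}=\Pi^\sharp\bigl(i((\Tan C)^{0L})\bigr)$ inside $\Der L_C$, is never proved: the inclusion $\Pi^\sharp\bigl(i((\Tan C)^{0L})\bigr)\subseteq\Ker{\Der \pi}$ is exactly what step (i) needs, the reverse inclusion is what step (ii) needs, and your closing paragraph concedes that matching the fibre-wise scalar parts ``is the delicate point'' without resolving it. Deferring the crux is not proving it; everything else in your argument is routine once the lemma is granted.

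Worse, the lemma cannot be derived from the hypotheses as stated, because the compatibility condition only constrains symbols. One checks that $\delta$ maps $\Ker{\Der \pi}$ isomorphically onto $\Ker{\Tan p}$ (an element of $\Ker{\Der \pi}$ with zero symbol is $c\,\Id_{L_C}$, and $\Der \pi(c\,\Id_{L_C})=c\,\Id_{L'}$ forces $c=0$; a rank count gives surjectivity), so the hypothesis $\delta(\Ker{\Der \pi})=\Lambda^\sharp((\Tan C)^{0L})$ says only that the characteristic distribution equals the $p$-vertical distribution, and it puts no constraint whatsoever on the $\Real\cdot\Id$ components of the derivations $D_T|_C$, $T\in\Gamma_C$. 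Concretely, take $M=\Real^2$ with coordinates $(x,y)$, $L=\Real_M$, the Jacobi bracket $\{f,g\}=f\partial_x g-g\partial_x f$ (the Lichnerowicz pair with bivector $0$ and vector field $\partial_x$), $C=\{x=0\}$, which is closed and coisotropic by condition 2 of Proposition \ref{CoisotropicSubmanifoldsJacobi} since the squiggle vanishes identically, and $\pi=\Id_{L_C}$ covering $p=\Id_C$. The compatibility condition then reads $0=0$, yet for $T=xf\in\Gamma_C$ one finds $D_T|_C=-f|_C\cdot\Id$, a pure fibre-wise endomorphism; accordingly your step (i) fails, since $1+x$ and $1$ both extend the same section of $L_C$ but $\{1+x,1\}|_C=-1$ while $\{1,1\}|_C=0$, and no reduced bracket exists. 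So the key lemma is false at this level of generality: what is actually needed is the derivation-level condition $\Ker{\Der \pi}=\Pi^\sharp\bigl((\Der L_C)^{0L}\bigr)$ (note that $(\Der L_C)^{0L}=i((\Tan C)^{0L})$), taken as a hypothesis or derived from some additional assumption controlling the scalar parts. In fairness to you, this is precisely the point the paper's own proof glosses over as well: its assertion that the compatibility condition makes the sections restricting to $p$-fibres coincide with the Lie idealizer $N(\Gamma_C)$ is the same unproved --- and, by the example above, unprovable --- claim.
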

\begin{proof}
Recall the proposition \ref{CoisotropicSubmanifoldsJacobi} identified a coisotropic submanifold $C$ with the Lie subalgebra of its vanishing submodule $\Gamma_C\subset \Sec{L}$. Since, at least locally, this characterization is accompanied by the vanishing ideal $I_C\subset \Cin{M}$ so that $\Gamma_C=I_C\cdot \Sec{L}$, we can use Propositions \ref{SubManLineBundle} and \ref{QuotientLineBUndle} to regard the functions and sections as quotients of functions and sections that restrict to fibres of the surjective submersion: $\Cin{M'}\cong (\Cin{M}/I_C)_p$ and $\Sec{L'}\cong (\Sec{L}/\Gamma_C)_\pi$. Then we can construct a Lie algebra $(\Sec{L'},\{,\})$ by noting that the compatibility condition ensures that sections restricting to $p$-fibres correspond to the elements in the Lie idealizer $N(\Gamma_C)$; hence $\Sec{L'}\cong N(\Gamma_C)/\Gamma_C$ as Lie algebras. To complete the proof we need to check that this Lie algebra indeed corresponds to a local Lie algebra structure on the line bundle $\lambda':L'\to M'$. This follows from the fact that the compatibility condition can be read as the point-wise requirement of Hamiltonian vector fields of the vanishing sections to be tangent to the $p$-fibres, then a few routine calculations show that the local Lie algebra properties of the bracket $\{,\}$ carry over to the bracket $\{,\}'$ in a natural way.
\end{proof}
As a particular case of coisotropic reduction we find \textbf{Jacobi submanifolds}, i.e. submanifolds $i:C\hookrightarrow M$ whose vanishing submodule $\Gamma_C$ is not only a Lie subalgebra but a Lie ideal, $\{\Gamma_C,\Sec{L}\}\subset \Gamma_C$. In this case, the distribution of Hamiltonian vector fields clearly vanishes, thus satisfying all the requirements of the proposition above trivially, so that the restricted line bundle itself inherits a Jacobi structure $(L_C,\{,\}_C)$. Another example of coistropic reduction is given by the presence of a \textbf{Hamiltonian group action}: a line bundle Lie group action on a Jacobi manifold $G \Acts L$ via Jacobi maps with infinitesimal action $\Psi:\mathfrak{g}\to \Dr{L}$, and a (co)moment map $\overline{\mu}:\mathfrak{g}\to \Sec{L}$ satisfying the defining conditions
\begin{equation*}
    \Psi(\xi)=D_{\overline{\mu}(\xi)},\qquad \{\overline{\mu}(\xi),\overline{\mu}(\zeta)\}=\overline{\mu}([\xi,\zeta]) \qquad \forall \xi,\zeta\in\mathfrak{g}.
\end{equation*}

A \textbf{precontact manifold} is a pair $(M,\text{H}\subset \Tan M)$ with $\text{H}$ a hyperplane distribution, i.e. $\dimm(\text{H}_x)=\dimm (M)-1$ for all $x\in M$. Note that a hyperplane distribution on the tangent bundle is equivalent to the datum of a (generically non-trivial) line bundle $\lambda:L\to M$ and a non-vanishing $L$-valued 1-form $\theta:\Tan M\to L$. The equivalence is realized by setting
\begin{equation*}
    \text{H}=\Ker{\theta}, \quad \text{ which then gives } \quad \Tan M/\text{H}\cong L.
\end{equation*}
Let us denote the $\Cin{M}$-submodule of vector fields tangent to the hyperplane distribution by $\Sec{\text{H}}$. We can define the following antisymmetric map for vector fields tangent to the hyperplane distribution
\begin{align*}
\omega: \Sec{\wedge^2 \text{H}} & \to \Sec{L}\\
(X,Y) & \mapsto \theta([X,Y]).
\end{align*}
The kernel of this map clearly measures the degree to which $\text{H}$ is integrable as a tangent distribution noting that, in particular, when $[\Sec{\text{H}},\Sec{\text{H}}]\subset\Sec{\text{H}}$ the map $\omega$ is identically zero. It follows by construction that $\omega$ is in fact $\Cin{M}$-bilinear and thus defines a bilinear form $\omega_\text{H}:\wedge^2 \text{H}\to L$ called the \textbf{curvature form} of the hyperplane distribution $\text{H}$. A hyperplane distribution $\text{H}$ is called \textbf{maximally non-integrable} when its curvature form $\omega_\text{H}$ is non-degenerate, i.e. when the musical map $\omega^\flat_\text{H}:\text{H}\to \text{H}^{*L}$ has vanishing kernel. Such a hyperplane distribution $\text{H}\subset \Tan M$ is called a \textbf{contact structure} on $M$ and we refer to the pair $(M,\text{H})$ as a \textbf{contact manifold}. Simple dimension counting applied to any tangent space of a contact manifold reveals that a manifold supporting a contact hyperplane is necessarily odd-dimensional. A \textbf{contact map}, defined as a smooth map $\varphi:(M_1,\text{H}_1)\to(M_2,\text{H}_2)$ whose tangent assigns the hyperplane distributions isomorphically $\Tan \varphi (\text{H}_1)=\text{H}_2$, is necessarily a local diffeomorphism from the fact that hyperplane distributions have codimension $1$ everywhere. This condition is equivalent to the tangent map $\Tan \varphi$ inducing a well-defined morphism of line bundles $\Phi:L_1\cong \Tan M_1/\text{H}_1\to L_2\cong \Tan M_2/\text{H}_2$. The presence of a hyperplane distribution on a precontact manifold allows for the identification of \textbf{isotropic submanifolds} as integral submanifolds of the tangent distribution of hyperplanes, i.e. submanifolds $S\subset M$ with $\Tan S\subset \text{H}$. Note that this terminology is appropriate since the curvature form restricted to an isotropic manifold vanishes $\omega_{\text{H}}|_S=0$.\newline

A Jacobi structure $(\Sec{L}, \{,\})$ is called \textbf{non-degenerate} when its Jacobi biderivation $\Pi$ induces a musical isomorphism of lvector bundles
\begin{equation*}
        \begin{tikzcd}
        \Jet^1 L\arrow[r, "\Pi^\sharp",yshift=0.7ex] & \Der L \arrow[l,"\Pi^\flat",yshift=-0.7ex]
        \end{tikzcd}
\end{equation*}
As it is well-known in the standard Poisson geometry literature, symplectic manifolds, conventionally regarded as non-degenerate presymplectic manifolds, are equivalent to non-degenerate Poisson manifolds. It turns out that there is an entirely analogous connection between contact manifolds and non-degenerate Jacobi manifolds.

\begin{prop}[Non-Degenerate Jacobi Manifolds are Contact Manifolds, {\cite[Prop. 2.31-2.32]{tortorella2017deformations}}]\label{NonDegenerateJacobiContact}
Let $M$ be a smooth manifold, then the datum of a contact structure $(M,\text{H}\subset \Tan M)$ is equivalent to the datum of a non-degenerate Jacobi structure $(\Sec{L_M},\{,\})$.
\end{prop}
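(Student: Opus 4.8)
The statement is an equivalence, so the plan is to build a construction in each direction and then check that the two are mutually inverse. The organising observation is a rank-and-parity count: the der sequence gives $\rkk{\Der L} = \dimm M + 1 = \rkk{\Jet^1 L}$, and since $\Pi^\sharp$ is a skew musical map it can be an isomorphism only when this common rank is even. Hence a non-degenerate Jacobi structure already forces $\dimm M$ to be odd, which matches the odd-dimensionality forced on a manifold carrying a contact hyperplane and reassures us that the two sides are compatible.

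For the passage from a non-degenerate Jacobi structure to a contact structure, I would take as candidate hyperplane the image of the squiggle, $\text{H} := \Lambda^\sharp(\Tan^{*L} M)\subset \Tan M$, where $\Lambda^\sharp = \delta\circ\Pi^\sharp\circ i$ is read off the commuting der/jet diagram. The first task is to show $\text{H}$ is a genuine hyperplane, i.e. $\rkk{\Lambda^\sharp} = \dimm M - 1$. Since $\Lambda$ is skew on the odd-rank bundle $\Tan M$, its image has odd corank $\geq 1$, and I would pin the corank to exactly $1$ by computing the kernel: $\alpha\in\Ker{\Lambda^\sharp}$ iff $\Pi^\sharp(i\alpha)\in\Ker{\delta} = \langle\Id_L\rangle$. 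Because $i$ is injective and $\Pi^\sharp$ is invertible, while $\Pi^\sharp(i(\Tan^{*L}M))$ has corank $1$ and $\langle\Id_L\rangle$ is a line, the kernel is at most a line; and it cannot be zero, for otherwise $\Lambda^\sharp$ would be an invertible skew map on an odd-rank bundle. Thus $\Ker{\Lambda^\sharp}$ is the line spanned by $\theta := i^{-1}(\Pi^\flat(\Id_L))$, an $L$-valued $1$-form, and skew-symmetry gives $\text{H} = \text{im}\,\Lambda^\sharp = \Ker{\theta}$, with $\Tan M/\text{H}\cong L$ exactly as required of a contact form.

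Next I would show $\text{H}$ is maximally non-integrable, which is where the Jacobi integrability $\llbracket\Pi,\Pi\rrbracket = 0$ is used. Expanding this condition along $\text{H}$ should identify the curvature form $\omega_\text{H}(X,Y)=\theta([X,Y])$ (up to sign) with the inverse of the isomorphism $\text{H}^{*L}\xrightarrow{\cong}\text{H}$ that $\Lambda^\sharp$ induces on passing to the quotient $\Tan^{*L}M/\langle\theta\rangle\cong \text{H}^{*L}$; non-degeneracy of $\omega_\text{H}$ then follows from invertibility of this restricted map, hence from $\Pi^\sharp$ being an isomorphism. Conversely, starting from a contact structure $(M,\text{H})$ with its $L$-valued form $\theta$ and non-degenerate $\omega_\text{H}$, I would reconstruct $\Pi^\sharp$ by splitting $\Tan M = \text{H}\oplus\langle R\rangle$ (equivalently splitting the der and jet sequences): on the $\text{H}$-block set $\Pi^\sharp$ to the inverse of $\omega^\flat_\text{H}$, and fix the transverse and $\Ker{\delta}$ directions using $\Id_L$ together with the normalisation encoded by $\theta$, so as to force $\Pi^\flat(\Id_L)=i(\theta)$. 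One then checks the resulting $\Pi\in\Sec{\wedge^2\Der L}$ is independent of the auxiliary splitting, is an isomorphism by construction, and satisfies $\llbracket\Pi,\Pi\rrbracket=0$; that the two assignments invert one another is essentially immediate once both are written through $\theta$ and $\omega_\text{H}$.

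The main obstacle I anticipate is precisely the maximal-non-integrability step: translating the global second-order condition $\llbracket\Pi,\Pi\rrbracket=0$ into the pointwise non-degeneracy of $\omega_\text{H}$, and dually verifying that the reconstructed $\Pi$ obeys the Jacobi identity. In the trivial-line-bundle model of Section \ref{TrivialLineBundles} this reduces to the familiar computation relating the Jacobi pair to a contact form and its Reeb field, but doing it invariantly requires careful bookkeeping of the non-canonical Reeb/transverse splitting and of the $L$-twisting that persists throughout the der and jet bundles.
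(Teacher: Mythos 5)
Your proposal is correct in outline, and in the Jacobi-to-contact direction it follows the paper's route exactly: both take $\text{H}:=\Lambda^\sharp(\Tan^{*L}M)$ and both derive non-degeneracy of the curvature form from invertibility of $\Pi^\sharp$. In fact your treatment of that direction is sharper than the paper's, which settles the hyperplane property by a ``point-wise dimension count'': you identify $\Ker{\Lambda^\sharp}$ explicitly as the line spanned by $\theta=i^{-1}(\Pi^\flat(\Id_L))$ (well-defined since skewness of $\Pi$ gives $\langle \Pi^\flat(\Id_L),\Id_L\rangle=\Pi(\Pi^\flat(\Id_L),\Pi^\flat(\Id_L))=0$, so $\Pi^\flat(\Id_L)$ lies in the image of $i$), and you exhibit $\text{H}=\Ker{\theta}$; your planned identification $\omega_\text{H}(\Lambda^\sharp\alpha,\Lambda^\sharp\beta)=\pm\Lambda(\alpha,\beta)$ is also correct and is provable from the symbol-squiggle identities of Proposition \ref{SymbolSquiggle} by writing elements of $\text{H}$ as $\Lambda^\sharp(df\otimes s)=X_{f\cdot s}-f X_s$ and using $X_{\{s,r\}}=[X_s,X_r]$. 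Where the two proofs genuinely diverge is the converse direction. The paper never builds the tensor $\Pi$ blockwise: it sets $L:=\Tan M/\text{H}$, defines the bracket directly on sections by $\{s,r\}:=\theta([X_s,X_r])$ for a lift $X_s$ with $\theta(X_s)=s$ (which, to be well defined, must be pinned down as the unique such lift whose flow preserves $\text{H}$ -- non-degeneracy of $\omega_\text{H}$ supplies existence and uniqueness), defines the squiggle by $\Lambda^\sharp(df\otimes s):=\omega^\sharp(df|_\text{H}\otimes s)$, and then invokes Proposition \ref{ExtensionSymbol}; the payoff is that the Jacobi identity is inherited from that of the Lie bracket of vector fields through the symbol-squiggle identities, which is precisely the step you flag as your main obstacle. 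Your splitting-based reconstruction of $\Pi^\sharp$ must instead verify $\llbracket\Pi,\Pi\rrbracket=0$ and splitting-independence by hand, which is harder than necessary; note also that the normalisation $\Pi^\flat(\Id_L)=i(\theta)$ by itself pins down only the $\theta$-component of the mixed block $\Pi(i\alpha,\cdot)$, so your recipe must additionally declare the chosen Reeb direction to be the Hamiltonian direction of a local unit (mixed block equal to evaluation against $R$) before $\Pi$ is determined at all. If you replace that portion with the paper's bracket-level construction and extension by symbol, the rest of your argument goes through.
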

\begin{proof}
Let us assume $(M,\text{H}\subset \Tan M)$ is a contact structure first and show that it defines a Lie bracket, a symbol and a squiggle, which, in virtue of Proposition \ref{ExtensionSymbol}, will induce a Jacobi structure that is non-degenerate. The curvature form is non-degenerate and thus has an inverse $\omega^\sharp:\text{H}^{*L}\to \text{H}\subset \Tan M$. Set $L:=\Tan M/\text{H}$ and define $\theta\in\Sec{\Tan^{*L} M}$ via the canonical projection. This, in turn, gives an $\Real$-linear map $X:\Sec{L}\to\Sec{\Tan M}$ defined by the condition $\theta(X_s)=s$. This enables us to induce the local Lie bracket on $L$ from the Lie bracket of vector fields via the identity $\{s,r\}:=\theta([X_s,X_r])$. Let us define the squiggle by
\begin{equation*}
    \Lambda^\sharp(df\otimes s):=\omega^\sharp(df|_\text{H}\otimes s).
\end{equation*}
It follows from the definition of curvature form and the Jacobi identity of Lie bracket of vector fields that $X$ and $\Lambda$ so defined satisfy the symbol-squiggle identities. The point-wise decomposition of tangent spaces as $T_xM\cong \text{H}\oplus \Real\cdot X_u(x)$ for some locally non-vanishing section $u$ then shows that the Jacobi biderivation associated to this Jacobi structure is indeed non-degenerate. Conversely, let us assume $\lambda:L\to M$ is a line bundle whose sections carry a non-degenerate Jacobi structure $(\Sec{L},\{,\})$ with squiggle $\Lambda^\sharp:\Tan^{*L}M\to \Tan M$. A point-wise dimension count shows that $\text{H}=\Lambda^\sharp(\Tan^{*L}M)\subset \Tan M$ is a hyperplane distribution and by direct computation we show that it is, in fact, contact and precisely the converse construction to the previous case.
\end{proof}
A submanifold of a contact manifold $S\subset M$ that is isotropic with respect to the hyperplane distribution and that is coisotropic with respect to the associated non-degenerate Jacobi structure is called \textbf{Legendrian}. The non-degeneracy of the Jacobi structure forces Legendrian submanifolds to be maximally isotropic, then, if the odd dimension of the contact manifold is $\dimm M=2n+1$, the dimension of a Legendrian submanifold must be $\dimm S=n$. Similarly to the definition of the Weinstein category of symplectic manifolds, we can use Legendrian submanifolds to define \textbf{the category of contact manifolds} $\Cont_\Man$ whose objects are smooth manifolds with a contact hyperplane distribution $(M,\text{H})$, or equivalently non-degenerate Jacobi structures on line bundles $(\Sec{L_M},\{,\})$, and whose morphisms are Legendrian relations $R:L_1\dashrightarrow L_2$, i.e. Legendrian submanifolds $R\subset M_1\dtimes M_2$ of the line product of non-degenerate Jacobi structures $L_1\utimes \overline{L_2}$. In particular, note that the lgraph of a contact map is clearly maximally coisotropic, thus we see that contact maps are recovered as isomorphisms in this category. As in the case of composition of Lagrangian relations, the composition of Legendrian relations is subject to cleanness of intersection issues.

\subsection{Lie Brackets on Functions Induced by Choice of Unit} \label{UnitPoisson}

In this section we will show that Jacobi manifolds recover well-known structures on functions when restricted to trivialising neighbourhoods, i.e. by choosing units. These structures were originally identified alongside Poisson manifolds by Lichnerowicz \cite{lichnerowicz1978jacobi} in the 1970s and have since been studied extensively (see \cite{ibort1997reduction,valcazar2018contact}). These structures have been referred to as `Jacobi' in the literature but we shall rename them in the interest of a systematic discussion about how they relate to our general notion of Jacobi manifold.\newline

Consider a smooth manifold $M$ with a choice of a bivector field $\pi\in\Sec{\wedge^2 \Tan M}$ and a vector field $R\in\Sec{\Tan M}$. The canonical antisymmetric bracket on functions that can be formed from these
\begin{equation*}
    \{f,g\}:=\pi(df,dg)+fR[g]-gR[f]
\end{equation*}
for all $f,g\in\Cin{M}$, is called a \textbf{Lichnerowicz bracket}. When this bracket satisfies the Jacobi identity $(M,\pi,R)$ is called a \textbf{Lichnerowicz manifold}. It can be shown that the Jacobi identity of this bracket is tantamount to the following Gerstenhaber algebra conditions
\begin{equation*}
    [R,\pi]=0\qquad [\pi,\pi]+2R\wedge\pi =0.
\end{equation*}
A \textbf{Lichnerowicz map} is a smooth map between Lichnerowicz manifolds whose pull-back on functions is a Lie algebra morphisms for the Lichnerowicz brackets or, equivalently, when the bivector and vector fields are related by the tangent map of the smooth map. In the case of $R=0$ we recover the notion of \textbf{Poisson manifolds} and when, furthermore, $\pi$ is non-degenerate we recover \textbf{symplectic manifolds}. In these cases, Lichnerowicz maps recover Poisson maps and symplectomorphisms, respectively.\newline

In a Lichnerowicz manifold $(M,\pi,R)$ with $R\neq 0$ non-degeneracy appears as the condition that bilinear form:
\begin{equation*}
    h:=R\otimes R + \pi \in \Sec{\otimes^2 \Tan M}
\end{equation*}
must induce a musical isomorphism
\begin{equation*}
        \begin{tikzcd}
        \Cot M \arrow[r, "h^\sharp",yshift=0.7ex] & \Tan M \arrow[l,"h^\flat",yshift=-0.7ex]
        \end{tikzcd}.
\end{equation*}
This implies that $\text{rank}(\pi^\sharp)=\dimm M -1$ everywhere, hence we can define a hyperplane distribution by setting $\text{H}:= \pi^\sharp(\Cot M)$ and since $(R\otimes R)^\sharp$ has a transversal image to H, we can identify $\text{H}:=\Ker{h^\flat R}$. The integrability conditions of the Lichnerowicz pair $(\pi, R)$ ensure that H is a contact hyperplane distribution. This corresponds to the well-known case of a co-orientable hyperplane distribution, i.e. one given by the kernel of a non-vanishing ordinary 1-form
\begin{equation*}
    \text{H}=\Ker{\theta},\qquad \theta\in\Omega^1(M).
\end{equation*}
When the hyperplane distribution is a contact structure $\theta$ is called a \textbf{contact form} and $(M,\theta)$ is called an \textbf{exact contact manifold}. It follows by construction that the curvature form of the hyperplane distribution is given by the restriction of the exterior derivative of the contact form
\begin{equation*}
    \omega_\text{H}=-d\theta|_\text{H}=:\omega|_\text{H},
\end{equation*}
and the fact that it is non-degenerate on $\text{H}$ is tantamount to the bilinear form
\begin{equation*}
    \eta:=\theta \otimes \theta + \omega \in \Sec{\otimes^2\Cot M}
\end{equation*}
being non-degenerate, i.e. inducing a musical isomorphism
\begin{equation*}
        \begin{tikzcd}
        \Tan M \arrow[r, "\eta^\flat",yshift=0.7ex] & \Cot M \arrow[l,"\eta^\sharp",yshift=-0.7ex]
        \end{tikzcd}.
\end{equation*}
It then follows that the datum of an exact contact structure $(M,\theta)$ is equivalent to the datum of a non-degenerate Lichnerowicz manifold $(M,\pi,R)$. This equivalence can be seen from the fact that the presence of a musical isomorphism allows for the natural assignment:
\begin{equation*}
    \theta = \eta^\flat R \qquad \omega=\eta^{\flat\flat}\pi
\end{equation*}
where
\begin{equation*}
    \eta^{\flat\flat}\pi(v,w):=\pi(\eta^\flat v, \eta^\flat w) \qquad \forall v,w\in\Tan M.
\end{equation*}

Let us now return to general Jacobi manifolds. Let $\lambda:L\to M$ a line bundle with a Jacobi structure $(\Sec{L},\{,\})$ and consider a unit $u\in \Sec{L^\bullet}$ defined locally on an open subset $U\subset M$. Since local sections on $U$ are $\Cin{M}$-spans of $u$, we can define the following local vector field and bivector field by restricting the symbol and squiggle of the Jacobi structure to $U$:
\begin{equation*}
    R_u:=X_u,\qquad \pi_u(\alpha,\beta)\cdot u := \Lambda(\alpha \otimes u,\beta \otimes u).
\end{equation*}
\begin{prop}[Local Jacobi Structures are Lichnerowicz Structures]\label{LocalJacobi}
Let a Jacobi structure $(\Sec{L},\{,\})$ and a unit $u\in \Sec{L^\bullet}$ defined on $U\subset M$ as above, then $(\pi_u,R_u)$ as defined above form a Lichnerowicz pair and thus define a Lie bracket on local functions satisfying the Jacobi identity given explicitly by
\begin{equation*}
    \{f,g\}_u:=\pi_u(df,dg)+fR_u[g]-gR_u[f]
\end{equation*}
for $\normalfont f,g\in \Cin{U}$.
\end{prop}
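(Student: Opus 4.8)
The plan is to exploit the fact that a choice of unit $u\in\Sec{L^\bullet}$ over $U$ is exactly a local trivialisation $L|_U\cong\Real_U$, which supplies an $\Real$-linear isomorphism $\phi_u:\Cin{U}\to\Sec{L|_U}$, $f\mapsto f\cdot u$. Since every local section over $U$ is a unique $\Cin{U}$-multiple of $u$, I would transport the Jacobi bracket across $\phi_u$ and identify the resulting bracket on functions with the candidate Lichnerowicz bracket $\{,\}_u$. Concretely, the goal is to verify the single identity
\begin{equation*}
    \{f\cdot u,\,g\cdot u\}=\{f,g\}_u\cdot u,\qquad f,g\in\Cin{U},
\end{equation*}
from which everything else will follow.

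The key computation is a direct application of the symbol--squiggle expansion of a derivative Lie algebra (the bracket of a Jacobi structure being one, by Proposition~\ref{LocalLieRk1}), specialised to $a=b=u$. Recalling that for the Jacobi bracket the symbol is the Hamiltonian vector field assignment $\lambda_s=X_s$ and the squiggle is $\Lambda^\sharp$, the expansion gives
\begin{equation*}
    \{f\cdot u,\,g\cdot u\}=fg\cdot\{u,u\}+f\,X_u[g]\cdot u-g\,X_u[f]\cdot u+\Lambda^\sharp(df\otimes u)[g]\cdot u.
\end{equation*}
Now $\{u,u\}=0$ by antisymmetry, $X_u=R_u$ by definition, and, unwinding the defining relation $\Lambda^\sharp(df\otimes u)[g]\cdot u=\Lambda(df\otimes u,dg\otimes u)=\pi_u(df,dg)\cdot u$, the right-hand side collapses to $(\pi_u(df,dg)+fR_u[g]-gR_u[f])\cdot u$, which is precisely $\{f,g\}_u\cdot u$. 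This establishes the explicit Lichnerowicz form of $\{,\}_u$ and simultaneously shows that $\phi_u$ intertwines $\{,\}_u$ with the Jacobi bracket restricted to $U$.

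With the intertwining in hand, antisymmetry and $\Real$-bilinearity of $\{,\}_u$ are immediate, and the Jacobi identity for $\{,\}_u$ is inherited from that of $\{,\}$ through the bijection $\phi_u$. Finally, since $\{,\}_u$ is a Lichnerowicz bracket that satisfies the Jacobi identity, the equivalence recorded above for Lichnerowicz structures (Jacobi identity $\Leftrightarrow$ the Gerstenhaber conditions $[R_u,\pi_u]=0$ and $[\pi_u,\pi_u]+2R_u\wedge\pi_u=0$) tells us that $(\pi_u,R_u)$ is genuinely a Lichnerowicz pair, completing the argument.

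The one point that needs care, and which I expect to be the main obstacle, is the passage from the global Jacobi structure to the open set $U$: units exist only locally, and local sections over $U$ need not extend to global sections, so a priori the bracket $\{,\}$ is not defined on $\Sec{L|_U}$. This is resolved by locality --- the bracket is a bidifferential operator of order $\le 1$, hence a local operator whose value at a point depends only on the $1$-jets of its arguments. Consequently it restricts unambiguously to a bracket on $\Sec{L|_U}$, and the Jacobi identity, being a pointwise condition, transfers to $U$ by the standard bump-function argument: near any $x\in U$ one replaces local sections by global ones agreeing to first order at $x$, applies the global Jacobi identity, and uses locality to conclude the Jacobiator vanishes at $x$.
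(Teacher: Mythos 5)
Your proposal is correct, and its first half coincides with the paper's own argument: both specialise the symbol--squiggle expansion to $a=b=u$, use $\{u,u\}=0$, $X_u=R_u$, and $\Lambda^\sharp(df\otimes u)[g]\cdot u=\Lambda(df\otimes u,dg\otimes u)=\pi_u(df,dg)\cdot u$ to arrive at the identity $\{f\cdot u,g\cdot u\}=\{f,g\}_u\cdot u$. Where you genuinely diverge is in how integrability is established. The paper never uses the bijection $\phi_u:f\mapsto f\cdot u$ as a Lie algebra isomorphism; instead it verifies the Gerstenhaber conditions $[R_u,\pi_u]=0$ and $[\pi_u,\pi_u]+2R_u\wedge\pi_u=0$ directly, reading them off from identities 4 and 5 of Proposition \ref{SymbolSquiggle} (which encode the Jacobi identity of $\{,\}$ at the tensor level), and then the Lichnerowicz property follows from the equivalence stated in Section \ref{UnitPoisson}. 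You run the logic in the opposite direction: the intertwining identity makes $\phi_u$ a Lie algebra isomorphism onto the restricted Jacobi bracket, so the Jacobi identity of $\{,\}_u$ is inherited at once, and the Gerstenhaber conditions then follow from the same equivalence. Your route is more elementary --- it needs only the expansion and antisymmetry, not the deeper identities 4 and 5 --- and it also makes explicit the locality issue the paper leaves implicit: since $\{s,r\}=\Pi(j^1s,j^1r)$ is a biderivation, the bracket is a local operator and restricts unambiguously to $\Sec{L|_U}$, with the bump-function argument transferring the Jacobi identity (one caveat: for the Jacobiator, agreement of the extensions on a whole neighbourhood of $x$, which bump functions do provide, is what is needed, rather than mere first-order agreement at $x$). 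What the paper's computation buys in exchange is structural information: it exhibits exactly how the two Lichnerowicz integrability conditions arise as specialisations of the symbol--squiggle identities, which is what makes the subsequent conformal-transformation result (Proposition \ref{ConformalJacobi}) and the discussion of Poisson units immediate.
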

\begin{proof}
This is an immediate consequence of the symbol-squiggle identities of Proposition \ref{SymbolSquiggle} for the bracket $\{f\cdot u, g\cdot u\}$. Note first that the bivector can be explicitly given by its musical map
\begin{equation*}
    \pi^\sharp_u(\alpha):= \Lambda^\sharp(\alpha\otimes u).
\end{equation*}
Then, since $\{u,u\}=0$, condition 4. in Proposition \ref{SymbolSquiggle} can be used to directly compute $[R_u,\pi_u]=0$ and condition 5. gives precisely $[\pi_u,\pi_u]+2R_u\wedge\pi_u =0$.
\end{proof}
As an immediate corollary we see that given an arbitrary unit $u$ defined on $U\subset M$ in a Jacobi manifold, we can identify a Poisson subalgebra of local functions as the $R_u$-invariant functions $(\text{C}^\infty_u(U),\{,\}_u)$.\newline

We can consider a different unit $u'\in \Sec{L^\bullet}$ defined on some open $U'$ with $U\cap U'\neq \varnothing$. On the intersection, the two units are related by a conversion factor, i.e. a non-vanishing local function $z\in\Cin{U\cap U'}$, via $u'=z\cdot u$.
\begin{prop}[Conformal Transformations of Lichnerowicz Structures]\label{ConformalJacobi}
Let $u$ and $u'$ two units related by $u'=z\cdot u$ in a Jacobi structure $(\Sec{L},\{,\})$ as above, then, on the intersection of their domains, the local Lichnerowicz brackets are related by
\begin{equation*}
    \{f,g\}_{u'}=z\{f,g\}_u +f\pi_u(dz,dg)-g\pi_u(dz,df)
\end{equation*}
for $\normalfont f,g\in \Cin{U\cap U'}$.
\end{prop}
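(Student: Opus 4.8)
The plan is to reduce everything to the single structural identity that underlies Proposition \ref{LocalJacobi}: the Lichnerowicz bracket attached to a unit $u$ is exactly the bracket obtained by reading off the Jacobi bracket of multiples of $u$, i.e.
\[
\{f\cdot u, g\cdot u\} = \{f,g\}_u\cdot u \qquad \forall f,g\in\Cin{U}.
\]
This is immediate from the symbol-squiggle expansion of $\{f\cdot u,g\cdot u\}$: the term $fg\{u,u\}$ vanishes by antisymmetry, the symbol terms contribute $fR_u[g]\cdot u - gR_u[f]\cdot u$ since $R_u=X_u$, and the squiggle term is $\Lambda^\sharp(df\otimes u)[g]\cdot u = \Lambda(df\otimes u,\,dg\otimes u) = \pi_u(df,dg)\cdot u$ by the definition of $\pi_u$. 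I would take this identity as the starting point for both units.

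First I would apply the identity to the second unit $u'$ and rewrite it using $u'=z\cdot u$. Since $f\cdot u'=(fz)\cdot u$ and $g\cdot u'=(gz)\cdot u$, the left-hand side becomes $\{(fz)\cdot u,(gz)\cdot u\}=\{fz,gz\}_u\cdot u$, while the right-hand side is $\{f,g\}_{u'}\cdot u' = z\{f,g\}_{u'}\cdot u$. As $u$ is nowhere vanishing on $U\cap U'$ and $z$ is a conversion factor (hence also non-vanishing), I may cancel $u$ and divide by $z$ to obtain
\[
\{f,g\}_{u'} = \tfrac{1}{z}\{fz,gz\}_u.
\]
The whole statement thus reduces to expanding the Lichnerowicz bracket $\{fz,gz\}_u$ with respect to $u$ and simplifying.

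The remaining step is a direct Leibniz expansion. Writing $d(fz)=f\,dz+z\,df$ and $d(gz)=g\,dz+z\,dg$, bilinearity and antisymmetry of $\pi_u$ annihilate the $\pi_u(dz,dz)$ term and leave $z^2\pi_u(df,dg) + fz\,\pi_u(dz,dg) - gz\,\pi_u(dz,df)$. For the symbol part, the Leibniz rule for the vector field $R_u$ produces $fz\,R_u[gz]-gz\,R_u[fz]$, in which the two occurrences of $R_u[z]$ cancel, leaving $z^2\bigl(fR_u[g]-gR_u[f]\bigr)$. Collecting gives $\{fz,gz\}_u = z^2\{f,g\}_u + z\bigl(f\pi_u(dz,dg)-g\pi_u(dz,df)\bigr)$, and dividing by $z$ yields exactly the claimed formula.

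The computation is entirely routine; the only points requiring care are the two cancellations (the antisymmetric vanishing $\pi_u(dz,dz)=0$ and the cancellation of the $R_u[z]$ cross-terms), so the main obstacle is bookkeeping rather than any conceptual difficulty. I would also be careful to note that all the identities above are local, valid only on $U\cap U'$ where both units and the conversion factor $z$ are simultaneously defined and non-vanishing.
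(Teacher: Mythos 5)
Your proof is correct and follows essentially the same route as the paper, whose proof is simply the remark that the identity ``follows by direct computation using the symbol and squiggle of the Jacobi bracket'': your reduction to $\{f,g\}_{u'}=\tfrac{1}{z}\{fz,gz\}_u$ via the identity $\{f\cdot u,g\cdot u\}=\{f,g\}_u\cdot u$ is exactly that computation, organized cleanly, and both cancellations are handled correctly. No gaps.
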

\begin{proof}
This follows by direct computation using the symbol and squiggle of the Jacobi bracket.
\end{proof}

We are now in the position to identify the key concept that allows us to formally connect Poisson structures with Jacobi structures. A unit $u\in \Sec{L^\bullet}$ defined locally on an open subset $U\subset M$ of a Jacobi structure $(\Sec{L},\{,\})$ is called a \textbf{Poisson unit} when $X_u=0$. It is clear from Proposition \ref{LocalJacobi} that a Poisson unit induces a Poisson algebra structure on local functions $(\Cin{U},\{,\}_u)$. However, a change to another arbitrary unit $u'=z\cdot u$ will not induce another Poisson algebra in general but a Lichnerowicz bracket related by conformal transformation as detailed in Proposition \ref{ConformalJacobi} above.  When both $u$ and $u'$ are Poisson units the conversion factor $z$ satisfies
\begin{equation*}
    \pi_u^\sharp(dz)=0 \qquad \pi_{u'}^\sharp(dz)=0,
\end{equation*}
whence we obtain the conformal transformation of the local Poisson brackets induced by each Poisson unit
\begin{equation*}
    \{f,g\}_{u'}=z\{f,g\}_u
\end{equation*}
for $f,g\in \Cin{U\cap U'}$. It is then natural to define a \textbf{(locally) conformal Poisson structure} as a Jacobi structure $(\Sec{L},\{,\})$ that admits a cover by Poisson units. In the special case when a Jacobi structure admits a global Poisson unit we recover the ordinary notion of Poisson manifold in the context of line bundles, these are appropriately called \textbf{unit-free Poisson structures}. These structures encapsulate families of Poisson brackets that are related via conformal factors. From the perspective of conventional Poisson geometry, these conformal transformations are a manifestation of the trivial fact that for any Poisson bracket $(\Cin{M},\{,\})$, given any function $k\in\Cin{M}$ in the kernel of the adjoint map, $\{k,-\}=0$, the bracket defined by
\begin{equation*}
    \{f,g\}_k:=k\{f,g\}
\end{equation*}
is clearly a Poisson bracket.

\subsection{Jacobi Algebroids associated with Jacobi Manifolds} \label{JacobiManifoldAlgebroids}

In this section we show how the intimate connection between Poisson structures and Lie algebroids generalises to the context of line bundles. To this end, the unit-free formalism developed so far will prove a powerful tool to transparently state the link between Jacobi structures and Jacobi algebroids following arguments and constructions that are entirely analogous to those used in ordinary Poisson geometry.\newline

Recall that the der bundle of an arbitrary line bundle $\Der L$ trivially carries the structure of a Jacobi algebroid. The first connection between Jacobi manifolds and Jacobi algebroids appears as the presence of a Jacobi algebroid structure on the jet bundle.
\begin{prop}[Jet Jacobi Algebroids, {\cite[Prop. 2.24]{tortorella2017deformations}}]\label{JetJacobiAlgebroids}
Let $\lambda: L\to M$ be a line bundle, then there is a 1:1 correspondence between Jacobi structures $(\Sec{L},\{,\})$ and Jacobi algebroid structures on the jet bundle $(\Jet^1 L,\rho,[,])$ such that $[\alpha,\beta]=\beta(\rho(\alpha))$. This equivalence is realised via the formula
\begin{equation*}
    [\alpha,\beta]_\Pi:=\LDer_{\Pi^\sharp\alpha}\beta -\LDer_{\Pi^\sharp\beta}\alpha - j^1\Pi(\alpha,\beta)
\end{equation*}
for $\alpha,\beta\in\Sec{\Jet^1 L}$ and where $\normalfont \Pi\in\Sec{\wedge^2 \Der L}$ is the Jacobi biderivation.
\end{prop}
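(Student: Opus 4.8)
The plan is to treat this as the unit-free transcription of the classical correspondence between Poisson structures on a manifold and Lie algebroid structures on its cotangent bundle, in which the Koszul bracket $[\alpha,\beta]=\LDer_{\pi^\sharp\alpha}\beta-\LDer_{\pi^\sharp\beta}\alpha-d(\pi(\alpha,\beta))$ on $\Sec{\Cot M}$ is replaced by its der-bundle analogue on $\Sec{\Jet^1 L}$. I would establish both directions of the bijection. For the forward direction I would start from a Jacobi biderivation $\Pi\in\Sec{\wedge^2\Der L}$ with $\llbracket\Pi,\Pi\rrbracket=0$, take the musical map $\Pi^\sharp:\Jet^1 L\to\Der L$ as anchor, and define the bracket by the displayed formula using the Cartan calculus of $\Der L$ (Section \ref{JacobiAlgebroids}). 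For the reverse direction the key structural fact is that the exact jets $j^1 s$, $s\in\Sec{L}$, generate $\Sec{\Jet^1 L}$ as a $\Cin{M}$-module (the content of the jet/Spencer sequence), so any Jacobi algebroid bracket is determined by its values on exact jets together with the Leibniz rule; one then recovers $\Pi$ from the anchor and checks the two assignments are mutually inverse.

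For the forward direction I must first verify the formula defines a genuine Jacobi algebroid bracket. Skew-symmetry is immediate. The Leibniz property $[\alpha,f\cdot\beta]_\Pi=f\cdot[\alpha,\beta]_\Pi+\delta(\Pi^\sharp\alpha)[f]\cdot\beta$ follows from the derivation behaviour of $\LDer$ and the jet Leibniz identity $j^1(f\cdot s)=f\cdot j^1 s + i(df\otimes s)$: expanding the three terms, the anomalous contributions proportional to $df$ cancel because $\alpha(\Pi^\sharp\beta)=-\Pi(\alpha,\beta)$, leaving exactly the symbol term with $\delta\circ\Pi^\sharp$. The crucial normalisation is that on exact jets the bracket reproduces the Jacobi bracket: since $\Pi^\sharp j^1 s=D_s$, $\Pi(j^1 s,j^1 r)=\{s,r\}$, and $[\LDer_a,d_L]=0$ in the Cartan calculus gives $\LDer_{D_s}j^1 r=j^1(D_s[r])=j^1\{s,r\}$, the formula collapses to $[j^1 s,j^1 r]_\Pi=j^1\{s,r\}$. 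This simultaneously shows that $(\Pi^\sharp)_*$ is a Lie-algebra morphism on exact jets, hence everywhere once Leibniz and the generation property are invoked, so the defining condition of a Jacobi algebroid is met.

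The main obstacle is the Jacobi identity for $[,]_\Pi$, which I expect to be equivalent to the integrability condition $\llbracket\Pi,\Pi\rrbracket=0$. Since both $[,]_\Pi$ and its Jacobiator act as derivations in each argument modulo the symbol, it suffices to check the identity on the generating exact jets; there the normalisation $[j^1 s,j^1 r]_\Pi=j^1\{s,r\}$ reduces the iterated bracket $[[j^1 s,j^1 r]_\Pi,j^1 t]_\Pi$ and its cyclic permutations to $j^1$ applied to the Jacobiator of $\{,\}$, which vanishes because $\{,\}$ is a Lie bracket. Propagating this back to arbitrary sections via Leibniz is where the symbol--squiggle identities of Proposition \ref{SymbolSquiggle} (equivalently, the graded Jacobi identity of the Schouten bracket of multiderivations) do the bookkeeping and guarantee that no obstruction beyond $\llbracket\Pi,\Pi\rrbracket=0$ survives. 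I would organise this as a single direct computation tracking the $\LDer$- and $i$-terms with the Cartan relations $[\LDer_a,\LDer_b]=\LDer_{[a,b]}$ and $[\LDer_a,i_b]=i_{[a,b]}$.

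For the reverse direction and bijectivity, I read the stated compatibility as the requirement that the pairing $(\alpha,\beta)\mapsto\beta(\rho(\alpha))$ be antisymmetric, which is exactly the condition that the anchor be the musical map $\rho=\Pi^\sharp$ of a unique skew biderivation $\Pi\in\Sec{\wedge^2\Der L}$. Given such an algebroid I would set $\{s,r\}:=\Pi(j^1 s,j^1 r)=\rho_*(j^1 s)[r]$; this is skew by hypothesis and is a derivation in each slot, hence a local Lie bracket of the required form, while its Jacobi identity, and thus $\llbracket\Pi,\Pi\rrbracket=0$, descends from the Jacobi identity of the algebroid bracket evaluated on exact jets. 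To close the bijection I would check the constructions invert one another: feeding $\Pi$ into the forward construction and reading off the induced biderivation returns $\Pi$ by the normalisation $[j^1 s,j^1 r]_\Pi=j^1\{s,r\}$, while conversely the generation property forces any algebroid bracket with anchor $\Pi^\sharp$ and the correct values on exact jets to coincide with $[,]_\Pi$ everywhere by Leibniz. The one delicate point is to invoke the antisymmetry hypothesis precisely where needed, namely to rule out an extraneous symmetric part of the anchor and to guarantee that $\{,\}$ is genuinely skew.
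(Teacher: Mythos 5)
Your forward direction is correct and, where the paper simply asserts that a ``routine computation'' shows the Jacobiator of $[,]_\Pi$ is proportional to $\llbracket\Pi,\Pi\rrbracket$, you take a cleaner route: verify Leibniz (the two $i(df\otimes\cdot)$ anomalies indeed cancel by antisymmetry of $\Pi$, as you say), establish the normalisation $[j^1s,j^1r]_\Pi=j^1\{s,r\}$, deduce the anchor-morphism property on exact jets from $D_{\{s,r\}}=[D_s,D_r]$ and propagate it by Leibniz, and only then check the Jacobi identity on generators. You also correctly identify the one subtlety of this strategy, namely that the anchor-morphism property is exactly what makes the Jacobiator tensorial and hence checkable on exact jets.

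The genuine gap is in the reverse direction, and it is not cosmetic. You read the side condition as antisymmetry of the pairing $(\alpha,\beta)\mapsto\beta(\rho(\alpha))$; under that reading the proposition is false, so no argument can close the bijection. Take $\rho=0$: the pairing is trivially antisymmetric, the Leibniz rule degenerates to $\Cin{M}$-bilinearity of the bracket, and the requirement that $\rho_*$ be a Lie algebra morphism becomes vacuous, so \emph{any} smooth bundle-of-Lie-algebras structure on $\Jet^1 L$ is a Jacobi algebroid satisfying your hypotheses. Concretely, on $\Jet^1\Real_M\cong\Cot M\oplus\Real_M$ put $[(\alpha,f),(\beta,g)]:=(0,\omega(\alpha,\beta))$ for a fixed bivector field $\omega\neq 0$; all iterated brackets vanish, so this is a Lie bracket, it induces the zero Jacobi structure $\{s,r\}=\rho_*(j^1s)[r]=0$, and yet it is not $[,]_{\Pi=0}=0$. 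Hence the assignment from algebroids to Jacobi structures is many-to-one under your reading. Your own write-up betrays the missing hypothesis at two points: the uniqueness step invokes the bracket taking ``the correct values on exact jets'', i.e. $[j^1s,j^1r]=j^1\{s,r\}$, which antisymmetry does not supply; and the claim that the Jacobi identity of $\{,\}$ descends from that of $[,]$ needs it as well, since the anchor axiom only gives $[\rho_*(j^1s),\rho_*(j^1r)]=\rho_*([j^1s,j^1r])$, whereas Jacobi for $\{,\}$ requires this to equal $\rho_*(j^1\{s,r\})$, i.e. $[j^1s,j^1r]-j^1\{s,r\}\in\Ker{\rho_*}$. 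The displayed condition in the statement, which as written does not typecheck (the left side is a section of $\Jet^1L$, the right side a section of $L$), is meant as the condition on holonomic jets $[j^1s,j^1r]=j^1\bigl(j^1r(\rho(j^1s))\bigr)$ --- the line bundle analogue of $[df,dg]=d\left(\rho(df)[g]\right)$ in the Poisson--cotangent correspondence, and the form in which it appears in \cite[Prop. 2.24]{tortorella2017deformations}. With that reading, your generation-plus-Leibniz uniqueness argument and the rest of your reverse direction do go through.
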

\begin{proof}
This equivalence is a consequence of the fact that a Jacobi biderivation induces a musical map $\Pi^\sharp:\Jet^1 L\to \Der L$ and that, conversely, a musical map defines an antisymmetric bilinear form. The above formula of the Jacobi algebroid bracket is clearly antisymmetric and a routine computation shows that its Jacobiator is proportional to $\llbracket \Pi,\Pi \rrbracket$, thus showing the connection between the integrability condition of the biderivation and the Jacobi identity of Lie brackets.
\end{proof}
It also follows from this result that Jacobi structures on $L$ can be regarded as LDirac structures of the standard LCourant algebroid $\mathbb{D}L$ by considering the graph of a Jacobi biderivation $\Pi$ as a subbundle $\text{graph}(\Pi^\sharp)\subset \Der L \oplus \Jet^1 L$.\newline

When working with geometric structures on the total space of ordinary vector bundles, it is common to identify subspaces of special functions that are compatible with the vector bundle structure, i.e. fibre-wise constants and fibre-wise linear functions. A well-known result in Poisson geometry is the 1:1 correspondence between linear Poisson manifolds and Lie algebroids. By identifying the analogous notions of fibre-wise constant and linear functions in lvector bundles it will be proved that there is a similar correspondence between linear Jacobi manifolds and Jacobi algebroids.\newline

Let $E^L$ be a lvector bundle and denote the projection to the base manifold by $\epsilon:E\to M$. There is a natural line bundle structure over the total space of a lvector bundle given simply by the pullback line bundle $\lambda:\epsilon^*L\to E$. By construction, sections of this line bundle are
\begin{equation*}
    \Sec{L_E}:=\Sec{\epsilon^*L}=\text{span}_{\Cin{E}}(\epsilon^*\Sec{L})\cong \Cin{E}\otimes_{\Cin{M}}\Sec{L},
\end{equation*}
where the isomorphism is as $\Cin{M}$-modules. There are two special subspaces of sections in $\Sec{L_E}$ sigled out from the presence of a lvector bundle structure: the \textbf{fibre-wise constant sections} $\epsilon^*\Sec{L}$ and the \textbf{fibre-wise linear sections} $l\Sec{E^{*L}}$, given by inclusion of section of the ldual bundle $l:\Sec{E^{*L}}\to \Sec{L_E}$. We fix the following notation for the interaction of the $\Cin{M}$-module structures:
\begin{equation*}
    \epsilon^*f\epsilon^*s=\epsilon^*(f\cdot s), \qquad l_\alpha \epsilon^*s=l_{\alpha\otimes s}, \qquad \epsilon^*fl_\theta=l_{f\cdot\theta}, \qquad l_\theta l_\eta\notin l\Sec{E^{*L}}
\end{equation*}
where $f\in\Cin{M}$, $s\in \Sec{L}$, $\alpha\in\Sec{E^*}$ and $\theta,\eta\in\Sec{E^{*L}}$ and where we are abusing notation by denoting the inclusion of fibre-wise functions on $E$ by the same symbol $l$ as the inclusion of fibre-wise sections on $L_E$.\newline

A \textbf{(fibre-wise) linear Jacobi structure} on a lvector bundle $E^L$ is a Jacobi structure $(\Sec{L_E},\{,\})$ such that
\begin{align*}
    \{l\Sec{E^{*L}},l\Sec{E^{*L}}\} &\subset l\Sec{E^{*L}}\\
    \{l\Sec{E^{*L}},\epsilon^*\Sec{L}\} &\subset\epsilon^*\Sec{L} \\
    \{\epsilon^*\Sec{L},\epsilon^*\Sec{L}\} &= 0.
\end{align*}
Since jets of fibre-wise linear functions span the jet bundle everywhere, the above conditions on spanning sections uniquely determine a Jacobi bracket on all sections of $\Sec{L_E}$.

\begin{prop}[Linear Jacobi Structures and Jacobi Algebroids, {cf. \cite[Prop. 2.28]{tortorella2017deformations}}]\label{LinearJacobi}
There is a 1:1 correspondence between linear Jacobi structures and Jacobi algebroids. This correspondence is realised by lduality in the category of lvector bundles.
\end{prop}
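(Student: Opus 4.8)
The plan is to set up an explicit dictionary between the two structures and show that each of the three defining bracket conditions of a linear Jacobi structure on $E^L$ encodes exactly one piece of the data of a Jacobi algebroid on the ldual $E^{*L}$. Throughout I work on the total-space line bundle $L_E=\epsilon^*L$ and identify $\epsilon^*\Sec{L}\cong\Sec{L}$ with the fibre-wise constant sections and $l\Sec{E^{*L}}\cong\Sec{E^{*L}}$ with the fibre-wise linear sections; recall that the jets of fibre-wise linear sections span $\Jet^1 L_E$ everywhere, so any local Lie bracket is determined by its values on the spanning set $\Sigma:=l\Sec{E^{*L}}\cup\epsilon^*\Sec{L}$. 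The correspondence to be established is that a linear Jacobi structure $(\Sec{L_E},\{,\})$ is the same datum as a Jacobi algebroid $(E^{*L},\rho,[,])$ whose underlying lvector bundle is the ldual of $E^L$.

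\textbf{Forward direction.} Given a linear Jacobi structure, I would define a Lie bracket on $\Sec{E^{*L}}$ by $l_{[\theta,\eta]}:=\{l_\theta,l_\eta\}$, which is valued in fibre-wise linear sections by condition~1, and an anchor $\rho$ by declaring $\rho_*(\theta)\in\Dr{L}$ to act through $\epsilon^*(\rho_*(\theta)[s]):=\{l_\theta,\epsilon^*s\}$, which lands in fibre-wise constant sections by condition~2. That $\rho_*(\theta)$ really is a derivation of $\Sec{L}$ follows by expanding $\{l_\theta,\epsilon^*(f\cdot s)\}$ with the derivation property of the Jacobi bracket and observing that the Hamiltonian vector field $X_{l_\theta}$ acts on the pullback $\epsilon^*f$ through the projectable field $\delta(\rho_*(\theta))$, so that $X_{l_\theta}[\epsilon^*f]=\epsilon^*(\delta(\rho_*(\theta))[f])$. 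The same computation yields the Leibniz identity $[\theta,f\cdot\eta]=f\cdot[\theta,\eta]+\delta(\rho_*(\theta))[f]\cdot\eta$. Restricting the Jacobi identity of $\{,\}$ to three linear sections gives the Jacobi identity for $[,]$, while restricting it to two linear and one constant section gives $\rho_*([\theta,\eta])=[\rho_*(\theta),\rho_*(\eta)]$ in $\Dr{L}$, i.e.\ that $\rho_*$ is a Lie algebra morphism; condition~3 records that the bracket vanishes on the constant-constant sector. This assembles a Jacobi algebroid $(E^{*L},\rho,[,])$.

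\textbf{Backward direction.} Given a Jacobi algebroid $(E^{*L},\rho,[,])$, I would run these formulas in reverse to define an $\Real$-bilinear bracket on $\Sigma$, and then read off a symbol map and a squiggle form from the anchor $\rho$ and its symbol $\delta$, so as to invoke Proposition~\ref{ExtensionSymbol} (extension by symbol) and obtain a genuine derivative Lie algebra, hence Jacobi, structure on $\Sec{L_E}$. Concretely one must verify the five symbol-squiggle identities of Proposition~\ref{SymbolSquiggle}: identities~1--4 follow from the Leibniz rule and from $\rho_*$ being a morphism into $\Dr{L}$, while identity~5 follows from the Jacobi identity of the algebroid bracket.

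\textbf{Main obstacle.} The computational heart is the backward direction, specifically isolating the correct squiggle and verifying identity~5 of Proposition~\ref{SymbolSquiggle}. The squiggle is the genuinely new feature relative to the classical linear-Poisson/Lie-algebroid correspondence, where it vanishes because the symbol is $\Cin{M}$-linear; in the Jacobi case it is nonzero and tracking it faithfully through the nested-bracket computation is where the bookkeeping is heaviest. Once both constructions are in hand, they are manifestly inverse to one another on the spanning set $\Sigma$, and since $\Sigma$ determines the full structures the correspondence is the claimed bijection, realised by lduality in $\LVect_\Man$.
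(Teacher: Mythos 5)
Your proposal is correct and follows essentially the same route as the paper's proof: the forward direction defines the bracket and anchor on the ldual via $l_{[\theta,\eta]}:=\{l_\theta,l_\eta\}$ and $\epsilon^*(\rho_*\theta[s]):=\{l_\theta,\epsilon^*s\}$, inheriting antisymmetry, Jacobi identity and the morphism property from injectivity of $l$ and $\epsilon^*$, and the backward direction defines the bracket on spanning sections and invokes Proposition \ref{ExtensionSymbol}, exactly as the paper does. The only difference is one of explicitness: the paper actually writes down the symbol and squiggle on the spanning functions of the dual total space (using the rank-one identities such as $\sigma(s)\cdot r=\sigma(r)\cdot s$ for $\sigma\in\Sec{L^*}$), whereas you leave these formulas implicit, correctly flagging their extraction and the verification of identity 5 of Proposition \ref{SymbolSquiggle} as the computational heart.
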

\begin{proof}
Let us first assume that $E^L$ carries a linear Jacobi structure $(\Sec{L_E},\{,\})$ and aim to construct a Jacobi algebroid on its ldual $E^{*L}$. Note that the defining axioms above imply the following identities for the symbol and squiggle of the linear Jacobi bracket
\begin{equation*}
    X_{\epsilon^*s}[\epsilon^*f]=0, \qquad \Lambda^\sharp(d(\epsilon^*f)\otimes \epsilon^*s)[\epsilon^*g]=0.
\end{equation*}
These, together with injectivity of $l$ and $\epsilon^*$, allow us to define a bracket and anchor via the following equations:
\begin{equation*}
    l_{[\theta,\eta]}=\{l_\theta,l_\eta\}, \qquad \epsilon^*(\rho_*\theta[s])=\{l_\theta,\epsilon^*s\}
\end{equation*}
where $s\in\Sec{L}$ and $\theta,\eta\in \Sec{E^{*L}}$. Clearly, the bracket $[,]$ inherits its antisymmetry and Jacobi identity from $\{,\}$ by injectivity of $l$. A simple computation shows that the anchor $\rho_*$ is $\Cin{M}$-linear and a Lie algebra morphism, thus showing that $E^{*L}$ indeed inherits a Jacobi algebroid structure. Conversely, let us assume a Jacobi algebroid structure $(E^L,\rho,[,])$ and aim to construct a Jacobi structure on $\Sec{L_{E^{*L}}}$. The strategy here is to define the Jacobi brackets on spanning sections in the obvious way: for $a,b\in\Sec{E}$ and $s,r\in \Sec{L}$
\begin{align*}
    \{l_a,l_b\} &:= l_{[a,b]}\\
    \{l_a,\epsilon^*s\} &:=\epsilon^*\rho_*a[f] \\
    \{\epsilon^*s,\epsilon^*r\} &:= 0.
\end{align*}
and then use Proposition \ref{ExtensionSymbol} to extend by symbol-squiggle. Note that we are using the lvector bundle isomorphism $E^L\cong(E^{*L})^{*L}$ to regard sections of $E^L$ as fibre-wise linear sections of $E^{*L}$. In order to define the symbol and squiggle as vector fields on the total space of $E^{*L}$, we further consider spanning functions of $\Cin{E^{*L}}$, which correspond to pullbacks $\epsilon^*\Cin{M}$ and inclusions of fibre-wise linear functions from $\Sec{E\otimes L^*}$. The peculiarity that $L$ is a vector bundle of rank $1$ appears in the following identities relating several combinations of the $\Cin{M}$-module structures on spanning functions:
\begin{equation*}
    \sigma(s)\cdot r = \sigma(r) \cdot s, \qquad l_{a\otimes \sigma}\epsilon^*s=\epsilon^*(\sigma(s))l_a=l_{\sigma(s)\cdot a}
\end{equation*}
for $s,r\in \Sec{L}$, $a\in \Sec{E}$ and $\sigma\in\Sec{L^*}$. With this in mind, it suffices to impose the following identities to define the symbol and squiggle:
\begin{align*}
    X_{l_a} [l_{b\otimes \sigma}]&:=l_{[a,b]\otimes \sigma + b\otimes \rho_*a[\theta]} \\
    X_{l_a} [\epsilon^* f]&:=\epsilon^*\delta(\rho_*a)[f] \\
    X_{\epsilon^* s} [l_{b\otimes \sigma}]&:= \epsilon^*\sigma(\rho_*b[s])\\
    \Lambda^\sharp(d(l_{a\otimes \sigma})\otimes \epsilon^* s)[l_{b\otimes \chi}] &:= l_{\sigma(s)[a,b]\otimes \chi + \rho_*a[\chi]\cdot b\otimes \sigma-\rho_*b[\sigma]\cdot a \otimes \chi}\\
    \Lambda^\sharp(d(l_{a\otimes \sigma})\otimes \epsilon^* s)[\epsilon^* f] &:= \epsilon^* \sigma(s)\delta(\rho_*a)[f]
\end{align*}
where the action of derivations on dual sections has been used when necessary. These can be checked to satisfy the symbol-squiggle identities by direct computation, making $\{,\}$ into a Jacobi structure on $E^{*L}$ which is linear by construction. It is easy to see that these two constructions are reciprocal, hence implying the 1:1 correspondence.
\end{proof}

\section{Unit-Free Hamiltonian Mechanics} \label{UnitFreeHamiltonian}

In this section we shall see how the formalisms of line bundles, lvector bundles and Jacobi structures can be used to generalise the conventional formulation of Hamiltonian mechanics to incorporate the notion of unit-free observable. The goal here is to provide a coherent categorical framework that generalises classical mechanics in a meaningful way. As a preliminary step, we first present a condensation of the physical principles underlying the concept of phase space and Hamiltonian dynamics and give a compact categorical formulation of the so-called canonical formalism of classical mechanics.

\subsection{Review of Ordinary Hamiltonian Mechanics} \label{OrdinaryHamiltonian}

There is a vast amount of literature documenting the application of modern symplectic and Poisson geometry to the field of classical mechanics (see the treaties of Arnold \cite{arnold2013mathematical} and Abraham-Marsden \cite{abraham1978foundations,marsden2013introduction}). In this section we extract the essential features of the common definitions of geometric mechanics regarding phase spaces, observables, kinematics and dynamics, and formulate them in a concise categorical form. This will prove vital for our argument that a similar categorical structure can be found in the context of line bundles and Jacobi structures later on in Section \ref{UnitFreeCanonicalHamiltonian}.\newline

In grossly general terms, the fundamental task of a theory of mechanics is to find a mathematical model that takes as inputs the mathematical parameters corresponding to a given experimental preparation $m_0$ at time $t_0$ and produces as output a temporal series $m(t)$ that will predict experimental outcomes at later times $t>t_0$. What follows below is a summary of the main physical intuitions behind the formal content of a dynamical theory in Hamiltonian phase spaces.
\begin{itemize}
    \item \textbf{Principle of Realism.} A physical system exists independent of the observer studying it. Discernible configurations of the system, sets of equivalent experimental preparations and outcomes, are called \textbf{states} $s$ and are identified with points in a smooth manifold $P$, called the \textbf{phase space}.
    \item \textbf{Principle of Characterization.} Properties or characteristics of a system are smooth assignments of measurement outcome values to each point of the phase space. In the case of conventional classical mechanics this gives the usual definition of \textbf{observable} as a real-valued function $f\in \Obs{P}:=\Cin{P}$. Note that observables $\Obs{P}$ form a ring with the usual operations of point-wise addition and multiplication.
    \item \textbf{Principle of Kinematics.} The observer studying the system exists simultaneously with the system. In the same way that the observer's memory state is mapped uniquely into the time interval used to array the experimental temporal series, the physical system is thought to be in a single state corresponding to each of the observer's time parameter values. A \textbf{motion} is defined as a smooth curve in phase space $c:I\to P$ parameterized by the observer's time $t\in I\subset \Real$. Phase spaces are assumed to be path-connected so that any state is  connected to any other state by a motion, at least virtually, not necessarily physically. A family of motions $\{c(t)\}$ is called an \textbf{evolution} $\mathcal{E}$ on the phase space $P$ if all the states are included in the path of some curve. More precisely, we call $\mathcal{E}=\{c(t)\}$ an evolution if
    \begin{equation*}
        \forall s_0\in P \quad \exists c(t)\in \mathcal{E} \quad | \quad c(t_0)=s_0 \text{ for some } t_0\in I.
    \end{equation*}
    \item \textbf{Principle of Observation.} The observed time series of measurement outcomes are the result of observables taking values along a particular motion. More concretely, given a motion $c(t)$ and an observable $f\in\Obs{P}$, the predicted temporal series is simply given by
    \begin{equation*}
        m(t)=(f\circ c)(t).
    \end{equation*}
    \item \textbf{Principle of Reproducibility.} Similar experimental preparations of a physical system should give similar observational outcomes\footnote{By ``observational outcome'' here we mean aggregates of experimental results on which statistical analysis is necessary. The usual classical and quantum measurement paradigms fit into this description.}. An evolution $\mathcal{E}$ implementing this principle will satisfy the following property for all pairs of motions $c(t),c'(t)\in\mathcal{E}$:
    \begin{equation*}
        t_0\in I,\quad c(t_0)=c'(t_0) \quad \Rightarrow \quad c(t)=c'(t) \quad \forall t\in I,
    \end{equation*}
    in other words, $\mathcal{E}$ is a family of non-intersecting curves parameterized by $t\in I$.
    \item \textbf{Principle of Dynamics.} Future states of a physical system are completely determined\footnote{We are careful not to call this determinism since it is the abstract state of a system, not the measurement outcomes, what are assumed to evolve deterministically. Even with measurement paradigms such as a collapse mechanism that forces the definition of a pre-measurement state and post-measurement state, if the theory relies on ordinary differential equations for the modelling of time evolution, the principle of dynamics will be used, if, perhaps, implicitly.} by any given present state, at least locally. Enforcing this condition on motions for arbitrary small time intervals leads to the condition that a motion must be uniquely specified by the values of its tangent everywhere. In other words, a motion satisfying the Principle of Dynamics must be an integral curve of some smooth vector field, thus justifying the chosen name for this principle. Indeed, local existence and uniqueness of solutions to ODEs imply that a smooth vector field $X\in\Sec{\Tan P}$ gives an evolution defined by the family of its integral curves $\mathcal{E}^X$ automatically satisfying the Principle of Reproducibility and the Principle of Dynamics. Arguing from a different angle that focuses on the temporal series of measurement outcomes $m(t)$, conjunction of the Principle of Observation with the Principle of Dynamics leads, via the local notion of directional derivative, to $m(t)$ being the integration of some derivation on observables. We thus conclude that an evolution on a phase space $P$ satisfying all the principles stated above is given by a choice of \textbf{dynamics} $X\in\Dyn{P}$, which is equivalently understood as a vector field on the phase space or a derivation of observables
    \begin{equation*}
        \Sec{\Tan P}\cong : \Dyn{P} :\cong \Dr{\Cin{P}}.
    \end{equation*}
    \item \textbf{Principle of Conservation.} Conserved quantities are a fundamental building block of experimental mechanics: one can only study time-dependent phenomena effectively when enough variables can be assumed to be constants to the effects of the experiment at hand. We could promote this to the more general and abstract requirement that any observable $f\in\Obs{P}$ has an associated evolution $\mathcal{E}^f$, given by some vector field $X_f$, along which the predicted time series are constant. We call such an assignment a \textbf{Hamiltonian map}
    \begin{equation*}
        \eta : \Obs{P}\to \Dyn{P},
    \end{equation*}
    which is required to satisfy
    \begin{equation*}
        \eta(f)[f]=0 \quad \forall f\in\Obs{P}.
    \end{equation*}
    In conventional Hamiltonian mechanics this is assumed to be given by the slightly stronger structure of a Poisson bracket on observables $(\Obs{P},\{,\})$, thus making the phase space into a Poisson manifold $(P,\pi)$. The Hamiltonian map is then $\eta:=\pi^\sharp \circ d$ and, following from the Jacobi identity of the Poisson bracket, it is a Lie algebra morphism.
    \item \textbf{Principle of Reductionism.} The theoretical description of a physical system specified as a subsystem of a larger system must be completely determined by the theoretical description of the larger system and the information of how the smaller system sits inside. This principle is implemented by demanding that knowledge about submanifolds in a phase space allows us to construct new phase spaces. In conventional Hamiltonian mechanics this corresponds the problem of reduction in Poisson manifolds.
    \item \textbf{Principle of Combination.} The theoretical description of a system formed as a combination of other two systems must be completely determined by the theoretical descriptions of each of the parts and the information of how they interact. This is implemented by demanding that there is a \textbf{combination product} construction for phase spaces
    \begin{equation*}
        \between :  (P_1,P_2)\mapsto P_1 \between P_2.
    \end{equation*}
    In the case of conventional Hamiltonian mechanics, this is simply given by the usual product of Poisson manifolds.
    \item \textbf{Principle of Symmetry.} A theoretical description of a system containing states that are physically indistinguishable should contain all the information to form a faithful theoretical description of the system. Physically indistinguishable states are commonly regarded to be orbits of some Lie group action on the phase space, thus an implementation of this principle will require that from the information of a Lie group action preserving some existing structure on a phase space, a new phase space is constructed whose states are classes of physically-indistinguishable states. In Hamiltonian mechanics this is implemented via the theory of Poisson group actions and equivariant moment maps.
\end{itemize}

Blending all these principles together and casting them into categorical form, we introduce the general notion of a \textbf{theory of phase spaces} consisting of the following categorical data: A \textbf{category of phase spaces} $\textsf{Phase}$, which can be identified with some category of smooth manifolds carrying natural notions of subobjects, quotients and a categorical product. A \textbf{category of observables} $\textsf{Obs}$ whose objects carry local\footnote{Here local is used in the sense introduced in the beginning of Section \ref{LocalLieAlgebras}} algebraic structures reflecting the measurement paradigm in which the theory will fit. A \textbf{category of dynamics} $\textsf{Dyn}$ whose objects carry local algebraic structures reflecting the time evolution of the quantities to be measured. These three categories fit in the following commutative diagram of functors:
\begin{equation*}
\begin{tikzcd}[row sep=small]
& \textsf{Obs} \arrow[dd,"\text{Evl}"] \\
\textsf{Phase} \arrow[ur,"\text{Obs}"] \arrow[dr,"\text{Dyn}"'] & \\
& \textsf{Dyn} 
\end{tikzcd}
\end{equation*}
where the \textbf{observable functor} $\text{Obs}$ represents the assignment of measurable properties to a given system, the \textbf{dynamics functor} $\text{Dyn}$ represents the correspondence between motions and smooth curves on phase spaces and the \textbf{evolution functor} $\text{Evl}$ represents how motions should induce the time evolution of observables infinitesimally. A theory of phase spaces is called \textbf{Hamiltonian} if for all phase spaces $P\in\textsf{Phase}$ there exists a canonical choice of Hamiltonian map
\begin{equation*}
    \eta_P: \Obs{P}\to \Dyn{P}
\end{equation*}
which is compatible with the algebraic structures and that captures the notion of conservative evolution.\newline

The fundamental notion at the core of classical mechanics is the concept of \textbf{configuration space}: the set of static states of a physical system, such as the possible spatial positions of moving particles or the possible shapes of a vibrating membrane. Invoking a sort of Principle of Refinement by which mathematical objects representing physical entities are assumed to be continuous and smooth, our definition of a configuration space will be simply that of a smooth manifold whose points $q\in Q$ are identified with the different static states of a given physical system. We then propose the definition of \textbf{the category of configuration spaces} simply to be the category of smooth real manifolds $\Man$. Understanding that smooth manifolds represent sets of static states of a physical system, we now give physical interpretation to the natural categorical structure present in $\Man$:
\begin{itemize}
    \item The measurable static properties of a physical system with configuration space $Q\in\Man$, what we call \textbf{static observables}, are the real-valued functions $\Obs{Q}:=\Cin{Q}$. The assignment of static observables is a contravariant functor $\text{C}^\infty:\Man\to\Ring$, which we regard as the categorical version of the Principle or Characterization for configuration spaces.
    \item A subsystem is characterised by restricting possible positions of a larger system, that is, by an inclusion of an embedded submanifold $i:S\to Q$. This notion implements the Principle of Reductionism for configuration spaces.
    \item Physically-indistinguishable static states in a configuration space $Q\in\Man$ are related by equivalence relations $\sim$ that have quotients faithfully characterizing the physical system, that is, there is a surjective submersion $p:Q\to Q/\sim$. In particular, a free and proper action of a Lie group $G\Acts Q$ gives an example implementing the Principle of Symmetry via $p:Q\to Q/G$.
    \item Given two configuration spaces $Q_1,Q_2\in\Man$ representing the possible positions of two physical systems, the combined system will have static states given by all the possible pairs of static states in each of the two systems. The categorical manifestation of the Principle of Combination for configuration spaces is then simply the presence of the Cartesian product of smooth manifolds $Q_1\times Q_2\in\Man$.
    \item A temporal series of static states will be called a \textbf{path} of the physical system. Paths in a configuration space $Q\in\Man$ will be given by smooth curves $r:I\to Q$ parameterized by the observer's time parameter $t\in I\subset \Real$. Dynamics on the static states of a configuration space $Q\in\Man$ are identified with smooth vector fields $\Dyn{Q}=\Sec{\Tan Q}$ or, equivalently, with derivations $\Dyn{Q}=\Dr{\Cin{Q}}$.
\end{itemize}

We thus see that the category of configuration spaces provides the first natural example of a theory of phase spaces where the observable functor is simply the assignment of the ring of smooth functions to a manifold $\text{Obs}=\text{C}^\infty:\Man \to \Ring$, the dynamics functor is the tangent functor $\text{Dyn}=\Tan : \Man \to \Lie_\Man$ and the evolution functor is given by taking the vector bundle of smooth ring derivations $\text{Evl}=\text{Der}: \Ring \to \Lie_\Man$, i.e. the tangent bundle regarded as a Lie algebroid. Indeed, these three functors fit in the phase space theory commutative diagram
\begin{equation*}
\begin{tikzcd}[row sep=small]
& \Ring \arrow[dd,"\text{Der}"] \\
\Man \arrow[ur,"\text{C}^\infty"] \arrow[dr,"\Tan"'] & \\
& \Lie_\Man 
\end{tikzcd}
\end{equation*}
Note, however, that, since the categorical information of each object of $\Man$ is strictly a smooth manifold with no canonical choice of extra structure on it, the category of configuration spaces does not provide an example of a Hamiltonian theory of phase spaces.\newline

When approached from this categorical angle, the mathematical implementation of the conventional \textbf{canonical formalism of classical mechanics} can be understood quite simply as the search for a category of phase spaces associated to the category of configuration spaces that forms a Hamiltonian theory of phase spaces in a natural or \emph{canonical} way. As shown below, this will be achieved by the identification of the canonical symplectic structures on the cotangent bundles of smooth manifolds.\newline

Another, more physically-informed, approach to the canonical formalism of classical mechanics is the search for a category of phase spaces whose observables encompass both the static and dynamics states of a physical system as motivated by the basic postulates of Newtonian mechanics, where positions and velocities are the initial data for the deterministic evolution of the system. More precisely, a phase space $P$ associated to a configuration space $Q$ should carry a space of observables naturally containing the static observables of $Q$ and its dynamics $\Obs{Q},\Dyn{Q}\subset\text{Obs}(P)$. We see that the cotangent bundle $P=\Cot Q$ appears, again, as the natural or \emph{canonical} choice for such a phase space since it is clear that the static observables $\Obs{Q}=\Cin{Q}$ and the dynamics $\Dyn{Q}=\Sec{\Tan Q}$ are recovered as the fibre-wise constant and fibre-wise linear functions of the cotangent bundle $\Cin{\Cot Q}=\Obs{\Cot Q}$.\newline

In order to formally identify the categorical properties of what will become the category of canonical phase spaces we recall some well-known facts about the symplectic geometry of cotangent bundles.\newline

The cotangent bundle of any smooth manifold $Q$ carries a canonical symplectic structure
\begin{equation*}
    \begin{tikzcd}
    \Cot Q \arrow[d, "\pi_Q"]\\
    Q
    \end{tikzcd}
    \qquad \qquad \omega_Q:=-d\theta_Q, \quad \text{ with }\quad  \theta_Q|_{\alpha_q}(v):=\alpha_q(\Tan_{\alpha_q}\pi_Q (v)) \quad \forall v\in \Tan_{\alpha_q}(\Cot Q).
\end{equation*}
The non-degenerate Poisson structure $(\Cin{\Cot Q},\cdot,\{,\}_Q)$, with the natural inclusions of vector fields and functions on $Q$ as functions on $\Cot Q$:
\begin{equation*}
    l_Q:\Sec{\Tan Q}\to \Cin{\Cot Q}, \qquad \pi_Q^*:\Cin{Q}\to \Cin{\Cot Q},
\end{equation*}
is shown to be linear. It is, in fact, dual to the canonical Lie algebroid structure on $\Tan Q$:
\begin{align*}
    \{l_Q(X),l_Q(Y)\}_Q & =  l_Q([X,Y])\\
    \{l_Q(X),\pi_Q^*f\}_Q & =  \pi_Q^* X [f]\\
    \{\pi_Q^*f,\pi_Q^*g\}_Q & = 0.
\end{align*}
The cotangent bundle of a Cartesian product is canonically symplectomorphic to the vector bundle product of cotangent bundles with the induced product symplectic forms:
\begin{equation*}
    (\Cot (Q_1\times Q_2),\omega_{Q_1\times Q_2})\cong (\Cot Q_1\boxplus \Cot Q_2, \Proj_1^*\omega_{Q_1} \oplus \Proj_2^*\omega_{Q_2}).
\end{equation*}
A smooth map $\varphi:Q_1\to Q_2$ induces a Lagrangian relation $\Cot \varphi\subset \Cot Q_1 \times \overline{\Cot Q_2}$ called the cotangent lift of $\varphi$  and defined by
\begin{equation*}
   \Cot \varphi:=\{(\alpha_q,\beta_p)| \quad \varphi(q)=p, \quad \alpha_q=(\Tan_q\varphi)^*\beta_p\}.
\end{equation*}
Here $\overline{\Cot Q_2}$ denotes $(\Cot Q_2,-\omega_{Q_2})$. When $\varphi:Q_1\to Q_2$ is a diffeomorphism, its cotangent lift becomes the graph of a symplectomorphism $\Cot \varphi :\Cot Q_2\to \Cot Q_1$. Composition of cotangent lifts as Lagrangian relations is always strongly transversal in the Weinstein symplectic category $\Symp_\Man$, where we regard the above construction as a morphism $\Cot \varphi:\Cot Q_2\dashrightarrow \Cot Q_1$, and thus the cotangent bundle construction can be seen as a contravariant functor
\begin{equation*}
    \Cot : \Man \to \Symp_\Man.
\end{equation*}
An embedded submanifold $i:S\to Q$, what is known as holonomic constraints in standard classical mechanics, gives the coisotropic submanifold $\Cot Q|_S\subset (\Cot Q,\omega_Q)$ inducing the coisotropic reduction diagram
\begin{equation*}
    \begin{tikzcd}[sep=small]
        \Cot Q|_S \arrow[r, hookrightarrow] \arrow[d, twoheadrightarrow] & (\Cot Q,\omega_Q) \\
        (\Cot S,\omega_S).
    \end{tikzcd}
\end{equation*}
Note that the surjective submersion results from quotienting by the foliation given by the conormal bundle of $S$, in other words
\begin{equation*}
        \Cot S\cong \Cot Q|_S/(\Tan S)^0
\end{equation*}
as vector bundles over $S$. Lastly, for a free and proper group action $\phi: G\times Q\to Q$, with infinitesimal action $\psi:\mathfrak{g}\to \Sec{\Tan Q}$ and smooth orbit space $\Tilde{Q}:=Q/G$, the cotangent lift gives an action by symplectomorphisms $\Cot \phi : G\times \Cot Q\to \Cot Q$ with equivariant (co)moment map given by
\begin{equation*}
        \overline{\mu}:=l_Q\circ \psi : \mathfrak{g}\to \Cin{\Cot Q},
\end{equation*}
this induces the symplectic reduction diagram
\begin{equation*}
    \begin{tikzcd}[sep=small]
    \mu^{-1}(0) \arrow[r, hookrightarrow] \arrow[d, twoheadrightarrow] & (\Cot Q,\omega_Q) \\
    (\Cot \Tilde{Q},\omega_{\Tilde{Q}}).
    \end{tikzcd}
\end{equation*}

In light of these results, we are compelled to define the \textbf{category of canonical symplectic phase spaces} simply as the image of the category of configuration spaces under the cotangent functor $\Cot (\Man)$. This is clearly a theory of phase spaces with notions of observable, dynamics and evolution functors as in the case of configuration spaces. Furthermore, the presence of a canonical symplectic structure on each phase space allows to define Hamiltonian maps simply by assignment of the Hamiltonian vector field to a function
\begin{equation*}
    \eta_Q:=\omega_Q^\sharp \circ d = \text{ad}_{\{,\}_Q}: \Obs{\Cot Q}\to \Dyn{\Cot Q}.
\end{equation*}
This makes the category $\Cot (\Man)$ into a Hamiltonian theory of phase spaces, thus achieving the motivating goal of finding a Hamiltonian theory of phase spaces canonically associated with the category of configuration spaces.\newline 

We are now in the position to argue that the structural content of standard canonical Hamiltonian mechanics can be encapsulated in the cotangent functor. The cotangent functor is the motivating example of the general notion of a \textbf{Hamiltonian functor} sending a generic theory of phase spaces into a theory of Hamiltonian phase spaces while preserving all the categorical structure. This is summarised in the following table:
\begin{center}
\begin{tabular}{ c c c }
Configuration Spaces & Hamiltonian Functor  & Phase Spaces \\
\hline
 $\Man$ & $\begin{tikzcd}\phantom{A} \arrow[r,"\Cot"] & \phantom{B} \end{tikzcd}$ & $\Symp_\Man$ \\ 
 $Q$ & $\begin{tikzcd} \phantom{Q} \arrow[r, "\Cot ",mapsto] & \phantom{Q} \end{tikzcd}$ & $(\Cot Q,\omega_Q)$ \\ 
 $\Obs{Q}$ & $\begin{tikzcd} \phantom{Q} \arrow[r,hookrightarrow, "\pi_Q^*"] & \phantom{Q}  \end{tikzcd}$ & $\Obs{\Cot Q}$ \\
 $\Dyn{Q}$ & $\begin{tikzcd} \phantom{Q} \arrow[r,hookrightarrow, "l_Q"] & \phantom{Q}  \end{tikzcd}$ & $\Obs{\Cot Q}$ \\
 $Q_1\times Q_2$ & $\begin{tikzcd} \phantom{Q} \arrow[r,mapsto, "\Cot "] & \phantom{Q}  \end{tikzcd}$ & $\Cot Q_1\boxplus \Cot Q_2$ \\
 $\varphi:Q_1\to Q_2$ & $\begin{tikzcd} \phantom{Q} \arrow[r,mapsto, "\Cot "] & \phantom{Q}  \end{tikzcd}$ & $\Cot \varphi:\Cot Q_2\dashrightarrow \Cot Q_1$\\
 $S\subset Q$ & $\begin{tikzcd} \phantom{Q} \arrow[r,mapsto, "\Cot"] & \phantom{Q}  \end{tikzcd}$ & $\Cot Q|_S\subset \Cot Q$ coisotropic \\
 $G\Acts Q$ & $\begin{tikzcd} \phantom{Q} \arrow[r,mapsto, "\Cot"] & \phantom{Q}  \end{tikzcd}$ & $G\Acts \Cot Q$ Hamiltonian \\
\end{tabular}
\end{center}
The Hamiltonian functor connects the category of configuration spaces and the category of canonical symplectic phase spaces regarded as theories of phase spaces as summarized by the following diagram:
\begin{equation*}
\begin{tikzcd}[row sep=small]
 & \Ring \arrow[dd,"\text{Der}"']& & & \textsf{PoissAlg} \arrow[dd,"\text{Der}"] \\
 & & \Man \arrow[ul, "\text{C}^\infty"']\arrow[dl,"\Tan"]\arrow[r,"\Cot"] & \Symp_\Man \arrow[ur,"\text{C}^\infty"] \arrow[dr,"\Tan"'] & \\
 & \Lie_\Man & & & \Lie_\Man
\end{tikzcd}
\end{equation*}

Once a physical system is assigned a configuration space $Q$, together with its canonically associated phase space $\Cot Q$ as above, the only remaining task left for the observer studying the system is to determine which choice of dynamics on $\Cot Q$ will produce evolutions of the system that match experimental temporal series of measurements. The Poisson structure present in $\Cot Q$ reduces this problem to the choice of an observable $h\in\Obs{\Cot Q}$ since, under the Hamiltonian map, this automatically gives a choice of conservative dynamics $\eta_Q(h)\in\Dyn{\Cot Q}$. This distinguished observable is often called the \textbf{energy} of the system. It generates the time evolution of the system and it is a fundamental conserved quantity. Algebraically, this is a trivial fact by construction since
\begin{equation*}
    \eta_Q(h)[h]=\{h,h\}_Q=0
\end{equation*}
by antisymmetry of the Lie bracket. This is the implementation of the Principle of Conservation within the category of symplectic phase spaces. Given two systems with a choice of energy $(Q_1,h_1)$ and $(Q_2,h_2)$, where $h_i\in\Cin{\Cot Q_i}$, their combination product has a canonical choice of energy given by the sum of pull-backs $h_1+h_2:=\Proj_1^*h_1+\Proj_2^*h_2\in\Cin{\Cot Q_1\times \Cot Q_2}$. This gives an extra line of assignments to the Hamiltonian functor:
\begin{center}
\begin{tabular}{ c c c }
Configuration Spaces & Hamiltonian Functor  & Phase Spaces \\
\hline
  $(Q_1,h_1)\times (Q_2,h_2)$ & $\begin{tikzcd} \phantom{Q} \arrow[r,mapsto, "\Cot "] & \phantom{Q}  \end{tikzcd}$ & $h_1+h_2 \in\Cin{\Cot Q_1\times \Cot Q_2}$ \\
\end{tabular}
\end{center}

The phase space formalism described so far in this section is general enough to account for a vast class of mechanical systems, however, this generality comes at a price: the Hamiltonian functor above fails to select a preferred choice of energy observable for a given configuration space. Turning to one of the earliest examples of mechanics we find inspiration to redefine the category of configuration spaces in order to account for some extra physical intuitions. In Newtonian mechanics, configuration spaces are submanifolds of Euclidean space and Cartesian products thereof, thus always carrying a Riemannian metric that encodes the physical notion of distance and angle; often also with a choice of potential, which is a function on the configuration space. This motivates us to refine our notion of configuration space and define the \textbf{category of Newtonian spaces} $\textsf{Newton}_\Man$ whose objects are triples $(Q,g,V)$, where $Q\in\Man$, $g\in \Sec{\odot ^2 \Cot Q}$ Riemannian metric and $V\in\Cin{Q}$, and whose morphisms $\varphi:(Q_1,g_1,V_1)\to (Q_2,g_2,V_2)$ are smooth maps $\varphi:Q_1\to Q_2$ such that $g_1-\varphi^*g_2$ is positive semi-definite and $V_1=\varphi^*V_2$. When $\varphi$ is a diffeomorphism, a morphism in this category is an isometry between $Q_1$ and $Q_2$. Note that a metric defines a quadratic form on tangent vectors $K_g:\Tan Q\to \Real$ given simply by $K_g(v):=\tfrac{1}{2}g(v,v)$, then we see that the datum of a Newtonian configuration space $(Q,g,V)$ is equivalent to a choice of function on static states $V\in\Cin{Q}$ and a choice of quadratic function on velocities $K\in\Cin{\Tan Q}$. The Cartesian product of configuration spaces gets updated to a categorical product in $\textsf{Newton}_\Man$ by setting
\begin{equation*}
    (Q_1,g_1,V_1) \times (Q_2,g_2,V_2) := (Q_1\times Q_2,g_1+g_2,V_1+V_2)
\end{equation*}
where
\begin{align*}
    g_1+g_2 &:= \Proj^*_1g_1 \oplus \Proj_2^*g_2\in \Sec{\odot ^2 (\Tan Q_1\boxplus \Tan Q_2))}\cong\Sec{\odot ^2 \Tan (Q_1\times Q_2)}, \\
    V_1+V_2 &:= \Proj^*_1V_1+\Proj_2^*V_2\in \Cin{Q_1\times Q_2}.
\end{align*}
A metric $g$ on $Q$ gives the usual musical isomorphism
\begin{equation*}
        \begin{tikzcd}
        \Tan Q \arrow[r, "g^\flat",yshift=0.7ex] & \Cot Q \arrow[l,"g^\sharp",yshift=-0.7ex]
        \end{tikzcd},
\end{equation*}
which can be used to regard the quadratic function $K$ identified with the metric $g$ as a quadratic function on the cotangent bundle by pull-back: $\sharp^*K_g\in\Cin{\Cot Q}$. Once a Newtonian space $(Q,g,V)$ is fixed, we now see that there is a canonical choice of energy:
\begin{equation*}
    E_{g,V}:=(g^\sharp)^*K_g+\pi_Q^*V\in \Cin{\Cot Q},
\end{equation*}
which is called the \textbf{Newtonian energy}. This name is further justified by the fact that a direct computation shows that solving the Hamiltonian dynamics of this observable is equivalent to solving Newton's equations on a Riemannian manifold background $(Q,g)$ and with force field $F=-(g^\sharp)(dV)$. For two Newtonian spaces $(Q_1,g_1,V_1)$ and $(Q_2,g_2,V_2)$, the categorical product construction above gives the following additivity property of Newtonian energy:
\begin{equation*}
    E_{g_1+g_2,V_1+V_2}=E_{g_1,V_1}+E_{g_2,V_2}.
\end{equation*}

The introduction of a metric and a potential in a configuration space does not affect the canonical constructions of the Hamiltonian functor. Then, in the category of Newtonian spaces $\textsf{Newton}_\Man$ we update the Hamiltonian functor table above by adding the preferred choice of Newtonian energy and the additivity of energy for combined systems without interaction:
\begin{center}
\begin{tabular}{ c c c }
Newtonian Spaces & Hamiltonian Functor  & Phase Spaces \\
\hline
 $\Man_{\textsf{Newton}}$ & $\begin{tikzcd}\phantom{A} \arrow[r,"\Cot"] & \phantom{B} \end{tikzcd}$ & $\Symp_\Man$ \\ 
 $(Q,g,V)$ & $\begin{tikzcd} \phantom{Q} \arrow[r, "\Cot ",mapsto] & \phantom{Q} \end{tikzcd}$ & $E_{g,V}\in\Obs{\Cot Q}$ \\
 $(Q_1,g_1,V_1)\times (Q_2,g_2,V_2)$ & $\begin{tikzcd} \phantom{Q} \arrow[r, "\Cot ",mapsto] & \phantom{Q} \end{tikzcd}$ & $(\Cot (Q_1\times Q_2),E_{g_1,V_1}+E_{g_2,V_2})$
\end{tabular}
\end{center}
Note that this result in the category of Newtonian configuration spaces motivates the additivity of energy of the combination product of two general phase spaces $(\Cot Q_1\boxplus \Cot Q_2,h_1+h_2)$ discussed above.

\subsection{The Unit-Free Generalisation of Phase Spaces} \label{UnitFreeGeneralisation}

Our goal is to introduce the notion of observables of different physical dimension, with varying choices of unit for them, over the same geometric phase space of a physical system. According to the conventions of dimensional analysis in practical science and engineering, a defining feature of physical quantities is the existence of conversion factors between units of the same dimension. Our discussion about proportionality factors in 1-dimensional real vector spaces surrounding Proposition \ref{RatMaps} in Section \ref{Lines} clearly suggests that lines are the appropriate mathematical representation of the freedom of choice of unit of physical quantities.\newline

Inspired by the general physical principles of phase space formalisms outlined at the beginning of Section \ref{OrdinaryHamiltonian} and guided by the intuition that the characteristics to be measured of a physical system should be implemented by elements of lines, we are compelled to define the \textbf{category of unit-free configuration spaces} as the category of line bundles over smooth manifolds $\Line_\Man$. Although this generalization is motivated  from physical principles in this section, the degree of its success will be measured by whether we can recover a canonical theory of phase spaces via a Hamiltonian functor that mirrors the cotangent functor for ordinary configuration spaces. Having all the technical results about line bundles presented in Section \ref{LineBundles} at hand, we give a physical interpretation for the categorical structure naturally present in $\Line_\Man$:
\begin{itemize}
    \item The physical interpretation the base manifold of a unit-free configuration space $L_Q$ is identical to a configuration space, they represent the \textbf{static states} of the physical system. In this sense, the space of static states of a physical system is independent of the particular dimensions of the physical quantities that will be measured from it.
    \item The measurable static properties a physical system are identified with the smooth sections of some fixed line bundle $\Sec{L_Q}$. The line bundle $L_Q$ will be appropriately called the \textbf{configuration space $Q$ of dimension $L$}. We thus identify the collection of all possible measurable properties of a fixed physical dimension with a choice of unit-free configuration space $L_Q$. We call these the \textbf{static observables of dimension $L$} of the configuration space $Q$ and denote them by $\Obs{L_Q}:=\Sec{L_Q}$. Properties of factor pull-backs in the category of line bundles then ensure that we have an observable contravariant functor
    \begin{equation*}
        \text{Obs}:\Line_\Man\to\textsf{RMod}.
    \end{equation*}
    \item We recover (unit-less) observables of ordinary configuration spaces via the notion of units on line bundles, which we recall are local non-vanishing sections $u\in\Sec{L_Q^\bullet}$ defined on some open subset $U\subset Q$. Restricting to the open subset $U$ allows us to see any other static observable $s\in\Obs{L_Q}$ as a local real-valued function $f_u$ determined uniquely by the equation $s=f_u\cdot u$. Thus, a unit $u$ allows (locally) for a functorial assignment of the form
    \begin{equation*}
        u:\Obs{L_Q}\to \Obs{Q}.
    \end{equation*}
    \item A subsystem is characterised by restricting possible positions of a larger system, that is, by an inclusion of an embedded submanifold $i:S\to Q$. Our discussion about submanifolds of line bundles and embedding factors in Section \ref{LineBundles} ensures that the the information of a submanifold on the base is enough for the line bundle structure to follow.
    \item Physically-indistinguishable static states must produce measurement outcomes that are indistinguishable as elements of a line bundle over the configuration space and thus should give a basic quotient of line bundles. Submersion factors and group actions on line bundles as discussed in Section \ref{LineBundles} encapsulate this notion.
    \item Given two unit-free configuration spaces $L_{Q_1}$ and $L_{Q_2}$, the line product construction of Proposition \ref{ProductLineBundle} gives the direct analogue of the Cartesian product of conventional configuration spaces. We thus regard $L_{Q_1}\utimes L_{Q_2}$ as the categorical implementation of the Principle of Combination for unit-free configuration spaces.
    \item Paths of a physical system, i.e. temporal series of static states, are simply recovered as smooth curves on the base space of a unit-free configuration space. In this manner, conventional dynamics $\Dyn{Q}$ are simply recovered as the vector fields on the base manifold. However, the extra structure introduced by the presence of the line bundle induces a new dynamical aspect of configuration spaces. Given a unit-free configuration space $L_Q$, all the non-zero fibre elements over a point $L_q$ represent different choices of unit for the same type of physical quantity. Then, any measurement performed on a system moving along a path $c(t)$ passing through $q$ at a time $t_0$ will have to be unit-compatible with any measurement performed at a later time $t>t_0$. This means that the choice of unit should be preserved along the motion of a path. Considering a unit $u$ as a local section, this is ensured locally by construction, however, taking all the possible arbitrary choices of local unit around the point $q$, forces the existence of a $1$-parameter family of fibre-wise isomorphisms covering the smooth curve $c(t)$. These are nothing but smooth families of line bundle automorphisms, which are given infinitesimally by line bundle derivations, and thus we identify the \textbf{unit-free dynamics} of a unit-free configuration space $L_Q$ as the der bundle of the line bundle $\Dyn{L_Q}:=\Der L_Q$. The anchor of the der bundle $\delta:\Der L_Q\to \Tan Q$ allows to connect dimensioned dynamics with ordinary dynamics via:
    \begin{equation*}
        \delta:\Dyn{L_Q}\to \Dyn{Q}.
    \end{equation*}
\end{itemize}

Similarly to the case of ordinary configuration spaces discussed in Section \ref{OrdinaryHamiltonian} we see that unit-free configuration spaces become another example of a theory of phase spaces. The observable functor is the assignment of sections of line bundles $\text{Obs}=\Gamma :\Line_\Man \to \textsf{RMod}$, the dynamics functor is the der functor $\text{Dyn}=\Der : \Line_\Man\to \textsf{Jacb}_\Man$ and the evolution functor is given by taking the vector bundle of module derivations of the spaces of sections regarded as Jacobi algebroids $\text{Evl}=\text{Der}:\textsf{RMod}\to \textsf{Jacb}_\Man$. The identification of module derivations as the sections of the der bundle
\begin{equation*}
    \Sec{\Der L}\cong \Dr{L},
\end{equation*}
implies that these functors fit in the following commutative diagram
\begin{equation*}
\begin{tikzcd}[row sep=small]
& \textsf{RMod} \arrow[dd,"\text{Der}"] \\
\Line_\Man \arrow[ur,"\Gamma"] \arrow[dr,"\Der"'] & \\
& \textsf{Jacb}_\Man 
\end{tikzcd}
\end{equation*}
thus making the category of unit-free configuration spaces into a theory of phase spaces. Since no additional structure is assumed on generic line bundles, the category of unit-free configuration spaces is not a Hamiltonian theory of phase spaces.

\subsection{Canonical Contact Structures on Jet Bundles} \label{CanonicalContact}

The strategy for the search of a Hamiltonian theory of phase spaces that is canonically associated to the category of unit-free configuration spaces defined in Section \ref{UnitFreeGeneralisation} above will be entirely analogous to the case of ordinary configuration spaces: given a line bundle regarded as a unit-free configuration space we construct the \emph{unit-free cotangent bundle}, that is, the jet bundle. In order to show that jet bundles are to line bundles what cotangent bundles are to ordinary manifolds we must prove some technical results about jet bundles using the formalisms of lvector bundles and Jacobi structures introduced throughout this paper. The facts presented in this section will lead to the eventual conclusion in Section \ref{UnitFreeCanonicalHamiltonian} below that unit-free phase spaces indeed generalise ordinary ones.\newline

Firstly, we identify the \textbf{canonical contact structure} found in the jet bundles of line bundles.

\begin{prop}[Canonical Contact Manifold Associated to a Line Bundle]\label{JetCanonicalContactStructure}
Let $\lambda:L\to Q$ be a line bundle and $\normalfont \pi:\Jet^1L \to Q$ its jet bundle, then there is a canonical contact structure $\normalfont (\Jet^1 L,\text{H}_L)$ such that the contact line bundle is isomorphic to the pull-back of the line bundle on the base manifold:
\begin{equation*}
\normalfont
    \Tan (\Jet^1 L)/\text{H}_L\cong \pi^*L.
\end{equation*}
We denote this line bundle by $\normalfont L_{\Jet^1 L}$. Furthermore, the non-degenerate Jacobi structure induced in the line bundle $\normalfont (L_{\Jet^1 L},\{,\}_L)$ is fibre-wise linear and it is completely determined by the algebraic structure of derivations acting on sections
\begin{align*}
    \{l_a,l_b\}_L & =  l_{[a,b]}\\
    \{l_a,\pi^*s\}_L & =  \pi^* a [s]\\
    \{\pi^*s,\pi^*r\}_L & = 0
\end{align*}
for all $s,r\in\Sec{L}$, $\normalfont a,b\in\Sec{\Der L}$ and where $\normalfont l:\Sec{\Der L}\to \Sec{L_{\Jet^1 L}}$ is the inclusion of derivations as fibre-wise linear sections of $\normalfont L_{\Jet^1 L}$.
\end{prop}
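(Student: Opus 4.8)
The plan is to realise the bracket of the statement as the fibre-wise linear Jacobi structure that is ldual to the canonical Jacobi algebroid on $\Der L$, and then to promote its non-degeneracy to a contact structure via Proposition \ref{NonDegenerateJacobiContact}. First I would recall that, by the discussion of Section \ref{JacobiAlgebroids} together with Proposition \ref{DerFunctor}, the der bundle $\Der L$ is canonically a Jacobi algebroid with the commutator bracket of derivations and the identity anchor $\rho=\Id_{\Der L}$. Its ldual lvector bundle is exactly the jet bundle, $(\Der L)^{*L}=\Jet^1 L$, and the natural line bundle over its total space is the pullback $\pi^*L=:L_{\Jet^1 L}$. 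Applying the correspondence of Proposition \ref{LinearJacobi} to $(\Der L,\Id,[,])$ then yields a fibre-wise linear Jacobi structure $(\Sec{L_{\Jet^1 L}},\{,\}_L)$. Because the anchor is the identity, so that $\rho_*=\Id$ on sections, the defining formulas of that construction specialise precisely to $\{l_a,l_b\}_L=l_{[a,b]}$, $\{l_a,\pi^*s\}_L=\pi^*(a[s])$ and $\{\pi^*s,\pi^*r\}_L=0$; since jets of fibre-wise linear sections span $\Jet^1(L_{\Jet^1 L})$ everywhere, these determine $\{,\}_L$ completely by Proposition \ref{ExtensionSymbol}. This settles the fibre-wise linearity and the explicit bracket formulas at once.

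The substance of the proof is to show that $\{,\}_L$ is non-degenerate, i.e. that its biderivation induces a musical isomorphism $\Pi^\sharp:\Jet^1(L_{\Jet^1 L})\to\Der(L_{\Jet^1 L})$; Proposition \ref{NonDegenerateJacobiContact} will then deliver the contact hyperplane $\text{H}_L$ together with the identification $\Tan(\Jet^1 L)/\text{H}_L\cong L_{\Jet^1 L}=\pi^*L$ of the contact line bundle, which is the remaining claim. Since non-degeneracy is pointwise, I would verify it in a local trivialisation. Choosing a unit $u\in\Sec{L^\bullet}$ over a chart $(q^i)$ of $Q$ frames $\Der L$ by the coordinate derivations and the identity derivation $\Id_L$, and induces fibre coordinates $(p_i,p_0)$ on $\Jet^1 L$ via $l_{\partial_{q^i}}=p_i\,\pi^*u$ and $l_{\Id_L}=p_0\,\pi^*u$, so that $\dim\Jet^1 L=2n+1$. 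Translating the three bracket formulas into the Lichnerowicz pair $(\pi_u,R_u)$ attached to the unit $\pi^*u$ by Proposition \ref{LocalJacobi} — using $[\partial_{q^i},\partial_{q^j}]=[\partial_{q^i},\Id_L]=0$, $\partial_{q^i}[u]=0$ and $\Id_L[s]=s$ — gives the normal form $R_u=-\partial_{p_0}$ and $\pi_u=\partial_{p_i}\wedge\partial_{q^i}+p_i\,\partial_{p_i}\wedge\partial_{p_0}$, which is the canonical contact structure on $\Jet^1(\Real^n,\Real)$ with contact form $dp_0-p_i\,dq^i$.

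The main obstacle is precisely this non-degeneracy computation, and it is where I expect the only genuine calculation to lie. I would finish it by reading off from the normal form that $\pi_u^\sharp$ has constant rank $2n$, with image spanned by $\partial_{p_i}$ and $\partial_{q^i}+p_i\partial_{p_0}$, and that the Reeb direction $R_u=-\partial_{p_0}$ is transversal to this image; equivalently the bilinear form $h=R_u\otimes R_u+\pi_u$ is invertible, which by the discussion of Section \ref{UnitPoisson} is exactly non-degeneracy of $\{,\}_L$. This holds at every point and is independent of the chosen unit, since two units differ by a nowhere-vanishing conversion factor. Feeding this into Proposition \ref{NonDegenerateJacobiContact} produces the canonical contact structure $(\Jet^1 L,\text{H}_L)$ and the isomorphism $\Tan(\Jet^1 L)/\text{H}_L\cong\pi^*L$. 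As a final sanity check one recognises $\text{H}_L$ as the annihilator of $dp_0-p_i\,dq^i$, i.e. the standard Cartan distribution, confirming that $\text{H}_L$ is maximally non-integrable.
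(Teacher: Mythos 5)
Your proof is correct, but it routes the ``contact'' half of the statement in the opposite direction to the paper. The two arguments agree on the Jacobi half: both obtain the fibre-wise linear Jacobi structure and the three bracket formulas as the ldual of the canonical Jacobi algebroid $(\Der L,\Id,[,])$ via Proposition \ref{LinearJacobi}. The paper then constructs the contact structure \emph{intrinsically}: it defines the $L$-valued analogue of the Liouville one-form, $\theta_{j^1_xu}:=\Tan_{j^1_xu}\varpi-\Tan_xu\circ\Tan_{j^1_xu}\pi$, sets $\text{H}_L:=\Ker{\theta}$, obtains $\Tan(\Jet^1L)/\text{H}_L\cong\pi^*L$ from the fibre-wise first isomorphism theorem, and only afterwards reconciles the two constructions by showing $\Lambda^\sharp(\Tan^{*(\pi^*L)}(\Jet^1L))=\text{H}_L$. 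You never build the tautological form: you prove non-degeneracy of $\{,\}_L$ by a local normal-form computation in a unit trivialization and then let Proposition \ref{NonDegenerateJacobiContact} manufacture $\text{H}_L$ and the quotient isomorphism. What the paper's route buys is the explicit, coordinate-free contact form $\theta_L$, an object it actually needs later (it reappears as $(\Jet^1 L_Q,\theta_{L_Q})$ in the Hamiltonian functor table); what your route buys is an honest verification of non-degeneracy, which the paper merely asserts by analogy with $\Cot Q$, plus a useful Darboux-type normal form $R_u=-\partial_{p_0}$, $\pi_u=\partial_{p_i}\wedge\partial_{q^i}+p_i\,\partial_{p_i}\wedge\partial_{p_0}$. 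One small point to tighten: Proposition \ref{NonDegenerateJacobiContact} takes as input non-degeneracy in the sense that $\Pi^\sharp:\Jet^1(L_{\Jet^1L})\to\Der(L_{\Jet^1L})$ is an isomorphism, whereas you check invertibility of $h=R_u\otimes R_u+\pi_u$; Section \ref{UnitPoisson} relates these two conditions only through the chain of equivalences with (exact) contact structures, so you should either spell out the trivialized formula $\Pi^\sharp(\alpha,\lambda)=(\pi_u^\sharp\alpha+\lambda R_u,\,-\langle R_u,\alpha\rangle)$ to see that the conditions coincide, or check $\Pi^\sharp$ directly from your normal form, which is immediate: $\Ker{\pi_u^\sharp}$ is spanned by $dp_0-p_i\,dq^i$, and $\langle R_u,dp_0-p_i\,dq^i\rangle=-1\neq 0$, so $\Pi^\sharp$ has trivial kernel.
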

\begin{proof}
The hyperplane distribution $\text{H}_L\subset \Tan (\Jet^1 L)$ is the usual Cartan distribution defined in general jet bundles of vector bundles, however, in our case it can be regarded as the kernel of the \textbf{canonical contact form} $\theta\in\Sec{\Tan^{*(\pi^*L)}(\Jet^1 L)}$ defined as the line bundle analogue of the Liouville $1$-form on the cotangent bundle. More precisely, denoting by $\varpi:\Jet^1 L\to L$ the surjective bundle map of the jet sequence, the canonical contact form is explicitly given at any point $j^1_xu\in\Jet^1 L$ of the jet bundle by
\begin{equation*}
    \theta_{j^1_xu}:=\Tan_{j^1_xu}\varpi - \Tan_x u\circ \Tan_{j^1_xu}\pi
\end{equation*}
where we note that the explicit use of $\Tan_xu$ is well-defined from the fact that $j^1u$ is defined as the equivalence class of all sections agreeing in value and tangent map at $x\in Q$. The map above is mapping tangent spaces of the vector bundles $\theta_{j^1_xu}:\Tan_{j^1_xu}\Jet^1 L\to \Tan_{u(x)}L$, in order to make it into a $L$-valued $1$-form we will use the fact that vertical subspaces of the total space of a vector bundle are canonically isomorphic to the fibres, $\Tan_{u(x)}^{\text{Vert}}L=\Ker{\Tan_{u(x)} \lambda}\cong L_x$ and that the image of $\theta_{j^1_xu}$ is always vertical
\begin{equation*}
    \Tan_{u(x)} \lambda\circ \theta_{j^1_xu}=\Tan_{j^1_xu}(\lambda\circ \varpi)-\Tan_{x}(\lambda\circ u)\circ \Tan_{j^1_xu}\pi=\Tan_{j^1_xu}\pi - \Tan_{j^1_xu}\pi =0,
\end{equation*}
where we have used the fact that sections and jet prolongations fit in the commutative diagram
\begin{equation*}
\begin{tikzcd}
\Jet^1 L \arrow[rr, "\varpi"] \arrow[dr, "\pi"] & & L \arrow[dl, "\lambda"'] \\
 & Q \arrow[ul,bend left,"j^1u"]\arrow[ur,bend right,"u"'] & 
\end{tikzcd}
\end{equation*}
We then define $\text{H}_L:=\Ker{\theta}$, which is shown to be a hyperplane distribution from simple point-wise dimension counting. Applying the first isomorphism theorem for vector spaces fibre-wise, we find $\Tan (\Jet^1 L)/\text{H}_L\cong \pi^*L$, as desired. The non-degenerate Jacobi structure on $\pi^*L$ appears as the ldual to the Jacobi algebroid structure present in $\Der L$ analogously to the canonical symplectic structure being the linear Poisson structure on $\Cot Q$ dual to the Lie algebroid $\Tan Q$. The construction of the Jacobi structure $(L_{\Jet^1 L},\{,\}_L)$ is done as a straightforward application of Proposition \ref{LinearJacobi}. To complete the proof we simply need to show that the contact structure $\text{H}_L$ identified above coincides with the one induced by the Jacobi structure. This results follows the fact that $\Lambda^\sharp(\Tan^{*(\pi^*L)}(\Jet^1 L)) = \text{H}_L$, which is proved via a simple dimension count argument and the observation that, by construction, $\theta(\Lambda^\sharp(\alpha\otimes \pi^*s))=0$ for all $\alpha\in \Sec{\Cot (\Jet^1 L)}$ and $s\in\Sec{L}$.
\end{proof}

The following result establishes the interaction between the jet bundle construction and taking line products of line bundles.

\begin{prop}[Jet Bundle of a Line Product]\label{JetLineProduct}
Let $\lambda_1:L_1\to Q_1$, $\lambda_2:L_2\to Q_2$ two line bundles and denote their line product by $L_1\utimes L_2$, then there is a canonical diffeomorphic factor:
\begin{equation*}
\normalfont
\begin{tikzcd}
L_{\Jet^1 (L_1\utimes L_2)} \arrow[r, "W"] \arrow[d] & L_{\Jet^1 L_1} \utimes L_{\Jet^1 L_2} \arrow[d] \\
\Jet^1 (L_1\utimes L_2) \arrow[r, "w"'] & \Jet^1 L_1\dtimes \Jet^1 L_2
\end{tikzcd}
\end{equation*}
Furthermore, the factor $W$ is an isomorphism of Jacobi manifolds.
\end{prop}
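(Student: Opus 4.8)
The plan is to build $w$ and $W$ from two successive canonical identifications and only then verify the Jacobi compatibility on a spanning set of sections. First I would exploit the fact that jet bundles are lduals of der bundles, $\Jet^1 L = (\Der L)^{*L}$, together with the canonical isomorphism $\Der(L_1 \utimes L_2) \cong \Der L_1 \boxplus \Der L_2$ of Proposition \ref{DerLineProduct}. Taking lduals and checking that lduality commutes with the lvector bundle product $\boxplus$ of Proposition \ref{ProductLVectorBundles} — this reduces fibre-wise to the identity $(V \oplus W)^{*L} \cong V^{*L} \oplus_L W^{*L}$ of Proposition \ref{LSumsDual}, with the line conversion of the second summand handled by the canonical factor $C_{12}$ exactly as in the definition of $\boxplus$ — yields a canonical isomorphism of lvector bundles $\Jet^1(L_1 \utimes L_2) \cong \Jet^1 L_1 \boxplus \Jet^1 L_2$ over the base product $Q_1 \dtimes Q_2$.

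Next I would observe that the total space of the product lvector bundle $\Jet^1 L_1 \boxplus \Jet^1 L_2$ is canonically diffeomorphic to the base product $\Jet^1 L_1 \dtimes \Jet^1 L_2$ of the line bundles $L_{\Jet^1 L_i} = \pi_i^* L_i$. Indeed a point of the former lying over $B \in Q_1 \dtimes Q_2$ is a triple $(B,\xi_1,\xi_2)$ with $\xi_i \in \Jet^1_{x_i} L_i$, while a point of the latter is a factor $B$ decorated by the jets $\xi_i$, and $(L_{\Jet^1 L_i})_{\xi_i} = L_{i,x_i}$, so the two descriptions carry the same data. Composing this diffeomorphism with the isomorphism of the previous step produces $w$, and the accompanying line bundles match on the nose — both fibres equal $L_{1,x_1}$, since $\pi^* p_1^* L_1 = P_1^* \pi_1^* L_1$ by functoriality of pull-back along the commuting square — so $W$ may be taken to be the canonical identity-covering factor, hence a diffeomorphic factor.

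Finally, to show $W$ is a Jacobi isomorphism I would verify that $W^*$ is a Lie algebra morphism on a spanning set. Both Jacobi structures are fibre-wise linear, so by Proposition \ref{LinearJacobi} it suffices to match the underlying Jacobi algebroids; concretely I would compare the canonical bracket of Proposition \ref{JetCanonicalContactStructure} on $L_{\Jet^1(L_1\utimes L_2)}$ with the product bracket of Proposition \ref{ProductJacobi} on $L_{\Jet^1 L_1}\utimes L_{\Jet^1 L_2}$ on the fibre-wise linear sections $l_{k_i(a_i)}$ and the fibre-wise constant sections $\pi^* P_i^* s_i$, which correspond under $W$ to $\mathcal{P}_i^* l_{a_i}$ and $\mathcal{P}_i^* \pi_i^* s_i$ respectively (this correspondence being precisely the one induced by the two identifications above). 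The canonical identities $\{l_a, l_b\}_L = l_{[a,b]}$, $\{l_a, \pi^* s\}_L = \pi^* a[s]$ and $\{\pi^* s, \pi^* r\}_L = 0$ then reduce each generator bracket to the defining relations of Proposition \ref{DerivationsLineProduct}: that $k_i$ are Lie algebra morphisms, that $\overline{D}_i(P_i^* s_i) = P_i^* D_i(s_i)$, and that $\overline{D}_i(P_j^* s_j) = 0$ for $i \neq j$.

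The one place where something genuine happens — and the main obstacle — is the vanishing of the mixed brackets. The product structure demands $\{\mathcal{P}_1^* t_1, \mathcal{P}_2^* t_2\} = 0$, and on the jet side this becomes $\{l_{k_1(a_1)}, l_{k_2(a_2)}\}_L = l_{[k_1(a_1), k_2(a_2)]}$, which vanishes precisely because of the commutation relation $[\Der L_1, \Der L_2] = 0$ of Proposition \ref{DerivationsLineProduct}. Granting this, all generator brackets agree and $W^*$ is a Lie algebra isomorphism, so $W$ is an isomorphism of Jacobi manifolds; the remaining effort is purely the bookkeeping of the two identifications, confirming that the generator correspondence used above is exactly the one carried by $w$.
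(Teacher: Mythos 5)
Your proposal is correct and follows essentially the same route as the paper's own proof: the dualisation of $\Der(L_1\utimes L_2)\cong \Der L_1 \boxplus \Der L_2$ to obtain the jet bundle of the product, the canonical identification of its total space with the base product $\Jet^1 L_1\dtimes \Jet^1 L_2$ together with the matching of the pulled-back line bundles, and the verification of the Jacobi property on the spanning sections via the injections $k_i$ of Proposition \ref{DerivationsLineProduct}. The paper likewise reduces the mixed-bracket vanishing to $[\Der L_1,\Der L_2]=0$ (implicitly, by writing out only the non-zero brackets), so your emphasis on that point coincides with its argument rather than departing from it.
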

\begin{proof}
Let us first construct the isomorphism factor $W$ explicitly. Recall from proposition \ref{DerLineProduct} that $\Der(L_1\utimes L_2)\cong \Der L_1 \boxplus \Der L_2$, the fact that the jet bundle is the ldual of the der bundle in the category of lvector bundles allows us to write:
\begin{equation*}
    \Jet^1 (L_1\utimes L_2)\cong(p_1^*\Der L_1\oplus p_2^*\Der L_2)^*\otimes (L_1\utimes L_2),
\end{equation*}
then, using the swapping isomorphism factor for the second term, we find the following isomorphism of vector bundles
\begin{equation*}
    t:\Jet^1 (L_1\utimes L_2)\to p_1^*\Jet^1 L_1\oplus p_2^*\Jet^1 L_2.
\end{equation*}
covering the identity map on $Q_1\dtimes Q_2$. Now, by the definition of line product, it is clear that we can define the following map
\begin{align*}
z: p_1^*\Jet^1 L_1\oplus p_2^*\Jet^1 L_2 & \to \Jet^1 L_1\dtimes \Jet^1L_2\\
(\alpha_{q_1}\oplus \beta_{q_2})_{B_{q_1q_2}} & \mapsto C_{\alpha_{q_1}\beta_{q_2}}
\end{align*}
where $C_{\alpha_{q_1}\beta_{q_2}}=B_{q_1q_2}$, which is well-defined since the line bundle over a jet bundle is defined as the pull-back line bundle from the base. Conversely, given a fibre-wise invertible map between pull-back line bundles we can project to a fibre-wise map between the base line bundles, denote this projection by $\overline{\pi}:\Jet^1 L_1 \dtimes \Jet^1 L_2 \to Q_1 \dtimes Q_2$, so there is an obvious inverse for the map above
\begin{equation*}
    z^{-1}(C_{\alpha_{q_1}\beta_{q_2}})=(\alpha_{q_1}\oplus \beta_{q_2})_{\overline{\pi}(C_{\alpha_{q_1}\beta_{q_2}})}.
\end{equation*}
We thus find the desired diffeomorphism
\begin{equation*}
    w:=z\circ t:\Jet^1 (L_1\utimes L_2) \to \Jet^1 L_1\dtimes \Jet^1L_2.
\end{equation*}
Let us write the line product commutative diagram for the line bundles over the jets as
\begin{equation*}
\begin{tikzcd}
L_{\Jet^1L_1} \arrow[d, "\mu_1"'] & L_{\Jet^1L_1}\utimes L_{\Jet^1L_2} \arrow[l,"R_1"']\arrow[d, "\mu_{12}"]\arrow[r,"R_2"] & L_{\Jet^1L_2} \arrow[d,"\mu_2"] \\
\Jet^1L_1 & \Jet^1L_1 \dtimes \Jet^1L_2 \arrow[l,"r_1"]\arrow[r,"r_2"'] & \Jet^1L_2
\end{tikzcd}
\end{equation*}
and denote the compositions $o_i:=\pi_i\circ r_i:\Jet^1L_1\dtimes \Jet^1L_2\to Q_i$. It then follows by construction that
\begin{equation*}
    L_{\Jet^1L_1}\utimes L_{\Jet^1L_2}=o_1^*L_1
\end{equation*}
and
\begin{equation*}
    L_{\Jet^1(L_1\utimes L_2)}\cong w^*o_1^*L_1
\end{equation*}
thus showing
\begin{equation*}
    L_{\Jet^1(L_1\utimes L_2)}\cong w^*(L_{\Jet^1L_1}\utimes L_{\Jet^1L_2}).
\end{equation*}
This is, of course, tantamount to there being a factor $W:L_{\Jet^1(L_1\utimes L_2)}\to L_{\Jet^1L_1}\utimes L_{\Jet^1L_2}$ covering the diffeomorphism $w$ that essentially acts as the fibre-wise identity on $L_1$. In order to show that this factor is, in fact, a Jacobi map we can write the pull-backs of (non-zero) brackets of spanning sections directly
\begin{align*}
    W^*\{R_i^*l_{a_i},R_i^*l_{b_i}\}_{12}&=W^*R_i^*\{l_{a_i},l_{b_i}\}_i=(R_i\circ W)^*l_{[a_i,b_i]}\\
    W^*\{R_i^*l_{a_i},R_i^*\pi_i^*u_i\}_{12}&=W^*R_i^*\{l_{a_i},\pi_i^*u_i\}_i=(R_i\circ W)^*\pi_i^*a_i[u_i]
\end{align*}
for $u_i\in\Sec{L_i}$, $a_i,b_i\in\Sec{\Der L_i}$, $i=1,2$, and where the definition of product Jacobi structure has been used to write the RHS expressions. Note that the construction of $W$ above is such that
\begin{equation*}
    (R_i\circ W)^*l_{a_i}=\overline{l}_{k_i(a_i)}\qquad (R_i\circ W)^*\pi_i^*u_i=\overline{\pi}^*P_i^*u_i
\end{equation*}
where $\overline{\pi}:\Jet^1(L_1\utimes L_2)\to Q_1\dtimes Q_2$ is the jet bundle projection for the line product, $k_i:\Sec{\Der L_i}\to \Sec{\Der (L_1\utimes L_2)}$ are the natural Lie algebra injections of derivations from proposition \ref{DerivationsLineProduct} and $\overline{l}:\Sec{\Der (L_1\utimes L_2)}\to L_{\Jet^1(L_1\utimes L_2)}$ is the inclusion of fibre-wise linear sections. A direct computation then gives the brackets of pull-backs of spanning sections
\begin{align*}
    \{W^*R_i^*l_{a_i},W^*R_i^*l_{b_i}\}&=\overline{l}_{[k_i(a_i),k_i(b_i)]}=(R_i\circ W)^*l_{[a_i,b_i]}\\
    \{W^*R_i^*l_{a_i},W^*R_i^*\pi_i^*u_i\}&=\overline{\pi}^*k_i(a_i)[P_i^*u_i]=\overline{\pi}^*P_i^*a_i[u_i]=(R_i\circ W)^*\pi_i^*a_i[u_i]
\end{align*}
which agree with the pull-backs of brackets above, thus completing the proof.
\end{proof}

Consider a factor between some pair of line bundles $B:L_1\to L_2$ covering a smooth map $b:Q_1\to Q_2$. Recall that the der map always gives a well-defined morphism of lvector bundles $\Der B:\Der L_1\to \Der L_2$, however factors covering general smooth maps cannot be dualized to give a well-defined map between the jet bundles. We can, nevertheless, define the \textbf{jet lift} of the factor $B$ as a subamanifold of the base product of jet bundles:
\begin{equation*}
    \Jet^1 B:=\{C_{\alpha_{q_1}\beta_{q_2}}\in \Jet^1 L_1\dtimes \Jet^1 L_2|\quad q_2=b(q_1),\quad (\Der_{q_1}B)^{*B_{q_1}}\beta_{q_2}=\alpha_{q_1}, \quad C_{\alpha_{q_1}\beta_{q_2}}=B_{q_1}\}
\end{equation*}
where the last condition is understood using the fact that the line bundles over the jet bundles are simply the pull-back bundles by the canonical projection to the base. When the factor $B$ covers a diffeomorphism $b$, the ldual of the der map $\Der B$ is a well-defined isomorphism of lvector bundles:
\begin{equation*}
    \Jet^1 B:(\alpha_q,l_q)\mapsto (\Jet^1 b(\alpha_q),B^{-1}_{q}l_q)
\end{equation*}
where
\begin{equation*}
    \Jet^1 b(\alpha_q)=(\Der_qB)^{*B_q}(\alpha_q)\in\Jet_{b^{-1}(q)}^1L_1.
\end{equation*}
The cotangent lift then induces the following commutative diagram of vector bundle morphisms
\begin{equation*}
    \begin{tikzcd}[sep=small]
     L_{\Jet^1L_2}\arrow[r,"\Jet^1B"]\arrow[d] & L_{\Jet^1L_1}\arrow[d] \\
     \Jet^1L_2\arrow[r,"\Jet^1 b"] \arrow[d] & \Jet^1L_1 \arrow[d] \\
     Q_2\arrow[r,"b^{-1}"] & Q_1
    \end{tikzcd}
\end{equation*}
and for any two diffeomorphic factors $B,F:L\to L$ and the identity factor $\Id_L:L\to L$ we find:
\begin{equation*}
    \Jet^1(B\circ F)=\Jet^1F\circ \Jet^1B, \qquad \Jet^1(\Id_L)=\Id_{L_{\Jet^1L}}.
\end{equation*}
In the case of a diffeomorphic factor, the jet lift is simply the lgraph of the induced factor $\LGrph{\Jet^1 B}$.\newline

We now show that the jet lift of a factor is, in fact, a Legendrian submanifold of the product Jacobi manifold, thus defining a Legendrian relation.

\begin{prop}[Jet Lift of a Factor]\label{JetLiftFactor}
Let $B:L_1\to L_2$ be a factor between two line bundles, then its jet lift
\begin{equation*}
\normalfont
    \Jet^1 B\subset \Jet^1 L_1 \dtimes \Jet^1 L_2
\end{equation*}
is a Legendrian submanifold of the Jacobi product structure $\normalfont L_{\Jet^1 L_1} \utimes \overline{L}_{\Jet^1 L_2}$.
\end{prop}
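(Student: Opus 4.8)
The plan is to mirror the classical proof that the cotangent lift of a smooth map is a Lagrangian relation, transported to the contact/Jacobi setting through the canonical contact form of Proposition~\ref{JetCanonicalContactStructure}. The product $L_{\Jet^1 L_1}\utimes \overline{L}_{\Jet^1 L_2}$ is a non-degenerate Jacobi manifold, hence by Proposition~\ref{NonDegenerateJacobiContact} a contact manifold; on such a manifold any submanifold that is isotropic with respect to the contact hyperplane distribution and of maximal dimension is automatically Legendrian (an $n$-dimensional integral submanifold of $\text{H}$ is Lagrangian for the non-degenerate curvature form on $\text{H}$, and this forces coisotropy with respect to the Jacobi biderivation via Proposition~\ref{CoisotropicSubmanifoldsJacobi}). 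I would therefore split the claim into a dimension count and a tangency statement $\Tan(\Jet^1 B)\subset \text{H}_{12}$.

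For the dimension count, write $m_i:=\dim Q_i$. The der sequence gives $\text{rk}(\Der L_i)=m_i+1$, so each jet bundle $\Jet^1 L_i=(\Der L_i)^{*L_i}$ has fibre dimension $m_i+1$ and total dimension $2m_i+1$; the base product then has dimension $(2m_1+1)+(2m_2+1)+1=2(m_1+m_2+1)+1$, i.e. it is contact with $n=m_1+m_2+1$. In the definition of $\Jet^1 B$ the free data are a point $q_1\in Q_1$ (contributing $m_1$) and a jet $\beta_{q_2}\in\Jet^1_{q_2}L_2$ over $q_2=b(q_1)$ (contributing $m_2+1$), whereas $\alpha_{q_1}=(\Der_{q_1}B)^{*B_{q_1}}\beta_{q_2}$ and the factor $C=B_{q_1}$ are both determined; hence $\dim \Jet^1 B=m_1+m_2+1=n$, exactly the Legendrian dimension. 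Note that the extra $\Real^\times$-direction of the base product is pinned by the constraint $C=B_{q_1}$, which is what keeps the count from overshooting.

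For tangency I would write the canonical contact form $\theta_{12}$ of the product — an $(L_{\Jet^1 L_1}\utimes\overline{L}_{\Jet^1 L_2})$-valued $1$-form with $\text{H}_{12}=\Ker{\theta_{12}}$ — in terms of the tautological forms $\theta_1,\theta_2$ of Proposition~\ref{JetCanonicalContactStructure} transported through the line-product projection factors $P_1,P_2$, the sign reversal on the second slot being supplied by the opposite structure $\overline{L}_{\Jet^1 L_2}$. A tangent vector to $\Jet^1 B$ projects to some $v\in\Tan_{q_1}Q_1$ together with a variation of $\beta_{q_2}$; evaluating $\theta_1$ produces the tautological term built from $\alpha_{q_1}$ and evaluating $\theta_2$ the one built from $\beta_{q_2}$ along $\Tan_{q_1}b(v)$, and the defining relation $\alpha_{q_1}=(\Der_{q_1}B)^{*B_{q_1}}\beta_{q_2}$ with $q_2=b(q_1)$ is precisely the identity making the two cancel once the line identification carried by the point $C=B_{q_1}$ is used to compare the two $L$-valued contributions. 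Since tangency to $\text{H}_{12}$ is a pointwise condition, I would carry out this cancellation over trivialising neighbourhoods (i.e. after choosing units), where by the trivial-line-bundle formulas the forms become the standard $dz-p\,dq$ and the computation collapses to the familiar fact that this annihilates a prolonged graph. The main obstacle is exactly this bookkeeping: assembling the product contact form on the twisted line product with the reversed orientation and tracking $B$ as the device identifying the two line fibres, so that the $L$-valued tautological terms are genuinely comparable and cancel; the der-map functoriality of Proposition~\ref{DerFunctor} and the explicit description of $\Jet^1 B$ as an lgraph of the induced factor in the diffeomorphic case (where the result also follows from Proposition~\ref{JacobiMapCoisotropicRelation}) serve as consistency checks. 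With tangency and the dimension count established, $\Jet^1 B$ is a maximal isotropic integral submanifold of $\text{H}_{12}$, hence Legendrian.
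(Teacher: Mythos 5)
Your proposal is correct, but it proves the opposite half of the Legendrian condition from the one the paper proves. The paper's own argument is algebraic: working directly with the abstract product bracket of Proposition \ref{ProductJacobi}, it exhibits generating sections of the vanishing submodule of $\Jet^1 B$, namely $l_{a_1a_2}=R_1^*l_{a_1}-R_2^*l_{a_2}$ and $\pi^*_{u_1u_2}=R_1^*\pi_1^*u_1-R_2^*\pi_2^*u_2$, observes that these vanish on $\Jet^1 B$ exactly when $a_1\sim_B a_2$ and $u_1=B^*u_2$, and shows they close under the bracket (the real work being done by Proposition \ref{DerFunctor}, i.e. $\Der B$ is a Lie algebroid morphism); this gives coisotropy via characterisation 3 of Proposition \ref{CoisotropicSubmanifoldsJacobi}, and then the same dimension count you perform finishes the proof. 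You instead establish isotropy, $\Tan(\Jet^1 B)\subset \text{H}_{12}$, plus the dimension count. Both reductions are legitimate: in a $(2n+1)$-dimensional contact manifold, an $n$-dimensional coisotropic submanifold is forced into $\text{H}$, and conversely an $n$-dimensional integral submanifold of $\text{H}$ is Lagrangian for the curvature form and hence coisotropic -- your justification of the latter implication is sound. What each approach buys: the paper never needs an explicit contact form on the twisted product (it uses only the extension-by-symbol definition of $\{,\}_{12}$) and obtains, as a by-product, an explicit description of the vanishing ideal of $\Jet^1 B$ as the $B$-related pairs; your route leans on Proposition \ref{JetLineProduct} but then collapses to the classical cotangent-lift/Lagrangian-graph computation, which is more geometric and more recognisable.

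The one step you should not leave as "bookkeeping" is the formula for $\theta_{12}$: the product Jacobi structure is defined by a bracket, not by a form, so the identity $\text{H}_{12}=\Ker{R_1^*\theta_1-R_2^*\theta_2}$ (where $R_i^*$ is factor pull-back of lforms, so that $R_2^*$ carries the fibre identification $C^{-1}$) requires proof. The clean route is to transport the canonical contact form of $\Jet^1(L_1\utimes L_2)$ through the Jacobi isomorphism $W$ of Proposition \ref{JetLineProduct}, and to handle the opposite structure via the fibre-wise negation $\nu:\Jet^1 L_2\to \Jet^1 L_2$, which is a Jacobi isomorphism $L_{\Jet^1 L_2}\to\overline{L}_{\Jet^1 L_2}$; the minus sign it produces is essential, since with the untwisted product the cancellation fails. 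Once this is in place your computation does go through: in adapted coordinates one gets $\theta_{12}=(dz_1-p_1dq_1)-\tfrac{1}{t}(dz_2-p_2dq_2)$, and on $\Jet^1 B$, where $B=(b,\beta)$ in trivialisations, one has $t=\beta(q_1)$, $q_2=b(q_1)$, $z_1=z_2/\beta$ and $p_1=\tfrac{1}{\beta}\bigl[(\Tan b)^*p_2-\tfrac{z_2}{\beta}d\beta\bigr]$, so the pull-back of $\theta_{12}$ vanishes identically. Note that the $d\beta$ terms (absent in the classical untwisted picture) must cancel against each other inside the first tautological term, so the computation is slightly more than "the familiar fact", but it is correct.
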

\begin{proof}
We first show that $\Jet^1B$ is a coisotropic submanifold by identifying a set of generating sections of its vanishing submodule $\Gamma_{\Jet^1B}\subseteq \Sec{L_{\Jet^1(L_1\utimes L_2)}}$ and showing that they indeed form a Lie subalgebra. Firstly, given $u_i\in\Sec{L_i}$, $a_i\in\Sec{\Der L_i}$, $i=1,2$, let us define the following sections of the line product of the canonical contact line bundles on the jet bundles $\Jet^1L_i$ following the notation introduced in proposition \ref{JetLineProduct}:
\begin{align*}
    l_{a_1a_2}&:=R_1^*l_{a_1}-R_2^*l_{a_2}\\
    \pi^*_{u_1u_2}&:= R_1^*\pi_1^*u_1-R_2^*\pi_2^*u_2.
\end{align*}
Consider a point on the jet lift $C_{\alpha_{q_1}\beta_{q_2}}\in \Jet^1 B$ so that $q_2=b(q_1)$, $(\Der_{q_1}B)^{*L}\beta_{q_2}=\alpha_{q_1}$ and $C_{\alpha_{q_1}\beta_{q_2}}=B_{q_1}$. Let us evaluate the defined sections on it:
\begin{align*}
l_{a_1a_2}(C_{\alpha_{q_1}\beta_{q_2}}) & = R_1|^{-1}_{C_{\alpha_{q_1}\beta_{q_2}}}l_{a_1}(\alpha_{q_1})-R_2|^{-1}_{C_{\alpha_{q_1}\beta_{q_2}}}l_{a_2}(\beta_{q_2})\\
& = l_{a_1}(\alpha_{q_1})-B^{-1}_{q_1}(l_{a_2}(\beta_{q_2}))\\
& = B^{-1}_{q_1}\beta_{q_2}(\Der_{q_1}B(a_1|_{q_1})-a_2|_{q_2})\\
& = B^{-1}_{q_1}\beta_{q_2}(\Der B \circ a_1 - a_2\circ b)(q_1)
\end{align*}
and
\begin{align*}
\pi^*_{u_1u_2} & = R_1|^{-1}_{C_{\alpha_{q_1}\beta_{q_2}}}u_1(q_1)-R_2|^{-1}_{C_{\alpha_{q_1}\beta_{q_2}}}u_2(q_2)\\
& = u_1(q_1)-B_{q_1}^{-1}u_2(q_2)\\
& = u_1(q_1)-B_{q_1}^{-1}u_2(b(q_1))\\
& = (u_1-B^*u_2)(q_2).
\end{align*}
It is then obvious that these spanning sections vanish on the jet lift $\Jet^1 B$ iff the derivations are $B$-related and the sections are mapped by the pull-back $B^*$, i.e.
\begin{align*}
    \Gamma_{\Jet^1 B}\ni l_{a_1a_2} &\Leftrightarrow   a_1\sim_B a_2\\
    \Gamma_{\Jet^1 B}\ni \pi^*_{u_1u_2}  &\Leftrightarrow  u_1=B^*u_2.
\end{align*}
These are the generating vanishing sections so it will suffice to check that evaluations of brackets among these on an arbitrary point of the jet lift $C_{\alpha_{q_1}\beta_{q_2}}\in \Jet^1 B$ vanish. From the defining conditions of a the linear Jacobi we see that $\{\pi^*_{u_1u_2},\pi^*_{u_1'u_2'}\}=0$, so we are left with the two other possible brackets. For the bracket of fibre-wise linear sections we compute explicitly using the defining properties of the product Jacobi bracket:
\begin{equation*}
    \{l_{a_1a_2} ,l_{a_1'a_2'} \}= R^*_1\{l_{a_1},l_{a_1'}\}_1-R^*_2\{l_{a_2},l_{a_2'}\}_2=R^*_1l_{[a_1,a_1']}-R^*_2l_{[a_2,a_2']}
\end{equation*}
for $a_i,a_i'\in\Sec{\Der L_i}$, $i=1,2$. Then if $l_{a_1a_2}$ and $l_{a_1'a_2'}$ are vanishing sections, the derivations are $B$-related and, by virtue of proposition \ref{DerFunctor}, where $\Der B$ is shown to be a Lie algebroid morphism, the brackets are also $B$-related $[a_1,a_1']\sim_B [a_2,a_2']$ making the expression above into a vanishing section. Only the cross bracket $\{l_{a_1a_2} ,\pi^*_{u_1u_2} \}= R^*_1\pi_1^*a_1[u_1]_1-R^*_2\pi_2^*a_2[u_2]$ remains, for which we evaluate on a point of the jet lift and show it vanishes by directly computing using $u_1=B^*u_2$ and $a_1\sim_B a_2$:
\begin{align*}
\{l_{a_1a_2} ,\pi^*_{u_1u_2} \}(C_{\alpha_{q_1}\beta_{q_2}}) & = a_1|_{q_1}(u_1)-B_{q_1}^{-1}a_2|_{q_2}(u_2)\\
& = a_1|_{q_1}(u_1)-B_{q_1}^{-1}\Der_{q_1}B(a_1|_{q_1})(u_2)\\
& = a_1|_{q_1}(B^*u_2)-B_{q_1}^{-1}B_{q_1}a_1|_{q_1}(B^*u_2)\\
& = a_1|_{q_1}(B^*u_2)-a_1|_{q_1}(B^*u_2)\\
& = 0.
\end{align*}
To show that $\Jet^1 B$ is maximally isotropic we simply do a dimension count. By definition, it is clear that $\dimm \Jet^1 B=\dimm Q_1\dtimes Q_2$, but it follows from proposition \ref{JetLineProduct} that $\dimm \Jet^1 L_1\dtimes \Jet^1 L_2 = 2(\dimm Q_1\dtimes Q_2)+1$, then clearly $\Jet^1 B\subset \Jet^1 L_1\dtimes \Jet^1 L_2$ is maximally isotropic and thus Legendrian.
\end{proof}

With this last proposition at hand, we can now see the jet bundle construction as a functor from the category of line bundles into the category of contact manifolds.

\begin{prop}[The Jet Functor] \label{JetFunctor}
The assignment of jet bundles to line bundles is a contravariant functor
\begin{equation*}
\normalfont
    \Jet^1 : \Line_\Man \to \Cont_\Man.
\end{equation*}
\end{prop}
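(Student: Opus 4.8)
The plan is to verify the three defining conditions of a contravariant functor: that $\Jet^1$ takes objects of $\Line_\Man$ to objects of $\Cont_\Man$, that it takes factors to Legendrian relations, and that it preserves identities while reversing composition. The first two conditions are already in hand from the preceding propositions. On objects, Proposition~\ref{JetCanonicalContactStructure} endows the jet bundle of any line bundle $\lambda:L\to Q$ with a canonical contact structure $(\Jet^1 L,\text{H}_L)$, equivalently a non-degenerate Jacobi structure on $L_{\Jet^1 L}$, which is by definition an object of $\Cont_\Man$. On morphisms, Proposition~\ref{JetLiftFactor} shows that the jet lift $\Jet^1 B\subset \Jet^1 L_1\dtimes \Jet^1 L_2$ of a factor $B:L_1\to L_2$ is a Legendrian submanifold of the product Jacobi structure $L_{\Jet^1 L_1}\utimes \overline{L}_{\Jet^1 L_2}$, hence a Legendrian relation, which---following the direction convention used for the cotangent functor---I would regard contravariantly as a morphism $\Jet^1 B:\Jet^1 L_2\dashrightarrow \Jet^1 L_1$.

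Preservation of identities is immediate. Since $\Id_L$ is a diffeomorphic factor, its jet lift is the lgraph $\LGrph{\Id_{L_{\Jet^1 L}}}$ of the induced factor, and the functorial identity $\Jet^1(\Id_L)=\Id_{L_{\Jet^1 L}}$ recorded just before Proposition~\ref{JetLiftFactor} identifies this lgraph with the diagonal, which is precisely the identity morphism in $\Cont_\Man$.

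The substance of the proof is compatibility with composition. Given factors $B:L_1\to L_2$ and $F:L_2\to L_3$, I would first compute the set-theoretic composite of the associated Legendrian relations. A point of $\Jet^1 B\circ \Jet^1 F$ is a pair $(\alpha_{q_1},\gamma_{q_3})$ admitting an intermediate $\beta_{q_2}$ with $(\alpha_{q_1},\beta_{q_2})\in\Jet^1 B$ and $(\beta_{q_2},\gamma_{q_3})\in\Jet^1 F$. Unfolding the defining conditions of the two jet lifts yields $q_2=b(q_1)$, $q_3=f(q_2)$, $\alpha_{q_1}=(\Der_{q_1}B)^{*B}\beta_{q_2}$ and $\beta_{q_2}=(\Der_{q_2}F)^{*F}\gamma_{q_3}$. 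Eliminating $\beta_{q_2}$ and invoking the functoriality $\Der(F\circ B)=\Der F\circ \Der B$ of Proposition~\ref{DerFunctor} together with the contravariance of the ldual, one finds $q_3=(f\circ b)(q_1)$ and $\alpha_{q_1}=(\Der_{q_1}(F\circ B))^{*(F\circ B)}\gamma_{q_3}$, which is exactly the defining condition of $\Jet^1(F\circ B)$. Thus the composite coincides set-theoretically with $\Jet^1(F\circ B)$, in agreement with the reversal $\Jet^1(B\circ F)=\Jet^1 F\circ\Jet^1 B$ already recorded for diffeomorphic factors.

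The main obstacle is to upgrade this set-theoretic identity to an identity of Legendrian relations, i.e. to check that the composition is clean, so that the composite carries the structure of a genuine smooth Legendrian submanifold---the exact analogue of the strong transversality with which cotangent lifts compose in the Weinstein category. Here I would note that the intermediate jet $\beta_{q_2}$ is cut out smoothly and uniquely from $\gamma_{q_3}$ by the fibre-wise invertible factor $F$ through its der map, so that the fibre product defining the composition is a clean intersection whose cleanness reduces to the automatic transversality of graph-type relations sitting over the composable base maps $f\circ b$. Once cleanness is established, the composite is a smooth submanifold of the same dimension as, and supported on the same set as, $\Jet^1(F\circ B)$; being set-theoretically equal to a Legendrian submanifold it is itself Legendrian, and the two relations coincide, completing the verification of functoriality.
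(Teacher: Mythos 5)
Your proof is correct and follows essentially the same route as the paper's: objects are handled by Proposition \ref{JetCanonicalContactStructure}, morphisms by Proposition \ref{JetLiftFactor}, the identity factor is identified with the diagonal relation, and composition is verified by unfolding the set-theoretic composite of relations and invoking the functoriality of $\Der$ (Proposition \ref{DerFunctor}) together with the contravariance of lduality. The one place where you go beyond the paper is the cleanness issue: the paper's proof simply computes the composite of relations and identifies it with $\Jet^1(B'\circ B)$, passing silently over the fact---flagged in its own definition of $\Cont_\Man$---that composition of Legendrian relations is in general subject to cleanness-of-intersection problems. Your observation that the intermediate jet $\beta_{q_2}$ is smoothly and uniquely determined by $\gamma_{q_3}$ via the fibre-wise ldual of $\Der F$, so that each jet lift is a graph-type relation over the pullback of the target jet bundle (e.g. $\Jet^1 B\cong b^*\Jet^1 L_2$ as manifolds) and the fibre product is automatically a clean intersection projecting diffeomorphically onto $\Jet^1(F\circ B)$, closes this gap; it is a genuine strengthening of the published argument rather than a redundancy, and it is exactly the point that makes the jet functor land in $\Cont_\Man$ as a bona fide category despite the partial-composition caveat.
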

\begin{proof}
A factor $B:L_1\to L_2$ gives a Legendrian relation $\Jet^1B:\Jet^1L_2\dashrightarrow \Jet^1L_1$ in virtue of proposition \ref{JetLiftFactor}, then it only remains to check functoriality. It is obvious by definition that the identity factor $\Id_L:L\to L$ gives the diagonal relation $\text{Ldiag}(L_{\Jet^1L})\subset \Jet^1 L\dtimes \Jet^1L$, where the diagonal is the natural subset of fibre-wise identity maps in a line product $L\utimes L$. Consider now two factors $B:L_1\to L_2$ and $B':L_2\to L_3$ covering the smooth maps $b:Q_1\to Q_2$ and $b':Q_2\to Q_3$. By definition of composition of relations, we find
\begin{align*}
    \Jet^1 B\circ \Jet^1B':= &\{C_{\alpha_{q_1}\gamma_{q_3}}\in \Jet^1 L_1\dtimes \Jet^1 L_3| \\ & q_3=b'(b(q_1)), \, (\Der_{q_1}B)^{*B_{q_1}}(\Der_{b(q_1)}B')^{*B'_{b(q_1)}}\gamma_{q_3}=\alpha_{q_1}, \, C_{\alpha_{q_1}\gamma_{q_3}}=B_{q_1}\circ B_{b(q_1)}'\}
\end{align*}
Then, it follows from the fact that lduality is a contravariant autofunctor of lvector spaces that the jet lift of factors is contravariant with respect to composition of relations
\begin{equation*}
    \Jet^1 B\circ \Jet^1B'=\Jet^1(B'\circ B).
\end{equation*}
\end{proof}

There are two alternative ways to regard the jet bundle of the line bundle induced on a submanifold of the base. On the one had, we could take the line bundle as a an embedded subbundle and construct its jet bundle from the ambient jet bundle. On the other, we could simply regard the restricted line bundle as an intrinsic line bundle and canonically construct its jet bundle. We now prove that these two constructions are equivalent.

\begin{prop}[Canonical Coisotropic Reduction in Jet Bundles]\label{JetCoisotropicReduction}
Let $i:S\hookrightarrow Q$ be a submanifold of a line bundle $L$, then the restriction of the ambient jet bundle to the submanifold $\normalfont (\Jet^1 L)|_S$ is a coisotropic submanifold with respect to the canonical contact structure on $\normalfont \Jet^1 L$. Furthermore, there is a submersion factor covering the surjective submersion $\normalfont z:(\Jet^1 L)|_S\twoheadrightarrow \Jet^1 L_S$ given by the fibre-wise quotient:
\begin{equation*}
\normalfont
    \Jet^1_qL/(\Der L_S)^{0L}\cong \Jet^1_q L_S\qquad q\in S
\end{equation*}
so that the canonical contact structure on $\normalfont \Jet^1 L$ Jacobi reduces to the canonical contact structure on $\normalfont \Jet^1 L_S$.
\end{prop}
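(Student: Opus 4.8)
The plan is to realise this as an instance of coisotropic reduction (Proposition \ref{CoisotropicReductionJacobi}): I would show that $C:=(\Jet^1 L)|_S$ is coisotropic in the canonical contact Jacobi manifold $(\Jet^1 L,\{,\}_L)$, construct the submersion factor realising the stated fibre-wise quotient, and then identify the reduced structure with the canonical contact structure on $\Jet^1 L_S$ by computing it on spanning sections.

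I would first set up the fibre-wise picture. By Proposition \ref{DerBundleLineSubmanifold} the embedding $\iota_0:L_S\hookrightarrow L$ realises $\Der L_S$ as the subbundle $\{b : \delta(b)\in\Tan S\}$ of the restriction of $\Der L$ to $S$. Applying lduality in the restricted category (Proposition \ref{LSumsDual}, the isomorphism $V^{*L}/U^{0L}\cong U^{*L}$ with $V=\Der L|_S$ and $U=\Der L_S$) yields exactly $\Jet^1_q L/(\Der L_S)^{0L}\cong \Jet^1_q L_S$, with $(\Der L_S)^{0L}$ the lannihilator. This exhibits the $L$-dual of the inclusion as a fibre-wise surjective bundle map $z:(\Jet^1 L)|_S\twoheadrightarrow \Jet^1 L_S$ over $S$ with kernel subbundle $(\Der L_S)^{0L}$, hence a surjective submersion; and since $L_{\Jet^1 L_S}=\pi_S^*L_S$ while the restricted contact line bundle is $L_C=(\pi|_S)^*L=z^*\pi_S^*L_S=z^*L_{\Jet^1 L_S}$, the canonical fibre-wise identity provides the required submersion factor $\zeta:L_C\to L_{\Jet^1 L_S}$ covering $z$.

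The principal work is coisotropy, and here I would use characterisation 3 of Proposition \ref{CoisotropicSubmanifoldsJacobi}, namely that the vanishing submodule $\Gamma_C$ is a Lie subalgebra. Since $C=\pi^{-1}(S)$ one has $\Gamma_C=\pi^*I_S\cdot\Sec{L_{\Jet^1 L}}$, so it suffices to treat generators $\sigma_i=(\pi^*f_i)\tau_i$ with $f_i\in I_S$. Expanding $\{\sigma_1,\sigma_2\}_L$ by the symbol-squiggle expansion, three of the four terms visibly carry a factor $\pi^*f_1$ or $\pi^*f_2$ and so lie in $\Gamma_C$; the obstacle is the squiggle term $\Lambda^\sharp(d\pi^*f_1\otimes\tau_1)[\pi^*f_2]\,\tau_2$. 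To control it I would work over a trivialising neighbourhood with the fibre-wise constant unit $u=\pi^*u_0$, write $\tau_1=\phi\, u$, and expand the identity $0=\{(\pi^*f_1)u,(\pi^*f_2)u\}_L$, which vanishes because fibre-wise constant sections commute (Proposition \ref{JetCanonicalContactStructure}). Using $\{u,u\}_L=0$ this isolates $\Lambda^\sharp(d\pi^*f_1\otimes u)[\pi^*f_2]\,u=-(\pi^*f_1)X_u[\pi^*f_2]\,u+(\pi^*f_2)X_u[\pi^*f_1]\,u$, a sum of terms proportional to $\pi^*f_1$ or $\pi^*f_2$ and hence vanishing on $C$. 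The squiggle term therefore also lies in $\Gamma_C$. I expect this step to be the hard part, since it is the only place where the genuinely Jacobi (as opposed to Poisson) part of the structure intervenes.

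Finally I would handle the reduction. Coisotropy guarantees, exactly as in the proof of Proposition \ref{CoisotropicReductionJacobi}, that the reduced bracket is well-defined on $N(\Gamma_C)/\Gamma_C\cong\Sec{L_{\Jet^1 L_S}}$, so it can be computed using any convenient $z$-projectable extensions. For $\bar s\in\Sec{L_S}$ and $\bar a\in\Sec{\Der L_S}$ I would choose the extensions $\pi^*S$ (with $\iota_0^*S=\bar s$) and $l_a$ with $a\in\text{Der}_S(L)$ a neighbourhood-tangent derivation inducing $\bar a$ (Proposition \ref{DerivationsLineSubmanifold}); both are constant along the fibres of $z$, hence lie in the idealizer $N(\Gamma_C)$. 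Using the der-map identity $\iota_0^*(a[S])=\bar a[\bar s]$ and the fact that restriction $\text{Der}_S(L)\to\Dr{L_S}$ is a Lie algebra morphism, the three canonical brackets of Proposition \ref{JetCanonicalContactStructure} descend term by term:
\begin{equation*}
\zeta^*\{l_{\bar a},l_{\bar b}\}'=\iota^*l_{[a,b]},\qquad \zeta^*\{l_{\bar a},\pi_S^*\bar s\}'=\iota^*\pi^*(a[S]),\qquad \zeta^*\{\pi_S^*\bar s,\pi_S^*\bar r\}'=0.
\end{equation*}
As these are precisely the defining relations of the canonical contact structure on $\Jet^1 L_S$, the reduction identity holds on spanning sections and the reduced Jacobi structure is exactly the canonical contact structure, completing the proof.
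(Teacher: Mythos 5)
Your proposal is correct, and its reduction half follows essentially the paper's own route: both arguments compute the reduced bracket on the spanning sections $\overline{l}_{\overline{a}},\overline{\pi}^*\overline{s}$ via the extensions $l_a$ with $a\in\text{Der}_S(L)$ and $\pi^*S$, using Proposition \ref{DerivationsLineSubmanifold} to identify $\Sec{L_S}\cong\Sec{L}/\Gamma_S$ and $\Dr{L_S}\cong\text{Der}_S(L)/\text{Der}_{S_0}(L)$ and to see that the three canonical relations of Proposition \ref{JetCanonicalContactStructure} descend term by term. Where you genuinely diverge is the coisotropy step. The paper notes that $\Gamma_C$ is generated by the structured vanishing sections $l_{\text{Der}_{S_0}(L)}$ and $\pi^*\Gamma_S$ and closes the bracket on these using the Lie-algebraic inclusions of Proposition \ref{DerivationsLineSubmanifold}, such as $[\text{Der}_{S_0}(L),\text{Der}_{S_0}(L)]\subset\text{Der}_{S_0}(L)$ and $a[\Sec{L}]\subset\Gamma_S$ for $a\in\text{Der}_{S_0}(L)$. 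You instead take the raw generators $(\pi^*f)\tau$ with $f\in I_S$, expand by symbol-squiggle, and kill the one obstruction term by a local unit trick whose only input is the abelianness of fibre-wise constant sections, $\{\pi^*s,\pi^*r\}_L=0$. Your route is more elementary and makes transparent that this is the exact Jacobi analogue of the classical fact that $\Cot Q|_S\subset\Cot Q$ is coisotropic, with the unit absorbing the genuinely Jacobi correction; the paper's route is shorter once Proposition \ref{DerivationsLineSubmanifold} is available and has the advantage that the generators it isolates are precisely the ones reused in the reduction computation.

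One step you should tighten: you justify that $\pi^*S$ and $l_a$ lie in the idealizer $N(\Gamma_C)$ ``because they are constant along the fibres of $z$.'' That implication is exactly what the compatibility condition $\delta(\Ker{\Der \pi})=\Lambda^\sharp((\Tan C)^{0L})$ of Proposition \ref{CoisotropicReductionJacobi} is meant to guarantee, and you never verify that condition. The clean fix is to check idealizer membership directly on the generators of $\Gamma_C$, which is immediate from the same facts the paper uses for coisotropy: $\{l_a,l_b\}_L=l_{[a,b]}$ with $[a,b]\in[\text{Der}_S(L),\text{Der}_{S_0}(L)]\subset\text{Der}_{S_0}(L)$, $\{l_a,\pi^*s\}_L=\pi^*a[s]$ with $a[s]\in\Gamma_S$ for $a\in\text{Der}_S(L)$, $s\in\Gamma_S$, and $\{\pi^*S,l_b\}_L=-\pi^*b[S]\in\pi^*\Gamma_S$ for $b\in\text{Der}_{S_0}(L)$, with module coefficients handled by the symbol expansion since $\Gamma_C$ is a submodule. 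With that patch your argument is complete.
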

\begin{proof}
Let us first prove that the submersion $ z:(\Jet^1 L)|_S\twoheadrightarrow \Jet^1 L_S$ fits in a Jacobi reduction scheme. It is a direct implication of proposition \ref{DerBundleLineSubmanifold} and the basic properties of lvector spaces applied fibre-wise, that we have the following diagram of line bundle morphisms
\begin{equation*}
\begin{tikzcd}[sep=tiny]
L_{\Jet^1L|_{S}} \arrow[rr,"\iota"] \arrow[dd,"\zeta"'] \arrow[dr]& & L_{\Jet^1L} \arrow[dr] & \\
& \Jet^1L|_S \arrow[rr,"i"', hook] \arrow[dd, "z",twoheadrightarrow] & & \Jet^1L \\
L_{\Jet^1L_S} \arrow[dr] & & & \\
 & \Jet^1L_S &  & 
\end{tikzcd}
\end{equation*}
where $\iota$ denotes, abusing notation, the embedding factor induced by the submanifold $i:S\hookrightarrow Q$ and $\zeta$ is defined as the fibre-wise identity of the pull-back line bundles covering the point-wise linear submersion
\begin{equation*}
    z_q:\Jet^1_q L\twoheadrightarrow \Jet^1_qL/(\Der L_S)^{0L}\cong \Jet^1_q L_S, \qquad q\in S.
\end{equation*}
Following proposition \ref{DerivationsLineSubmanifold}, we find natural isomorphisms $\Sec{L_S}\cong\Sec{L}/\Gamma_S$ and $\Dr{L_S}\cong\text{Der}_S(L)/\text{Der}_{0S}(L)$, and denoting the natural inclusions of spanning sections on $L_{\Jet^1 L_S}$ by $\overline{l}$ and $\overline{\pi}$, we can write the fibre-wise linear Jacobi structure on $\Jet^1L_S$ equivalently as
\begin{align*}
    \{\overline{l}_{\overline{a}},\overline{l}_{\overline{b}}\}_S & =  \overline{l}_{\overline{[a,b]}}\\
    \{\overline{l}_{\overline{a}},\overline{\pi}^*\overline{u}\}_S & =  \overline{\pi}^*\overline{a[u]}\\
    \{\overline{\pi}^*\overline{u},\overline{\pi}^*\overline{v}\}_S & = 0
\end{align*}
with $\overline{u},\overline{v}\in\Sec{L}/\Gamma_S$ and $\overline{a},\overline{b}\in\text{Der}_S(L)/\text{Der}_{0S}(L)$, which is well-defined precisely from the description of derivations as a subquotient of Lie algebras. The submersion factor $\zeta:L_{\Jet^1L|_{S}}\to L_{\Jet^1L_S}$ covering the quotient map $z:\Jet^1L|_{S}\to \Jet^1L_S$ has been defined such that it is the point-wise counterpart to the isomorphisms used above to rewrite the linear Jacobi bracket. Pull-backs via these factors satisfys the following identities by construction
\begin{equation*}
    \zeta^*\overline{\pi}^*\overline{u} =\iota^*\pi^*u \qquad \zeta^*\overline{l}_{\overline{a}} = \iota^* l_{a}
\end{equation*}
for all $u\in\Sec{L}$ and $a\in\text{Der}_S(L)$. This now clearly implies the reduction condition for all spanning sections
\begin{align*}
    \zeta^*\{\overline{l}_{\overline{a}},\overline{l}_{\overline{b}}\}_S & = \iota^* l_{[a,b]}=\iota^*\{l_a,l_b\}_L\\
    \zeta^*\{\overline{l}_{\overline{a}},\overline{\pi}^*\overline{u}\}_S & = \iota^* \pi^*a[u]=\iota^* \{l_a,\pi^*u\}_L\\
    \zeta^*\{\overline{\pi}^*\overline{u},\overline{\pi}^*\overline{v}\}_S & = 0 = \iota^* \{\pi^*u,\pi^*v\}_L
\end{align*}
thus showing that the linear Jacobi $(L_{\Jet^1L},\{,\}_L)$ reduces to $(L_{\Jet^1L_S},\{,\}_S)$. Lastly, it is easy to see that the vanishing sections of $\Jet^1L|_S$ seen as a submanifold of the jet bundle are precisely those of the form $l_{\text{Der}_{0S}(L)}$ and $\pi^*\Gamma_S$. It follows again from proposition \ref{DerivationsLineSubmanifold} that these form a Lie subalgebra of the linear Jacobi structure $(L_{\Jet^1L},\{,\}_L)$, thus explicitly showing that $\Jet^1L|_S\subseteq \Jet^1L$ is a coisotropic submanifold.
\end{proof}

Consider now a line bundle action $G\Acts L$. Recall that, in the case of a free and proper action, the orbit space is canonically a line bundle, denoted by $L/G$, and that there is a natural submersion factor $\sigma:L\to L/G$. The following proposition shows that the canonical contact structures associated to these two line bundles are related by a Hamiltonian reduction scheme.

\begin{prop}[Canonical Hamiltonian Reduction in Jet Bundles]\label{JetHamiltonianReduction}
Let $\Phi:G\times L\to L$ be a free and proper line bundle action of a connected Lie group $G$ on $L$, then the canonical contact structure on $\normalfont \Jet^1 L$ Jacobi reduces to the canonical contact structure on $\normalfont \Jet^1 (L/G)$. This reduction is, in fact, Hamiltonian: the jet lift of the line bundle action $\normalfont G\Acts L_{\Jet^1 L}$ preserves the canonical contact structure and has a natural comoment map given by
\begin{align*}
\overline{\mu}: \mathfrak{g} &\normalfont \to \Sec{L_{\Jet^1 L}}\\
\xi & \mapsto l_{\Psi(\xi)},
\end{align*}
where $\normalfont \Psi:\mathfrak{g}\to \Dr{L}$ is the infinitesimal line bundle action and $\normalfont l:\Dr{L}\to \Sec{L_{\Jet^1 L}}$ is the natural inclusion of derivations as fibre-wise linear sections on the jet bundle.
\end{prop}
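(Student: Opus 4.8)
The plan is to build everything out of the canonical contact bracket of Proposition \ref{JetCanonicalContactStructure}, the functoriality of the jet construction from Proposition \ref{JetFunctor}, and the der-bundle quotient of Proposition \ref{DerBundleGroupAction}, and then to reduce the statement to the Hamiltonian case of the general coisotropic-reduction scheme for Jacobi manifolds. First I would construct the lifted action. Each $\Phi_g:L\to L$ is a diffeomorphic factor, so its jet lift $\Jet^1\Phi_g:L_{\Jet^1 L}\to L_{\Jet^1 L}$ is a well-defined diffeomorphic factor and, by the explicit construction of the jet lift for diffeomorphic factors together with Proposition \ref{JetFunctor} (where contact maps are the isomorphisms of $\Cont_\Man$), an isomorphism of the canonical Jacobi structure. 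Because $\Jet^1$ is contravariant, to obtain a genuine left action I set $\widetilde{\Phi}_g:=\Jet^1\Phi_{g^{-1}}$; the functorial identities $\Jet^1(B\circ F)=\Jet^1 F\circ \Jet^1 B$ and $\Jet^1(\Id_L)=\Id_{L_{\Jet^1 L}}$ then yield $\widetilde{\Phi}_g\circ\widetilde{\Phi}_h=\widetilde{\Phi}_{gh}$ and $\widetilde{\Phi}_e=\Id$. This establishes $G\Acts L_{\Jet^1 L}$ by Jacobi automorphisms, so the lifted action preserves the canonical contact structure; I denote its infinitesimal counterpart by $\widetilde{\Psi}:\mathfrak{g}\to \Dr{L_{\Jet^1 L}}$.

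Next I would verify the two defining conditions of a Hamiltonian action for the proposed comoment map $\overline{\mu}(\xi)=l_{\Psi(\xi)}$. The equivariance condition is immediate from the canonical bracket: $\{l_{\Psi(\xi)},l_{\Psi(\zeta)}\}_L=l_{[\Psi(\xi),\Psi(\zeta)]}$, and since $\Psi$ is the infinitesimal counterpart of an action by automorphisms it is a Lie algebra morphism, so $[\Psi(\xi),\Psi(\zeta)]=\Psi([\xi,\zeta])$, giving $\{\overline{\mu}(\xi),\overline{\mu}(\zeta)\}_L=\overline{\mu}([\xi,\zeta])$. For the generating condition $\widetilde{\Psi}(\xi)=D_{\overline{\mu}(\xi)}$ I would evaluate both derivations on the spanning sections $l_a$ and $\pi^*s$: the canonical bracket gives $D_{\overline{\mu}(\xi)}(l_a)=l_{[\Psi(\xi),a]}$ and $D_{\overline{\mu}(\xi)}(\pi^*s)=\pi^*\Psi(\xi)[s]$, and I would show, by differentiating $\widetilde{\Phi}_{\exp(t\xi)}$ at $t=0$ using the explicit form of the jet lift of a diffeomorphic factor together with the der push-forward of Section \ref{DerJetBundles}, that $\widetilde{\Psi}(\xi)$ acts identically on these sections. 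Since jets of such sections span $\Jet^1 L$, the two derivations coincide.

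Finally I would assemble the reduction. By Proposition \ref{DerBundleGroupAction} for connected $G$ one has $\Der_{[x]}(L/G)\cong \Der_x L/\Psi(\mathfrak{g})_x$, and lduality identifies $\Jet^1_{[x]}(L/G)$ with the lannihilator $(\Psi(\mathfrak{g})_x)^{0L}\subset \Jet^1_x L$; this lannihilator is exactly the fibre of the zero level set $\mu^{-1}(0)$, since $\overline{\mu}(\xi)$ vanishes at a point $\alpha_x$ precisely when $\alpha_x(\Psi(\xi))=0$. The equivariance relation just proved shows that the components of $\overline{\mu}$ close under the bracket, so $\mu^{-1}(0)$ is coisotropic, and quotienting by the free, proper lifted action yields $\mu^{-1}(0)/G\cong \Jet^1(L/G)$. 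Invoking the Hamiltonian case of coisotropic reduction (Proposition \ref{CoisotropicReductionJacobi}), the canonical Jacobi structure on $\Jet^1 L$ reduces to a Jacobi structure on $\Jet^1(L/G)$, and identifying the reduced bracket with the canonical contact bracket there is a matter of checking the reduction relation on spanning sections, exactly as in the submanifold case of Proposition \ref{JetCoisotropicReduction}.

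I expect the main obstacle to be the generating condition $\widetilde{\Psi}(\xi)=D_{\overline{\mu}(\xi)}$: pinning down how the infinitesimal generator of the lifted action acts on fibre-wise linear sections $l_a$, and reconciling the sign introduced by the contravariance of $\Jet^1$ (the inverse in $\widetilde{\Phi}_g=\Jet^1\Phi_{g^{-1}}$) with the conventions of the der push-forward and of the canonical bracket.
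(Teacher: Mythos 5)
Your proposal is correct, and its skeleton is the same as the paper's: lift the action by jet lifts of the diffeomorphic factors $\Phi_g$, take the comoment map $\overline{\mu}=l\circ\Psi$, identify $\mu^{-1}(0)$ with the annihilator $\Psi(\mathfrak{g})^{0L}$, use Proposition \ref{DerBundleGroupAction} together with lduality to identify the quotient with $\Jet^1(L/G)$, and match the reduced bracket with the canonical one on spanning sections.

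Where you diverge is in the routing of two steps, and both divergences are worth recording. First, you obtain the Jacobi-map property of jet lifts from the Legendrian-lgraph machinery (Propositions \ref{JetLiftFactor} and \ref{JetFunctor}, combined with Proposition \ref{JacobiMapCoisotropicRelation}), and you then explicitly verify \emph{both} defining conditions of a Hamiltonian action, equivariance and the generating condition $\widetilde{\Psi}(\xi)=D_{\overline{\mu}(\xi)}$; the paper instead proves the Jacobi-map property by a direct spanning-section computation and merely asserts that the lifted action is Hamiltonian with the stated comoment map. Your treatment is more complete here, and your remark that contravariance of $\Jet^1$ forces $\widetilde{\Phi}_g:=\Jet^1\Phi_{g^{-1}}$ (the assignment $g\mapsto\Jet^1\Phi_g$ used in the paper is an anti-homomorphism, hence a right action) repairs a point the paper passes over silently. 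Second, for the reduction itself you invoke the general coisotropic reduction theorem, Proposition \ref{CoisotropicReductionJacobi}, using bracket-closure of the components $\overline{\mu}(\xi)$ to establish coisotropicity of $\mu^{-1}(0)$, whereas the paper builds the reduction diagram by hand and verifies the identity $\zeta^*\{\cdot,\cdot\}_{L/G}=\iota^*\{\cdot,\cdot\}_L$ on spanning sections using $\Sec{L/G}\cong\Sec{L}^{\mathfrak{g}}$ and $\Dr{L/G}\cong\Dr{L}^{\mathfrak{g}}/\Psi(\mathfrak{g})$. Your route buys existence and uniqueness of the reduced structure for free, but be aware of what its hypotheses still demand: the compatibility condition $\delta(\Ker{\Der\zeta})=\Lambda^\sharp((\Tan\,\mu^{-1}(0))^{0L})$ of Proposition \ref{CoisotropicReductionJacobi} is exactly what the generating condition provides (the $G$-orbit directions on $\mu^{-1}(0)$ are the Hamiltonian/characteristic directions of the vanishing sections), and the coisotropicity argument needs freeness of the action so that the sections $l_{\Psi(\xi_i)}$ genuinely generate the vanishing submodule of $\mu^{-1}(0)$. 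Also note that the general theorem only produces \emph{some} Jacobi structure on the quotient; identifying it with the canonical contact structure on $\Jet^1(L/G)$ still requires the spanning-section computation, which is precisely where the paper concentrates its effort, so that final step cannot be outsourced.
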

\begin{proof}
The spanning sections $l_a,\pi^*u\in\Sec{L_{\Jet^1L}}$ transform under pull-back by a jet lift of a diffeomorphic factor according to the following expressions
\begin{equation*}
    \Jet^1B^*l_a=l_{B_*a}, \qquad \Jet^1B^*\pi^*u=\pi^*(B^{-1})^*u,
\end{equation*}
then it follows that the jet lift of a diffeomorphic factor $\Jet^1B$ is indeed a Jacobi map of the canonical contact structure on the jet bundle:
\begin{align*}
    \Jet^1B^*\{l_a,l_b\}_L & = \Jet^1B^*l_{[a,b]}=l_{[B_*a,B_*b]}=\{l_{B_*a},l_{B_*b}\}_L=\{\Jet^1B^*l_a,\Jet^1B^*l_b\}_L \\
    \Jet^1B^*\{l_a,\pi^*u\}_L & = \Jet^1B^*\pi^*a[u]=\pi^*(B_*a)[(B^{-1})^*u]=\{l_{B_*a},\pi^*(B^{-1})^*u\}_L=\{ \Jet^1B^*l_a, \Jet^1B^* \pi^*u\}_L \\
    \Jet^1B^*\{\pi^*u,\pi^*v\}_L & = 0 = \{\pi^*(B^{-1})^*u,\pi^*(B^{-1})^*v\}_L=\{\Jet^1B^*\pi^*u,\Jet^1B^*\pi^*v\}_L.
\end{align*}
The jet lift of the group action $G\Acts \Jet^1 L$ is defined by the jet lifts of the diffeomorphic factors corresponding to each group element
\begin{equation*}
    (\Jet^1 \Phi)_g:=\Jet^1\Phi_g,
\end{equation*}
which, in light of the above results for general jet lifts of diffeomorphic factors, is readily checked to be a group action that acts via Jacobi maps. This is a Hamiltonian action with comoment map simply given by $\overline{\mu}:=l\circ \Psi :\mathfrak{g}\to \Sec{L_{\Jet^1L}}$. Observe that the zero locus of the moment map is naturally identified with the annihilator of the subspace of derivations spanned by the infinitesimal generators regarded as a subbundle of the jet bundle
\begin{equation*}
    \mu^{-1}(0)=\Psi(\mathfrak{g})^{0L}\subseteq \Jet^1L.
\end{equation*}
Note that $G$-equivariance of the infinitesimal action $\Psi$ implies that the jet lifted action of $G$ restricts to a $G$-action on $\Psi(\mathfrak{g})^{0L}$, indeed we check for any $j_q^1u\in\Psi(\mathfrak{g})^{0L}$ and $\xi\in\mathfrak{g}$
\begin{equation*}
    \Jet^1\phi_g(j_q^1 u)(\Psi(\xi))=\Psi(\xi)[\Phi^*_gu]=(\Phi_g)_{b^{-1}(q)}\Der_q\Phi_g\Psi(\xi)=(\Phi_g)_{b^{-1}(q)}j_q^1u(\Psi(\text{Ad}_g(\xi))=0.
\end{equation*}
Using Proposition \ref{DerBundleGroupAction} and simple linear algebra of lvector bundles we find the following point-wise isomorphism
\begin{equation*}
    \Jet^1_{[q]}(L/G):=(\Der_{[q]}(L/G))^{*L/G}\cong(\Der_q/\Psi(\mathfrak{g})_q)^{*L_q}\cong \Psi(\mathfrak{g})_q^{0L_q}.
\end{equation*}
This allows us to write the following factor reduction diagram
\begin{equation*}
\begin{tikzcd}[sep=tiny]
L_{\Psi(\mathfrak{g})^{0L}} \arrow[rr,"\iota"] \arrow[dd,"\zeta"'] \arrow[dr]& & L_{\Jet^1L} \arrow[dr] & \\
& \Psi(\mathfrak{g})^{0L} \arrow[rr,"i"', hook] \arrow[dd, "z",twoheadrightarrow] & & \Jet^1L \\
L_{\Jet^1(L/G)} \arrow[dr] & & & \\
 & \Jet^1(L/G) &  & 
\end{tikzcd}
\end{equation*}
where $\iota$ is the embedding factor for the annihilator $\Psi(\mathfrak{g})^{0L}$ seen as a subbundle (submanifold) of the jet bundle and $\zeta$ is the submersion factor induced by the jet lifted action restricted $\Psi(\mathfrak{g})^{0L}$, which is clearly free and proper. Since $G$ is connected, recall that the sections and derivations of the quotient line bundle can be equivalently regarded as
\begin{equation*}
    \Sec{L/G}\cong \Sec{L}^\mathfrak{g}, \qquad \Dr{L/G}\cong \Dr{L}^{\mathfrak{g}}/\Psi(\mathfrak{g}),
\end{equation*}
thus the linear Jacobi structure determined by the spanning sections $\overline{l}_{\overline{a}},\overline{\pi}^*\overline{u}\in\Sec{L_{\Jet^1(L/G)}}$ can be fully characterised under these isomorphisms. The factors constructed above are such that we have the following explicit treatment of extensions of spanning sections
\begin{align*}
    \zeta^*\overline{\pi}^*\overline{u}=\iota^*\pi^*u \quad &\Leftrightarrow \quad u\in\Sec{L}^\mathfrak{g}\\
    \zeta^*\overline{l}_{\overline{a}}=\iota^*l_a \quad &\Leftrightarrow \quad \overline{a}=a+\Psi(\mathfrak{g}), a\in\Dr{L}^\mathfrak{g}
\end{align*}
The reduction condition is now easily checked
\begin{align*}
    \zeta^*\{\overline{l}_{\overline{a}},\overline{l}_{\overline{b}}\}_{L/G} & = \iota^* l_{[a,b]}=\iota^*\{l_a,l_b\}_L\\
    \zeta^*\{\overline{l}_{\overline{a}},\overline{\pi}^*\overline{u}\}_{L/G} & = \iota^* \pi^*a[u]=\iota^* \{l_a,\pi^*u\}_L\\
    \zeta^*\{\overline{\pi}^*\overline{u},\overline{\pi}^*\overline{v}\}_{L/G} & = 0 = \iota^* \{\pi^*u,\pi^*v\}_L
\end{align*}
for all spanning sections $\overline{l}_{\overline{a}},\overline{l}_{\overline{b}},\overline{\pi}^*\overline{u},\overline{\pi}^*\overline{v}\in\Sec{L_{\Jet^1(L/G)}}$ and extensions $l_a,l_b,\pi^*u,\pi^*v\in\Sec{L_{\Jet^1L}}$, thus concluding the proof.
\end{proof}

\subsection{The Unit-Free Hamiltonian Functor} \label{UnitFreeCanonicalHamiltonian}

We are now in the position to define \textbf{the category of canonical contact phase spaces} as the image of the category of unit-free configuration spaces under the jet functor, $\Jet^1(\Line_\Man)$, which, together with the same notions of observables $\text{Obs}=\Gamma: \Cont_\Man\to \textsf{RMod}$, dynamics $\text{Dyn}=\Der:\Cont_\Man\to \textsf{Jacb}_\Man$ and evolution $\text{Evl}=\text{Der}:\textsf{RMod}\to \textsf{Jacb}_\Man$, forms a theory of phase spaces. Clearly, the presence of the canonical contact structure on jet bundles ensures that $\Cont_\Man$ is, furthermore, a Hamiltonian theory of phase spaces, with Hamiltonian maps given by the Hamiltonian derivation of the Jacobi structures:
\begin{equation*}
    \eta_L:=\Pi_L^\sharp\circ j^1=\text{ad}_{\{,\}_L}:\Obs{\Jet^1 L}\to \Dyn{\Jet^1 L}.
\end{equation*}

The results proved in Section \ref{CanonicalContact} can be encapsulated in the \textbf{Hamiltonian functor} for unit-free configuration spaces:

\begin{center}
\begin{tabular}{ c c c }
Unit-Free Configuration Spaces & Hamiltonian Functor  & Unit-Free Phase Spaces \\
\hline
 $\Line_\Man$ & $\begin{tikzcd}\phantom{A} \arrow[r,"\Jet^1"] & \phantom{B} \end{tikzcd}$ & $\Cont_\Man$ \\ 
 $L_Q$ & $\begin{tikzcd} \phantom{Q} \arrow[r, "\Jet^1 ",mapsto] & \phantom{Q} \end{tikzcd}$ & $(\Jet^1 L_Q,\theta_{L_Q})$ \\ 
 $\Obs{L_Q}$ & $\begin{tikzcd} \phantom{Q} \arrow[r,hookrightarrow, "\pi^*"] & \phantom{Q}  \end{tikzcd}$ & $\Obs{L_{\Jet^1 L_Q}}$ \\
 $\Dyn{L_Q}$ & $\begin{tikzcd} \phantom{Q} \arrow[r,hookrightarrow, "l"] & \phantom{Q}  \end{tikzcd}$ & $\Obs{L_{\Jet^1 L_Q}}$ \\
 $L_1\utimes L_2$ & $\begin{tikzcd} \phantom{Q} \arrow[r,mapsto, "\Jet^1 "] & \phantom{Q}  \end{tikzcd}$ & $\Jet^1 L_1\dtimes \Jet^1 L_2$ \\
 $B:L_1\to L_2$ & $\begin{tikzcd} \phantom{Q} \arrow[r,mapsto, "\Jet^1 "] & \phantom{Q}  \end{tikzcd}$ & $\Jet^1B:\Jet^1L_2\dashrightarrow \Jet^1L_1$\\
 $\iota:L_S\hookrightarrow L_Q$ & $\begin{tikzcd} \phantom{Q} \arrow[r,mapsto, "\Jet^1"] & \phantom{Q}  \end{tikzcd}$ & $(\Jet^1 L_Q)|_S\subset \Jet^1 L_Q$ coisotropic \\
 $G\Acts L$ & $\begin{tikzcd} \phantom{Q} \arrow[r,mapsto, "\Jet^1"] & \phantom{Q}  \end{tikzcd}$ & $G\Acts \Jet^1 L$ Hamiltonian \\
\end{tabular}
\end{center}
This correspondence is clearly categorically analogous to the Hamiltonian functor for ordinary configuration spaces of Section \ref{OrdinaryHamiltonian}, hence, canonical contact manifolds appear as valid unit-free generalizations of conventional \emph{unit-less} phase spaces. Furthermore, a similar diagram connecting the category of unit-free configuration spaces and the category of canonical contact phase spaces as theories of phase spaces is given by the jet functor:
\begin{equation*}
\begin{tikzcd}[row sep=small]
 & \textsf{RMod} \arrow[dd,"\text{Der}"']& & & \textsf{LocLieAlg} \arrow[dd,"\text{Der}"] \\
 & & \Line_\Man \arrow[ul, "\Gamma"']\arrow[dl,"\Der"]\arrow[r,"\Jet^1"] & \Cont_\Man \arrow[ur,"\Gamma"] \arrow[dr,"\Der"'] & \\
 & \textsf{Jacb}_\Man & & & \textsf{Jacb}_\Man 
\end{tikzcd}
\end{equation*}

Similarly to ordinary symplectic phase spaces, the notion of \textbf{unit-free energy} as a choice of observable $h\in\Sec{L_{\Jet^1 L}}$ to determine the dynamics of a physical system appears naturally in the category of canonical contact phase spaces. The fact that a Jacobi structure is a Lie algebra allows for the interpretation of $h$ as a fundamental conserved quantity of the evolution of the system, $\eta_L(h)[h]=\{h,h\}_L=0$, and the construction of the categorical product of line bundles gives a natural additivity property for the unit-free energies of two physical systems under the Hamiltonian functor
\begin{center}
\begin{tabular}{ c c c }
Unit-Free Configuration Spaces & Hamiltonian Functor  & Unit-Free Phase Spaces \\
\hline
 $(L_1,h_1), (L_2,h_2)$ & $\begin{tikzcd} \phantom{Q} \arrow[r,mapsto, "\Jet^1 "] & \phantom{Q}  \end{tikzcd}$ & $(L_{\Jet^1 L_1}\utimes L_{\Jet^1 L_2},P_1^*h_1 + P_2^*h_2)$ 
\end{tabular}
\end{center}

This additivity property of energy can be motivated from the natural unit-free generalization of the notion of Riemannian metric. A \textbf{lmetric} on a unit-free configuration space $\lambda:L\to Q$ is a non-degenerate symmetric bilinear form $\gamma:\Der L\odot \Der L\to L$. The presence of a lmetric induces a musical isomorphism
\begin{equation*}
        \begin{tikzcd}
        \Der L \arrow[r, "\gamma^\flat",yshift=0.7ex] & \Jet^1 L \arrow[l,"\gamma^\sharp",yshift=-0.7ex]
        \end{tikzcd},
\end{equation*}
which, in turn, allows for the definition of \textbf{unit-free kinetic energy} as a quadratic section $K_\gamma\in \Sec{L_{\Jet^1L}}$ via
\begin{equation*}
    K_\gamma(\alpha):=\gamma(\gamma^\sharp(\alpha),\gamma^\sharp(\alpha)).
\end{equation*}
With a choice of a section of the line bundle $v\in \Sec{L}$, that we identify as the \textbf{unit-free potential}, we define the \textbf{unit-free Newtonian energy} as
\begin{equation*}
    \Sec{L_{\Jet^1L}}\ni E_{\gamma, v}:= K_\gamma +\pi^*v.
\end{equation*}
It follows from proposition \ref{DerLineProduct} that, given two line bundles with choices of unit-free Newtonian energy $(L_1,E_{\gamma_1, v_1})$ and $(L_2,E_{\gamma_2, v_2})$, their line product carries a natural choice of unit-free Newtonian energy
\begin{equation*}
    (L_1\utimes L_2,E_{\gamma_1, v_1}+E_{\gamma_2, v_2})
\end{equation*}
where
\begin{equation*}
    E_{\gamma_1, v_1}+E_{\gamma_2, v_2}:=P_1^*E_{\gamma_1, v_1}+P_2^*E_{\gamma_2, v_2}=E_{p_1^*\gamma_1 \oplus p_2^*\gamma_2, p_1^*v_1+p_2^*v_2}.
\end{equation*}

\section{Conclusion: `Unit-Free' as a stepping stone towards `Dimensioned'} \label{Conclusion}

We hope the reader agrees that the introduction of the unit-free approach for line bundle geometry streamlines the presentation of Jacobi geometry and enables the clear interpretation as a direct generalisation of Poisson geometry. This seems to suggest that Jacobi geometry is yet another example of the pattern that has already been identified in the literature where the \emph{Poisson flavour} reappears when one tries to generalise explicitly Poisson objects, e.g. Lie algebroids can be understood as a generalisation of the Lie bracket structures that appear on cotangent bundles of Poisson manifolds but it can be shown that generic Lie algebroids are, in turn, in one-to-one correspondence with linear Poisson manifolds.\newline

Our approach clearly suggests an application of Jacobi geometry to physics in the form of unit-free phase spaces. This can be regarded as a first attempt at a rigorous and systematic treatment of physical quantities and units of measurement in geometric mechanics. The arguments presented in Section \ref{UnitFreeHamiltonian} demonstrate that, at least in categorical and structural terms, our formalism of unit-free Hamiltonian mechanics seems to be on the right track. This success is particularly apparent when considering the obvious similarity between the Hamiltonian functor of Section \ref{OrdinaryHamiltonian} and the unit-free Hamiltonian functor of Section \ref{UnitFreeHamiltonian}.\newline

Despite the promising results provided by the unit-free formalism, any reader familiar with geometric mechanics and the usual mathematical framework of physical theories will notice two major shortcomings of the theory of unit-free Hamiltonian phase spaces: unit-free dynamics don't seem to recover ordinary dynamics and unit-free observables lack a commutative product.\newline

Firstly, the problem of unit-free dynamics appears when one takes a the jet bundle of a line bundle $\pi: \Jet^1 L\to Q$ as the notion of phase space and tries to recover equations of motion on the base configuration space. We can choose adapted coordinates $(q^i,p_i,z)$ in a trivialising neighbourhood $U\subset Q$ and choose a unit-free Hamiltonian as a function $h\in\Cin{\Real^{2n+1}}$. The dynamics induced by this choice are what we could call \textbf{unit-free Hamilton's equations}:
\begin{align*}
    \frac{dq^i}{dt} &=\frac{\partial h}{\partial p_i}\\
    \frac{dp_i}{dt} &=-\frac{\partial h}{\partial q^i}-p_i\frac{\partial h}{\partial z} \\
    \frac{dz}{dt} &=p_i\frac{\partial h}{\partial p_i}-h.
\end{align*}
These are manifestly different from ordinary Hamilton's equations and thus will produce different equations of motion on $Q$ in general. This apparent problem is resolved when the adequate interpretation for the $z$ coordinate is adopted. If $(q^i,p_i)$ can be regarded as ordinary positions and momenta, then $z$ represents the freedom of choice of unit of observable and thus $z(t)$ keeps track of the evolution of such freedom: if a unit $u_0$ is chosen is chosen at an initial moment $t=0$, then that choice of unit propagates to later times as $u(t)=z(t) u_0$. In particular, if a unit is fixed on the entire coordinate patch $U$, the local choice of Hamiltonian does not have any dependence on $z$ (since no change-of-unit information needs to be encoded) so $\tfrac{\partial h}{\partial z}=0$ and the above equations become the ordinary Hamilton's equations. In a general unit-free phase space, where only a general Jacobi structure is assumed, Hamiltonian dynamics of ordinary observables are recovered via local functions that are invariant with respect to a unit.\newline

Secondly, the issue of the algebraic structure of unit-free observables turns out to have much deeper ramifications and its resolution requires, in fact, an overhaul of the entire formalism. Some preliminary attempts at a solution to this problem can be found in \cite[Ch. 7]{zapata2019landscape} where dimensioned structures and the potential functor of line bundles are introduced. A complete treatment of this topic will appear in future work by the author but we give some general comments here for the reader's convenience.\newline

Unit-free observables are the sections of a generically non-trivial line bundle $\Sec{L_P}$ while ordinary observables are the real-valued functions of some smooth manifold $\Cin{P}$. Algebraically, unit-free observables are a projective module over the commutative algebra of functions while ordinary observables are a commutative algebra themselves. In the conventional treatment of mechanics if two physical quantities were represented by observables $f\in\Cin{P}$ and $g\in\Cin{P}$ then the observable $fg\in\Cin{P}$ represented a derived quantity with physical dimension equal to the product of physical dimensions of $f$ and $g$, e.g. angular momentum can be expressed as a product of position coordinates and momentum coordinates $l=xp$. It is clear that unit-free observables do not allow for this kind of construction.\newline

This issue is not just a matter of a failure to account for the structure of physical quantities faithfully but it also has important mathematical consequences that, to the author's knowledge, have not been explored in the existing literature. This was noted, for instance, in section \ref{JacobiAlgebroids} when the direct unit-free analogue of Lie algebroids could not be clearly defined since no natural Leibniz formula could be established for a product of sections with derivations. It is well known that the product of Poisson manifolds appears algebraically as the tensor product of the Poisson algebras of functions. Another issue that a reader familiar with Poisson geometry may have noticed is that no mention of the tensor product of Jacobi brackets was made when discussing the product of Jacobi manifolds.\newline

All this points towards a clear course of action: to consider all the tensor powers of a line bundle collectively, which are also line bundles for obvious dimensional reasons, as a generalisation of unit-free manifold. This breaks with the limitations imposed by the L-rooted approach discussed in Section \ref{Lines} and endows sections with the tensor product as a commutative product (see \cite[Prop. 7.5.1]{zapata2019landscape}). Furthermore, a Jacobi structure on the base line bundle then appears as a Lie bracket on the tensor powers of the line bundle satisfying the ordinary Leibniz identity with respect to the commutative product (see \cite[Th. 7.6.1]{zapata2019landscape}).

\section*{Acknowledgements}

I would like to thank Luca Vitagliano for his useful comments and his student Jonas Schnitzer, whom I met during my visit to IMPA, Rio de Janeiro, in 2017, for introducing me to the topic of Jacobi geometry and showing me the line bundle approach that has been so heavily used in this work. I would like to thank Henrique Bursztyn for inviting me to IMPA. I am also indebted to the organisers of the Rethinking Workshop 2017 for suggesting that I explore the topic of the formal treatment of units of measurement in physics. Lastly, I would like to thank Jos\'e Figueroa-O'Farrill for all his support during my PhD and the many fruitful conversations, at times, merrily accompanied by orxata or Valor's chocolate.

\printbibliography

\appendix

\section{Supplementary identities to be used in the proof of Proposition \ref{ProductJacobi}} \label{SymbolSquiggleProductJacobi}

For $i=1,2$, let two line bundles $\lambda_i:L_i\to M_i$ with Jacobi structures $(\Sec{L_i},\{,\}_i)$. Let $f_i,g_i\in\Cin{M}$ be smooth functions, $s_i\in \Sec{L_i}$ sections and $a,a'\in\Sec{L_1^\bullet}$, $b,b'\in\Sec{L_2^\bullet}$ local non-vanishing sections. Then, the line product construction $L_1\utimes L_2$ allows us to write the following identities
\begin{equation*}
    p_1^*f_1\tfrac{s_1}{b}=\tfrac{f_1\cdot s_1}{b} \qquad P_1^*s_1=\tfrac{s_1}{b}P^*_2b \qquad P_2^*s_2=\tfrac{s_2}{a}P^*_1a \qquad p_2^*f_2\tfrac{s_2}{a}=\tfrac{f_2\cdot s_2}{a}
\end{equation*}
The definition of the bracket is done via the following equations
\begin{equation*}
    \{P_1^*s_1,P_1^*s_1'\}_{12}:=P_1^*\{s_1,s_1'\}_1 \qquad \{P_2^*s_2,P_2^*s_2'\}_{12}:=P_2^*\{s_2,s_2'\}_2\qquad \{P_1^*s_1,P_2^*s_2\}_{12}:=0
\end{equation*}
which, in turn, imply
\begin{align*}
    X^{12}_{P_1^*s_1}[p_1^*f_1]&=p_1^*X^1_{s_1}[f] & X^{12}_{P_2^*s_2}[p_2^*f_2]&=p_2^*X^2_{s_2}[f]\\
    X^{12}_{P_1^*s_1}[p_2^*f_2]&=0 & X^{12}_{P_2^*s_2}[p_1^*f_1]&=0
\end{align*}
The definition of the symbol and squiggle is done via the following equations
\begin{align*}
    X^{12}_{P_1^*s_1}[\tfrac{a}{b}]&=\tfrac{\{s_1,a\}_1}{b}   & X^{12}_{P_2^*s_2}[\tfrac{b}{a}]&=\tfrac{\{s_2,b\}_2}{a}\\
    \Lambda^{12}(d\tfrac{a}{b}\otimes P_1^*s_1)[\tfrac{a'}{b'}]&=\tfrac{\{a,a'\}_1}{b}\tfrac{s_1}{b'} &\Lambda^{12}(d\tfrac{b}{a}\otimes P_2^*s_2)[\tfrac{b'}{a'}]&=\tfrac{\{b,b'\}_2}{a}\tfrac{s_2}{a'}
\end{align*}
which, in turn, imply
\begin{align*}
    \Lambda^{12}( dp_1^*f_1 \otimes P_1^*s_1 )[p_1^*g_1]&=p_1^*\Lambda^1(df_1\otimes s_1)[g_1] & \Lambda^{12}( dp_2^*f_2 \otimes P_2^*s_2 )[p_2^*g_2]&=p_2^*\Lambda^1(df_2\otimes s_2)[g_2]\\
    \Lambda^{12}( dp_1^*f_1 \otimes P_1^*s_1 )[p_2^*g_2]&=0 &\Lambda^{12}( dp_1^*f_1 \otimes P_1^*s_1 )[p_2^*g_2]&=0
\end{align*}
Note that we have omitted the $\sharp$ symbol from the squiggle for simplicity. Then, using the symbol-squiggle identity inserting different combinations of spanning functions to obtain consistency relations, we obtain the following identities for the squiggle of the product Jacobi bracket
\begin{align*}
    \Lambda^{12}( dp_1^*f_1 \otimes P_1^*s_1 )[\tfrac{a}{b}] &= -p^*_1X^1_a[f_1]\tfrac{s_1}{b} \\
    \Lambda^{12}( dp_2^*f_2 \otimes P_2^*s_2 )[\tfrac{b}{a}] &= -p^*_2X^2_b[f_2]\tfrac{s_2}{a} \\
    \Lambda^{12}( dp_1^*f_1 \otimes P_2^*s_2 )[\tfrac{b}{a}] &= p_1^*X^1_a[f_1]\tfrac{s_2}{a}\tfrac{b}{a}\\
    \Lambda^{12}( dp_2^*f_2 \otimes P_1^*s_1 )[\tfrac{a}{b}] &= p_2^*X^2_b[f_2]\tfrac{s_1}{b}\tfrac{a}{b}\\
    \Lambda^{12}( dp_1^*f_1 \otimes P_2^*s_2 )[p_1^*g_1] &= -p_1^*\Lambda^1(dg_1\otimes a)[f_1]\tfrac{s_2}{a}, \quad \forall a\in\Sec{L_1^\bullet}\\
    \Lambda^{12}( dp_2^*f_2 \otimes P_1^*s_1 )[p_2^*g_2] &= -p_2^*\Lambda^2(dg_2\otimes b)[f_2]\tfrac{s_1}{b}, \quad \forall b\in\Sec{L_2^\bullet}\\
    \Lambda^{12}( dp_1^*f_1 \otimes P_2^*s_2 )[p_2^*s_2] &= 0\\
    \Lambda^{12}( dp_2^*f_2 \otimes P_1^*s_1 )[p_1^*s_1] &= 0 
\end{align*}

\end{document}